\documentclass[11pt]{article}
\usepackage[english]{babel}
\usepackage[left=1in,right=1in,top=1in,bottom=1.5in]{geometry}
\usepackage[skip=5pt, indent=10pt]{parskip}
\usepackage{amssymb,amsmath,amsthm}
\usepackage{mathtools,bm}              
\usepackage{hyperref}
\usepackage{xcolor}
\usepackage{nicefrac}
\usepackage[margin=1cm]{caption}
\usepackage{natbib}
\usepackage[makeroom]{cancel}
\usepackage{bbm}
\usepackage{mathrsfs}
\usepackage{times}
\usepackage{graphicx}
\usepackage{subcaption}
\usepackage{multirow}
\usepackage{diagbox}
\usepackage{siunitx}
\usepackage{cleveref}
\usepackage{xargs}
\usepackage{algorithm}
\usepackage{algorithmic}
\usepackage{enumitem}
\usepackage{url}

\allowdisplaybreaks
\definecolor{darkred}{HTML}{880000}
\definecolor{darkblue}{HTML}{000088}

\hypersetup{
  colorlinks,
  linkcolor={darkred},
  citecolor={darkblue}
}

\newtheorem{proposition}{Proposition}[section]
\newtheorem{lemma}[proposition]{Lemma}
\newtheorem{theorem}[proposition]{Theorem}
\newtheorem{corollary}[proposition]{Corollary}
\theoremstyle{definition}
\newtheorem{definition}[proposition]{Definition}
\newtheorem{assumption}{Assumption}
\theoremstyle{remark}
\newtheorem{remark}[proposition]{Remark}

\numberwithin{equation}{section}

\newcommand{\R}{\mathbb{R}}

\newcommand{\E}{\mathbb{E}}

\newcommand{\Var}{\operatorname{Var}}
\newcommand{\Cov}{\operatorname{Cov}}
\newcommand{\Hcal}{\mathcal{H}}

\newcommand{\norm}[1]{\left\lVert #1 \right\rVert}

\title{WEIRDO: WEak resIdual Regularized DOob's h-transform diffusion alignment}

\author{
Denis Suchkov\thanks{HSE University, Russian Federation}
}

\date{}

\begin{document}

\maketitle

\begin{abstract}
We study the problem of estimating the guidance that steers the distribution learned by a diffusion generative model toward a tilted target $q_0 \propto w\,p_0$ at inference time. Relying on the stochastic optimal control approach, we observe that the exact drift correction is the gradient of the logarithm of Doob's $h$-function, and we study the problem of estimating it from a sample. In the present paper, we assume that the score of the pretrained model is available, that the tilting weight is bounded and positive, and that the reference distribution has a bounded support, no smoothness of the weight is required. Introducing a penalized
least-squares risk in which the penalty is the residual of the space-time
harmonicity equation satisfied by the $h$-function, measured in a dual Sobolev norm, we derive high-probability bounds on the squared error of the resulting guidance estimate. Since the penalty vanishes at the target, the estimator is free of regularization bias, and in favourable scenarios its rate of convergence is faster than the minimax rate of estimating first-order derivatives of a smooth regression function. Assuming that $w$ is bounded and positive with $\mathbb{E}_{p_0}[w^{-\mathrm{s}}] < \infty$ for some $\mathrm{s} \in (0,\infty]$, and that the reference data are compactly supported, we prove that the guidance is estimable in squared $L^2$ at rate $\varepsilon_n^{\mathrm{s}/(\mathrm{s}+4)}$, where $\varepsilon_n = n^{-2(\beta-1)/(2(\beta-1)+d)}.$ We also transfer the obtained bounds to the total variation distance between the marginals of the estimated and the exactly guided samplers, and illustrate the performance of the suggested approach with numerical experiments.
\end{abstract}

\section{Introduction} \label{sec:intro}

Score-based diffusion models \citep{song2021} have become a standard tool of
generative modelling. Such a model transforms data into noise by a forward
stochastic differential equation and produces new samples by running the
corresponding time reversal. Let $p_0$ denote the data distribution on $\R^d$ and let the forward dynamics be the variance-preserving Ornstein--Uhlenbeck process
\begin{equation} \label{eq:intro_forward}
    dX_\tau = -X_\tau \, d\tau + \sqrt{2} \, dB_\tau, \qquad 0 \leq \tau \leq T,
    \qquad X_0 \sim p_0 ,
\end{equation}
where $B$ is a standard $d$-dimensional Brownian motion and $T > 0$ is a fixed time horizon. We denote the law of $X_\tau$ by $p_\tau,$ which for $ \tau > 0 $ has a smooth positive density denoted in the same way, and note that $p_\tau$ approaches the standard Gaussian density as $\tau$ grows. It is well known \citep{anderson1982,haussmann1986} that the time reversal
$\overleftarrow{X}_t \overset{d}{=} X_{T-t}$ of the process
\eqref{eq:intro_forward} is again a diffusion, governed by the SDE
\begin{equation} \label{eq:rev_denoising} 
    d \overleftarrow{X}_t = b(t, \overleftarrow{X}_t)\,dt + \sqrt{2}\,dB_t,
    \qquad b(t,x) = x + 2\,s(t,x), \qquad s(t,x) = \nabla \log p_{T-t}(x),
\end{equation}
started at $\overleftarrow{X}_0 \sim p_T$ and terminating at
$\overleftarrow{X}_T \sim p_0$. The function $s$ is referred to as the score. It is not available in closed form and, in practice, is learned by a neural network. Throughout the paper, a pretrained diffusion model is understood as a supplier of $s$, and we treat $s$ as known.

Once such a model is available, one usually wants to adapt it to a particular task or reward, and retraining it is expensive. Let $w : \R^d \to \R_{\geq 0}$ be a weight expressing the new objective: a reward exponential $e^{r/\alpha}$, a likelihood in an inverse problem, or a class probability. The aim is then to sample not from $p_0$ but from the tilted distribution
\begin{equation} \label{eq:intro_tilt}
    q_0 \;\propto\; w \, p_0 .
\end{equation}
Several routes are available. One can fine-tune the network \citep{uehara2024}, resample from a heuristic proposal \citep{wu2023}, or add a drift to \eqref{eq:rev_denoising} \citep{dhariwal2021,ho2022cfg}. We are interested in the third kind, known as inference-time alignment, because it leaves the pretrained weights untouched: the pretrained network is never retrained, only an auxiliary estimator of the correction is fitted per reward, and there is no need to draw many samples in order to keep a few. Unlike the usual heuristics, however, we work with the drift that is exact.

That drift is determined by a single function. From the optimal control point of view, turning $p_0$ into $q_0$ reduces to finding
\begin{equation} \label{eq:intro_h}
    h^\ast(t,x) \;:=\; \E\left[ w(X_0) \mid X_{T-t} = x \right],
\end{equation}
known as Doob's $h$-function \citep{doob1984}. Adding $2\nabla_x \log h^\ast$ to the drift of \eqref{eq:rev_denoising} produces
\begin{equation} \label{eq:intro_guided}
    d\overleftarrow{Z}_t = \big( b(t,\overleftarrow{Z}_t) + 2\nabla_x\log h^\ast(t,\overleftarrow{Z}_t) \big) dt + \sqrt{2}\,dB_t ,
\end{equation}
whose terminal law is exactly $ q_0, $ when it is started from the tilted law $ h^* (0, \cdot) p_T / Z, \, Z := \int w dp_0. $ What the sampler uses is the guidance $g^\ast := \nabla \log h^\ast$, and since $p_0$ is unknown, $h^\ast$ has to be estimated from data. This is done on a window of reverse times $ I = [t_0, t_1] \subset (0, T) $ kept away from both endpoints, since as $ t \uparrow T $ the noise vanishes and recovery is ill-posed, while as $ t \downarrow 0 $ there is nothing to recover.

Our focus lies on statistical aspects of guidance estimation, and here the
following difficulty arises. By \eqref{eq:intro_h}, the function $h^\ast$ is a conditional expectation, so a natural strategy is to estimate it by nonparametric regression. The sampler \eqref{eq:intro_guided}, however, does not consume $h^\ast$ but $\nabla \log h^\ast$, and a small value error does not by itself imply a small gradient error. Estimating the derivative is also intrinsically harder than estimating the function: for a $\beta$-smooth regression function observed in noise, the optimal squared $L^2$ rate is $n^{-2\beta/(2\beta+d)}$ for the function and $n^{-2(\beta-1)/(2\beta+d)}$ for its first derivative \citep{stone1982}, one derivative's worth slower. In order to do better, one has to exploit a property of $h^\ast$ that an arbitrary conditional expectation does not possess.

The property we use is that $h^\ast$ is space-time harmonic for the generator of \eqref{eq:rev_denoising}, that is, it satisfies the PDE
\begin{equation} \label{eq:intro_harm}
    \mathcal{M} h^\ast \;:=\; \partial_t h^\ast + b(t,x) \cdot \nabla h^\ast + \Delta h^\ast \;=\; 0 .
\end{equation}
Every term in \eqref{eq:intro_harm} is either a derivative of the unknown function or, through $b$, the score of the pretrained model. The residual $\mathcal{M}h$ can therefore be evaluated on the sample and added to the regression objective as a penalty. Since it vanishes at $h^\ast$, this penalty introduces no bias at the population level, whatever weight it is given.

However, the Laplacian in \eqref{eq:intro_harm} makes residual second order in the unknown, and second derivatives are exactly what the rate we are aiming for cannot control. We therefore pass to a weak form: we test \eqref{eq:intro_harm} against a function $\psi$ and integrate by parts, which moves one derivative from $h$ onto $\psi$. This step is available to us for a specific reason. Integrating by parts against the marginal $p_{T-t}$ produces its logarithmic derivative as a weight, and that is again the score already appearing in \eqref{eq:rev_denoising}. What results is a penalty that is first order in both arguments and computable from the pretrained model. From it we obtain an estimator of $h^\ast$ whose guidance converges to $g^\ast$ in the squared $L^2$ norm at a rate we make explicit, and the same bound transfers to the marginals of \eqref{eq:intro_guided} in total variation.

Throughout we take $\beta = 2$ as the leading case, treating the approximation of $h^\ast$ as that of a $W^{\beta,\infty}$ function. Although $h^\ast$ is smooth for any bounded $w$, as we show below, $\beta = 2$ is the smallest integer for which the minimax rate $n^{-2(\beta-1)/(2\beta+d)}$ of \citet{stone1982} is informative, and hence the smallest for which a comparison against it is meaningful. This should be read not as a assumption that $h^\ast$ belong to that smoothness class, but as the primary example in which the difference between our estimate and the benchmark is clearest.

\paragraph{Contribution.}
\begin{itemize}[leftmargin=1.4em,itemsep=2pt,topsep=3pt]
    \item The main contribution of this paper is a non-asymptotic
    high-probability upper bound on the squared $L^2$-error of the guidance
    estimate (Theorem \ref{thm:rate_main}), which holds on all but an arbitrarily
    small end-portion of the estimation window. In favourable scenarios, when the
    weight $w$ is not too close to zero, the obtained rate is faster than the
    generic minimax rate of first-derivative estimation for a smooth
    regression function.
    \item The proof relies on an exact energy identity for the reverse diffusion,
    which allows us to control the gradient by the value error and the weak
    residual (Theorem \ref{thm:coercivity_main}). We would like to emphasize that
    this result requires no inf-sup condition on the trial and test classes, a
    hypothesis that a weak formulation would ordinarily need, it is replaced by a
    containment between the two classes.
    \item We transfer the obtained bound to the total variation distance between
    the marginals of the estimated and the exactly guided samplers on the
    estimation window (Corollary \ref{cor:sampling_main}), so that an error in the
    estimated guidance is controlled as an error in the law that is sampled.
\end{itemize}

\subsection*{Related work}

\textbf{Inference-time alignment.} The oldest way of steering a diffusion sampler is to add a heuristic drift: classifier guidance \citep{dhariwal2021} uses the gradient of a noise-conditioned classifier, and classifier-free guidance \citep{ho2022cfg} extrapolates between conditional and unconditional scores. Both are effective but not exact: classifier guidance reproduces the conditional $h$-transform only with an exact noise-conditioned classifier at unit scale and the matching initial law, and other scales change the terminal law by an unquantified amount. The alternatives split by what they modify. Fine-tuning changes the weights, by reinforcement learning or stochastic control, and is reviewed by \citet{uehara2024}. Sequential Monte Carlo leaves the model alone and corrects a heuristic proposal by importance weights, as in the twisted sampler of \citet{wu2023}, paying for exactness with a particle system and with weight degeneracy in high dimension. The line closest to ours keeps the sampler exact and learns the $h$-transform itself. \citet{denker2024} fit a generalised $h$-transform on top of a frozen model, without a rate, and \citet{zhu2026doit} estimate it at sampling time by Monte Carlo, with a total-variation guarantee governed by the number of look-ahead draws rather than by a sample size. \citet{chang26} derive convergence rates for a variational estimator of $h^\ast$, but their smoothness penalty does not vanish at the target, so the rate reflects a bias--variance trade-off in the penalty weight. In \citet{guo2026}, who estimate $h$ and its gradient through martingale identities, and \citet{kawata2025}, who sample dual-averaging iterates through the $h$-transform, the drift error enters the bound as a parameter rather than being estimated at a rate. The question here is accordingly not whether the drift correction admits a statistical theory, but what it costs: which rate is attainable when the penalty incurs no bias at all, and what has to be assumed to get it.

\textbf{Statistical theory for diffusion models.} The nonparametric theory of diffusion models has concentrated on the score. \citet{oko2023} prove minimax-optimal distribution estimation under empirical score matching, and a parallel line takes the score as given and asks what sampling error follows \citep{chen2023sampling,li2024towards}. Because rates in the ambient dimension degrade quickly, much recent effort targets the intrinsic dimension instead: \citet{chen2023low} under an exact manifold hypothesis, \citet{yakovlev2025} under a substantially relaxed one --- a nonparametric Gaussian mixture in which samples may leave the manifold, with rates set by the intrinsic dimension and valid even when the ambient dimension grows polynomially in $n$ --- and \citet{li2024adapting} for the sampler, showing DDPM adapts to unknown low-dimensional structure. The guidance is a log-gradient in the same geometry as the score, but is estimated from a regression whose response is the terminal weight, and the error that matters is in the gradient.

\textbf{Empirical process theory.} Localization by local Rademacher
complexities \citep{Bartlett_2005,koltchinskii2006}, in the form set out by
\citet{wainwright2019}, yields multiplicative rather than additive comparisons
between empirical and population quantities: on a star hull satisfying a
Bernstein condition $Pf^2 \lesssim Pf$, the deviation is governed by a critical
radius read off a uniform entropy bound. That theory is built for bounded
classes. For suprema with unbounded envelopes, \citet{adamczak2008tailinequalitysupremaunbounded}
supplies a Talagrand-type tail inequality that applies after truncation and
peeling. We need both, the drift coefficient $x + s$ being only sub-Gaussian
under $\rho_t$.

\section{Setup} \label{sec:setup}

\textbf{Notation and conventions.} We keep the processes of Section
\ref{sec:intro} and fix $T > 0$. The forward SDE \eqref{eq:intro_forward} has
Gaussian transition kernel $X_\tau \mid X_0 = x_0 \sim \mathcal{N}(\mu_\tau x_0,
\sigma^2_\tau I_d)$ with $\mu_\tau = e^{-\tau}$ and $\sigma_\tau^2 = 1 -
e^{-2\tau}$, so $\mu_\tau^2 + \sigma_\tau^2 = 1$, and we write $p_\tau :=
\mathrm{Law}(X_\tau)$. The reversal \eqref{eq:rev_denoising} is started at
$ \overleftarrow{X}_0 \sim p_T, $ its marginals $\rho_t :=
\mathrm{Law}(\overleftarrow{X}_t) = p_{T-t}$ are the laws from which the
estimation sample is drawn. Write
\begin{equation} \label{eq:generator}
    (\mathcal{L}_t f)(x) := b(t,x) \cdot \nabla f(x) + \Delta f(x), \qquad
    \mathcal{M} := \partial_t + \mathcal{L}_t
\end{equation}
for the generator of \eqref{eq:rev_denoising} and the associated space-time
operator, the Laplacian rather than $\tfrac12 \Delta$ appears because the
diffusion coefficient is $\sqrt{2}$. The score $s$ is supplied by the pretrained
model, and we treat it as known throughout. As is standard in the statistical analysis of guidance, we treat $s$ as exact, which separates the approximation error of the pretrained model from the error of estimating $h^\ast$. The image experiments of Section~\ref{sec:experiments} deliberately drop this idealization and use the model's learned score, to test the estimator where it would be used in practice.

\textbf{Target and guidance.} Given a known weight $w \geq 0$, the target is $q_0 = w p_0 / Z$ with $Z = \int w \, p_0$. Doob's $h$-transform adds the drift $2 \nabla \log h^\ast$ to \eqref{eq:rev_denoising}, giving the controlled process
\begin{equation} \label{eq:rewersed_process}
    d \overleftarrow{Z}_t = \big( \overleftarrow{Z}_t + 2 s(t, \overleftarrow{Z}_t) + 2 \nabla \log h^\ast(t, \overleftarrow{Z}_t) \big) dt + \sqrt{2} \, dB_t ,
\end{equation}
whose terminal law is $ q_0, $ when started from $ \rho_0^* := h^*(0, \cdot) p_T / Z $ by Lemma \ref{lem:marginals_bounded}. Here $h^\ast(t,x) := \E[w(\overleftarrow{X}_T) \mid \overleftarrow{X}_t = x] = \E[w(X_0) \mid X_{T-t} = x]$ is Doob's $h$-function and $g^\ast := \nabla \log h^\ast$ the guidance. By Bayes' rule for the Gaussian likelihood, $h^\ast(t,x) = \E_{X_0 \sim \pi_{t,x}}[w(X_0)]$ for the posterior $\pi_{t,x}(dx_0) \propto \phi_{\sigma^2_{T-t}}(x - \mu_{T-t} x_0) p_0(dx_0)$.

\textbf{Assumptions.} Exactly two hypotheses are used anywhere below.

\begin{assumption}[Weight] \label{ass:bounded}
    $w : \R^d \to \R$ is measurable with $0 < w \leq \bar B < \infty$ on $\mathrm{supp}(p_0)$, and there is $\mathrm{s} \in (0, \infty]$ with $\Xi_{\mathrm{s}} := \E_{p_0}[w^{-\mathrm{s}}] < \infty$, where $\mathrm{s} = \infty$ is read as $w \geq B > 0$ on $\mathrm{supp}(p_0)$.
\end{assumption}

\begin{assumption}[Compact reference support] \label{ass:compact}
    $p_0$ is supported in the cube $\{x_0 : |x_0|_\infty \leq 1\}$.
\end{assumption}

Reward and likelihood tilts with bounded reward are of this form, $\Xi_{\mathrm{s}}$ quantifies how close $w$ comes to zero, degenerating at $\mathrm{s} = \infty$ to a uniform lower bound. Assumption \ref{ass:compact} is the usual compact-support hypothesis, and its only role is to make the posterior compactly supported, hence to yield the regularity bounds of Appendix \ref{app:regularity}. No smoothness of $w$ is assumed anywhere: the smoothness index $\beta$ below is a free parameter of the construction, and the requisite derivative envelopes for $h^\ast$ are derived from Assumptions \ref{ass:bounded}--\ref{ass:compact} (Lemma \ref{lem:space_time_approx}, Step 1). Assumption~\ref{ass:bounded} lets $ w $ vanish, which the corresponding hypotheses of prior work do not: \citet{chang26} and \citet{kawata2025} bound $ w $ and the density ratio, respectively, from both sides, and \citet{zhu2026doit} bounds $ h $ from below. \citet{guo2026} work under conditions of a different kind, strong log-concavity of the marginals and of $ h $ with the score and the guidance error controlled in sup norm, which are not directly comparable to ours. The order $ \mathrm{s} $ is free here and enters the rate, so how far $ w $ may vanish is traded explicitly against how fast the guidance converges, a trade we measure in Appendix \ref{app:finiteS}.

\textbf{Window and function spaces.} As $t \uparrow T$ the noise level $\sigma_{T-t} \to 0$ and estimation is ill-posed, as $t \downarrow 0$ there is nothing to recover. We fix an early--late window
\begin{equation} \label{eq:window}
    I := [t_0, t_1] \subset (0,T), \quad \ell := t_1 - t_0 \leq 1, \quad \sigma_\ast := \sigma_{T - t_1} > 0,
\end{equation}
and a probability weight $\nu$ on $I$, taken to be normalized Lebesgue measure. For each $t$, $L^2(\rho_t)$ and $H^1(\rho_t) = \{ f : f, \nabla f \in L^2(\rho_t) \}$ carry the reverse marginal as reference measure, and $H^{-1}(\rho_t)$ is the dual of $H^1(\rho_t)$ under the pairing
\begin{equation} \label{eq:h_inv_norm}
    \| F \|_{H^{-1}(\rho_t)} := \sup_{0 \neq \psi \in H^1(\rho_t)} \frac{\langle F, \psi \rangle_{\rho_t}}{\| \psi \|_{H^1(\rho_t)}} \;\; \leq \;\; \| F \|_{\rho_t} .
\end{equation}
On the window we use the space-time spaces $\mathcal{V} := L^2_\nu(I; H^1(\rho_t))$ and its dual $\mathcal{V}^\ast$, with $\| F \|_{\mathcal{V}^\ast}^2 = \int_I \| F(t,\cdot) \|^2_{H^{-1}(\rho_t)} \nu(dt)$. Constants written $\sigma^{-c}$ are of the form $ C_\star \sigma_\ast^{-c}, $ where $ c $ is absolute and $ C_\star $ depends only on $ (d, \beta, \bar{B}, \ell, C_t, \kappa_\Theta) $ and not on $ n, N, \zeta, \delta $ or $ \sigma_\ast, $ the relations $ \lesssim $ and $ \asymp $ hide constants of the same form.

\section{Harmonicity and its weak form} \label{sec:weak_residual}

Write $\mathcal{R}_t[h] := (\mathcal{M}h)(t,\cdot)$ for the residual and
\begin{equation} \label{eq:functionals_main}
    \mathcal{D}(h) := \int_I \| h(t,\cdot) - h^\ast(t,\cdot) \|^2_{\rho_t} \nu(dt), \qquad \mathcal{P}^w(h) := \| \mathcal{R}[h] \|^2_{\mathcal{V}^\ast}
\end{equation}
for the value functional and the weak penalty. Here $\mathcal{D}$ is exactly the population excess least-squares risk, since at fixed $t$ the minimizer of $\E[(h(t,X_{T-t}) - w(X_0))^2]$ is the conditional mean $h^\ast(t,\cdot)$.

\begin{proposition}[Harmonicity and its weak form] \label{prop:weak_form_main}
Under Assumptions \ref{ass:bounded}--\ref{ass:compact}, $h^\ast$ is space-time harmonic for the reverse generator, $\mathcal{M}h^\ast = 0$ on $(0,T)\times\R^d$ with $h^\ast(T,\cdot) = w$. Moreover, for $t \in I$, $h \in C^{1,2}$ and $\psi \in H^1(\rho_t)$ of polynomial growth, writing $\varphi := h - h^\ast$,
\begin{equation} \label{eq:weak_main}
    \langle \mathcal{R}_t[h], \psi \rangle_{\rho_t} \;=\; a_t(h,\psi) \;:=\; \int_{\R^d} \Big[ \partial_t h \, \psi + \big((x+s) \cdot \nabla h\big)\psi - \nabla h \cdot \nabla \psi \Big] \rho_t \, dx ,
\end{equation}
and $|a_t(h,\psi)| \leq \big( \| \partial_t \varphi \|_{\rho_t} + \| \nabla \varphi \|_{\rho_t} + M_4 \| |\nabla \varphi| \|_{L^4(\rho_t)} \big) \| \psi \|_{H^1(\rho_t)}$ with $M_4 \leq C\sqrt d \sigma_\ast^{-2}$.
\end{proposition}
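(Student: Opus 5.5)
The plan is in three parts: prove harmonicity from an explicit Gaussian representation, obtain the weak form by a single integration by parts against $\rho_t$ using $\nabla\rho_t=\rho_t s$, and bound $a_t$ by Cauchy--Schwarz plus a Hölder step on the drift term.

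\textbf{Harmonicity.} Write $\tau := T-t$ for forward time. By Bayes' rule and Assumption~\ref{ass:compact},
\[
h^\ast(t,x) = \frac{u(\tau,x)}{p_\tau(x)}, \qquad u(\tau,x) := \int w(x_0)\,\phi_{\sigma_\tau^2}(x-\mu_\tau x_0)\,p_0(dx_0).
\]
\begin{itemize}
\item \emph{Regularity.} Since $w$ is bounded, the posterior has compact support and $p_\tau>0$, so $u$ and $p_\tau$ are $C^\infty$ on $(0,T]\times\R^d$. Both numerator and denominator may be differentiated under the integral. This gives $h^\ast\in C^{1,2}$, with all derivatives of polynomial growth in $x$ and bounded by powers of $\sigma_\tau^{-1}$.
\item \emph{Fokker--Planck.} The measure $w\,p_0$ is a finite positive measure, so $u$ solves the same forward equation as $p_\tau$:
\[
\partial_\tau u = \nabla\cdot(xu) + \Delta u .
\]
\item \emph{Quotient computation.} Substitute $u = h\,p_\tau$ into this equation and subtract $h$ times the equation for $p_\tau$. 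Using $\nabla p_\tau = p_\tau s$ and $\partial_t = -\partial_\tau$, what remains is $p_\tau(\partial_t h + (x+2s)\cdot\nabla h + \Delta h)=0$, i.e. $\mathcal{M}h^\ast=0$.
\item \emph{Terminal value.} The boundary condition $h^\ast(t,\cdot)\to w$ as $t\uparrow T$ holds in the sense of the representation $h^\ast(t,x)=\E[w(X_0)\mid X_{T-t}=x]$. Alternatively, the martingale property of $h^\ast(t,\overleftarrow X_t)$ can be invoked.
\end{itemize}

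\textbf{Weak form.} For $h\in C^{1,2}$ and $\psi$ of polynomial growth, integrate the Laplacian term by parts against $\rho_t=p_{T-t}$:
\[
\int \Delta h\,\psi\,\rho_t = -\int \nabla h\cdot\nabla\psi\,\rho_t - \int (s\cdot\nabla h)\,\psi\,\rho_t .
\]
Here we used $\nabla\rho_t=\rho_t s$. Boundary terms vanish on large balls because $\rho_t$ has Gaussian tails, being a Gaussian convolution of a compactly supported law, while $h$, $\nabla h$ and $\psi$ grow at most polynomially. Adding $\int(\partial_t h + (x+2s)\cdot\nabla h)\psi\rho_t$, one copy of $s$ cancels, which yields \eqref{eq:weak_main}.

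\textbf{The bound on $a_t$.} Since $\mathcal{R}_t[h^\ast]=0$ and $a_t$ is linear in $h$, we have $a_t(h,\psi)=a_t(\varphi,\psi)$. The three terms are handled as follows:
\begin{itemize}
\item the time term by Cauchy--Schwarz, giving $\|\partial_t\varphi\|_{\rho_t}\|\psi\|_{\rho_t}$;
\item the gradient term by Cauchy--Schwarz, giving $\|\nabla\varphi\|_{\rho_t}\|\nabla\psi\|_{\rho_t}$;
\item the drift term by Hölder with exponents $(4,4,2)$, giving $M_4\,\||\nabla\varphi|\|_{L^4(\rho_t)}\|\psi\|_{\rho_t}$, where $M_4:=\sup_{t\in I}\||x+s(t,x)|\|_{L^4(\rho_t)}$.
\end{itemize}
To bound $M_4$, use Tweedie's formula $s=(\mu_\tau m(x)-x)/\sigma_\tau^2$ with $m(x)=\E[X_0\mid X_\tau=x]$. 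Then
\[
x+s = x\Big(1-\sigma_\tau^{-2}\Big) + \mu_\tau m/\sigma_\tau^2 .
\]
Here $|m|\le\sqrt d$ by Assumption~\ref{ass:compact}. Under $\rho_t$ we can write $x=\mu_\tau X_0+\sigma_\tau Z$, so $\||x|\|_{L^4}\lesssim\sqrt d$. For $t\le t_1$ we have $\sigma_\tau\ge\sigma_\ast$, which gives $M_4\le C\sqrt d\,\sigma_\ast^{-2}$.

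The main obstacle I expect is making the harmonicity and the integration by parts fully rigorous without any smoothness of $w$. This is why I route everything through the explicit ratio $u/p_\tau$: it makes differentiation under the integral, the polynomial growth bounds, and the vanishing of boundary terms routine consequences of compact support and Gaussian kernels.
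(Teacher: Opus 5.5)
Your proof is correct. For the weak form and the continuity bound you follow the paper exactly:
\begin{itemize}
\item integrate by parts with $\nabla\rho_t=s\rho_t$, so that $b-s=x+s$;
\item reduce to $a_t(h,\psi)=a_t(\varphi,\psi)$, then apply Cauchy--Schwarz to the time and gradient terms and H\"older with exponents $(4,4,2)$ to the drift term;
\item bound $M_4$ through Tweedie's formula and $|\E[X_0\mid x]|\le\sqrt d$.
\end{itemize}

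Where you genuinely differ is harmonicity. The paper argues probabilistically. It shows $h^\ast(t,\overleftarrow{X}_t)$ is a Doob martingale of $w(\overleftarrow{X}_T)$, and It\^o's formula identifies its drift as $\mathcal{M}h^\ast(t,\overleftarrow{X}_t)$, which must therefore vanish almost surely. Continuity of $\mathcal{M}h^\ast$ and full support of $\rho_t$ then upgrade this to $\mathcal{M}h^\ast\equiv0$.

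You instead note that $u=h^\ast p_\tau$ is a Gaussian convolution of the finite measure $w\,p_0$, so it solves the same forward Fokker--Planck equation as $p_\tau$. Subtracting $h^\ast$ times the equation for $p_\tau$ leaves $p_\tau\,\mathcal{M}h^\ast=0$ pointwise. I checked the quotient computation: the cross term $2\nabla h\cdot\nabla p_\tau=2p_\tau\,s\cdot\nabla h$ is exactly what turns the drift $x$ into $x+2s$.

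Your route is more elementary and self-contained. It needs no time-reversal theory for the law of the reverse SDE, no stochastic integrals, and no almost-everywhere-to-everywhere upgrade. The paper's route, in exchange, yields the martingale representation $dh^\ast(t,\overleftarrow{X}_t)=\sqrt2\,\nabla h^\ast\cdot dB_t$ as a by-product, which it reuses in the Girsanov argument of Lemma~\ref{lem:marginals_bounded}.

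Two small corrections:
\begin{itemize}
\item The compact support of the posterior comes from Assumption~\ref{ass:compact}, not from boundedness of $w$. Boundedness of $w$ is what justifies differentiating $u$ under the integral.
\item The terminal condition $h^\ast(T,\cdot)=w$ should be read, as in the paper, as the definitional identity at $t=T$. For a merely measurable $w$ you should not assert $h^\ast(t,\cdot)\to w$ pointwise as $t\uparrow T$.
\end{itemize}
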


There are three main consequences. The form $a_t$ is computable: it needs first derivatives of $h$, first derivatives of $\psi$, and the pretrained score, because the weight generated by integrating $\Delta h$ by parts against $\rho_t$ is $\nabla \log \rho_t = s$. It is gap-free: $a_t(h^\ast,\psi) = 0$ for all $\psi$, so $h^\ast$ minimizes $\mathcal{D} + \mu\mathcal{P}^w$ for every $\mu \geq 0$ and no bias is traded against variance in the penalty weight. And --- the point that drives the rate --- the continuity bound is first order in $\varphi$, so the penalty inherits a first-order approximation floor: for $ n \geq n_0 $ and suitable budgets there is an $h_N \in \Hcal_N$ with 
\begin{align} \label{eq:first_order_approx_floor}
    & \| h_N - h^\ast \|_{L^4} + \underset{t \in I}{\sup} \| h_N - h^\ast \|_{\rho_t} + \| | \nabla (h_N - h^\ast)| \|_{L^4} + \| \partial_t (h_N - h^\ast) \|_{L^2} \leq e_N, \\
    & e_N := 8 Q_n \varepsilon_N, \quad \varepsilon_N := \mathrm{A}_\beta N^{- (\beta - 1)}, 
\end{align}
where $ \mathrm{A}_\beta $ is the constant, depending only on $ \beta, \bar{B}, \sigma_\ast $ and, through $ R, $ polylogarithmically on $ n, $ and $ Q_n \leq \sigma^{-c} (\log n)^4 $ is given in \eqref{eq:Q_n_def}. Such an $ h_N $ exists in a class with $ \mathbb{V}_N \lesssim N^d\,\mathrm{polylog}(n), $ and gives $\inf_{h \in \Hcal_N}\mathcal{P}^w(h) \lesssim C d \sigma^{-4}_\ast e^2_N $ (Lemmas \ref{lem:weak_res_approx} and \ref{lem:space_time_approx}). Table \ref{tab:penalties} places this against the alternatives.

\begin{table}[h]
\centering
\small
\caption{Why the residual is measured weakly. The floor is $\inf_{h \in \Hcal_N} \mathcal{P}(h)$ at resolution $N,$ only the last is both gap-free and informative at $\beta = 2$.}
\label{tab:penalties}
\begin{tabular}{lccc}
\hline
Penalty $\mathcal{P}(h)$ & order in $h$ & value at $h^\ast$ & approximation floor \\
\hline
Sobolev, $\int_I \|\nabla h\|^2_{\rho_t}\nu(dt)$ & 1 & $\|\nabla h^\ast\|^2 \neq 0$ & --- (nonzero gap for $ \lambda > 0 $) \\
strong residual, $\|\mathcal{R}[h]\|^2_{L^2(\nu\otimes\rho)}$ & 2 & $0$ & $N^{-2(\beta-2)}$, non-informative at $\beta = 2$ \\
weak residual, $\|\mathcal{R}[h]\|^2_{\mathcal{V}^\ast}$ & 1 & $0$ & $N^{-2(\beta-1)}$ \\
\hline
\end{tabular}
\end{table}

\section{From residual to gradient} \label{sec:coercivity}

Two things remain: that the weak penalty still controls the gradient, and that the part of it an estimator can see suffices. Both rest on an exact identity. Put $\omega(t) := \nu([t,t_1]) = (t_1-t)/\ell$, $A(h,\psi) := \int_I a_t(h,\psi(t,\cdot))\,\nu(dt)$, and $E_\omega := \int_I \omega \| \nabla \varphi \|^2_{\rho_t}\nu(dt)$. It\^o's formula applied to $\varphi^2$ along the reverse process, averaged over a random endpoint $r \sim \nu$, gives (Theorem \ref{thm:weighted_energy_identity})
\begin{equation} \label{eq:wei_main}
    2 \ell E_\omega \;+\; \| \varphi(t_0,\cdot) \|^2_{\rho_{t_0}} \;=\; \mathcal{D}(h) \;-\; 2 \ell \, A(h, \omega \varphi) .
\end{equation}
The endpoint averaging is what produces $\omega$, and is forced: the identity on a fixed $[t_0,r]$ leaves a terminal value at a single instant, which the statistical analysis cannot control. Its cost is that $\omega$ vanishes at $t_1$, so guarantees are stated on
\begin{equation} \label{eq:I_delta}
    I_\delta := [t_0,\, t_1 - \delta \ell], \qquad \delta \in (0,1), \qquad \omega \geq \delta \ \text{ on } I_\delta ,
\end{equation}
covering a $(1-\delta)$ fraction of the window at a cost of $\delta^{-1}$ in the constant.

Bounding the cross term in \eqref{eq:wei_main} by $\|\mathcal{R}[h]\|_{\mathcal{V}^\ast}\|\omega\varphi\|_{\mathcal{V}}$ and absorbing already yields coercivity (Corollary \ref{cor:coercivity}). But no estimator sees $\|\mathcal{R}[h]\|_{\mathcal{V}^\ast}$. What a finite test class $\Psi$ delivers is the projected residual
\begin{equation} \label{eq:proj_main}
    \mathcal{N}_{\Psi,\tau}(h) := \sup_{0 \neq \psi \in \Psi} \frac{A(h,\psi)}{\| \psi \|_{\mathcal{V},\tau}} \;\leq\; \| \mathcal{R}[h] \|_{\mathcal{V}^\ast}, \qquad \| \psi \|^2_{\mathcal{V},\tau} := \| \psi \|^2_{\mathcal{V}} + \tau^2 \| \psi \|_\bullet^2 ,
\end{equation}
with $\|\cdot\|_\bullet$ a positively homogeneous scale on the cone $\Psi$ and $\tau$ a slack. Recovering the left-hand side of \eqref{eq:proj_main} from the right is a discrete inf-sup condition: the requirement that some $\kappa > 0$ satisfy
\begin{equation} \label{eq:infsup}
    \| \mathcal{R}[h] \|_{\mathcal{V}^\ast} \;\leq\; \kappa^{-1}\, \mathcal{N}_{\Psi_N}(h) \qquad \text{for all } h \in \Hcal_N ,
\end{equation}
that is, that the finite test class detect a fixed fraction of any residual the trial class can produce. It is the standard price of a weak formulation, and $\kappa$ would enter the final rate multiplicatively. We do not assume \eqref{eq:infsup}. Instead we impose two structural properties on the classes, neither of which requires linearity or convexity:

\begin{enumerate}[label=(S\arabic*), leftmargin=2.8em, itemsep=1pt, topsep=3pt]
    \item \textbf{Symmetric cone:} $\psi \in \Psi \Rightarrow -\psi \in \Psi$ and $r\psi \in \Psi$ for $r \geq 0$. This makes the inner maximization closed-form up to a scalar problem (Lemma \ref{lem:fenchel_cone}) and costs nothing: leave the output layer unnormalized. \label{property:cone}
    \item \textbf{Absorption:} $\omega \cdot (\Hcal - \Hcal) \subseteq \Psi$. This is what replaces \eqref{eq:infsup}. It costs a test network of twice the width and one more layer than the trial network, since a difference of two networks is a network (Lemma \ref{lem:networks_difference}). \label{property:absorption}
\end{enumerate}

Finally, write $ \Lambda_\ast $ for the uniform $ L^4 $ envelope of the coefficients generated by the integration by parts below,
\begin{equation} \label{eq:lambda_big}
    \Lambda_\ast := \underset{t \in I}{\sup} \left( \| | x + s | \|_{L^4 (\rho_t)} + \| d + \nabla \cdot s \|_{L^4 (\rho_t)} + \| | x + s | | s | \|_{L^4 (\rho_t)} + \| \partial_t \log \rho_t \|_{L^4 (\rho_t)} \right),
\end{equation}
which under Assumptions \ref{ass:bounded}-\ref{ass:compact} satisfies $ \Lambda_\ast \lesssim d^2 \sigma_\ast^{-4}. $

\begin{theorem}[Coercivity without an inf-sup constant] \label{thm:coercivity_main}
    Let Assumptions \ref{ass:bounded}--\ref{ass:compact} hold, $ \mathcal{H}_N, \Psi_N $ satisfy \ref{property:cone}--\ref{property:absorption}, every $ h \in \mathcal{H}_N $ be $ C^{1,2} $ on $ I \times \mathbb{R}^d $ with $ | h | \leq 3 \bar{B} $ and bounded $ \partial_t h, \nabla h, \nabla^2 h, $ and $ \| \omega (h - h') \|_\bullet \leq C_0 $ for all $ h, h' \in \mathcal{H}_N. $ If $ h_N \in \mathcal{H}_N $ satisfies the inequality in \eqref{eq:first_order_approx_floor} with some $ e_N > 0, $ then for every $ h \in \mathcal{H}_N,  \, \tau \geq 0, \, \delta \in (0,1) $ with $ \varphi := h - h^\ast, $
    \begin{equation} \label{eq:coercivity_main}
        \int_{I_\delta}  \| \nabla \varphi \|^2_{\rho_t} \nu (dt) \leq \frac{C}{\delta} \left( \frac{1 + \ell}{\ell} \mathcal{D}(h) + \mathcal{N}_{\Psi, \tau} (h)^2 + C_0^2 \tau^2 + (1 + \Lambda _\ast^2 + \ell^{-2}) e_N^2 \right),
    \end{equation}
    where $ C $ absolute and $ \Lambda_\ast $ as in \eqref{eq:lambda_big}. For the classes of Definition \ref{def:trial_test_classes_main_text} the regularity conditions hold and $ C_0 = 1. $ 
\end{theorem}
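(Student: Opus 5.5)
Starting point: the weighted energy identity \eqref{eq:wei_main}. Since $\omega \geq \delta$ on $I_\delta$ and $\|\varphi(t_0,\cdot)\|^2_{\rho_{t_0}} \geq 0$, it gives
\[
2\ell\delta \int_{I_\delta}\|\nabla\varphi\|^2_{\rho_t}\nu(dt) \;\leq\; 2\ell E_\omega \;\leq\; \mathcal{D}(h) + 2\ell\,|A(h,\omega\varphi)| .
\]
Everything therefore reduces to bounding the cross term $A(h,\omega\varphi)$, with $\varphi = h - h^\ast$. We cannot simply bound it by $\|\mathcal{R}[h]\|_{\mathcal{V}^\ast}\|\omega\varphi\|_{\mathcal{V}}$, since only $\mathcal{N}_{\Psi,\tau}$ is observable. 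Moreover $\omega\varphi$ is not in $\Psi$, because $h^\ast \notin \Hcal_N$.

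The plan is to split
\[
\omega\varphi \;=\; \omega(h - h_N) + \omega(h_N - h^\ast).
\]
By absorption \ref{property:absorption}, $\psi_1 := \omega(h-h_N) \in \Psi$. The definition \eqref{eq:proj_main} then gives
\[
A(h,\psi_1) \leq \mathcal{N}_{\Psi,\tau}(h)\,\|\psi_1\|_{\mathcal{V},\tau}, \qquad \|\psi_1\|^2_{\mathcal{V},\tau} \leq \|\omega(h-h_N)\|_{\mathcal{V}}^2 + \tau^2 C_0^2 .
\]
The symmetric cone property \ref{property:cone} handles the sign: if $A(h,\psi_1)<0$ we use $-\psi_1 \in \Psi$ instead, so we may bound $|A(h,\psi_1)|$. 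Next, since $\omega \leq 1$,
\[
\|\omega(h-h_N)\|_{\mathcal{V}} \leq \|\omega\varphi\|_{\mathcal{V}} + \|h_N-h^\ast\|_{\mathcal{V}} .
\]
Here $\|\omega\varphi\|_{\mathcal{V}}^2 \leq \mathcal{D}(h) + \int\omega^2\|\nabla\varphi\|^2\nu(dt) \leq \mathcal{D}(h) + E_\omega$, and $\|h_N-h^\ast\|_{\mathcal{V}} \lesssim e_N$ by \eqref{eq:first_order_approx_floor}. Applying Young's inequality, $2\ell\,\mathcal{N}\,\|\psi_1\|_{\mathcal{V},\tau} \leq \tfrac{\ell}{2}\|\psi_1\|^2_{\mathcal{V},\tau} + 2\ell\,\mathcal{N}^2$. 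The resulting $E_\omega$ piece carries a coefficient of at most $\ell$, which is absorbed into the left side $2\ell E_\omega$. What remains is $\ell\mathcal{N}^2 + \ell C_0^2\tau^2 + \ell\mathcal{D} + \ell e_N^2$.

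For the second piece, $A(h,\omega(h_N-h^\ast))$, the test function is not in $\Psi$, so we use the continuity of $a_t$ differently. Write $A(h,\cdot) = A(h,\cdot) - A(h^\ast,\cdot)$, which is valid because $a_t(h^\ast,\cdot) = 0$ (Proposition \ref{prop:weak_form_main}). Directly bounding by $\|\varphi\|$-norms would require $L^4$ control of $\nabla\varphi$ for the estimator $h$, which we do not have. Instead, we integrate by parts to move all derivatives off $\varphi$ and onto $\psi_2 := \omega(h_N - h^\ast)$. This is the source of $\Lambda_\ast$: the adjoint of $\partial_t + (x+s)\cdot\nabla + \Delta$ against $\rho_t$ produces the coefficients $d + \nabla\cdot s$, $|x+s||s|$ and $\partial_t\log\rho_t$, together with $\omega' = -\ell^{-1}$ and boundary terms at $t_0, t_1$. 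The time boundary term at $t_1$ vanishes since $\omega(t_1)=0$. At $t_0$ it is $\ell^{-1}$ times a product of a $\varphi(t_0)$ value and $e_N$, and is controlled using the $\|\varphi(t_0)\|^2$ term kept on the left of \eqref{eq:wei_main}.

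The result is
\[
|A(h,\psi_2)| \lesssim \big(\|\varphi\|_{\text{value}} + \|\nabla\varphi\|\big)\big(1+\Lambda_\ast+\ell^{-1}\big)e_N ,
\]
using Hölder's inequality with the $L^4$ floor in \eqref{eq:first_order_approx_floor}. One $\nabla$ stays on $\varphi$ in the form $\nabla\varphi\cdot\nabla\psi_2$, handled with the $L^4$ gradient floor of $h_N$ and an $L^2$-type norm of $\nabla\varphi$ weighted by $\omega$. Young's inequality again absorbs the $\varphi$ factors into $\ell E_\omega + \mathcal{D}$, leaving $\ell(1+\Lambda_\ast^2+\ell^{-2})e_N^2$.

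\textbf{Expected main obstacle.} The hardest step is this second piece. We must arrange the integration by parts so that every surviving $\varphi$ factor is either a value in $L^2(\rho_t)$ or $\omega^{1/2}\nabla\varphi$, since only these can be absorbed. The regularity hypotheses on $\Hcal_N$ ($C^{1,2}$, bounded derivatives, $|h|\le 3\bar B$) are what justify the boundary terms and the polynomial growth needed to apply Proposition \ref{prop:weak_form_main}. Finally, dividing by $2\ell\delta$ gives the $\tfrac{1+\ell}{\ell}\mathcal{D}$ factor and the overall $C/\delta$. For the classes of Definition \ref{def:trial_test_classes_main_text}, $C_0 = 1$ is checked directly from the definition of $\|\cdot\|_\bullet$.
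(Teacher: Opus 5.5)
Your proposal is correct and takes essentially the same route as the paper. It uses the same split $\varphi = (h - h_N) + (h_N - h^\ast)$, handles the in-class part by absorption, the projected residual and Young's inequality, and treats the out-of-class part by a weighted Cauchy--Schwarz on $\nabla\varphi\cdot\nabla\varphi_2$ plus integration by parts in $x$ (drift) and in $t$ (time term, with the boundary term at $t_1$ killed by $\omega(t_1)=0$ and the one at $t_0$ absorbed into $\|\varphi(t_0,\cdot)\|^2_{\rho_{t_0}}$), which is exactly where $\Lambda_\ast$ and $\ell^{-1}$ enter. The only slip is presentational: your first display drops $\|\varphi(t_0,\cdot)\|^2_{\rho_{t_0}}$, which you later need for that boundary term, so keep it on the left until the final assembly.
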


\section{Estimator and main result} \label{sec:main}

Let $ \varrho(u) = (u \lor 0)^3 $ be the rectified cubic activation function, which we will use for our proof sketch of main result below.

\begin{definition}(Network class) \label{def:network_class}
    Put $ Q_R := \{ x: | x |_\infty \leq R \} $ and $ Z := I \times Q_R. $ For $ W, L, P_0 \in \mathbb{N} $ and $ \Lambda, \mathcal{A} \geq 1, $ let $ \mathcal{NN}_\varrho (W, L, \Lambda, P_0, \mathcal{A}) $ be the set of $ f_\vartheta (z) = A_L x^{(L)} (z) + b_L $ with $ x^{(0)}(z) := z $ and $ x^{(\ell + 1)}(z) := \varrho (A_\ell x^{(\ell)} (z) + b_\ell), $ subject to: hidden widths at most $ W, $ parameters $ \vartheta = (A_0, b_0, \ldots, A_L, b_L) $ bounded in modulus by $ \Lambda, $ at most $ P_0 $ of them nonzero and $ \sup_{z \in Z} \| x^{(\ell)}(z) \|_\infty \leq \mathcal{A} $ for every hidden layer. We will write $ \mathcal{NN}_{\varrho, \Theta} $ for the same class with $ \Theta $ as the activation of the last hidden layer.
\end{definition}

\begin{definition}(Trial and test classes) \label{def:trial_test_classes_main_text}
    Fix a resolution $ N, $ a time degree $ K, $ a radius $ R $ and budgets $ \Lambda, P_0, \mathcal{A}, $ and set $ W_N \asymp K N^d $ with $ W_N \geq d + 1. $ Let $ \Theta \in C^\infty (\mathbb{R}, [- 2 \bar{B}, 3 \bar{B}]) $ be a fixed 1-Lipschitz retraction with $ \Theta = \mathrm{id} $ on $ [- \bar{B}, 2 \bar{B}] $ and bounded second derivative, and let $ \chi_R $ be a fixed smooth coordinatewise clip, the identity on $ [- (R-1), R-1], $ of modulus at most $ R, $ derivative at most 1 and bounded second derivative. Write $ f^\chi (t, x) := f (t, \chi_R (x)), $ then
    \begin{align} 
        & \mathcal{H}_N := \{ (\Theta \circ g)^\chi : g \in \mathcal{NN}_\varrho (W_N, L, \Lambda, P_0, \mathcal{A}), \, \sup_{I \times \mathbb{R}^d} | \nabla h | \leq G, \, \sup_{I \times \mathbb{R}^d} | \partial_t h | \leq G_t \}, \label{eq:trial_class} \\ 
        & \Psi_N := \{ c \omega \phi^\chi  : c \in \mathbb{R}, \phi \in \mathcal{NN}_{\varrho, \Theta} (2 W_N, L + 1, \Lambda, 2 P_0 + 2, \mathcal{A} \lor 3 \bar{B}), \,  \\
        & \qquad \qquad \qquad \qquad \qquad \qquad \qquad \qquad \qquad \quad \sup_{I \times \mathbb{R}^d} (| \phi^\chi | + | \nabla \phi^\chi | + | \partial_t \phi^\chi |) \leq \Gamma) \}, \label{eq:test_class}
    \end{align}
    the constraint in \eqref{eq:trial_class} being on $ h = (\Theta \circ g)^\chi $ itself, with $ G := 4 \sqrt{d} \bar{B} \sigma^{-2}_\ast, \, G_t := C_t d \bar{B} (R + 1) \sigma^{-4}_\ast, \, \Gamma := 6 \bar{B} + 2 G + 2 G_t + 6 \bar{B} / \ell $ and $ C_t $ absolute.
\end{definition}

With $ \{ (X_0^i, \varepsilon^i) \}_{i=1}^n $ i.i.d., $ X^i_{T-t} := \mu_{T-t} X_0^i + \sigma_{T-t} \varepsilon^i \sim \rho_t, $ and $ \hat{a}_t, \hat{\mathcal{D}}, \| \cdot \|_{\hat{\mathcal{V}}} $ the empirical counterparts,
\begin{equation} \label{eq:h_empirical_main}
    \hat{h} \in \underset{h \in \mathcal{H}_N}{\mathrm{argmin}} \, \hat{\mathcal{D}}(h) + \mu \underset{\psi \in \Psi_N}{\sup} \left[ \int_I \hat{a}_t (h, \psi) \nu (dt) - \frac{\alpha}{2} \| \psi \|^2_{\hat{\mathcal{V}}, \delta_n} \right], ~ \hat{g} := \Pi_{G^\ast} \left( \frac{\nabla \hat{h}}{\hat{h} \lor \mathrm{b}} \right).
\end{equation}

For $ \psi \in \Psi_N $ let $ \| \psi \|_\bullet := \inf \{ |c| : \psi = c \omega \phi^\chi \text{ as in } \eqref{eq:test_class} \} $ be its cone scale, the scale entering \eqref{eq:h_empirical_main}. Writing $ \mathbb{V}_N $ for the metric entropy index, in the supremum norm and normalized by $ \log (e / \varepsilon), $ of the first order objects $ (h, \partial_th, \nabla h), \, h \in \mathcal{H}_N, $ and $ (\psi, \nabla \psi), \, \psi \in \Psi_N, \, \| \psi \|_\bullet \leq 1 $ (see \eqref{eq:eff_dim}), it is admissible that
\begin{equation} \label{eq:mathbb_V}
    \mathbb{V}_N \lesssim N^d \, \mathrm{polylog}(n)
\end{equation}
once $ R, K $ and the budgets are as specified below. The test class is normalized by the cone scale.

The spatial radius, time degree and the approximation constant are
\begin{align} 
    & R := 2 + \sqrt{(8 \beta + 16) \log (2 d n)}, \quad \eta := \min \{ \frac{\ell}{2}, \frac{\pi \sigma_\ast^4}{12 d (5 R + 3)} \}, \quad K := \left\lceil \frac{\ell}{\eta} (\beta + 2) \log n \right\rceil, \label{eq:param_choice1} \\
    & \mathrm{A}_\beta := C_\beta \bar{B} \sigma_\ast^{-2 \beta} (2 R)^{\beta - 1}, \quad C_\beta := \mathfrak{B}_{\beta + 1} \beta! \, (2 \sec (\pi / 6))^{\beta + 1}, \label{eq:param_choice2}
\end{align}
with $ \mathfrak{B}_{\beta + 1} $ the Bell number. Here $ \eta $ is the width of a complex neighborhood of $ I $ on which $ h^* $ is analytic in $ t, \, C_\beta \bar{B} \sigma_\ast^{-2 \beta} $ bounds its $x$-derivatives of order at most $ \beta $ there, and $ (2 R)^{\beta - 1} $ is the cost of rescaling $ Q_R $ to the unit cube. Thus $ R \asymp \sqrt{\log n}, \, K \leq \sigma^{-c} (\log n)^{3/2} $ and $ \mathrm{A}_\beta = \mathrm{polylog}(n) $ at fixed $ \beta. $

\begin{theorem}[Guidance rate] \label{thm:rate_main}
    Grant Assumptions \ref{ass:bounded}--\ref{ass:compact} and fix $ \zeta, \delta \in (0, 1) $ and $ \kappa \geq 1. $ There are constants $ C^\#, c^\# \geq 1 $ and $ L_0 \in \mathbb{N} $ depending only on $ (d, \beta), $ a constant $ C_0 = C_\star \sigma^{-c}_\ast $ and $ n_0 $ not depending on $ n, $ such that the following holds for every $ n \geq n_0. $ Let $ R, K $ and $ \mathrm{A}_\beta $ be given by \eqref{eq:param_choice1}--\eqref{eq:param_choice2}, and choose
    \begin{equation} \label{eq:N_mu_alpha_b_main}
        \begin{aligned}
            & N := \left\lceil (n \mathrm{A}_\beta^2)^{\frac{1}{2 (\beta - 1) + d}} \right\rceil, ~ \varepsilon_n := \mathrm{A}_\beta^{\frac{2d}{2 (\beta - 1) + d}} n^{- \frac{2 (\beta - 1)}{2 (\beta - 1) + d}}, ~ \mu, \alpha \in [\kappa^{-1}, \kappa], \\
            & \mathrm{b} := (\varepsilon_n / \Xi_\mathrm{s})^{\frac{1}{\mathrm{s} + 4}} \land \bar{B}, ~ \delta_n^2 := C_0 \frac{( \mathbb{V}_N + \log (1 / \zeta)) \log^2 (n / \zeta)}{n}.
        \end{aligned}
    \end{equation}
    Let $ \mathcal{H}_N, \Psi_N $ be the classes of Definition \ref{def:trial_test_classes_main_text} with these $ N, R, K $ and the budgets 
    \begin{equation} \label{eq:budgets_main}
        \begin{aligned}
            & W_N := \lceil C^\# K (N + 1)^d \rceil, ~ L := L_0 + \lceil \log_2 (K + 1) \rceil + 1, \\
            & P_0 := \lceil C^\# K ((N + 1)^d + L) \rceil, ~ \Lambda := \mathcal{A} := n^{c^\#},
        \end{aligned}
    \end{equation}
    let $ \mathbb{V}_N $ be given by \eqref{eq:mathbb_V}, and let $ \hat{h}, \hat{g} $ be the estimator \eqref{eq:h_empirical_main}. Then, with probability at least $ 1 - \zeta $
    \begin{equation} \label{eq:g_rate_main}
        \| \hat{g} - g^\ast \|^2_{L^2 (\nu \otimes \rho; I_\delta)} \leq \delta^{-1} \sigma^{-c} \log^8 ( e n / \zeta) \Xi_{\mathrm{s}}^{\frac{4}{\mathrm{s} + 4}} \varepsilon_n^{\frac{\mathrm{s}}{\mathrm{s} + 4}}.
    \end{equation}
    In the boundary case $ \mathrm{s} = \infty, $ where $ w \geq B > 0 $ on $ \mathrm{supp}(p_0), $ the right hand side is $ \delta^{-1} B^{-4} \sigma^{-c} \log^8 (e n / \zeta) \varepsilon_n, $ of order $ n^{-2 / (d + 2)} $ at $ \beta = 2 $ up to logarithmic factors. 
\end{theorem}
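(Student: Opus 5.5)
The proof proceeds in three stages: a bound on the gradient error of $\hat h$ obtained from Theorem \ref{thm:coercivity_main}, a statistical bound on the two quantities $\mathcal{D}(\hat h)$ and $\mathcal{N}_{\Psi,\tau}(\hat h)^2$ that coercivity needs, and a conversion from $\nabla\hat h$ to the ratio $\hat g$ that uses the threshold $\mathrm{b}$ and Assumption \ref{ass:bounded}.

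\textbf{Stage 1: the oracle inequality.} Write $J(h)$ for the objective in \eqref{eq:h_empirical_main}. By \ref{property:cone} and Lemma \ref{lem:fenchel_cone}, the inner supremum reduces to a scalar problem over $c$. Its value is then comparable to $\frac{1}{2\alpha}\hat{\mathcal{N}}_{\Psi,\delta_n}(h)^2$, the empirical projected residual with slack $\tau=\delta_n$.

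Since $\hat h$ minimizes $J$, we have $J(\hat h)\le J(h_N)$, where $h_N$ is the approximant of Lemmas \ref{lem:space_time_approx} and \ref{lem:weak_res_approx}. On the right-hand side, $\hat{\mathcal{D}}(h_N)$ and $\hat{\mathcal{N}}(h_N)^2$ are controlled by $e_N^2$ up to $d\sigma_\ast^{-4}$, because $h^\ast$ is gap-free by Proposition \ref{prop:weak_form_main}.

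To pass from empirical to population quantities, we use localized uniform deviation bounds over the first-order objects $(h,\partial_t h,\nabla h)$ and $(\psi,\nabla\psi)$:
\begin{itemize}
\item for the least-squares part, local Rademacher complexities with the Bernstein condition, applied to the bounded class $\{(h-w)^2-(h^\ast-w)^2\}$;
\item for the bilinear part $\hat a_t$, whose coefficient $x+s$ is only sub-Gaussian, the Adamczak inequality after truncation at level $R$ and peeling.
\end{itemize}
Both are driven by the entropy index $\mathbb{V}_N$ in \eqref{eq:mathbb_V}. The multiplicative form of these bounds is the point of the slack term $\delta_n^2\|\psi\|_\bullet^2$: the deviation of $\hat a$ at $\psi$ is bounded by $\delta_n\|\psi\|_{\mathcal V,\delta_n}$ times an $H^1$-type norm of $h-h^\ast$, and that product can be absorbed. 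The outcome, with probability $1-\zeta$, is
\[
\mathcal{D}(\hat h)+\mathcal{N}_{\Psi,\delta_n}(\hat h)^2\lesssim \sigma^{-c}\big(e_N^2+\delta_n^2\big)+\epsilon\int_I\|\nabla(\hat h-h^\ast)\|^2_{\rho_t}\nu(dt).
\]
I expect this step to be the main obstacle. The $\epsilon$-term carries the full window $I$, whereas coercivity only returns $I_\delta$. I would try to avoid the term altogether by using the unweighted Corollary \ref{cor:coercivity}, or the identity \eqref{eq:wei_main} together with $\omega\le1$ and the a priori bound $|\nabla h|\le G$ to close the loop. Failing that, I would accept the extra $\sigma^{-c}$ and logarithmic factors that come with it.

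\textbf{Stage 2: coercivity and the rate for $\nabla\hat h$.} Plug the Stage 1 bound into Theorem \ref{thm:coercivity_main} with $\tau=\delta_n$, $C_0=1$ and $\Lambda_\ast\lesssim d^2\sigma_\ast^{-4}$. This gives
\[
\int_{I_\delta}\|\nabla(\hat h-h^\ast)\|^2\lesssim \delta^{-1}\sigma^{-c}\big(e_N^2+\delta_n^2\big).
\]
The choice of $N$ in \eqref{eq:N_mu_alpha_b_main} balances the two terms. Since $e_N^2\asymp Q_n^2\mathrm{A}_\beta^2N^{-2(\beta-1)}$ and $\delta_n^2\asymp N^d\,\mathrm{polylog}/n$, both equal $\varepsilon_n$ up to $\log^8$ factors. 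The same argument gives $\mathcal{D}(\hat h)\lesssim\varepsilon_n\,\mathrm{polylog}$, and the supremum-in-time value control follows from \eqref{eq:wei_main}.

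\textbf{Stage 3: from gradient to guidance.} On the event $\{h^\ast\ge 2\mathrm{b}\}$, split
\[
\frac{\nabla\hat h}{\hat h\vee\mathrm b}-\frac{\nabla h^\ast}{h^\ast}
\]
into a gradient-error term and a value-error term, each divided by $\mathrm b$. Since $g^\ast$ is bounded by $G^\ast$ (Appendix \ref{app:regularity}), the projection $\Pi_{G^\ast}$ is a contraction. This event contributes $\mathrm b^{-2}(\text{grad err})+\mathrm b^{-4}(\text{value err})\cdot\sigma^{-c}$, the second factor coming from $|\nabla\hat h|\le G$.

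On the complement, the integrand is bounded by $4G^{\ast2}$, and $\rho_t(h^\ast<2\mathrm b)\le(2\mathrm b)^{\mathrm s}\E[h^{\ast-\mathrm s}]\le(2\mathrm b)^{\mathrm s}\Xi_{\mathrm s}$ by Markov's inequality and Jensen's inequality applied to $h^\ast=\E[w\mid\cdot]$.

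Adding the two contributions gives a bound of order $\mathrm b^{-4}\varepsilon_n+\Xi_{\mathrm s}\mathrm b^{\mathrm s}$. The choice $\mathrm b=(\varepsilon_n/\Xi_{\mathrm s})^{1/(\mathrm s+4)}$ balances these and yields $\Xi_{\mathrm s}^{4/(\mathrm s+4)}\varepsilon_n^{\mathrm s/(\mathrm s+4)}$, which is \eqref{eq:g_rate_main}. In the case $\mathrm s=\infty$, the complement event is empty once $\mathrm b\le B/2$, and the bound becomes $B^{-4}\varepsilon_n$.
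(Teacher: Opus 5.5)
\textbf{The gap is in Stage 1.} You localize the deviation of $\hat a$ in $h$, bounding it by $\delta_n\|\psi\|_{\mathcal V,\delta_n}$ times an $H^1$-type norm of $\hat h-h^\ast$. This leaves a term $\epsilon\int_I\|\nabla(\hat h-h^\ast)\|^2_{\rho_t}\nu(dt)$ over the whole window, and you do not resolve it. None of your proposed exits works as stated:
\begin{itemize}
\item Theorem~\ref{thm:coercivity_main} controls only the $\omega$-weighted energy $E_\omega$, hence $\int_{I_\delta}$. Since $\omega(t_1)=0$, the unweighted integral over $I$ is not dominated by $E_\omega$.
\item Corollary~\ref{cor:coercivity} is not unweighted. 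It is weighted by the same $\omega$, and it also needs the full dual norm $\mathcal P^w(\hat h)$, which the estimator never sees; that is precisely the missing inf-sup.
\item Bounding the term by the a priori constant $(3G/2)^2$ at fixed $\epsilon$ leaves an $O(\epsilon\,\sigma_\ast^{-4})$ remainder that does not decay in $n$. Accepting extra $\sigma^{-c}$ or logarithmic factors does not cure this.
\end{itemize}
The structure you posit is also not right: $\hat a_t(h^\ast,\psi)\neq 0$ empirically, so the deviation has a part of order $\delta_n\|\psi\|$ that does not vanish with $\varphi$.

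\textbf{The missing idea} is that no localization in $h$ is needed. The trial class imposes $|\partial_t h|\le G_t$ and $|\nabla h|\le G$ globally on $I\times\mathbb R^d$; this is what the clip $\chi_R$ is for. Hence, at generator scale $\|\psi\|_\bullet\le 1$, the summand $G_{h,\psi}$ has envelope $\Gamma(\max(G_t,G)+G\bar V)$, and its truncated version has second moment at most $\Sigma^2\|\psi\|^2_{\mathcal V}$, uniformly in $h$. The argument then runs as follows:
\begin{itemize}
\item Truncate $x+s$ at $\mathcal T\asymp\sqrt d\,\sigma_\ast^{-2}\sqrt{\log(n/\zeta)}$, not at $R$.
\item Peel only over shells of $\|\psi\|_{\mathcal V}$, and extend to the whole cone by linearity in $\psi$.
\item This gives $|\int_I(\hat a_t-a_t)(h,\psi)\,\nu(dt)|\le\delta_n\|\psi\|_{\mathcal V}+\delta_n^2\|\psi\|_\bullet$ for all $(h,\psi)$ simultaneously (Theorem~\ref{thm:dev_bound}(i)). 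The constants $G,G_t,\Gamma$ are only $\sigma^{-c}\mathrm{polylog}(n)$ and are absorbed into $\delta_n^2$.
\item Combine this with $\tfrac14\|\psi\|^2_{\mathcal V,\delta_n}\le\|\psi\|^2_{\hat{\mathcal V},\delta_n}\le 2\|\psi\|^2_{\mathcal V,\delta_n}$. Writing $\Pi_\Psi(h):=\mathcal N_{\Psi,\delta_n}(h)^2$, the inner supremum lies between $\Pi_\Psi(h)/(8\alpha)-\delta_n^2/(2\alpha)$ and $(4\Pi_\Psi(h)+8\delta_n^2)/\alpha$.
\item The basic inequality then gives $\mathcal D(\hat h)+\Pi_\Psi(\hat h)\le C_\kappa(\mathcal D(h_N)+\Pi_\Psi(h_N)+\delta_n^2)$, with no gradient term at all.
\end{itemize}
After this your Stage 2 goes through as written. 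Your instinct to use $|\nabla h|\le G$ is right, but the bound belongs inside the empirical-process estimate, not in a loop through the energy identity.

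\textbf{Minor points.}
\begin{itemize}
\item Sup-in-time value control does not follow from \eqref{eq:wei_main}, which controls only $\|\varphi(t_0,\cdot)\|_{\rho_{t_0}}$. Nothing needs it, since Corollary~\ref{cor:guidance_control} uses only $\mathcal D(\hat h)$.
\item In Stage 3, split at $\{h^\ast\ge\mathrm b\}$ rather than at $2\mathrm b$. Already $\hat h\vee\mathrm b\ge\mathrm b$. The threshold $2\mathrm b$ costs a factor $2^{\mathrm s}$, which the $\mathrm s$-free constant in the statement does not allow, and at $\mathrm s=\infty$ it forces $\mathrm b\le B/2$ instead of $\mathrm b=B$.
\item You must also handle the cap $\wedge\bar B$. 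If $(\varepsilon_n/\Xi_{\mathrm s})^{1/(\mathrm s+4)}>\bar B$, then $\Xi_{\mathrm s}\bar B^{\mathrm s}\ge 1$ makes the right-hand side of \eqref{eq:g_rate_main} at least a constant, and the trivial bound $4(G^\ast)^2$ suffices.
\end{itemize}
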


\paragraph{Proof sketch.} Write $\Pi_\Psi(h) := \mathcal{N}_{\Psi_N,\delta_n}(h)^2$ and let $h_N \in \Hcal_N$ be the approximant of Section~\ref{sec:weak_residual}, satisfying \eqref{eq:first_order_approx_floor}. Since $Q_n \leq \sigma^{-c} (\log n)^4$ and $\varepsilon_N^2 \leq \varepsilon_n$ for the $N$ of \eqref{eq:N_mu_alpha_b_main}, $e_N^2 \leq \sigma^{-c} (\log n)^8 \varepsilon_n$. The floor gives $\mathcal{D}(h_N) \leq e_N^2$ directly, and \eqref{eq:proj_main} together with the penalty bound of Section~\ref{sec:weak_residual} gives $\Pi_\Psi(h_N) \leq \mathcal{P}^w(h_N) \leq C d \sigma_\ast^{-4} e_N^2$. The argument runs on the event of the deviation bound (Theorem~\ref{thm:dev_bound}), which holds with probability $1-\zeta$ and is proved, not assumed.
 
Step 1 (the inner maximum is the projected residual). Since $\Psi_N$ is a symmetric cone \ref{property:cone}, the supremum in \eqref{eq:h_empirical_main} factors into a radial and a directional part, the radial part being a scalar quadratic (Lemma \ref{lem:fenchel_cone}). The deviation bound transfers between $\hat a$ and $a$ and between the empirical and population $\mathcal{V}$-norms multiplicatively, pinning the inner supremum $\hat F(h)$ between $\Pi_\Psi(h)/(8\alpha) - \delta_n^2/(2\alpha)$ and $ (4\Pi_\Psi(h) + 8\delta_n^2)/\alpha, $ no additive term appears outside the ridge, which is what the ridge secures.
 
Step 2 (basic inequality). $\hat h$ minimizes and $h_N$ is feasible, so $\hat{\mathcal{D}}(\hat h) + \mu\hat F(\hat h) \leq \hat{\mathcal{D}}(h_N) + \mu\hat F(h_N)$. Subtracting $\hat{\mathcal{D}}(h^\ast)$ and applying Step 1 gives
\begin{equation} \label{eq:D_bound_main}
    \mathcal{D}(\hat h) + \Pi_\Psi(\hat h) \;\lesssim\; \mathcal{D}(h_N) + \Pi_\Psi(h_N) + \delta_n^2 \;\lesssim\; d\sigma_\ast^{-4} e_N^2 + \delta_n^2 .
\end{equation}
Calibration enters only here, and only through $\mu/\alpha \in [\kappa^{-2}, \kappa^2].$ No choice of $(\mu,\alpha)$ trades bias against variance, because gap-freeness left no bias to trade.
 
Step 3 (coercivity). Every $h \in \Hcal_N$ is $C^{1,2}$ with bounded derivatives up to second order and $\|\omega(h - h')\|_\bullet \leq 1$ (Proposition \ref{prop:classes_props}), so Theorem \ref{thm:coercivity_main} applies to $\hat h$ at $\tau = \delta_n$ with $C_0 = 1$. With \eqref{eq:D_bound_main} this gives $\int_{I_\delta}\|\nabla\hat\varphi\|^2_{\rho_t}\nu(dt) + \mathcal{D}(\hat h) \lesssim \delta^{-1}\sigma^{-c}(e_N^2 + \delta_n^2)$.
 
Step 4 (balance). With the budgets \eqref{eq:budgets_main}, $\mathbb{V}_N \leq \sigma^{-c} N^d (\log n)^{7/2}$, so $\delta_n^2 \leq \sigma^{-c} (N^d/n) \log^{13/2}(en/\zeta)$. Balancing $\varepsilon_N^2 = \mathrm{A}_\beta^2 N^{-2(\beta-1)}$ against $N^d/n$ gives $N^{2(\beta-1)+d} \asymp n\mathrm{A}_\beta^2$, the choice in \eqref{eq:N_mu_alpha_b_main}, with common value of order $ \varepsilon_n, $ hence $e_N^2 + \delta_n^2 \leq \sigma^{-c} \log^8 (en/\zeta) \, \varepsilon_n$. Here the weak form is decisive: with the strong penalty the floor would be $N^{-2(\beta-2)}$ and the balance empty at $\beta = 2$.
 
Step 5 (clipping). Corollary \ref{cor:guidance_control} converts gradient and value bounds into $\delta^{-1}\sigma^{-c}\big(\log^8(en/\zeta)\,\mathrm{b}^{-4}\varepsilon_n + \Xi_{\mathrm{s}}\mathrm{b}^{\mathrm{s}}\big)$, the two terms being equal to $\Xi_{\mathrm{s}}^{4/(\mathrm{s}+4)}\varepsilon_n^{\mathrm{s}/(\mathrm{s}+4)}$, up to the logarithmic factor, at $\mathrm{b} = (\varepsilon_n / \Xi_{\mathrm{s}})^{1/(\mathrm{s}+4)}$. If this value exceeds $\bar B$, then $\Xi_{\mathrm{s}} \mathrm{b}^{\mathrm{s}} \geq \Xi_{\mathrm{s}} \bar B^{\mathrm{s}} \geq 1$ because $w \leq \bar B$, and the trivial bound $|\hat g - g^\ast| \leq 2 G^\ast$ already gives \eqref{eq:g_rate_main}. At $\mathrm{s} = \infty$, $\mathrm{b} = B \leq h^\ast$ and the second term vanishes. \hfill$\square$

\section{From guidance error to sampling error} \label{sec:sampling}

\begin{corollary}[Local stability of the controlled marginals] \label{cor:sampling_main}
    Let $J := [t_0,\bar t] \subseteq I_\delta$ and, conditionally on the sample, let $ \mathbb{Q}^\ast $ and $ \hat{\mathbb{Q}} $ be the laws on $C(J;\R^d)$ of the solutions of 
    \begin{equation}
        d \overleftarrow{Z}_t = (b + 2 g^\ast) (t, \overleftarrow{Z}_t) dt + \sqrt{2} d B_t, \quad d \overleftarrow{Y}_t = ( b + 2 \hat{g}) (t, \overleftarrow{Y}_t) dt + \sqrt{2} d B_t,
    \end{equation}
    with $ b $ as in \eqref{eq:rev_denoising}, both started at time $ t_0 $ from $ \rho^\ast_{t_0} := h^\ast (t_0, \cdot) \rho_{t_0} / Z, $ and let $ \rho_t^\ast $ and $ \hat{\rho}_t $ be their marginals at time $ t \in J. $ Then the two laws are equivalent, $\mathrm{KL}(\mathbb{Q}^\ast \| \hat{\mathbb{Q}}) = \int_J \|\hat g - g^\ast\|^2_{\rho^\ast_t} dt \leq \ell \Upsilon \|\hat g - g^\ast\|^2_{L^2(\nu\otimes\rho;J)}$ with $\Upsilon := \sup_{t \in J} \|d\rho^\ast_t/d\rho_t\|_\infty \leq \bar{B} \,  \Xi_{\mathrm{s}}^{1/\mathrm{s}}$, and consequently, on the event of Theorem \ref{thm:rate_main} and up to $\sigma^{-c}$ and $\mathrm{polylog}(n)$ factors,
    \begin{equation} \label{eq:tv_main}
        \mathrm{TV}(\rho^\ast_{\bar t}, \hat\rho_{\bar t})^2 \;\lesssim\; \delta^{-1}\sigma^{-c} \log^8(en / \zeta) \ell \bar B \, \Xi_{\mathrm{s}}^{\frac{1}{\mathrm{s}} + \frac{4}{\mathrm{s}+4}} \, \varepsilon_n^{\frac{\mathrm{s}}{\mathrm{s}+4}} ,
    \end{equation}
    which in the boundary case $\mathrm{s} = \infty$ reads $ \delta^{-1}\ell \bar B B^{-5}\sigma^{-c} \log^8 (e n /\zeta) \varepsilon_n. $
\end{corollary}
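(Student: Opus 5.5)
The plan is to compare the two path laws by Girsanov's theorem and then pass to marginals by data processing and Pinsker. Conditionally on the sample, $\hat g = \Pi_{G^\ast}(\nabla\hat h/(\hat h\lor \mathrm b))$ is bounded by $G^\ast$ and measurable. On the bounded window $J$, the guidance $g^\ast=\nabla\log h^\ast$ is smooth and, by the regularity bounds of Appendix \ref{app:regularity}, bounded by a constant of order $\sigma_\ast^{-2}$. Hence the drift difference $2(\hat g-g^\ast)$ is bounded on $J\times\R^d$.

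First I will check that $\mathbb Q^\ast$ is well defined and identify its marginals. By the $h$-transform (Lemma \ref{lem:marginals_bounded}), started from $\rho^\ast_{t_0}=h^\ast(t_0,\cdot)\rho_{t_0}/Z$, the marginals are $\rho^\ast_t=h^\ast(t,\cdot)\rho_t/Z$ for all $t\in J$. Since $\hat g-g^\ast$ is bounded, Novikov's condition holds trivially. Girsanov's theorem then shows that $\mathbb Q^\ast$ and $\hat{\mathbb Q}$ are equivalent, with
\[
\log\frac{d\mathbb Q^\ast}{d\hat{\mathbb Q}}=\int_J \frac{2(g^\ast-\hat g)}{\sqrt2}\cdot dW^\ast_t-\frac12\int_J 2|g^\ast-\hat g|^2\,dt
\]
up to sign conventions. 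Here $W^\ast$ is a $\mathbb Q^\ast$-Brownian motion obtained from the diffusion coefficient $\sqrt2$, so the quadratic drift term carries the factor $\tfrac12\cdot|2(\hat g-g^\ast)|^2/2=|\hat g-g^\ast|^2$. Taking $\mathbb Q^\ast$-expectations kills the martingale term, because the integrand is bounded. This gives $\mathrm{KL}(\mathbb Q^\ast\|\hat{\mathbb Q})=\int_J\|\hat g-g^\ast\|^2_{\rho^\ast_t}dt$, as claimed.

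Next I will change measure from $\rho^\ast_t$ to $\rho_t$. Since $d\rho^\ast_t/d\rho_t=h^\ast(t,\cdot)/Z\le\bar B/Z$, we get $\Upsilon\le \bar B/Z$. Jensen's inequality gives $Z^{-1}=(\E_{p_0}w)^{-1}\le(\E_{p_0}w^{-\mathrm s})^{1/\mathrm s}=\Xi_{\mathrm s}^{1/\mathrm s}$, and for $\mathrm s=\infty$ it gives $Z^{-1}\le B^{-1}$. Then
\[
\int_J\|\hat g-g^\ast\|^2_{\rho^\ast_t}dt\le\Upsilon\int_J\|\hat g-g^\ast\|^2_{\rho_t}dt=\ell\Upsilon\,\|\hat g-g^\ast\|^2_{L^2(\nu\otimes\rho;J)},
\]
because $\nu$ is $dt/\ell$.

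Finally, the marginal at $\bar t$ is a pushforward of the path law, so data processing gives $\mathrm{KL}(\rho^\ast_{\bar t}\|\hat\rho_{\bar t})\le\mathrm{KL}(\mathbb Q^\ast\|\hat{\mathbb Q})$. Pinsker's inequality gives $\mathrm{TV}^2\le\tfrac12\mathrm{KL}$. Since $J\subseteq I_\delta$, the $L^2$ norm over $J$ is at most the norm over $I_\delta$, and on the event of Theorem \ref{thm:rate_main} this is bounded by \eqref{eq:g_rate_main}. Multiplying by $\ell\bar B\,\Xi_{\mathrm s}^{1/\mathrm s}$ yields \eqref{eq:tv_main}. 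In the case $\mathrm s=\infty$ the factors are $B^{-1}$ from $\Upsilon$ and $B^{-4}$ from the rate, which gives $B^{-5}$.

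The main obstacle is justifying Girsanov's theorem rigorously. This requires strong (or at least weak) well-posedness of both SDEs on $J$. The drift $b=x+2s$ grows linearly, since the score of a Gaussian smoothing of a compactly supported law is Lipschitz for $t\le t_1$. The term $g^\ast$ is smooth and bounded with bounded derivative by Appendix \ref{app:regularity}. The term $\hat g$ is bounded but possibly only measurable; if it is not Lipschitz, I would rely on weak existence via Girsanov from $\mathbb Q^\ast$ itself, which works precisely because the perturbation is bounded.
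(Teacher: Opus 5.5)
Your proposal is correct and follows the paper's proof essentially step for step. The steps are: Girsanov with Novikov holding trivially because $|\hat g|,|g^\ast|\le G^\ast$; the identity $\mathrm{KL}(\mathbb{Q}^\ast\|\hat{\mathbb{Q}})=\int_J\|\hat g-g^\ast\|^2_{\rho^\ast_t}\,dt$; the reweighting $\Upsilon\le\bar B/Z\le\bar B\,\Xi_{\mathrm s}^{1/\mathrm s}$ via Lemma \ref{lem:marginals_bounded} and Jensen; then data processing, Pinsker and Theorem \ref{thm:rate_main}. Your concern about well-posedness goes beyond what the paper writes, and the weak-existence fallback is not needed: $\hat h\in\Hcal_N$ has bounded first and second derivatives and $\hat h\lor\mathrm b\ge\mathrm b$, so $\hat g$ is in fact globally Lipschitz in $x$.
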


The projection $\Pi_{G^\ast}$ is what keeps the Novikov constant independent of $n$, and the reweighting bound $\Upsilon \leq \bar B/Z$ is what lets a guarantee stated under the uncontrolled marginals $\rho_t$ --- the ones we can sample --- be read under the controlled ones.

\section{Numerical Experiments}
\label{sec:experiments}

We test the rate of Theorem~\ref{thm:rate_main} where every quantity it bounds is closed-form, and apply the estimator \eqref{eq:h_empirical_main} to pretrained image diffusion models, where none is (details in Appendix~\ref{app:experiments}). The image experiments ask one question: does the estimator the theorem analyses work on a real pretrained model, and at what cost to the base distribution? We measure that cost and compare with the two estimators closest to it in kind.

\textbf{The rate.} $p_0$ is atomic on $512$ points in a ball of radius $0.8$ and $0.5 \leq w \leq 3$, so Assumptions~\ref{ass:bounded}--\ref{ass:compact} hold with $\mathrm{s} = \infty$, and the score is closed-form. In tensor cubic B-splines ($\beta = 4$, $\mathbb{V}_N \asymp N^d$), with $(N, \mu)$ chosen by oracle, the measured exponent over three decades of $n$ is $0.842 \pm 0.026$ (Figure~\ref{fig:main}a), against $0.857$ predicted and $0.667$ for recovering a first derivative from noisy values \citep{stone1982}. At $n = 10^6$, the strong residual, unpenalized least squares and the Sobolev penalty give $1.4$, $5.9$ and $8.5$ times the guidance error of the weak residual. At $\beta = 3$, the strong residual's approximation floor decays as $N^{-1.0}$, against $N^{-1.8}$ for the weak one (Appendix~\ref{app:synthetic}).

\begin{figure}[t]
\centering
\begin{minipage}[t]{0.335\textwidth}\vspace{0pt}\includegraphics[width=\textwidth]{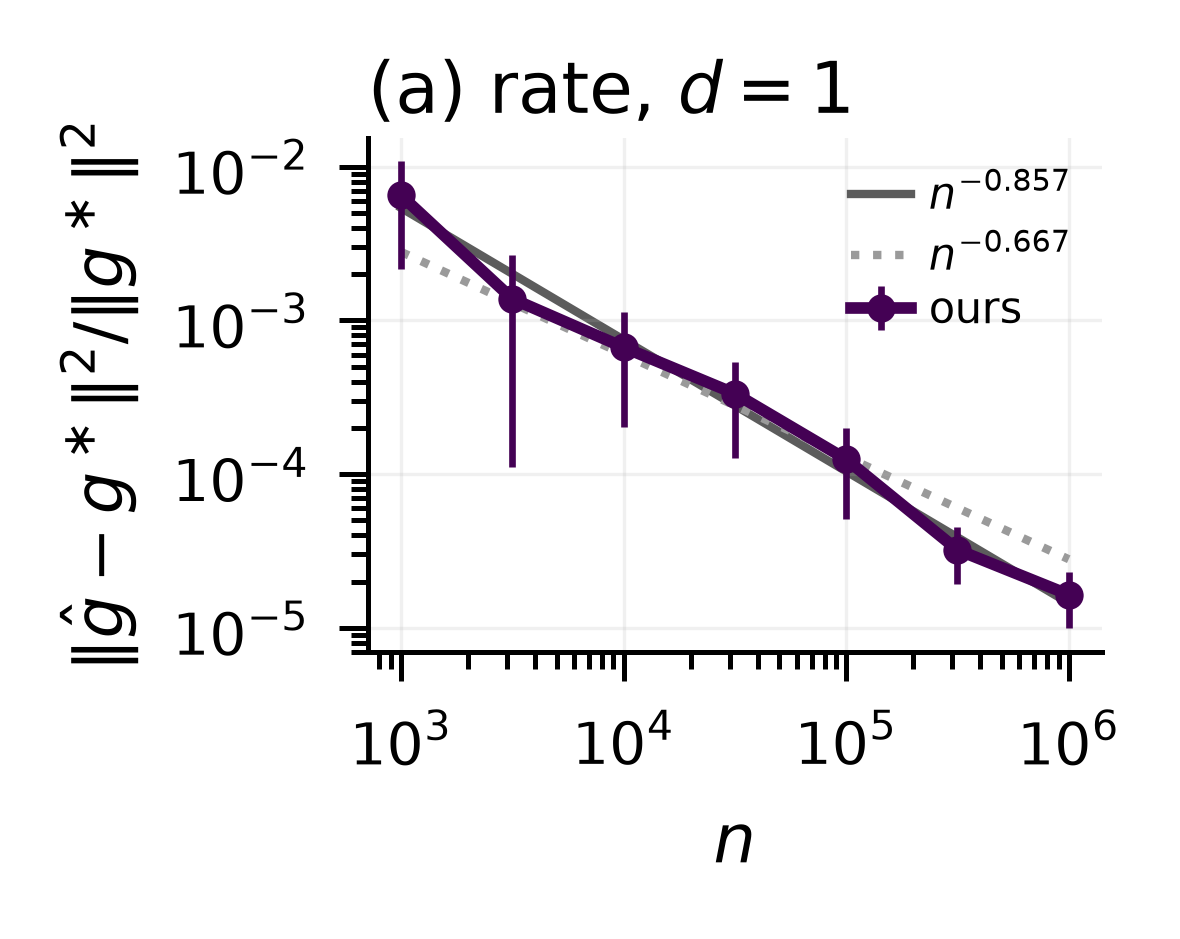}\end{minipage}\hfill
\begin{minipage}[t]{0.335\textwidth}\vspace{0pt}\includegraphics[width=\textwidth]{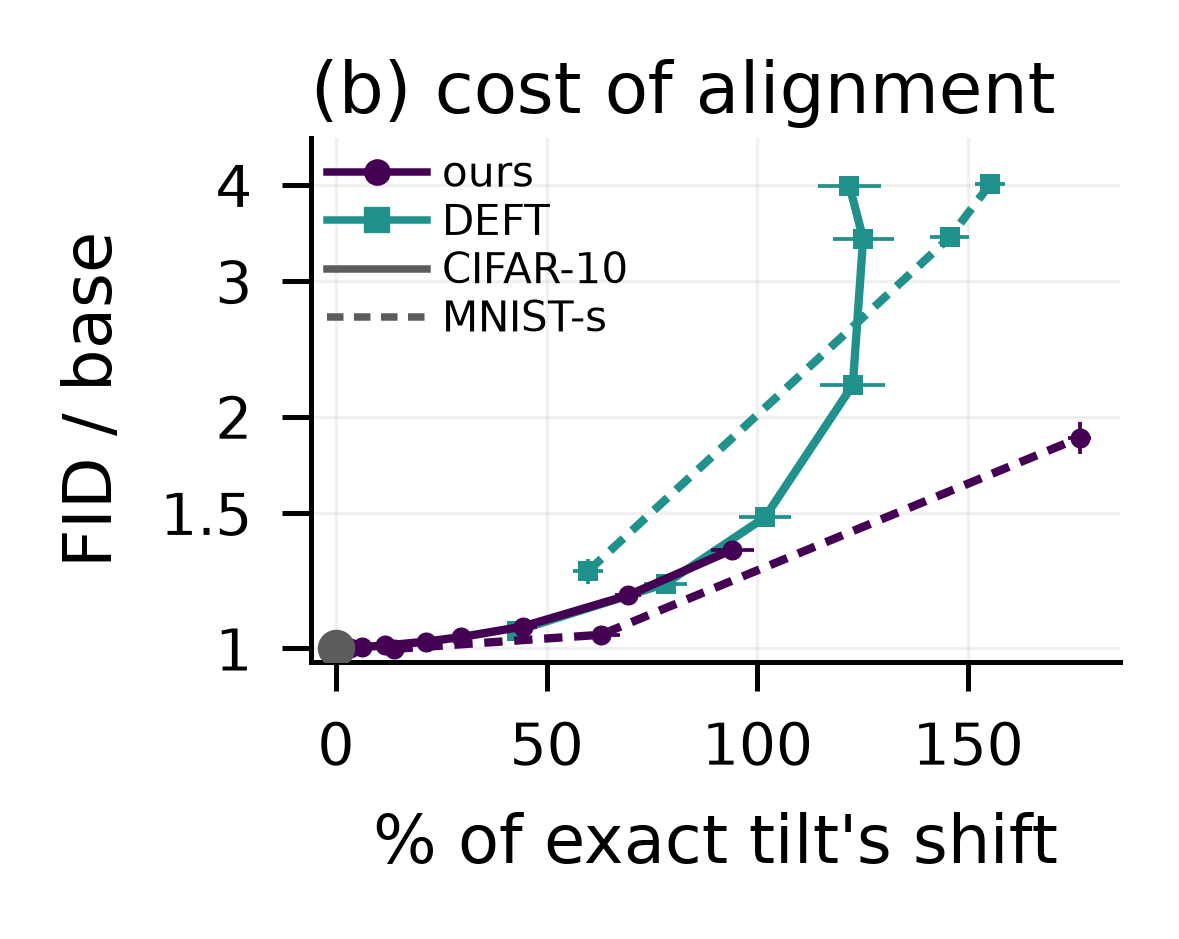}\end{minipage}\hfill
\begin{minipage}[t]{0.29\textwidth}\vspace{0pt}\centering{\small (c)}\\[1pt]
\includegraphics[width=\textwidth]{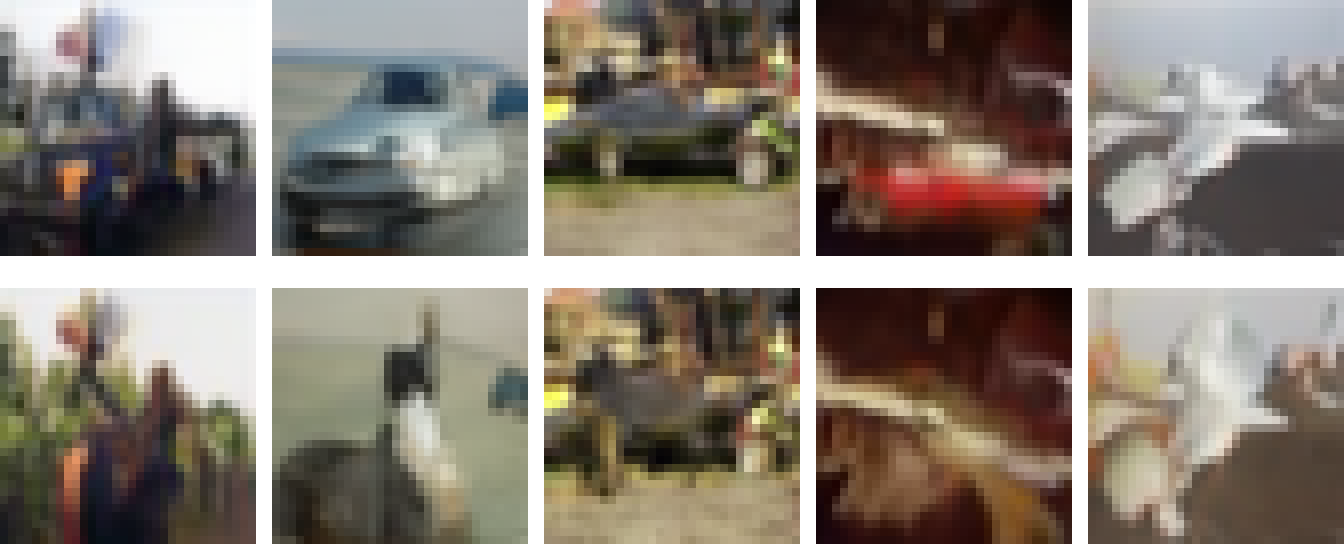}\end{minipage}
\caption{(a) Guidance error at $d=1$, $\pm1$ s.d.\ over $8$ seeds, against slopes $0.857$ and $0.667$. (b) Held-out reward shift, as a share of the exact tilt's, against the FID it costs over that dataset's base ($24.5$, $7.4$), so grey is both base models and lower is cheaper. Colour is the method, line style the dataset, $c \in [0.25, 20]$, bars $\pm1$ s.d.\ over $3$ seeds, DOIT is off the panel at $4.8$ and $21$ times its base, absolute values in Figure~\ref{fig:frontier_all}. (c) The five largest rises in held-out probability among $64$ samples, base above and ours at $c = 20$ below on the same initial noise, all $64$ in Figure~\ref{fig:cifar_samples}.}
\label{fig:main}
\end{figure}

\textbf{Alignment of a pretrained model.} On a model whose score is only a network, we tilt the CIFAR-10 DDPM of \citet{ho2020denoising} (100 DDIM steps, \citealp{song2021ddim}) towards animals, with a bounded, positive weight $w$ built from a classifier (Appendix~\ref{app:real_setup}). The reference sample is drawn from the model, so the score entering the penalty is the pretrained network itself, and the trial and test classes are convolutional instances of Definition~\ref{def:trial_test_classes_main_text}, with absorption holding by construction (Appendix~\ref{app:objective}). We sample along $c\,\nabla \log \hat h$: $c = 1$ is the case covered by Theorem~\ref{thm:rate_main}, while $c > 1$, common in practice \citep{dhariwal2021}, targets a sharper law and is reported separately as a heuristic. Samples are scored by a classifier head that $w$ never saw.

\textbf{Exact guidance ($c = 1$).} The theorem covers a non-degenerate regime: at $c = 1$ the estimated guidance applied only on the estimation window already achieves $11.7\%$ of the exact tilt's reward shift, at an FID to the base distribution of $24.7$ against $24.5.$ This measures practical steering rather than the guarantee itself.

\textbf{Scaled guidance ($c > 1$), a heuristic.} Scaling the same direction traces the frontier of Figure~\ref{fig:main}b: the share of animals rises from $0.653$ to $0.831$ at $c = 20$, against $0.854$ for $32$-fold importance resampling at $32\times$ the cost. Under the same weight, sample, sweep and cap we compare DEFT \citep{denker2024} and DOIT \citep{zhu2026doit}, from their authors' public code (Appendix~\ref{app:cifar}). Of the three only DEFT is given the gradient of the log weight, ours never differentiating the reward model. Read at a fixed departure the frontiers coincide: $62.5\%$ of the exact tilt's shift against $62.7\%$ at FID $28$, $77.0\%$ against $79.2\%$ at FID $30,$ DOIT reaches $70.6\%$ only at FID $116.6$. A sample that changes class keeps its counterpart's layout and palette (Figure~\ref{fig:main}c). The gain transfers to a head that $w$ never saw: at $94\%$ shift the ratio of the gain on the classifier defining $w$ to the gain on the held-out one is $1.19$ for us and $1.20$ for DEFT, while for DOIT it is negative, the two heads moving apart.

\textbf{MNIST.} On a public MNIST DDPM with the CIFAR-10 architecture, tilted towards odd digits (Appendix~\ref{app:mnist}), the ordering is ours: at $c = 20$ the share of odd digits rises from $0.523$ to $0.819$ at FID $13.8$, which is $176\%$ of the exact tilt's shift, where DEFT at the same FID reaches $84\%$ and needs FID $29.5$ for $155\%$ (Figure~\ref{fig:main}b). DOIT reaches $296\%$ at FID $153$, its cap binding on every step (Appendix~\ref{app:cap}).

\section{Discussion and limitations} \label{sec:discussion}

\textbf{Constants, and the choice of $\beta$.} Theorem \ref{thm:rate_main} is a family indexed by $\beta$, and $\beta$ is not free: $\mathrm{A}_\beta$ enters \eqref{eq:N_mu_alpha_b_main} as $\mathrm{A}_\beta^{2d/(2(\beta-1)+d)}$ rather than as a prefactor, and grows factorially, so raising $\beta$ improves the exponent and worsens the constant, and at moderate $n$ and large $d$ the smallest admissible value wins the trade. We therefore read $\beta = 2$, giving $n^{-2/(d+2)}$, as the operative case --- and it is exactly the case in which the strong residual is non-informative (Table \ref{tab:penalties}, see Appendix \ref{app:floors}).

\textbf{The score enters as a known coefficient.} This is what makes the penalty computable from a frozen model and nothing else: the weak form needs exactly the quantity the pretrained sampler already supplies, and no second network is fitted to obtain it. The theory reads that coefficient as exact, and the closed-form instance supplies it exactly. On images it is the base network's own output, and the estimator remains effective there, which is evidence that the construction tolerates the score error a pretrained model carries. Quantifying how the guarantees degrade with that error is a natural next step.

\textbf{Towards intrinsic dimension.} The ambient dimension enters the exponent only through the approximation budget $\mathbb{V}_N \asymp N^d$. Since natural data concentrate near low-dimensional sets \citep{pope2021} and score estimation adapts to the intrinsic dimension \citep{chen2023low,yakovlev2025}, we expect the same theorem with $d$ replaced by an intrinsic $k,$ as the bilinear form is integrated against the $d$-dimensional $\rho_t$, the gain has to come from the approximation side.

\bibliographystyle{abbrvnat}
\bibliography{references}

\appendix


\section{Notation recap} \label{app:notation}

For the reader's convenience we recall the objects fixed in Section \ref{sec:setup}. The forward
process is the variance-preserving Ornstein--Uhlenbeck SDE
\begin{equation} \label{eq:VP_SDE}
    d X_\tau = - X_\tau \, d \tau + \sqrt{2} \, d B_\tau, \quad \tau \in (0, T), \quad X_0 \sim p_0,
\end{equation}
with Gaussian transition kernel
\begin{equation} \label{eq:gauss_likelihood}
    X_\tau \mid X_0 = x_0 \sim \mathcal{N} (\mu_\tau x_0, \sigma_\tau^2 I_d), \quad \mu_\tau = e^{-\tau}, \quad \sigma_\tau^2 = 1 - e^{-2 \tau},
\end{equation}
so that $ \mu_\tau^2 + \sigma_\tau^2 = 1, $ and $ p_\tau := \mathrm{Law}(X_\tau). $ The reverse process
\eqref{eq:rev_denoising} has marginals $ \rho_t = p_{T-t} $ and generator
$ (\mathcal{L}_t f)(x) = b(t,x)\cdot\nabla f(x) + \Delta f(x), $ with $ \mathcal{M} = \partial_t + \mathcal{L}_t. $
The Laplacian, rather than $ \tfrac12 \Delta, $ appears because the diffusion coefficient is $ \sqrt{2}: $
It\^o's rule reads $ df(t, X_t) = (\partial_t f + b\cdot\nabla f + \Delta f)\,dt + \sqrt{2}\,\nabla f \cdot dB_t, $
the second-order term being $ \tfrac{1}{2}\mathrm{tr}(2 I \nabla^2 f) = \Delta f. $

\begin{definition}[Doob's $h$-function] \label{def:h_star}
    For $ t \in [0,T] $ and $ x \in \mathbb{R}^d, $
    \begin{equation}
        h^* (t, x) := \mathbb{E} [w ( \overleftarrow{X}_T ) \mid \overleftarrow{X}_t = x ] = \mathbb{E} [ w(X_0) \mid X_{T-t} = x ],
    \end{equation}
    the two expressions coinciding because $ (\overleftarrow{X}_t, \overleftarrow{X}_T) \overset{d}{=} (X_{T-t}, X_0) $
    under time reversal. The guidance is $ g^* := \nabla \log h^*. $
\end{definition}

By Bayes' rule for \eqref{eq:gauss_likelihood}, the conditional law of $ X_0 $ given $ X_{T-t} = x $ is
\begin{equation} \label{eq:gauss_posterior}
    \pi_{t,x} (dx_0) \propto \phi_{\sigma^2_{T-t}} (x - \mu_{T-t} x_0) \, p_0(dx_0),
\end{equation}
and $ h^*(t, x) = \mathbb{E}_{X_0 \sim \pi_{t,x}} [w (X_0)], $ so $ 0 < w \leq \bar{B} $ implies $ 0 < h^* \leq \bar{B}. $

We write $ \| \varphi \|^2_{L^2 (\nu \otimes \rho)} := \int_I \| \varphi \|^2_{\rho_t} \nu (dt), $ similarly for
$ L^4, $ and $ \| \cdot \|_{\mathcal{C}^\ell} $ for the supremum norm of derivatives up to order $ \ell $ (the
reservation of $ \mathcal{H} $ for hypothesis classes and of $ H^1 $ for Sobolev spaces makes this notation
necessary). Throughout, $ \beta \geq 2 $ is an integer and the approximation analysis of Appendix
\ref{app:approx} measures $ h^* $ in $ W^{\beta, \infty} $ uniformly on the window. This is not an
assumption: by Proposition \ref{prop:derivative_bounds}, $ h^* \in C^{\infty}((0, T) \times \mathbb{R}^d), $ and
Step 1 of Lemma \ref{lem:space_time_approx} produces from Assumptions \ref{ass:bounded}--\ref{ass:compact} alone
the explicit envelope $ \sup | \partial_x^\gamma h^* | \leq C_\beta \bar{B} \sigma_*^{- 2 | \gamma |} $ at every
order $ | \gamma | \leq \beta. $ Thus $ \beta $ is a free parameter of the construction, traded against a
constant growing factorially in $ \beta. $

Finally, we write
\begin{equation} \label{eq:half_window}
    I^\circ := [t_0, \tfrac{t_0 + t_1}{2}]
\end{equation}
for the first half of the window \eqref{eq:window}. It is where the weight $ \omega $ of Theorem
\ref{thm:energy_identity} satisfies $ \omega \geq 1/2, $ which is why the guarantees below are stated
there. Remark \ref{rem:sampling_window} explains how they transfer to a prescribed sampling window.

\section{Regularity of the Doob $h$-function} \label{app:regularity}

\begin{lemma}(Score identity and drift envelope) \label{lem:score_identity}
    Under Assumptions \ref{ass:bounded}--\ref{ass:compact}, for every $ t \in (0, T) $ and $ x \in \mathbb{R}^d, $
    \begin{equation} \label{eq:score_identity_1}
        s(t,x) = - \frac{x - \mu_{T-t} \mathbb{E} [X_0 | X_{T-t} = x]}{\sigma^2_{T-t}}, \quad \text{hence } x + s(t,x) = \left( 1 - \frac{1}{\sigma^2_{T-t}} \right) x + \frac{\mu_{T-t}}{\sigma^2_{T-t}} \mathbb{E}[X_0 | x].
    \end{equation}
    Consequently there is an absolute constant $ C $ such that, for all $ t \in I, $
    \begin{equation}
        \| | x + s (t, \cdot ) | \|_{\psi_2, \rho_t} \leq \frac{C \sqrt{d}}{\sigma^2_{T-t}}, \quad M_4 := \underset{t \in I}{\sup} \| | x + s(t, \cdot) | \|_{L^4 (\rho_t)} \leq \frac{C \sqrt{d}}{\sigma_*^2} < \infty.
    \end{equation}
\end{lemma}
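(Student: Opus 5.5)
The score identity is Tweedie's formula, and the drift envelope follows from it because the posterior lives in a bounded cube.

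\textbf{Score identity.} For $\tau = T-t > 0$, write $p_\tau(x) = \int \phi_{\sigma_\tau^2}(x - \mu_\tau x_0)\, p_0(dx_0)$. By Assumption \ref{ass:compact} the integrand and its $x$-gradient are dominated uniformly on compact sets of $x$, so we may differentiate under the integral. Using $\nabla_x \phi_{\sigma^2}(x-\mu x_0) = -\sigma^{-2}(x-\mu x_0)\phi_{\sigma^2}(x-\mu x_0)$ and dividing by $p_\tau(x)>0$ gives
\[
s(t,x) = -\sigma_\tau^{-2}\big(x - \mu_\tau\, \E_{\pi_{t,x}}[X_0]\big),
\]
with $\pi_{t,x}$ the posterior \eqref{eq:gauss_posterior}. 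Adding $x$ yields the second form in \eqref{eq:score_identity_1}.

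\textbf{Sub-Gaussian bound.} Under $\rho_t$ we realize $x = \mu_\tau X_0 + \sigma_\tau \varepsilon$ with $\varepsilon \sim \mathcal N(0,I_d)$ independent of $X_0$. Substituting into the identity gives
\[
x + s = \Big(1-\frac{1}{\sigma_\tau^2}\Big)\big(\mu_\tau X_0 + \sigma_\tau\varepsilon\big) + \frac{\mu_\tau}{\sigma_\tau^2}\,\E[X_0\mid x].
\]
The terms are handled separately.
\begin{itemize}
\item Both $|X_0|$ and $|\E[X_0\mid x]|$ are at most $\sqrt d$ almost surely, since $p_0$ and the posterior are supported in the unit cube.
\item $|1-\sigma_\tau^{-2}| = \mu_\tau^2/\sigma_\tau^2 \le \sigma_\tau^{-2}$ and $\mu_\tau \le 1$.
\item $\| |\varepsilon| \|_{\psi_2} \le C\sqrt d$ and $\sigma_\tau \le 1$.
\end{itemize}
By the triangle inequality in the Orlicz norm, and since bounded variables have $\psi_2$ norm comparable to their sup, this gives $\||x+s|\|_{\psi_2,\rho_t} \le C\sqrt d\,\sigma_\tau^{-2}$.

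\textbf{$L^4$ envelope.} The $\psi_2$ norm dominates the $L^4$ norm up to an absolute constant. On $I$ we have $T-t \ge T-t_1$, and $\sigma$ is increasing in its argument, so $\sigma_{T-t} \ge \sigma_\ast$. Taking the supremum over $t\in I$ gives $M_4 \le C\sqrt d\,\sigma_\ast^{-2}$.

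The only delicate point is justifying differentiation under the integral and the positivity of $p_\tau$. Both are immediate from Gaussian tails together with the compact support of $p_0$, so no real obstacle is expected.
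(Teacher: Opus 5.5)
Your proposal is correct and follows the paper's proof essentially step for step. You derive Tweedie's formula by differentiating the Gaussian mixture under the integral, then write $x = \mu_{T-t}X_0 + \sigma_{T-t}\varepsilon$, apply the triangle inequality in $\psi_2$ using $|X_0|, |\E[X_0\mid x]| \le \sqrt d$, and finish with $\|\cdot\|_{L^4} \lesssim \|\cdot\|_{\psi_2}$ and $\sigma_{T-t}\ge\sigma_\ast$ on $I$. The only cosmetic difference is that the paper first bounds $\||X_{T-t}|\|_{\psi_2}\le C\sqrt d$ as a single quantity, whereas you split it into its two pieces and use the identity $|1-\sigma_{T-t}^{-2}| = \mu_{T-t}^2/\sigma_{T-t}^2$.
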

\begin{proof}
    Differentiating $ p_\tau (x) = \int \phi_{\sigma^2_\tau} (x - \mu_\tau x_0) p_0 (dx_0) $ under the integral (legitimate by compact support and Gaussian tails) gives $ \nabla \log p_\tau (x) = \int (- \frac{x - \mu_\tau x_0}{\sigma^2_\tau}) \pi_{\tau, x} (dx_0), $ which is \eqref{eq:score_identity_1} at $ \tau = T - t, $ the second display follows from $ b - s = x + s $ by rearrangement. 

    By Assumption \ref{ass:compact}, $ | X_0 | \leq \sqrt{d} $ almost surely, hence $ | \mathbb{E} [X_0 | x] | \leq \sqrt{d} $ for every $ x. $ Under $ \rho_t $ we may write $ X_{T-t} = \mu_{T-t} X_0 + \sigma_{T-t} \varepsilon $ with $ \varepsilon \sim \mathcal{N}(0, I_d) $ independent of $ X_0, $ so by the triangle inequality for $ \| \cdot \|_{\psi_2} $ and $ \mu_{T-t}, \sigma_{T-t} \leq 1, ~ \| | X_{T-t} | \|_{\psi_2} \leq \mu_{T-t} \sqrt{d} + \sigma_{T-t} \| | \varepsilon | \|_{\psi_2} \leq C \sqrt{d}. $ Applying the triangle inequality to the second identity in \eqref{eq:score_identity_1}
    \begin{equation}
        \| | x + s | \|_{\psi_2, \rho_t} \leq \left| 1 - \frac{1}{\sigma^2_{T-t}} \right| \| | X_{T-t} | \|_{\psi_2} + \frac{\mu_{T-t}}{\sigma^2_{T-t}} \sqrt{d} \leq \frac{C \sqrt{d}}{\sigma^2_{T-t}},
    \end{equation}
    using $ \sigma_{T-t} \leq 1. $ Finally $ \| Z \|_{L^4} \leq C \| Z \|_{\psi_2} $ for any $ Z, $ and $ \sigma_{T-t} \geq \sigma_* $ on $ I. $
\end{proof}

Lemma \ref{lem:score_identity} does more than make $ M_4 $ finite: it supplies a sub-Gaussian envelope for the unbounded coefficient of the weak form, which is what the deviation bound of Section \ref{app:statistical} needs.

The Gaussian likelihood smooths the bounded terminal datum $ w. $ The following makes the smoothness and its $\sigma$-scaling quantitative. The first-derivative identity is Tweedie's formula and it exhibits the guidance as a posterior covariance. 

\begin{proposition} (Smoothness and derivative bounds) \label{prop:derivative_bounds}
    Under Assumptions \ref{ass:bounded}--\ref{ass:compact}, $ h^* \in C^\infty ((0, T) \times \mathbb{R}^d), $ and for all $ t \in (0, T), \, x \in \mathbb{R}^d $ and coordinates $ k, l $
    \begin{equation} \label{eq:tweedie}
        \partial_{x_k} h^* (t, x) = \frac{\mu_{T-t}}{\sigma^2_{T-t}} \Cov (w(X_0), X_{0,k} | X_{T-t} = x),
    \end{equation}
    where the covariance is under the posterior \eqref{eq:gauss_posterior}. Consequently 
    \begin{equation} \label{eq:h_der_bounds} 
        0 < h^* \leq \bar{B}, \quad | \partial_{x_k} h^* | \leq \frac{2 \bar{B}}{\sigma^2_{T-t}}, \quad | \partial^2_{x_k x_l} h^* | \leq \frac{C_2 \bar{B}}{\sigma^4_{T-t}},
    \end{equation}
    for an absolute constant $ C_2. $ Moreover the log-density of the reverse marginal satisfies 
    \begin{equation} \label{eq:hessian}
        - \frac{1}{\sigma^2_{T-t}} I_d \preceq \nabla^2 \log \rho_t (x) = - \frac{1}{\sigma^2_{T-t}} I_d + \frac{\mu^2_{T-t}}{\sigma^4_{T-t}} \Cov (X_0 | x) \preceq \left( \frac{d \mu^2_{T-t}}{\sigma^4_{T-t}} - \frac{1}{\sigma^2_{T-t}} \right) I_d.
    \end{equation}
\end{proposition}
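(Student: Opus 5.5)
The plan is to work throughout with the explicit posterior representation. Write $\tau := T - t$ and
\[
Z_\tau(x) := \int \phi_{\sigma^2_\tau}(x - \mu_\tau x_0)\, p_0(dx_0), \qquad h^\ast(t,x) = \frac{\int w(x_0)\,\phi_{\sigma^2_\tau}(x - \mu_\tau x_0)\, p_0(dx_0)}{Z_\tau(x)} .
\]
By Assumption \ref{ass:compact}, $p_0$ is supported in a bounded set, and $w$ is bounded by Assumption \ref{ass:bounded}. Numerator and denominator are therefore integrals of a kernel that is jointly smooth in $(\tau, x)$. On compact subsets of $(0,T)\times\R^d$, every $(\tau,x)$-derivative of this kernel is dominated uniformly in $x_0$ by a polynomial in $|x|$ times a Gaussian factor. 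Dominated convergence then lets us differentiate under the integral to all orders. Since $Z_\tau > 0$, this gives $h^\ast \in C^\infty$. The bound $0 < h^\ast \leq \bar B$ is immediate because $h^\ast$ is a posterior mean of $w \in (0, \bar B]$.

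For Tweedie's identity \eqref{eq:tweedie}, I will differentiate the kernel:
\[
\partial_{x_k} \phi = -\frac{x_k - \mu_\tau x_{0,k}}{\sigma^2_\tau}\,\phi .
\]
By the quotient rule, $\partial_{x_k} h^\ast = \E_\pi[w\,U_k] - \E_\pi[w]\,\E_\pi[U_k]$ with $U_k = -(x_k - \mu_\tau X_{0,k})/\sigma_\tau^2$. The $x_k$ part of $U_k$ is constant under $\pi_{t,x}$ and drops out of the covariance, leaving $\frac{\mu_\tau}{\sigma^2_\tau}\Cov_\pi(w, X_{0,k})$. For the first-derivative bound, $w \in [0,\bar B]$ and $|X_{0,k}| \leq 1$ give $|\Cov(w, X_{0,k})| \leq \E|w - \E w|\,|X_{0,k}| \leq 2\bar B$ (crudely). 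Using $\mu_\tau \leq 1$ then yields $2\bar B/\sigma_\tau^2$.

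For the second derivatives I will differentiate the covariance formula once more. Since $\partial_{x_l}$ of a posterior expectation $\E_\pi[F]$ equals $\Cov_\pi(F, U_l) = \frac{\mu_\tau}{\sigma^2_\tau}\Cov_\pi(F, X_{0,l})$, we get
\[
\partial_{x_l}\Cov_\pi(w, X_{0,k}) = \frac{\mu_\tau}{\sigma^2_\tau}\,\kappa_3(w, X_{0,k}, X_{0,l}),
\]
where $\kappa_3$ is the joint third central moment under $\pi$. It is bounded by $C\bar B$ because all three variables are bounded by $\bar B$, $1$, $1$. Since $\mu_\tau$ does not depend on $x$, this gives $|\partial^2_{x_kx_l} h^\ast| \leq C_2\bar B\,\mu_\tau^2/\sigma_\tau^4 \leq C_2\bar B/\sigma^4_\tau$.

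For the Hessian \eqref{eq:hessian}, I will start from Lemma \ref{lem:score_identity}, which gives $\nabla\log\rho_t = -(x - \mu_\tau\E_\pi[X_0])/\sigma^2_\tau$. Differentiating with the same covariance rule, $\nabla_x \E_\pi[X_0] = \frac{\mu_\tau}{\sigma_\tau^2}\Cov_\pi(X_0)$. This produces the stated identity. The lower bound follows since $\Cov_\pi(X_0) \succeq 0$. The upper bound follows since $\Cov_\pi(X_0) \preceq \E|X_0 - \E X_0|^2\, I_d \preceq d\, I_d$, using $|X_0|_\infty \leq 1$ (the trace is at most $d$, since each coordinate variance is at most $1$).

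The main obstacle is justifying the interchange of derivative and integral uniformly, together with the claimed joint smoothness in $t$. Derivatives in $\tau$ of $\mu_\tau$ and $\sigma_\tau^{-2}$ are smooth on $(0,T)$, and derivatives of the Gaussian produce polynomials in $(x, x_0)$ that remain integrable because $x_0$ ranges over a compact set. Local uniform domination on compacts of $(0,T)\times\R^d$ therefore suffices, and I would state this once and reuse it for all orders.
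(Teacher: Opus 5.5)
Your proposal is correct and follows essentially the same route as the paper. Both arguments differentiate the ratio of Gaussian convolutions under the integral, obtain Tweedie's covariance identity, identify $\partial^2_{x_kx_l}h^\ast$ through posterior third central moments, and derive the Hessian of $\log\rho_t$ from the covariance rule. The differences are cosmetic: you bound the first derivative crudely by $2\bar B$ rather than by the paper's Cauchy--Schwarz bound $\bar B$, and you make explicit that the second derivative is exactly $\mu_{T-t}^2\sigma_{T-t}^{-4}$ times the joint third central moment of $(w(X_0),X_{0,k},X_{0,l})$.
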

\begin{proof}
    Smoothness: $ h^*(t,x) = (\int \phi_{\sigma^2_{T-t}} (x - \mu_{T-t} x_0) w(x_0) p_0 (x_0) d x_0 ) / ( \int \phi_{\sigma^2_{T-t}} (x - \mu_{T-t} x_0) p_0 (x_0) dx_0 ) $ is a ratio of Gaussian convolutions with a strictly positive denominator on $ (0, T). $ Both are $ C^\infty $ in $ x $ by differentiation under the integral, justified by the compact support and Gaussian tails, and the denominator is positive, so $ h^* \in C^\infty. $ Smoothness in $ t $ is likewise clear since $ \mu_{T-t}, \sigma_{T-t} $ are smooth on $ (0, T). $

    Tweedie identity: From \eqref{eq:gauss_posterior}, $ \partial_{x_k} \log \pi_{t, x} (x_0) = \partial_{x_k} \left[ - \frac{| x - \mu_{T-t} x_0 |^2}{2 \sigma^2_{T-t}} - \log Z_t (x) \right] $ with $ Z_t(x) = \int \phi_{\sigma^2_{T-t}} (x - \mu_{T-t} x_0) p_0(x_0) dx_0. $ The first term differentiates to $ - (x_k - \mu_{T-t} x_{0, k}) / \sigma^2_{T-t}, $ and $ \partial_{x_k} \log Z_t(x) = \int \left( - \frac{x_k - \mu_{T-t} x_{0, k}}{\sigma^2_{T-t}} \right) \pi_{t, x} (x_0) dx_0 = - \frac{x_k}{\sigma^2_{T-t}} + \frac{\mu_{T-t}}{\sigma^2_{T-t}} \mathbb{E} [X_{0,k} | x]. $ Subtracting, $ \partial_{x_k} \log \pi_{t,x} (x_0) = \frac{\mu_{T-t}}{\sigma^2_{T-t}} (x_{0,k} - \mathbb{E} [X_{0,k} | x]). $ Hence, since $ \partial_{x_k} \pi_{t,x} = \pi_{t,x} \partial_{x_k} \log \pi_{t,x}, $
    \begin{multline*}
        \partial_{x_k} h^* = \int w(x_0) \partial_{x_k} \pi_{t,x} (x_0) dx_0
        = \frac{\mu_{T-t}}{\sigma^2_{T-t}} \int w(x_0) (x_{0,k} - \mathbb{E}[X_{0,k} | x]) \pi_{t,x} (x_0) dx_0 \\
        = \frac{\mu_{T-t}}{\sigma^2_{T-t}} \Cov (w(X_0), X_{0,k} | x),
    \end{multline*}
    which is \eqref{eq:tweedie} coordinatewise.

    Bounds: With $ \bar{w} := \mathbb{E} [w | x], $ Cauchy-Schwarz gives $ | \Cov (w, X_{0,k} | x) | \leq \| w - \bar{w} \|_\infty \sqrt{\Var (X_{0,k} | x)} \leq \bar{B}, $ using $ \| w - \bar{w} \|_\infty \leq \bar{B} $ and $ \Var (X_{0, k} | x) \leq 1. $ With $ \mu_{T-t} \leq 1 $ this yields $ | \partial_{x_k} h^* | \leq \bar{B} / \sigma^2_{T-t}, $ which we record in the weaker form \eqref{eq:h_der_bounds} the form in which it is used below. Differentiating  \eqref{eq:tweedie} once more and using that every conditional expectation obeys the same Tweedie rule expresses $ \partial^2_{x_k, x_l} h^* $ as $ \mu^2_{T-t} \sigma^{-4}_{T-t} $ times a fixed combination of posterior central third moments of $ (w(X_0), X_{0,k}, X_{0,l}), $ each bounded by an absolute multiple of $ \bar{B} $ under Assumption \ref{ass:compact}.

    Hessian: By \eqref{eq:score_identity_1}, $ \nabla \log \rho_t (x) = \sigma^{-2}_{T-t} ( \mu_{T-t} \mathbb{E} [X_0 | x] - x ), $ and $ \nabla_x \mathbb{E} [X_0 | x] = \mu_{T-t} \sigma^{-2}_{T-t} \Cov (X_0 | x) $ by the Tweedie rule applied coordinatewise. Combining gives the identity in \eqref{eq:hessian}. The bounds follow from $ 0 \preceq \Cov (X_0 | x) \preceq \mathbb{E} [|X_0|^2 | x] I_d \preceq d I_d. $
\end{proof}

Equation \eqref{eq:tweedie} exhibits the guidance as a rescaled posterior covariance between the weight and the clean signal. The $ \sigma^{-2}, \sigma^{-4} $ scalings are the quantitative form of the endpoint singularity that \eqref{eq:window} excises.

\begin{lemma}(The guidance is bounded, with no lower bound on $ w $) \label{lem:bounded_guidance}
    Under Assumptions \ref{ass:bounded}--\ref{ass:compact}, write $ \pi^w_{t,x}(dx_0) \propto w(x_0) \pi_{t,x} (dx_0) $ for the $w$-tilted posterior. Then for all $ t \in (0, T) $ and $ x \in \mathbb{R}^d $
    \begin{equation} \label{eq:guidance_bound}
        g^* (t,x) = \frac{\mu_{T-t}}{\sigma^2_{T-t}} \left( \mathbb{E}_{\pi^w_{t,x}} [X_0] - \mathbb{E}_{\pi_{t,x}}[X_0] \right), 
        \quad \text{hence } | g^* (t,x) | \leq \frac{2 \sqrt{d}}{\sigma^2_{T-t}},
    \end{equation}
    and in particular $ | g^* (t,x) | \leq G^* := 2 \sqrt{d} / \sigma^2_* $ for $ t \in (0, t_1]. $
\end{lemma}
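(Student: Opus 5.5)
The plan is to start from the Tweedie identity \eqref{eq:tweedie} of Proposition \ref{prop:derivative_bounds} and divide by $h^\ast$. Since $h^\ast(t,x) = \E_{\pi_{t,x}}[w(X_0)] > 0$ under Assumption \ref{ass:bounded}, we can write
\begin{equation*}
    g^\ast_k = \frac{\partial_{x_k} h^\ast}{h^\ast} = \frac{\mu_{T-t}}{\sigma^2_{T-t}} \, \frac{\E_{\pi_{t,x}}[w(X_0) X_{0,k}] - \E_{\pi_{t,x}}[w(X_0)]\,\E_{\pi_{t,x}}[X_{0,k}]}{\E_{\pi_{t,x}}[w(X_0)]} .
\end{equation*}
The first ratio is exactly $\E_{\pi^w_{t,x}}[X_{0,k}]$ by definition of the tilted posterior. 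Its normalizing constant $\int w\, d\pi_{t,x} = h^\ast(t,x)$ is positive and finite because $0 < w \leq \bar B$ on $\mathrm{supp}(p_0)$. The second term reduces to $\E_{\pi_{t,x}}[X_{0,k}]$. Together these give the displayed identity in \eqref{eq:guidance_bound} coordinatewise.

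For the bound, I would use that both $\pi_{t,x}$ and $\pi^w_{t,x}$ are supported in $\mathrm{supp}(p_0) \subseteq \{|x_0|_\infty \leq 1\}$ by Assumption \ref{ass:compact}. Each mean therefore lies in this cube, and its Euclidean norm is at most $\sqrt d$. The triangle inequality then gives $|\E_{\pi^w}[X_0] - \E_{\pi}[X_0]| \leq 2\sqrt d$, and $\mu_{T-t} \leq 1$ yields $|g^\ast| \leq 2\sqrt d / \sigma^2_{T-t}$. No lower bound on $w$ enters, because $w$ appears in both the numerator and the normalizer of the tilted mean.

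For the uniform statement, $\tau \mapsto \sigma^2_\tau = 1 - e^{-2\tau}$ is increasing. For $t \leq t_1$ we have $T - t \geq T - t_1$, so $\sigma_{T-t} \geq \sigma_{T-t_1} = \sigma_\ast$, and hence $|g^\ast| \leq G^\ast$.

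The only delicate point is justifying the differentiation under the integral that underlies Tweedie. This is already covered by Proposition \ref{prop:derivative_bounds}, so I expect no real obstacle. If one wants sharper constants, note that a difference of two points in a convex set of diameter $2\sqrt d$ is bounded by $2\sqrt d$, which is what we used.
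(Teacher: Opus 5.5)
Your proposal is correct and follows the paper's proof essentially step for step. You divide the Tweedie identity by $h^\ast>0$ and recognize the $w$-tilted posterior mean, bound the difference of two means lying in the cube $\{|x_0|_\infty\le 1\}$ by $2\sqrt d$, and finish with $\mu_{T-t}\le 1$ and $\sigma_{T-t}\ge\sigma_\ast$ for $t\le t_1$.
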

\begin{proof}
    Since $ w > 0 $ on $ \mathrm{supp}(p_0) $ and $ \pi_{t,x} $ is supported there, $ h^*(t,x) = \mathbb{E}_{\pi_{t,x}}[w] > 0 $ and $ \pi^w_{t,x} $ is a well-defined probability measure. Dividing \eqref{eq:tweedie} by $ h^* $ and expanding the covariance
    \begin{equation}
        \frac{\Cov (w(X_0), X_{0,k} | x)}{\mathbb{E}[w(X_0) | x]} = \frac{\mathbb{E}[w(X_0) X_{0,k} | x]}{\mathbb{E}[w(X_0) | x]} - \mathbb{E}[X_{0,k} | x] = \mathbb{E}_{\pi^w_{t,x}}[X_{0,k}] - \mathbb{E}_{\pi_{t,x}}[X_{0,k}],
    \end{equation}
    which is \eqref{eq:guidance_bound}. Both expectations are averages of $ X_0 $ over probability measures supported in $ \mathrm{supp}(p_0) \subseteq \{ | x_0 |_{\infty} \leq 1 \}, $ hence lie in the convex hull of that cube and differ by at most $ 2 \sqrt{d} $ in norm. Since $ \mu_{T-t} \leq 1 $ this gives the first bound, and $ \sigma_{T-t} \geq \sigma_* $ for $ t \leq t_1 $ gives the second.
\end{proof}

\begin{lemma}(Negative moments transfer to $ h^* $) \label{lem:negative_moments}
    Under Assumptions \ref{ass:bounded}--\ref{ass:compact}, for every $ t \in (0, T) $ and every $ \mathrm{b} > 0, $
    \begin{equation}
        \mathbb{E}_{\rho_t} [ (h^*)^{-\mathrm{s}} ] \leq \Xi_\mathrm{s}, \quad \text{hence } \rho_t (h^* < \mathrm{b}) \leq \Xi_\mathrm{s} \mathrm{b}^\mathrm{s}.
    \end{equation}
\end{lemma}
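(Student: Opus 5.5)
The plan is to prove both bounds through a single Jensen-type inequality, comparing $h^\ast(t,x)$ pointwise with the posterior average of $w^{-\mathrm{s}}$ and then integrating against $\rho_t$. Fix $t \in (0,T)$ and $x \in \R^d$. By Definition \ref{def:h_star} and \eqref{eq:gauss_posterior}, $h^\ast(t,x) = \E_{\pi_{t,x}}[w(X_0)]$, and $\pi_{t,x}$ is a probability measure supported in $\mathrm{supp}(p_0)$, where $w > 0$ by Assumption \ref{ass:bounded}.

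The first step is the pointwise inequality. The map $u \mapsto u^{-\mathrm{s}}$ is convex on $(0,\infty)$ for every $\mathrm{s} > 0$, so Jensen's inequality gives
\[
(h^\ast(t,x))^{-\mathrm{s}} = \big(\E_{\pi_{t,x}}[w(X_0)]\big)^{-\mathrm{s}} \leq \E_{\pi_{t,x}}[w(X_0)^{-\mathrm{s}}] = \E[w(X_0)^{-\mathrm{s}} \mid X_{T-t} = x].
\]
The inequality holds in $[0,\infty]$, so no integrability is needed at this stage.

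The second step integrates over $x \sim \rho_t = \mathrm{Law}(X_{T-t})$. The tower property gives
\[
\E_{\rho_t}[(h^\ast)^{-\mathrm{s}}] \leq \E\big[\E[w(X_0)^{-\mathrm{s}} \mid X_{T-t}]\big] = \E_{p_0}[w^{-\mathrm{s}}] = \Xi_{\mathrm{s}}.
\]
The third step is Markov's inequality applied to the nonnegative variable $(h^\ast)^{-\mathrm{s}}$:
\[
\rho_t(h^\ast < \mathrm{b}) = \rho_t\big((h^\ast)^{-\mathrm{s}} > \mathrm{b}^{-\mathrm{s}}\big) \leq \mathrm{b}^{\mathrm{s}}\, \E_{\rho_t}[(h^\ast)^{-\mathrm{s}}] \leq \Xi_{\mathrm{s}} \mathrm{b}^{\mathrm{s}}.
\]
This chain covers every finite $\mathrm{s}$.

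The only point needing care, which I would treat as the main obstacle, is the boundary case $\mathrm{s} = \infty$. There Assumption \ref{ass:bounded} says $w \geq B$ on $\mathrm{supp}(p_0)$. Averaging over $\pi_{t,x}$ gives $h^\ast \geq B$ everywhere, so $\rho_t(h^\ast < \mathrm{b}) = 0$ whenever $\mathrm{b} \leq B$. This is consistent with reading $\Xi_\infty \mathrm{b}^\infty$ as $0$ for $\mathrm{b} \leq B$, via the convention $\Xi_{\mathrm{s}}^{1/\mathrm{s}} \to B^{-1}$. I would state this convention explicitly and note that it is exactly the case used in Step 5 of the proof of Theorem \ref{thm:rate_main}, where $\mathrm{b} = B$.
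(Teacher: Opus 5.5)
Your proposal is correct and follows the paper's proof: conditional Jensen for the convex map $u \mapsto u^{-\mathrm{s}}$, the tower property to integrate against $\rho_t$, and Markov's inequality at level $\mathrm{b}^{-\mathrm{s}}$. Your separate treatment of $\mathrm{s} = \infty$ through $h^\ast \geq B$ is a harmless addition that the paper leaves implicit (it uses $h^\ast \geq B$ directly in the rate proof). The limiting convention you attach to it is loose, though: $\Xi_{\mathrm{s}} B^{\mathrm{s}} = \mathbb{E}_{p_0}[(B/w)^{\mathrm{s}}] \to p_0(w = B)$ rather than $0$, so that case rests on the direct argument, not on the limit.
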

\begin{proof}
    The map $ u \mapsto u^{-\mathrm{s}} $ is convex on $ (0, \infty), $ so conditional Jensen applied to $ h^* (t,x) = \mathbb{E}[w(X_0) | X_{T-t} = x] $ gives $ h^*(t,x)^{-\mathrm{s}} \leq \mathbb{E}[w(X_0)^{-\mathrm{s}} | X_{T-t} = x] $ pointwise. Taking expectations over $ X_{T-t} \sim \rho_t $ and using the tower property gives $ \mathbb{E}_{\rho_t} [ (h^*)^{-\mathrm{s}} ] \leq \mathbb{E}_{p_0} [ w^{-\mathrm{s}} ] = \Xi_\mathrm{s}, $ uniformly in $ t. $ Markov's inequality applied to $ (h^*)^{-\mathrm{s}} $ at level $ \mathrm{b}^{-\mathrm{s}} $ gives the second claim.
\end{proof}

\section{Harmonicity, the weak form, and gap-freenesss} \label{app:harmonic}

\begin{proposition} (Martingale property) \label{prop:martingale}
    Under Assumption \ref{ass:bounded}, the process $ L_t := h^* (t, \overleftarrow{X}_t), \, t \in [t_0, t_1] $ is a martingale for the law $ \mathbb{P} $ of the reverse SDE \eqref{eq:rev_denoising}, with respect to its natural filtration $ \mathcal{F}_t = \sigma (\overleftarrow{X}_s : 0 \leq s \leq t). $
\end{proposition}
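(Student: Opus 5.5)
The plan is to prove the martingale property directly from the Markov property of the reverse process and the tower property of conditional expectation, with no appeal to Itô's formula. The reverse SDE \eqref{eq:rev_denoising} is a time-homogeneous-in-form Markov diffusion, and its path law coincides with the time reversal of the forward Ornstein--Uhlenbeck process. In particular, the joint law of $(\overleftarrow{X}_s)_{s \le t}$ together with $\overleftarrow{X}_T$ is that of $(X_{T-s})_{s\le t}$ together with $X_0$.

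First, integrability. By Assumption \ref{ass:bounded} and Definition \ref{def:h_star}, $0 < h^* \le \bar B$, so $L_t$ is bounded. Measurability of $x \mapsto h^*(t,x)$ holds because $h^*$ is a ratio of Gaussian convolutions; continuity is even available under Assumption \ref{ass:compact} via Proposition \ref{prop:derivative_bounds}. Without that assumption one still has Borel measurability of a version of the conditional expectation. Hence $L_t$ is $\mathcal F_t$-adapted and integrable.

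Second, the identification $L_t = \E[w(\overleftarrow{X}_T)\mid \mathcal F_t]$ almost surely. By the Markov property of the reverse process with respect to its natural filtration,
\[
\E[w(\overleftarrow{X}_T)\mid \mathcal F_t]=\E[w(\overleftarrow{X}_T)\mid \overleftarrow{X}_t]=h^*(t,\overleftarrow{X}_t),
\]
where the last equality is Definition \ref{def:h_star}. The Markov property itself I would take from the well-posedness of \eqref{eq:rev_denoising}: the drift $b = x + 2s$ is smooth and, on $[0,t_1]$, locally Lipschitz with linear growth, so strong solutions are Markov. Alternatively, reversing a Markov process yields a Markov process, so the property also follows from the reversal identity $(\overleftarrow{X}_t)\overset{d}{=}(X_{T-t})$. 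The second route needs nothing about $s$ and works under Assumption \ref{ass:bounded} alone: the forward process is Markov, so the past $\sigma(X_u : u \ge T-t)$ and the future $X_0$ are conditionally independent given $X_{T-t}$.

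Third, for $t_0 \le u \le t \le t_1$, the tower property gives
\[
\E[L_t\mid\mathcal F_u]=\E\big[\E[w(\overleftarrow{X}_T)\mid\mathcal F_t]\mid\mathcal F_u\big]=\E[w(\overleftarrow{X}_T)\mid\mathcal F_u]=L_u,
\]
which is the martingale property.

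The only delicate point is the Markov property up to the terminal time $T$, where the score of $p_0$ may blow up and $\overleftarrow{X}_T$ is a limit. I would handle it via the reversal route, working entirely with the forward process on $[0,T-t_0]$ and taking $\overleftarrow{X}_T := X_0$ in law. This avoids any regularity of $s$ near $T$ and keeps the argument within Assumption \ref{ass:bounded}, as the statement demands. Regularity is needed only to pick the version $h^*(t,\cdot)$ consistently across $t$. That causes no trouble, because each $L_t$ is defined up to null sets and the martingale property is a statement about fixed times.
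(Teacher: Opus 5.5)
Your proposal is correct and follows essentially the same route as the paper: the paper also identifies $L_t=\E[w(\overleftarrow{X}_T)\mid\mathcal{F}_t]=h^\ast(t,\overleftarrow{X}_t)$ via the Markov property and Definition~\ref{def:h_star}, concludes that $L$ is a Doob martingale by the tower property, and takes integrability from $|w|\le\bar B$. Your justification of the Markov property through time reversal of the forward Ornstein--Uhlenbeck process, which avoids any regularity of the score near $T$, elaborates a step the paper simply asserts.
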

\begin{proof}
    By the Markov property of \eqref{eq:rev_denoising} and the tower property of conditional expectation
    \begin{equation}
        \mathbb{E} [ w(\overleftarrow{X}_T) | \mathcal{F}_t ] = \mathbb{E} [ w (\overleftarrow{X}_T) | \overleftarrow{X}_t ] = h^* (t, \overleftarrow{X}_t) = L_t,
    \end{equation}
    using Definition \ref{def:h_star}. Thus $ (L_t) $ is a Doob martingale -- a conditional expectation of the single terminal random variable $ w(\overleftarrow{X}_T) $ along the filtration. Integrability is following from $ | w | \leq \bar{B}. $
\end{proof}

\begin{theorem} (Harmonicity of $ h^* $) \label{thm:harmonicity}
    Suppose Assumptions \ref{ass:bounded}--\ref{ass:compact} hold. Then $ h^* $ is space-time harmonic for the reverse generator:
    \begin{equation} \label{eq:harmonicity}
        \mathcal{M} h^* = \partial_t h^* + b(t, x) \cdot \nabla h^* + 
        \Delta h^* = 0 \quad \text{on} ~ (0, T) \times \mathbb{R}^d, \quad h^*(T, \cdot) = w.
    \end{equation}
    Moreover $ h^* $ admits the martingale representation $ d h^* (t, \overleftarrow{X}_t) = \sqrt{2} \nabla h^* (t, \overleftarrow{X}_t) \cdot d B_t. $
\end{theorem}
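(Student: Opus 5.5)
We combine the martingale property of Proposition \ref{prop:martingale} with the smoothness of Proposition \ref{prop:derivative_bounds}. Fix $0<t_0<t_1<T$; the window in Proposition \ref{prop:martingale} is arbitrary, so the argument covers all of $(0,T)$. Since $h^\ast\in C^\infty((0,T)\times\R^d)$, Itô's formula applies to $h^\ast(t,\overleftarrow{X}_t)$ along \eqref{eq:rev_denoising}, with second-order term $\Delta$ as recalled in Appendix \ref{app:notation}. It gives
\[
h^\ast(t,\overleftarrow{X}_t)-h^\ast(t_0,\overleftarrow{X}_{t_0}) = \int_{t_0}^t (\mathcal{M}h^\ast)(r,\overleftarrow{X}_r)\,dr + \sqrt2\int_{t_0}^t \nabla h^\ast(r,\overleftarrow{X}_r)\cdot dB_r .
\]
By \eqref{eq:h_der_bounds}, $|\nabla h^\ast|\le 2\sqrt d\,\bar B\sigma_\ast^{-2}$ on $[t_0,t_1]$. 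Hence the stochastic integral is a true square-integrable martingale, not merely a local one. Subtracting it from the martingale $L_t$, the process $\int_{t_0}^t (\mathcal{M}h^\ast)(r,\overleftarrow{X}_r)\,dr$ is a continuous martingale of finite variation. It is therefore constant almost surely, and equal to its value $0$ at $t_0$.

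The main step is to pass from this almost-sure statement to the pointwise identity $\mathcal{M}h^\ast=0$ everywhere. The function $F:=\mathcal{M}h^\ast$ is continuous on $(0,T)\times\R^d$, since $b=x+2s$ is smooth there by the smoothness of $p_{T-t}$ (Lemma \ref{lem:score_identity} and the Gaussian convolution). The vanishing integral gives $F(r,\overleftarrow{X}_r)=0$ for a.e.\ $r$, almost surely. Taking expectations of $|F|\wedge1$ yields $\int_{t_0}^{t_1}\E_{\rho_r}[|F(r,\cdot)|\wedge1]\,dr=0$. Because $\rho_r=p_{T-r}$ has a strictly positive density on $\R^d$, $F(r,\cdot)=0$ Lebesgue-a.e.\ for a.e.\ $r$. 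Continuity of $F$ then gives $F\equiv0$ on $[t_0,t_1]\times\R^d$, and hence on $(0,T)\times\R^d$.

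As a cross-check, or as an alternative route, one can verify harmonicity directly. Write $h^\ast = u/p_{T-t}$ with $u(t,x)=\int\phi_{\sigma^2_{T-t}}(x-\mu_{T-t}x_0)w(x_0)p_0(dx_0)$. Both $u$ and $p_{T-t}$ solve the forward Fokker--Planck equation $\partial_\tau f=\nabla\cdot(xf)+\Delta f$ in $\tau=T-t$. The quotient rule, using $s=\nabla\log p_{T-t}$, then gives $\partial_t h^\ast + (x+2s)\cdot\nabla h^\ast+\Delta h^\ast=0$.

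For the terminal condition $h^\ast(T,\cdot)=w$, note that at $t=T$ the conditioning is on $X_0$ itself, so the identity holds by definition (Definition \ref{def:h_star}). It is to be read on $\mathrm{supp}(p_0)$, the set where the conditional expectation is determined. The martingale representation $dh^\ast(t,\overleftarrow{X}_t)=\sqrt2\nabla h^\ast\cdot dB_t$ is exactly the Itô expansion above once the drift term $\mathcal{M}h^\ast$ has been shown to vanish.
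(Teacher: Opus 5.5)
Your proposal is correct and follows essentially the same route as the paper. Both apply It\^o's formula along the reverse SDE, use the Doob martingale of Proposition~\ref{prop:martingale} to force the drift $\int \mathcal{M}h^\ast\,dr$ to vanish, and then combine the full support of $\rho_t$ with continuity of $\mathcal{M}h^\ast$ to pass from the almost-sure statement to $\mathcal{M}h^\ast \equiv 0$; your bounded-gradient (true-martingale) and Fubini steps make explicit what the paper leaves implicit. Your quotient-rule check, writing $h^\ast = u/p_{T-t}$ with both $u$ and $p_{T-t}$ solving the forward Fokker--Planck equation, is a valid, purely analytic alternative that the paper does not use.
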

\begin{proof}
    $ h^* \in C^{1,2} (I \times \mathbb{R}^d), $ by Proposition \ref{prop:derivative_bounds}. It\^o's formula for \eqref{eq:rev_denoising} gives
    \begin{equation}
        d L_t = ( \partial_t h^* + \mathcal{L}_t h^* )(t, \overleftarrow{X}_t) dt + \sqrt{2} \nabla h^* (t, \overleftarrow{X}_t) \cdot d B_t = (\mathcal{M} h^*) (t, \overleftarrow{X}_t) dt + \sqrt{2} \nabla h^* (t, \overleftarrow{X}_t) \cdot d B_t.
    \end{equation}
    By Proposition \ref{prop:martingale}, $ (L_t) $ is a martingale, so its drift part must vanish almost surely: $ ( \mathcal{M}h^* ) (x, \overleftarrow{X}_t) = 0 $ for a.e. $ t, $ $ \mathbb{P} $-a.s. The law of $ \overleftarrow{X}_t $ is $ \rho_t = p_{T-t}, $ a Gaussian mixture with full support om $ \mathbb{R}^d. $ Since $ \mathcal{M} h^* $ is continuous and vanishes $ \rho_t $-a.e. for a dense set of $ t, $ it vanishes identically on $ (0, T) \times \mathbb{R}^d. $ The terminal condition is $ h^* (T,x) = \mathbb{E} [w(\overleftarrow{X}_T) | \overleftarrow{X}_T = x] = w(x). $ Substituting $ \mathcal{M}h^* = 0 $ into \eqref{eq:harmonicity} leaves the representation.
\end{proof}

\begin{definition}(Strong residual and penalized risk)
    For $ h \in C^{1,2} $ the residual is $ \mathcal{R}_t [h] := (\mathcal{M}h)(t, \cdot) = \partial_t h + b \cdot \nabla h + \Delta h. $ By Theorem \ref{thm:harmonicity}, $ \mathcal{R}_t [h^*] \equiv 0 $ and, writing $ \varphi := h - h^*, \, \mathcal{M} \varphi = \mathcal{R}_t [h]. $ The population value-fidelity and strong PDE functionals are
    \begin{equation} \label{eq:value_pde_functionals}
        \mathcal{D}(h) := \int_I \| h(t, \cdot) - h^*(t, \cdot) \|^2_{\rho_t} \nu (dt), \quad \mathcal{P}^s (h) := \int_I \| \mathcal{R}_t [h] \|^2_{\rho_t} \nu (dt).
    \end{equation}
\end{definition}

The value functional \eqref{eq:value_pde_functionals} is exactly the population excess least-squares risk: at fixed $ t $ the minimizer over measurable $ h(t, \cdot) $ of $ \mathbb{E}[ ( h(t, X_{T-t}) - w(X_0))^2 ] $ is the conditional mean $ h^* (t, \cdot), $ and the excess risk equals $ \| h(t, \cdot) - h^*(t, \cdot) \|^2_{\rho_t,} $ so $ \mathcal{D} $ is what regression naturally drives down.

\begin{theorem}(Gap-freeness) \label{thm:gap_freeness}
    Under Assumptions \ref{ass:bounded}--\ref{ass:compact}, for every penalty weight $ \mu \geq 0 $ the function $ h^* $ is the unique minimizer, over all measurable $ h $ with $ \varphi \in L^2 (\nu \otimes \rho) $ and $ \mathcal{R}[h] \in L^2 (\nu \otimes \rho), $ of the penalized population risk $ J_\mu := \mathcal{D} + \mu \mathcal{P}^s. $ In particular the penalized target does not move with $ \mu: $ there is no regularization gap. The same holds with $ \mathcal{P}^s $ replaced by any nonnegative functional vanishing at $ h^*. $
\end{theorem}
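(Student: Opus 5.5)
The argument only uses two nonnegativity facts: $\mathcal{D}(h) \geq 0$ with equality only for $h = h^\ast$ ($\nu\otimes\rho$-a.e.), and $\mathcal{P}^s(h) \geq 0$ with $\mathcal{P}^s(h^\ast) = 0$. The plan is to show $J_\mu(h^\ast) = 0$, that $J_\mu \geq 0$ on the admissible class, and that $J_\mu(h) = 0$ forces $h = h^\ast$. Nothing about the specific form of the penalty enters beyond vanishing at $h^\ast$, which also gives the final sentence of the statement.

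\emph{Admissibility of $h^\ast$.} We have $\varphi = 0 \in L^2(\nu\otimes\rho)$. By Theorem \ref{thm:harmonicity}, $\mathcal{R}_t[h^\ast] = \mathcal{M}h^\ast \equiv 0$ on $(0,T)\times\R^d \supseteq I\times\R^d$, so $\mathcal{R}[h^\ast] \in L^2(\nu\otimes\rho)$ trivially. Hence $J_\mu(h^\ast) = \mathcal{D}(h^\ast) + \mu\mathcal{P}^s(h^\ast) = 0 + 0 = 0$. The same computation gives $J_\mu(h^\ast) = 0$ for any nonnegative penalty $\mathcal{P}$ with $\mathcal{P}(h^\ast) = 0$.

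\emph{Lower bound and uniqueness.} For any admissible $h$, both $\mathcal{D}(h) = \|\varphi\|^2_{L^2(\nu\otimes\rho)}$ and $\mu\mathcal{P}^s(h)$ are finite and nonnegative, since $\mu \geq 0$. So $J_\mu(h) \geq 0 = J_\mu(h^\ast)$ and $h^\ast$ is a minimizer. If $J_\mu(h) = 0$, then both summands vanish, in particular $\mathcal{D}(h) = 0$, so $\varphi = 0$ in $L^2(\nu\otimes\rho)$. Uniqueness is therefore understood as in $L^2(\nu\otimes\rho)$, i.e.\ $\nu\otimes\rho_t$-a.e.

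\emph{Recovering pointwise uniqueness.} If pointwise uniqueness is wanted for continuous $h$ (e.g.\ $h \in C^{1,2}$), use that $\rho_t = p_{T-t}$ is a Gaussian mixture with strictly positive density, and $\nu$ is normalized Lebesgue measure on $I$. Then a continuous $\varphi$ vanishing $\nu\otimes\rho$-a.e.\ vanishes identically on $I\times\R^d$, by the same density argument used in the proof of Theorem \ref{thm:harmonicity}.

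\emph{Independence of $\mu$ and the main obstacle.} The minimizer does not depend on $\mu$, which is the claimed absence of a regularization gap. The only step with content is $\mathcal{P}^s(h^\ast) = 0$, and it is entirely supplied by Theorem \ref{thm:harmonicity}. So I expect no real obstacle. The only care needed is being precise that uniqueness holds modulo $\nu\otimes\rho$-null sets, since $\mathcal{D}$ only sees equivalence classes.
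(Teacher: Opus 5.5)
Your proposal is correct and follows the paper's proof: $J_\mu(h^\ast)=0$ by harmonicity, $J_\mu\ge 0$ by nonnegativity of both terms, and $J_\mu(h)=0$ forces $\mathcal{D}(h)=0$, hence $h=h^\ast$ $\nu\otimes\rho$-a.e. Your added remark on pointwise uniqueness for continuous $h$ is a harmless refinement that the paper does not state.
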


\begin{proof}
    By Theorem \ref{thm:harmonicity}, $ \mathcal{R}_t [h^*] = 0, $ so $ \mathcal{P}^s (h^*) = 0, $ and $ \mathcal{D}(h^*) = 0 $ by definition. Hence, $ J_\mu (h^*) = 0. $ Since $ \mathcal{D} \geq 0 $ and $ \mathcal{P}^s \geq 0, $ we have $ J_\mu (h) \geq 0 = J_\mu (h^*) $ for all $ h $ and all $ \mu \geq 0, $ so $ h^* $ minimizes. If $ J_\mu (h) = 0 $ then in particular $ \mathcal{D}(h) = 0, $ i.e. $ \| h - h^* \|_{L^2 (\nu \otimes \rho)} = 0, $ whence $ h = h^* $ ($ \nu \otimes \rho $-a.e.). Uniqueness follows.
\end{proof}

Contrast this with a Sobolev regularized objective $ J^\lambda (h) := \mathcal{D}(h) + \lambda \int_I \| \nabla h \|^2_{\rho_t} \nu(dt), $ $ \lambda > 0: $ its penalty does not vanish at $ h^*. $ If $ \int_I \| \nabla h^* \|^2_{\rho_t} \nu (dt) > 0, $ then $ h^* $ does not minimize $ J^\lambda, $ since $ J^\lambda ((1 - \epsilon) h^*) - J^\lambda (h^*) = \epsilon^2 \int_I \| h^* \|^2_{\rho_t} \nu(dt) - \lambda (2 \epsilon - \epsilon^2) \int_I \| \nabla h^* \|^2_{\rho_t} \nu (dt) < 0 $ for all sufficiently small $ \epsilon > 0. $ The resulting shrinkage bias must be traded against variance in $ \lambda. $ Gap-freeness removes that trade-off.

\textbf{The bottleneck.} Gap-freeness alone does not improve the rate. Since $ \mathcal{M} \varphi $ contains $ \Delta \varphi, $ the best second-derivative control a resolution-$N$ approximant of a $ W^{\beta, \infty} $ function supplies is $ \| \nabla^2 (h_N - h^*) \|_{L^2} \lesssim N^{- (\beta - 2)}, $ so the corresponding upper bound is $ \underset{h \in \mathcal{H}}{\inf} \mathcal{P}^s (h) $ is $ N^{-2 (\beta - 2)}, $ which is $ O(1) $ vacuous at a $ \beta = 2. $ Removing this floor is the sole purpose of the weak form.

The idea is to test the residual against a function $ \psi $ and move one derivative off $ \varphi $ onto $ \psi $ by integration by parts. The key algebraic point is that the weight generated by integrating against $ \rho_t $ is exactly the base score, because $ \nabla \log \rho_t = \nabla \log p_{T-t} = s. $

\begin{lemma}(Weak form of residual) \label{lem:weak_form}
    Let $ t \in I, \, h \in C^{1,2} $ and $ \psi \in H^1 (\rho_t), $ with $ h, \nabla h, \psi $ of polynomial growth. Then, with $ \nabla \log \rho_t = s(t, \cdot) $ and $ b - s = x + s, $
    \begin{equation} \label{eq:weak_res}
        \langle \mathcal{R}_t [h], \psi \rangle_{\rho_t} = \underbrace{\int_{\mathbb{R}^d} \left[ \partial_t h \psi \, + \, ((x+s) \cdot \nabla h) \psi \, - \, \nabla h \cdot \nabla \psi \right] \rho_t dx}_{:= a_t (h, \psi)}.
    \end{equation}
    This bilinear form $ a_t $ is first order in both $ h $ and $ \psi $ (no $ \Delta h $ and no $ \Delta \psi $), and is computable from the covariates and the known score $ s. $ Moreover $ a_t (h^*, \psi) = 0 $ for every $ \psi, $ that is, $ h^* $ is the weak solution of \eqref{eq:harmonicity}.
\end{lemma}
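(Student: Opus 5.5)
The plan is to expand the strong residual term by term and integrate only the Laplacian by parts against $\rho_t$. Write $\langle \mathcal{R}_t[h],\psi\rangle_{\rho_t} = \int (\partial_t h + b\cdot\nabla h + \Delta h)\psi\,\rho_t\,dx$. The first two terms are left untouched. For the third, use $\nabla \cdot(\psi\rho_t\nabla h) = \Delta h\,\psi\rho_t + \nabla h\cdot\nabla\psi\,\rho_t + \psi\,\nabla h\cdot\nabla\rho_t$ together with $\nabla\rho_t = \rho_t\,\nabla\log\rho_t = \rho_t\, s(t,\cdot)$, which holds because $\rho_t = p_{T-t}$ and $s(t,x)=\nabla\log p_{T-t}(x)$. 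This gives
\[
\int \Delta h\,\psi\,\rho_t\,dx = -\int \nabla h\cdot\nabla\psi\,\rho_t\,dx - \int (s\cdot\nabla h)\psi\,\rho_t\,dx ,
\]
provided the boundary flux vanishes. Since $b = x + 2s$, the term $(b\cdot\nabla h)\psi$ combined with $-(s\cdot\nabla h)\psi$ leaves $((x+s)\cdot\nabla h)\psi$, which is \eqref{eq:weak_res}.

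The only real step is justifying the integration by parts on $\R^d$, and I expect this to be the main obstacle. I would first do it on balls $B_r$ via the divergence theorem, which produces the boundary term $\int_{\partial B_r}\psi\,\rho_t\,\nabla h\cdot n\,dS$. I then need this term to tend to $0$ along a sequence $r_k\to\infty$.

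The decay comes from $\rho_t$. Under Assumption \ref{ass:compact}, $\rho_t$ is a Gaussian mixture with means in $\mu_{T-t}[-1,1]^d$ and variance $\sigma^2_{T-t}>0$. Hence $\rho_t(x)\le C\exp(-(|x|-\sqrt d)^2/(2\sigma^2_{T-t}))$ for $|x|\ge\sqrt d$, while $|s|$ grows at most linearly by Lemma \ref{lem:score_identity}. Polynomial growth of $\psi$ and $\nabla h$ then makes the flux super-polynomially small.

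For $\psi\in H^1(\rho_t)$ that is not continuous, I would either read the polynomial-growth hypothesis as including local regularity, or approximate. In the latter case, take a smooth truncation $\psi_k=\psi\,\chi(\cdot/k)$ with $\chi$ a cutoff, apply the identity to $\psi_k$, and pass to the limit by dominated convergence. The domination is justified by Gaussian tails: $\partial_t h$, $(x+s)\cdot\nabla h$ and $\Delta h$ all have polynomial growth (here $h\in C^{1,2}$ on the window and polynomial growth are assumed), and they are integrated against $\rho_t$. The extra term $\psi\,\nabla h\cdot\nabla\chi(\cdot/k)/k$ is supported on $|x|\sim k$ and tends to $0$. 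All integrals in $a_t$ are absolutely convergent for the same reason.

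Gap-freeness then follows directly. By Theorem \ref{thm:harmonicity}, $\mathcal{R}_t[h^\ast]\equiv 0$, so $a_t(h^\ast,\psi)=\langle 0,\psi\rangle_{\rho_t}=0$. To apply the identity to $h^\ast$ itself, I use that $h^\ast\in C^\infty$ with $|h^\ast|\le\bar B$ and $|\nabla h^\ast|\le 2\sqrt d\,\bar B\sigma_\ast^{-2}$ by Proposition \ref{prop:derivative_bounds}, so it satisfies the growth hypotheses.

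The remaining claims are read off \eqref{eq:weak_res}. The form contains only $\partial_t h$, $\nabla h$ and $\nabla\psi$, so it is first order in both arguments. The coefficient $x+s$ is supplied by the pretrained score, and $\rho_t$-integrals are sample averages over $X^i_{T-t}$, so the form is computable.
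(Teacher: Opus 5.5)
Your proposal is correct and follows the paper's proof: you integrate only the Laplacian term by parts against $\rho_t$, use $\nabla\rho_t = s\rho_t$ and $b-s = x+s$, and obtain $a_t(h^\ast,\psi)=0$ by applying the identity to $h^\ast$ together with $\mathcal{M}h^\ast = 0$; your cutoff argument just makes explicit the vanishing of the boundary term, which the paper asserts from polynomial growth against Gaussian tails. One point to tidy: your domination step uses polynomial growth of $\partial_t h$ and $\Delta h$, which the statement does not list (though it is needed for the left-hand side to be an absolutely convergent integral), so for $h^\ast$ you should also cite $|\Delta h^\ast| \le d C_2 \bar B \sigma_\ast^{-4}$ from \eqref{eq:h_der_bounds} and the linear growth of $\partial_t h^\ast$ that follows from harmonicity.
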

\begin{proof}
    Expand $ \langle \mathcal{R}_t [h], \psi \rangle_{\rho_t} = \int ( \partial_t h + b \cdot \nabla h + \Delta h ) \psi \rho_t $ and integrate the Laplacian term by parts. With no boundary contribution at infinity (polynomial growth against the Gaussian-tailed $ \rho_t $)
    \begin{equation}
        \int \Delta h \psi \rho_t = - \int \nabla h \cdot \nabla (\psi \rho_t) = - \int \nabla h \cdot \nabla \psi \rho_t - \int (\nabla h \cdot \nabla \rho_t) \psi.
    \end{equation}
    Since $ \nabla \rho_t = (\nabla \log \rho_t) \rho_t = s \rho_t, $ the last integrand is $ (s \cdot \nabla h) \psi \rho_t. $ Collecting the three terms
    \begin{equation}
        \langle \mathcal{R}_t [h], \psi \rangle_{\rho_t} = \int \partial_t h \psi \rho_t + \int ((b - s) \cdot \nabla h) \psi \rho_t - \int \nabla h \cdot \nabla \psi \rho_t,
    \end{equation}
    and $ b - s = ( x + 2 s) - s = x + s, $ which is \eqref{eq:weak_res}. Finally, taking $ h = h^* $ and reversing the integration by parts $ a_t (h^*, \psi) = \langle \mathcal{M} h^*, \psi  \rangle_{\rho_t} = 0 $ by Theorem \ref{thm:harmonicity}.
\end{proof}

That the integration-by-parts weight is exactly $ s $ is what makes the weak residual free: no second derivatives, and the only coefficient it needs is the output of the pre-trained model.

\begin{lemma}(Continuity) \label{lem:continuity}
    Under the Assumptions \ref{ass:bounded}--\ref{ass:compact}, with $ M_4 \leq C \sqrt{d} \sigma_*^{-2} $ from Lemma \ref{lem:score_identity}, for all $ h \in C^{1,2} $ of polynomial growth, $ \psi \in H^1 (\rho_t) $ and $ \varphi := h - h^* $
    \begin{equation} \label{eq:continuity}
        | a_t (h, \psi) | \leq \left( \| \partial_t \varphi \|_{\rho_t} + \| \nabla \varphi \|_{\rho_t} + M_4 \| | \nabla \varphi |\|_{L^4 (\rho_t)} \right) \| \psi \|_{H^1 (\rho_t)}.
    \end{equation}
    Consequently $ \| \mathcal{R}_t [h] \|_{H^{-1} (\rho_t)} \leq \| \partial_t \varphi \|_{\rho_t} + \| \nabla \varphi \|_{\rho_t} + M_4 \| | \nabla \varphi | \|_{L^4 (\rho_t)}. $
\end{lemma}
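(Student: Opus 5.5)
The plan is to exploit gap-freeness from Lemma \ref{lem:weak_form}: since $a_t(h^\ast,\psi)=0$ for every $\psi$, bilinearity gives $a_t(h,\psi)=a_t(\varphi,\psi)$ with $\varphi=h-h^\ast$. This identity is legitimate because $h^\ast\in C^{1,2}$ with bounded derivatives on the window, by Proposition \ref{prop:derivative_bounds}. Hence $\varphi$ is again of polynomial growth, and the integration-by-parts expression makes sense for it. It therefore suffices to bound the three terms of $a_t(\varphi,\psi)$ separately, each by a first-order norm of $\varphi$ times $\|\psi\|_{H^1(\rho_t)}$, where $\|\psi\|^2_{H^1(\rho_t)}=\|\psi\|^2_{\rho_t}+\|\nabla\psi\|^2_{\rho_t}$.

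The time-derivative and gradient terms are immediate by Cauchy--Schwarz in $L^2(\rho_t)$. For the first, $\big|\int \partial_t\varphi\,\psi\,\rho_t\big|\le \|\partial_t\varphi\|_{\rho_t}\|\psi\|_{\rho_t}$. For the last, $\big|\int \nabla\varphi\cdot\nabla\psi\,\rho_t\big|\le \|\nabla\varphi\|_{\rho_t}\|\nabla\psi\|_{\rho_t}$. Both $\|\psi\|_{\rho_t}$ and $\|\nabla\psi\|_{\rho_t}$ are at most $\|\psi\|_{H^1(\rho_t)}$.

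The drift term $\int ((x+s)\cdot\nabla\varphi)\psi\,\rho_t$ is the main obstacle, because the coefficient $x+s$ is unbounded. I would apply a three-factor Hölder inequality with exponents $(4,4,2)$, which gives $\||x+s|\|_{L^4(\rho_t)}\,\||\nabla\varphi|\|_{L^4(\rho_t)}\,\|\psi\|_{\rho_t}$. The first factor is at most $M_4\le C\sqrt d\,\sigma_\ast^{-2}$ by Lemma \ref{lem:score_identity}, using $t\in I$. Putting $\psi$ in $L^2$ and both other factors in $L^4$ is what keeps the bound in terms of $\|\psi\|_{H^1}$ rather than a higher integrability norm of $\psi$; this is why the $L^4$ norm of $\nabla\varphi$ appears in the statement.

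Summing the three bounds gives \eqref{eq:continuity}. For the consequence, Lemma \ref{lem:weak_form} gives $\langle\mathcal R_t[h],\psi\rangle_{\rho_t}=a_t(h,\psi)$ for $\psi$ of polynomial growth. Dividing by $\|\psi\|_{H^1(\rho_t)}$ and taking the supremum yields the $H^{-1}(\rho_t)$ bound. Extending from polynomial-growth test functions to all of $H^1(\rho_t)$ uses density of smooth compactly supported functions in $H^1(\rho_t)$, together with the fact that the right-hand side of \eqref{eq:continuity} is continuous in $\psi$ for the $H^1$ norm.
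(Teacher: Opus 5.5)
Your proposal is correct and follows the paper's proof essentially verbatim: reduce to $a_t(\varphi,\psi)$ by gap-freeness, bound the time and gradient terms by Cauchy--Schwarz and the drift term by H\"older with exponents $(4,4,2)$ (the paper's ``Cauchy--Schwarz twice''), then sum and pass to the dual norm. Your density remark for the $H^{-1}$ consequence is a detail the paper leaves implicit. The only imprecision is that Proposition~\ref{prop:derivative_bounds} bounds only the spatial derivatives of $h^\ast$, whereas $\partial_t h^\ast$ can grow linearly in $|x|$ by harmonicity; that is still polynomial growth, so nothing in the argument changes.
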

\begin{proof}
    Because $ a_t $ is linear in its first argument and $ a_t (h^*, \psi) = 0 $ by Lemma \ref{lem:weak_form}, $ a_t (h, \psi) = a_t (\varphi, \psi). $ Bound the three terms of $ a_t (\varphi, \psi) $ in \eqref{eq:weak_res}:
    \begin{align}
        \left| \int \partial_t \varphi \psi \rho_t \right| & \leq \| \partial_t \varphi \|_{\rho_t} \| \psi \|_{\rho_t}, \\
        \left| \int \nabla \varphi \cdot \nabla \psi \rho_t \right| & \leq \| \nabla \varphi \|_{\rho_t} \| \nabla \psi \|_{\rho_t}, \\
        \left| \int ((x + s) \cdot \nabla \varphi) \psi \rho_t \right| & \leq \| (x + s) \cdot \nabla \varphi \|_{\rho_t} \| \psi \|_{\rho_t} \leq M_4 \| \nabla \varphi \|_{L^4 (\rho_t)} \| \psi \|_{\rho_t},
    \end{align}
    where the last line uses Cauchy-Schwarz twice: first $ \int ((x + s) \cdot \nabla \varphi) \psi \rho_t \leq \| (x + s) \cdot \nabla \varphi \|_{\rho_t} \| \psi \|_{\rho_t}, $ then $ \| (x + s) \cdot \nabla \varphi \|_{\rho_t} \leq \| | x + s | \|_{L^4 (\rho_t)} \| | \nabla \varphi | \|_{L^4 (\rho_t)} \leq M_4 \| \nabla \varphi \|_{L^4 (\rho_t)}. $ Summing and bounding $ \| \psi \|_{\rho_t}, \, \| \nabla \psi \|_{\rho_t} \leq \| \psi \|_{H^1 (\rho_t)} $ gives \eqref{eq:continuity}. The dual-norm consequence is \eqref{eq:h_inv_norm}. 
\end{proof}

\begin{definition}(Weak residual norm and penalty)
    The weak (dual) residual norm at time $ t $ is $ \| \mathcal{R}_t [h] \|_{H^{-1} (\rho_t)} $ as in \eqref{eq:h_inv_norm}, with $ \langle \mathcal{R}_t [h], \psi \rangle_{\rho_t} = a_t (h, \psi) $ by Lemma \ref{lem:weak_form}. The associated population penalty is 
    \begin{equation}
        \mathcal{P}^w (h) := \int_I \| \mathcal{R}_t [h] \|^2_{H^{-1} (\rho_t)} \nu (dt) = \| \mathcal{R}[h] \|^2_{\mathcal{V}^*}.
    \end{equation}
\end{definition}

By Lemma \ref{lem:weak_form}, $ \mathcal{P}^w (h^*) = 0, $ so the gap-freeness argument of Theorem \ref{thm:gap_freeness} applies verbatim with $ \mathcal{P}^w $ in place of $ \mathcal{P}^s. $ $ \mathcal{P}^w $ is measured in the weaker $ H^{-1} $ norm and the two remaining tasks are to shown this weaker penalty still controls the gradient and has better approximation floor. 

\section{The energy identity and coercivity} \label{app:coercivity}

The bridge from residual to gradient is an exact identity obtained by applying It\^o's formula to $ \varphi^2 $ along the reverse process.

\begin{theorem}(Energy identity) \label{thm:energy_identity}
    Let $ \varphi \in C^{1,2} (I \times \mathbb{R}^d) $ be bounded, with $ \nabla \varphi \in L^2 (dt \otimes \rho) $ and $ \mathcal{M} \varphi \in L^1 (dt \otimes \rho). $ Then, for every $ r \in I, $
    \begin{equation} \label{eq:energy_identity}
        2 \int_{t_0}^{r} \| \nabla \varphi (t, \cdot) \|^2_{\rho_t} dt = \| \varphi (r, \cdot) \|_{\rho_{r}}^2 - \| \varphi (t_0, \cdot) \|_{\rho_{t_0}}^2 - 2 \int_{t_0}^{r} \langle \varphi(t, \cdot), \mathcal{M} \varphi (t, \cdot) \rangle_{\rho_t} dt.
    \end{equation}
\end{theorem}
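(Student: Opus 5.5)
The plan is to apply It\^o's formula to $\varphi^2$ along the reverse process \eqref{eq:rev_denoising}, take expectations, and use that $\overleftarrow{X}_t \sim \rho_t$. For $f = \varphi^2$ we have $\mathcal{M}(\varphi^2) = 2\varphi\,\mathcal{M}\varphi + 2|\nabla\varphi|^2$. The cross term comes from $\Delta(\varphi^2) = 2\varphi\Delta\varphi + 2|\nabla\varphi|^2$, while $\partial_t$ and $b\cdot\nabla$ act as derivations. It\^o's rule, in the form recalled in Appendix \ref{app:notation}, then gives
\begin{equation*}
    d\,\varphi^2(t,\overleftarrow{X}_t) = \big(2\varphi\,\mathcal{M}\varphi + 2|\nabla\varphi|^2\big)(t,\overleftarrow{X}_t)\,dt + 2\sqrt{2}\,\varphi\nabla\varphi(t,\overleftarrow{X}_t)\cdot dB_t .
\end{equation*}
Integrating from $t_0$ to $r$ and taking expectations yields $\E\varphi^2(r,\overleftarrow{X}_r) - \E\varphi^2(t_0,\overleftarrow{X}_{t_0}) = 2\int_{t_0}^r \|\nabla\varphi\|^2_{\rho_t}dt + 2\int_{t_0}^r\langle\varphi,\mathcal{M}\varphi\rangle_{\rho_t}dt$, provided the stochastic integral has zero mean. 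Since $\mathrm{Law}(\overleftarrow{X}_t) = \rho_t$, the left side equals $\|\varphi(r,\cdot)\|^2_{\rho_r} - \|\varphi(t_0,\cdot)\|^2_{\rho_{t_0}}$, and rearranging gives \eqref{eq:energy_identity}. The Fubini exchanges are justified because $\nabla\varphi \in L^2(dt\otimes\rho)$, and because $\varphi$ is bounded and $\mathcal{M}\varphi \in L^1(dt\otimes\rho)$, so $\varphi\,\mathcal{M}\varphi \in L^1(dt\otimes\rho)$.

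The main obstacle is the martingale step, because $\varphi$ is only assumed to lie in $C^{1,2}$, not to have globally bounded derivatives, and the drift $b$ is unbounded. I will first check that the integrand is square integrable: $\E\int_{t_0}^r |\varphi\nabla\varphi|^2(t,\overleftarrow{X}_t)\,dt \le \|\varphi\|_\infty^2 \int_{t_0}^r\|\nabla\varphi\|^2_{\rho_t}dt < \infty$. This makes $\int \varphi\nabla\varphi\cdot dB$ an $L^2$ martingale with zero mean, and it needs only the stated hypotheses.

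To be safe about applying It\^o's formula itself, I will localize with the stopping times $\tau_k := \inf\{t\ge t_0 : |\overleftarrow{X}_t| \ge k\}\wedge r$. It\^o's formula for $C^{1,2}$ functions holds up to $\tau_k$, where all coefficients are bounded, and the stopped stochastic integral has zero mean. As $k\to\infty$, dominated convergence handles the drift integrals, using the integrability noted above with $|\varphi\,\mathcal{M}\varphi| + |\nabla\varphi|^2$ as dominating function along the path. For the endpoint term, $\E\varphi^2(\tau_k,\overleftarrow{X}_{\tau_k}) \to \E\varphi^2(r,\overleftarrow{X}_r)$ by boundedness of $\varphi$, continuity of paths, and $\tau_k \uparrow r$ almost surely, since the reverse SDE does not explode: it has the time-marginals $\rho_t$ and a locally Lipschitz drift by Proposition \ref{prop:derivative_bounds}.

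The last ingredient is the identification $\E F(t,\overleftarrow{X}_t) = \int F(t,\cdot)\,d\rho_t$ inside the time integrals. This is exactly the marginal property $\rho_t = p_{T-t}$ of the reversal, combined with Tonelli's theorem.
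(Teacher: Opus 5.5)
Your proposal is correct and follows essentially the same route as the paper: the pointwise identity $\mathcal{M}(\varphi^2) = 2\varphi\,\mathcal{M}\varphi + 2|\nabla\varphi|^2$, It\^o's formula along the reverse process, localization by exit times of balls so that the stopped stochastic integral has mean zero, then the limit (bounded convergence for the endpoint term, monotone/dominated convergence for the drift), and finally the identification $\mathrm{Law}(\overleftarrow{X}_t) = \rho_t$. Your extra remark is a small simplification the paper does not use: $\E\int_{t_0}^r |\varphi\nabla\varphi|^2(t,\overleftarrow{X}_t)\,dt \le \|\varphi\|_\infty^2 \int_{t_0}^r \|\nabla\varphi\|^2_{\rho_t}\,dt < \infty$ already makes the stochastic integral a true $L^2$ martingale. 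It does not change the argument.
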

\begin{proof}
    For $ \varphi \in C^{1,2} $ a direct computation gives the pointwise identity
    \begin{equation} \label{eq:m_phi_square}
        \mathcal{M} (\varphi^2) = 2 \varphi \, \mathcal{M} \varphi + 2 | \nabla \varphi |^2 .
    \end{equation}
    Indeed $ \partial_t (\varphi^2) = 2 \varphi \partial_t \varphi, \, b \cdot \nabla (\varphi^2) = 2 \varphi \, b \cdot \nabla \varphi, $ and $ \Delta (\varphi^2) = 2 \varphi \Delta \varphi + 2 | \nabla \varphi |^2. $ Summing and grouping the factor $ 2 \varphi $ against $ \mathcal{M} \varphi $ yields \eqref{eq:m_phi_square}, in which the term $ | \nabla \varphi |^2 $ is the carr\'e du champ $ \Gamma (\varphi, \varphi) $ of $ \mathcal{M} $ \citep[\S1.4.2]{bakry2014}. It\^o's formula for \eqref{eq:rev_denoising} applied to $ t \mapsto \varphi^2 (t, \overleftarrow{X}_t) $ then reads
    \begin{equation} \label{eq:d_phi_square}
        d [ \varphi^2 (t, \overleftarrow{X}_t) ] = ( 2 \varphi \mathcal{M} \varphi + 2 | \nabla \varphi |^2 ) (t, \overleftarrow{X}_t) dt + 2 \sqrt{2} (\varphi \nabla \varphi) (t, \overleftarrow{X}_t) \cdot d B_t.
    \end{equation}

    For $ R > 0 $ let $ \tau_R := \inf \{ t \geq t_0: | \overleftarrow{X}_t | \geq R \} \land r. $ On the stochastic interval $ [t_0, \tau_R] $ the state stays in the ball of radius $ R, $ on which the continuous integrand $ \varphi \nabla \varphi $ is bounded, hence $ \mathbb{E} \int_{t_0}^{\tau_R} | \varphi \nabla \varphi |^2 (t, \overleftarrow{X}_t) \, dt < \infty $ and the stopped stochastic integral $ \int_{t_0}^{\cdot \land \tau_R} \varphi \nabla \varphi \cdot dB $ is a continuous square-integrable martingale started at $ 0, $ so it has mean zero \citep[\S3.2]{karatzas1991}, see also \citep[Ch.~3]{oksendal2003}. Taking expectations in the stopped version of \eqref{eq:d_phi_square}, the martingale term drops:
    \begin{equation}
        \mathbb{E} [ \varphi^2 (\tau_R, \overleftarrow{X}_{\tau_R})] - \mathbb{E} [\varphi^2 (t_0, \overleftarrow{X}_{t_0})] = \mathbb{E} \int_{t_0}^{\tau_R} (2 \varphi \mathcal{M} \varphi + 2 | \nabla \varphi |^2) (t, \overleftarrow{X}_t) dt.
    \end{equation}
    Now let $ R \rightarrow \infty. $ Since $ | \overleftarrow{X}_t | < \infty $ a.s.\ on $ [t_0, r], $ $ \tau_R \uparrow r $ a.s. The left side converges by bounded convergence since $ \varphi $ is bounded. On the right, the $ 2 | \nabla \varphi |^2 \geq 0 $ contribution converges by monotone convergence and the $ 2 \varphi \mathcal{M} \varphi $ contribution by dominated convergence, because $ | \varphi | \leq \| \varphi \|_{\infty} $ and $ \mathcal{M} \varphi \in L^1 (dt \otimes \rho) $ make it absolutely integrable. Using $ \overleftarrow{X}_t \sim \rho_t $ to write $ \mathbb{E}[G(t, \overleftarrow{X}_t)] = \int G(t, x) \rho_t (dx) $ for the surviving deterministic integrands,
    \begin{equation}
        \| \varphi (r) \|^2_{\rho_{r}} - \| \varphi (t_0) \|^2_{\rho_{t_0}} = 2 \int_{t_0}^{r} \left( \langle \varphi, \mathcal{M} \varphi \rangle_{\rho_t} + \| \nabla \varphi \|^2_{\rho_t} \right) dt .
    \end{equation}
    Rearranging gives \eqref{eq:energy_identity}.
\end{proof}

We now show that the weak residual controls the gradient. The mechanism is to recognize the cross term in \eqref{eq:energy_identity} as the bilinear form $ a_t $ evaluated at the test function $ \psi = \varphi $ and to bound it by the $ H^{-1} $ norm of the residual times the $ H^1 $ norm of $ \varphi, $ absorbing the resulting gradient factor.

Throughout this section and the next, $ \nu $ is normalized Lebesgue measure on $ I, ~ \ell := t_1 - t_0, ~ \ell \leq 1 $ and 
\begin{equation}
    \omega(t) := \nu ([t, t_1]) = \frac{t_1 - t}{t_1 - t_0} \in [0,1], \quad \omega(t_0) = 1, ~ \omega(t_1) = 0, ~ \omega \geq \frac{1}{2} \text{ on } I\circ.
\end{equation}
The weight $ \omega $ is not a modeling choice. The energy identity of Theorem \ref{thm:energy_identity} holds on $ [t_0, r] $ for each fixed $ r $ and leaves a terminal term $ \| \varphi (r, \cdot) \|^2_{\rho_r}, $ a value error at a single instant, which the statistical analysis does not control -- it controls only the $\nu$-average $ \mathcal{D}(h). $ Averaging the identity over a random endpoint $ r \sim \nu $ turns that term into exactly $ \mathcal{D}(h), $ and exchanging the order of integration on the left produces $ \int_I f(t) \nu ([t, t_1]) dt. $ Thus $ \omega $ is the survival function of $ \nu, $ and its affine form is simply the uniformity of $ \nu. $ Its three relevant properties each do work below: $ \omega(t_1) = 0 $ removes a boundary term in the integration by parts of Theorem \ref{thm:coercivity}, $ \omega^2 \leq \omega $ allows the in-class term of Theorem \ref{thm:coercivity} to be absorbed into the same weighted energy that appears on the left, and $ \omega \geq \frac{1}{2} $ on $ I^\circ $ converts the weighted bound back into an unweighted one --which is the reason the guarantee is stated on the half-window rather that on $ I. $

For a space-time test function $ \psi $ we write $ A (h, \psi) := \int_I a_t (h, \psi(t, \cdot)) \nu (dt), $ so that $ \| \mathcal{R} [h] \|_{\mathcal{V}^*} = \underset{\psi \neq 0}{\sup} A (h, \psi) / \| \psi \|_{\mathcal{V}}. $

\begin{theorem}(Weighted energy identity) \label{thm:weighted_energy_identity}
    Under Assumptions \ref{ass:bounded}--\ref{ass:compact}, let $ h \in C^{1,2} (I \times \mathbb{R}^d) $ with $ | h | \leq 3 \bar{B} $ and $ \nabla \varphi \in L^2 (dt \otimes \rho) $ and 
    \begin{equation} \label{eq:wei_bound}
        \underset{I \times \mathbb{R}^d}{\sup} ( | \partial_t h | + | \nabla^2 h |) < \infty,
    \end{equation}
    and write $ \varphi := h - h^*. $ Then, with $ E_\omega := \int_I \omega (t) \| \nabla \varphi (t, \cdot) \|^2_{\rho_t} \nu (dt), $
    \begin{equation} \label{eq:weighted_energy_identity}
        2 \ell E_\omega + \| \varphi (t_0, \cdot ) \|^2_{\rho_{t_0}} = \mathcal{D}(h) - 2 \ell A (h, \omega \varphi).
    \end{equation}
\end{theorem}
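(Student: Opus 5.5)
Starting from the pointwise energy identity of Theorem \ref{thm:energy_identity} applied to $\varphi = h - h^\ast$, I will average over the right endpoint $r \sim \nu$ and then rewrite the cross term through the weak form. First I check the hypotheses of Theorem \ref{thm:energy_identity}. $\varphi$ is $C^{1,2}$ on $I\times\R^d$, since $h$ is by assumption and $h^\ast$ is by Proposition \ref{prop:derivative_bounds}. It is bounded by $4\bar B$, because $|h|\le 3\bar B$ and $0<h^\ast\le\bar B$, and $\nabla\varphi\in L^2(dt\otimes\rho)$ is assumed. It remains to show $\mathcal{M}\varphi = \mathcal{M}h \in L^1(dt\otimes\rho)$, using $\mathcal{M}h^\ast=0$ (Theorem \ref{thm:harmonicity}). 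By \eqref{eq:wei_bound}, $\partial_t h$ and $\Delta h$ are bounded. For $b\cdot\nabla h$, I split $b = (x+s)+s$ and bound $\nabla h$ via $\nabla h = \nabla\varphi + \nabla h^\ast$, with $|\nabla h^\ast|\le 2\sqrt d\bar B\sigma_\ast^{-2}$ by Proposition \ref{prop:derivative_bounds}. The drift factors $x+s$ and $s$ lie in $L^2(\rho_t)$ uniformly on $I$ by Lemma \ref{lem:score_identity}. Cauchy--Schwarz then gives $L^1$; alternatively, $\nabla^2 h$ bounded makes $\nabla h$ of linear growth, which also suffices against sub-Gaussian $\rho_t$.

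Next, for each $r\in I$, Theorem \ref{thm:energy_identity} gives
\[2\int_{t_0}^r \|\nabla\varphi\|^2_{\rho_t}dt = \|\varphi(r)\|^2_{\rho_r} - \|\varphi(t_0)\|^2_{\rho_{t_0}} - 2\int_{t_0}^r\langle\varphi,\mathcal{M}\varphi\rangle_{\rho_t}dt.\]
I integrate this against $\nu(dr)$ and use Fubini. The interchange is justified by the integrability just established, and the integrands in $t$ are $L^1$ uniformly in $r$. For any integrable $f$, $\int_I\int_{t_0}^r f(t)\,dt\,\nu(dr) = \int_I f(t)\,\nu([t,t_1])\,dt = \ell\int_I \omega f\,\nu(dt)$, because $dt = \ell\,\nu(dt)$. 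The terminal term averages exactly to $\mathcal{D}(h)$. The initial term is constant in $r$, and $\nu$ is a probability, so it survives unchanged. This yields
\[2\ell E_\omega + \|\varphi(t_0)\|^2 = \mathcal{D}(h) - 2\ell\int_I \langle \omega\varphi, \mathcal{M}\varphi\rangle_{\rho_t}\nu(dt).\]

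Finally I identify $\langle \omega(t)\varphi(t,\cdot), \mathcal{R}_t[h]\rangle_{\rho_t}$ with $a_t(h,\omega\varphi)$ by Lemma \ref{lem:weak_form}, applied with test function $\psi = \omega(t)\varphi(t,\cdot)$. The factor $\omega(t)$ is a scalar at fixed $t$ and pulls out of both sides. Integrating over $\nu$ gives $A(h,\omega\varphi)$ and hence \eqref{eq:weighted_energy_identity}.

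The main obstacle is checking the growth conditions in Lemma \ref{lem:weak_form}. These are $h,\nabla h$ of polynomial growth and $\psi=\omega\varphi\in H^1(\rho_t)$ of polynomial growth. $h$ and $\varphi$ are bounded. $\nabla h$ has at most linear growth since $\nabla^2 h$ is bounded, and $\nabla h^\ast$ is bounded, so $\nabla\varphi$ grows at most linearly and $\varphi\in H^1(\rho_t)$ for a.e.\ $t$. The boundary terms at infinity then vanish against the Gaussian tails of $\rho_t$. I would verify this by truncating on balls and letting the radius go to infinity, mirroring the localization in Theorem \ref{thm:energy_identity}.
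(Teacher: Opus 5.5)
Your proof is correct and follows the paper's route. You verify the hypotheses of Theorem \ref{thm:energy_identity} (boundedness by $4\bar B$, and $\mathcal{M}\varphi\in L^1(dt\otimes\rho)$ via the $L^2$ drift envelope and Cauchy--Schwarz), average over the endpoint $r\sim\nu$ so that the terminal term becomes $\mathcal{D}(h)$ and the weight $\omega$ appears, and identify the cross term through Lemma \ref{lem:weak_form}; the differences are cosmetic (you use $\mathcal{M}\varphi=\mathcal{M}h$ instead of bounding $\partial_t h^\ast$ separately, and apply the weak form with $\psi=\omega\varphi$ after averaging rather than with $\psi=\varphi$ before), and your explicit check of the growth conditions in Lemma \ref{lem:weak_form} is a detail the paper leaves implicit.
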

\begin{proof}
    The hypotheses of Theorem \ref{thm:energy_identity} hold. Boundedness is immediate $ | \varphi | \leq 4 \bar{B}. $ Let's split 
    \begin{equation}
        \mathcal{M} \varphi = (\partial_t h - \partial_t h^*) + b \cdot ( \nabla h - \nabla h^*) + (\Delta h - \Delta h^*)
    \end{equation}
    and bound the six terms. By \eqref{eq:wei_bound}, $ \partial_t h $ and $ \Delta h $ are bounded. By \eqref{eq:h_der_bounds}, $ | \nabla h^* | \leq 2 \sqrt{d} \bar{B} \sigma_*^{-2} $ and $ | \Delta h^* |  \leq d C_2 \bar{B} \sigma_*^{-4}, $ whence by harmonicity
    \begin{equation}
        | \partial_t h^* | = | b \cdot \nabla h^* + \Delta h^* | \leq 2 b \sqrt{d} \bar{B} \sigma_*^{-2} + d C_2 \bar{B} \sigma_*^{-4}.
    \end{equation}
    Finally, $ |b| \leq |x + s| + |s| $ lies in $ L_2 (\rho_t) $ uniformly on $ I $ by Lemma \ref{lem:score_identity} for the first summand and by $ |s| \leq \sigma^{-2}_{T-t} (|x| + \sqrt{d}) $ from \eqref{eq:score_identity_1} for the second, so Cauchy-Schwarz in $ dt \otimes \rho $ gives
    \begin{equation}
        \int_I \int |b| |\nabla h - \nabla h^*| \rho_t dt \leq \sqrt{\ell} \, \underset{t \in I}{\sup} \| | b| \|_{L^2 (\rho_t)} \cdot \| | \nabla \varphi | \|_{L^2 (dt \otimes \rho_t)} < \infty
    \end{equation}
    by the hypothesis $ \nabla \varphi \in L^2 (dt \otimes \rho_t). $ Every remaining term of \eqref{eq:weighted_energy_identity} is bounded by affine function of $ |b|, $ hence integrable against Gaussian-tailed $ \rho_t. $ Thus $ \mathcal{M} \varphi \in L^1(dt \otimes \rho_t). $ Its cross term is $ \langle \varphi, \mathcal{M} \varphi \rangle_{\rho_t} = a_t (h, \varphi) $ by Lemma \ref{lem:weak_form}. Integrating \eqref{eq:energy_identity} over the endpoint $ r $ against $ \nu $ and using $ \int_I ( \int_{t_0}^r f(t) dt ) \nu (dr) = \int_I f(t) \omega(t) dt = \ell \int_I f \omega \nu(dt) $ for $ f \in L^1(I), $ together with $ \int_I \| \varphi (r, \cdot) \|^2_{\rho_r} \nu(dr) = \mathcal{D}(h), $ gives \eqref{eq:weighted_energy_identity}. 
\end{proof}

Identity \eqref{eq:weighted_energy_identity} is exact. Two consequences follow, according to how the cross term $ A(h, \omega \varphi) $ is bounded.

\begin{corollary}(Coercivity, dual-norm form) \label{cor:coercivity}
    Under the hypotheses of Theorem \ref{thm:weighted_energy_identity}, for every $ \theta \in (0, 2 \ell) $
    \begin{equation} \label{eq:coercivity}
        E_\omega \leq \frac{1 + \theta}{2 \ell - \theta} \mathcal{D}(h) + \frac{\ell^2 / \theta}{2 \ell - \theta} \mathcal{P}^w (h),
    \end{equation}
    and in particular, taking $ \theta = \ell $ and using $ \omega \geq \frac{1}{2} $ on $ I^\circ $
    \begin{equation} \label{eq:coercivity_on_half}
        \int_{I^\circ} \| \nabla \varphi \|^2_{\rho_t} \nu (dt) \leq \frac{2 ( 1 + \ell)}{\ell} \mathcal{D}(h) + 2 \mathcal{P}^w (h).
    \end{equation}
\end{corollary}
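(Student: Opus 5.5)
The plan is to start from the weighted energy identity of Theorem \ref{thm:weighted_energy_identity}, which applies under the stated hypotheses:
\[
2\ell E_\omega + \|\varphi(t_0,\cdot)\|^2_{\rho_{t_0}} = \mathcal{D}(h) - 2\ell A(h,\omega\varphi).
\]
The second term on the left is nonnegative, so I drop it. This leaves $2\ell E_\omega \le \mathcal{D}(h) + 2\ell |A(h,\omega\varphi)|$. Everything then reduces to bounding the cross term $A(h,\omega\varphi)$ by a small multiple of $E_\omega$, plus $\mathcal{D}(h)$ and $\mathcal{P}^w(h)$.

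I would bound the cross term directly in the dual norm. By definition of $\|\cdot\|_{\mathcal{V}^*}$,
\[
|A(h,\omega\varphi)| \le \|\mathcal{R}[h]\|_{\mathcal{V}^*}\,\|\omega\varphi\|_{\mathcal{V}}.
\]
Here $\omega\varphi$ is an admissible test function. It lies in $\mathcal{V}$ because $\varphi$ is bounded and $\nabla\varphi \in L^2(dt\otimes\rho)$. It has polynomial growth, so Lemma \ref{lem:weak_form} applies for a.e. $t$.

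Since $\omega$ depends only on $t$ and $0\le\omega\le1$, we have $\omega^2\le\omega$. Hence
\[
\|\omega\varphi\|^2_{\mathcal{V}} = \int_I \omega^2\big(\|\varphi\|^2_{\rho_t} + \|\nabla\varphi\|^2_{\rho_t}\big)\,\nu(dt) \le \mathcal{D}(h) + E_\omega .
\]
This is the point where $\omega^2\le\omega$ lets the gradient part of the test norm land in the same weighted energy $E_\omega$ as the left side.

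Next I apply Young's inequality with parameter $\theta$:
\[
2\ell\,\|\mathcal{R}\|_{\mathcal{V}^*}\sqrt{\mathcal{D}+E_\omega} \le \frac{\ell^2}{\theta}\mathcal{P}^w(h) + \theta(\mathcal{D}+E_\omega).
\]
Moving $\theta E_\omega$ to the left gives $(2\ell-\theta)E_\omega \le (1+\theta)\mathcal{D}(h) + (\ell^2/\theta)\mathcal{P}^w(h)$. Dividing by $2\ell-\theta>0$ yields \eqref{eq:coercivity}.

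For \eqref{eq:coercivity_on_half}, set $\theta=\ell$. This gives
\[
E_\omega \le \frac{1+\ell}{\ell}\mathcal{D}(h) + \mathcal{P}^w(h).
\]
Then restrict the integral defining $E_\omega$ to $I^\circ$ and use $\omega\ge\frac12$ there. This gives $\int_{I^\circ}\|\nabla\varphi\|^2_{\rho_t}\,\nu(dt) \le 2E_\omega$, which is the claim.

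The only delicate point is justifying $\langle\mathcal{R}_t[h],\omega\varphi\rangle_{\rho_t} = a_t(h,\omega\varphi)$ and the dual pairing bound with $\psi=\omega\varphi$. This needs $\varphi$ to have polynomial growth, so that there are no boundary terms in the integration by parts. That holds because $|h|\le 3\bar B$, $h^*\le\bar B$, $\nabla h^*$ is bounded by \eqref{eq:h_der_bounds}, and the second derivatives of $h$ are bounded, so $\nabla h$ grows at most linearly. The rest is algebra.
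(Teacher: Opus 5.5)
Your proposal is correct and follows the paper's proof essentially step for step. You drop the nonnegative endpoint term and bound the cross term by $\|\mathcal{R}[h]\|_{\mathcal{V}^*}\|\omega\varphi\|_{\mathcal{V}}$. You then get $\|\omega\varphi\|_{\mathcal{V}}^2\le \mathcal{D}(h)+E_\omega$ from $\omega^2\le 1$ on the value part and $\omega^2\le\omega$ on the gradient part (you use the first only implicitly). Finally you absorb via Young's inequality with parameter $\theta$ and pass to $I^\circ$ using $\omega\ge\tfrac12$ there.
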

\begin{proof}
    Abbreviate $ E := E_\omega, ~ D := \mathcal{D}(h) $ and $ P := \mathcal{P}^w (h) = \| \mathcal{R}[h] \|^2_{\mathcal{V}^*}. $

    Step 1: Both terms on the left of \eqref{eq:weighted_energy_identity} are nonnegative, so dropping $ \| \varphi (t_0, \cdot) \|^2_{\rho_{t_0}} $ gives
    \begin{equation} \label{eq:energy_bound}
        2 \ell E \leq D + 2 \ell | A(h, \omega \varphi) |.
    \end{equation}
    Note that $ D $ enters here without a factor $ \ell, $ because it arises from the terminal value $ \| \varphi (r, \cdot) \|^2_{\rho_r} $ of the energy identity, which is averaged over the endpoint $ r $ and not integrated in $ t. $ Every contribution coming from the cross term, by contrast, carries $ \ell. $ 

    Step 2: The function $ \omega \varphi $ belongs to $ \mathcal{V}, $ so by the definition of $ \| \cdot \|_{\mathcal{V}^*} $
    \begin{equation}
        | A (h, \omega \varphi) | \leq \| \mathcal{R}[h] \|_{\mathcal{V}^*} \| \omega \varphi \|_{\mathcal{V}} = \sqrt{P} \| \omega \varphi \|_{\mathcal{V}}.
    \end{equation}

    Step 3: Since $ \omega $ depends only on $ t, $ we have $ \nabla (\omega \varphi) = \omega \nabla \varphi, $ and since $ 0 \leq \omega \leq 1 $ we have both $ \omega^2 \leq 1 $ and $ \omega^2 \leq \omega. $ Using the first on the value part and the second on the gradient part
    \begin{equation}
        \| \omega \varphi \|^2_{\mathcal{V}} = \int_I \omega^2 ( \| \varphi \|^2_{\rho_t} + \| \nabla \varphi \|^2_{\rho_t} ) \nu (dt) \leq \int_I \| \varphi \|^2_{\rho_t} \nu(dt) + \int_I w \| \nabla \varphi \|^2_{\rho_t} \nu (dt) = D + E.
    \end{equation}
    The second inequality is where $ \omega^2 \leq \omega $ is used: it produces the same weighted energy $ E $ that stands on the left of \eqref{eq:energy_bound}, which is what makes absorption possible.

    Step 4: Combining last steps and applying $ 2 a b \leq \theta a^2 + \theta^{-1} b^2 $ with $ a = \sqrt{D + E}, ~ b = \ell \sqrt{P} $ and any $ \theta > 0 $
    \begin{equation}
        2 \ell E \leq D + 2 \ell \sqrt{P} \sqrt{D + E} \leq D + \theta(D + E) + \frac{\ell^2}{\theta} P.
    \end{equation}
    Collecting the $ E $ terms, which requires $ \theta < 2 \ell $
    \begin{equation}
        (2 \ell - \theta) E \leq (1 + \theta) D + \frac{\ell^2}{\theta} P,
    \end{equation}
    which is \eqref{eq:coercivity}. Setting $ \theta = \ell $ gives $ \ell E \leq (1 + \ell) D + \ell P, $ i.e. $ E \leq \frac{1 + \ell}{\ell} D + P. $

    Step 5: For $ \delta \in (0, 1) $ put $ I_\delta := [t_0, t_1 - \delta \ell], $ on which $ \omega \geq \delta, $ so that
    \begin{equation} \label{eq:delta_coercivity}
        \int_{I_\delta} \| \nabla \varphi \|^2_{\rho_t} \nu(dt) \leq \frac{1}{\delta} \int_{I_\delta} \omega \| \nabla \varphi \|^2_{\rho_t} \nu(dt) \leq \frac{1}{\delta} E.
    \end{equation}
    The case $ \delta = 1/2, $ for which $ I_\delta = I^\circ $ and $ \delta^{-1} = 2, $ is \eqref{eq:coercivity_on_half}. Remark \ref{rem:sampling_window} explains how \eqref{eq:delta_coercivity} places the guarantee on a prescribed sampling window.
\end{proof}

The estimator, however, does not control $ \mathcal{P}^w (h) $ but only its projection onto the test class. The second consequence, proved in Section \ref{app:infsup}, is that with the test class chosen appropriately the projected quantity suffices, and no inf-sup constant is incurred. 

\begin{corollary}(Guidance control) \label{cor:guidance_control}
    Under the hypotheses of Theorem \ref{thm:weighted_energy_identity}, let $ \mathrm{b} \in (0, \bar{B}] $ be a clipping level and define the clipped, projected guidance
    \begin{equation} \label{eq:proj_guidance}
        \hat{g} := \Pi_{G^*} \left( \frac{\nabla h}{h \lor \mathrm{b}} \right), \quad \Pi_{G^*} (v) := v \min \{ 1, G^* / | v | \},
    \end{equation}
    with $ G^* = 2 \sqrt{d} \sigma_*^{-2} $ from Lemma \ref{lem:bounded_guidance}. Then
    \begin{equation} \label{eq:guidance_control}
        \| \hat{g} - g^* \|^2_{L^2 (\nu \otimes \rho; I^\circ)} \leq \frac{2}{\mathrm{b}^2} \int_{I^\circ} \| \nabla \varphi \|^2_{\rho_t} \nu (dt) + \frac{8 d \bar{B}^2}{\mathrm{b}^4} \sigma_*^{-4} \mathcal{D}(h) + 4 (G^*)^2 \Xi_\mathrm{s} \mathrm{b}^\mathrm{s}.
    \end{equation}
\end{corollary}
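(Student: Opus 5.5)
The plan is a pointwise estimate on $I^\circ\times\mathbb{R}^d$, split along whether $h^\ast$ lies above or below the clipping level $\mathrm{b}$, and then integrated against $\nu\otimes\rho$. The starting point is that $\Pi_{G^\ast}$ is the metric projection onto the closed Euclidean ball of radius $G^\ast$, hence $1$-Lipschitz. By Lemma \ref{lem:bounded_guidance} we have $|g^\ast|\le G^\ast$ on $(0,t_1]$, so $g^\ast=\Pi_{G^\ast}(g^\ast)$. Consequently, for every $v$,
\[
|\Pi_{G^\ast}(v)-g^\ast|\le |v-g^\ast|,
\]
and also $|\hat g-g^\ast|\le 2G^\ast$ trivially. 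We apply the first bound with $v:=\nabla h/(h\vee\mathrm{b})$.

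\textbf{Good set} $\{h^\ast\ge\mathrm{b}\}$. Here $h^\ast=h^\ast\vee\mathrm{b}$, and we decompose
\[
v-g^\ast=\frac{\nabla\varphi}{h\vee\mathrm{b}}+\nabla h^\ast\Big(\frac{1}{h\vee\mathrm{b}}-\frac{1}{h^\ast\vee\mathrm{b}}\Big).
\]
The first term is bounded in modulus by $|\nabla\varphi|/\mathrm{b}$. For the second, the map $u\mapsto u\vee\mathrm{b}$ is $1$-Lipschitz, so
\[
\Big|\frac{1}{h\vee\mathrm{b}}-\frac{1}{h^\ast\vee\mathrm{b}}\Big|\le \frac{|\varphi|}{\mathrm{b}^2}.
\]
Combined with $|\nabla h^\ast|\le 2\sqrt d\,\bar B\sigma_\ast^{-2}$, obtained from \eqref{eq:h_der_bounds} summed over coordinates with $\sigma_{T-t}\ge\sigma_\ast$ on $I$, the second term is at most $2\sqrt d\,\bar B\sigma_\ast^{-2}|\varphi|/\mathrm{b}^2$. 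Applying $(a+b)^2\le 2a^2+2b^2$ then gives
\[
|\hat g-g^\ast|^2\le \frac{2|\nabla\varphi|^2}{\mathrm{b}^2}+\frac{8d\bar B^2\sigma_\ast^{-4}|\varphi|^2}{\mathrm{b}^4}.
\]

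\textbf{Bad set} $\{h^\ast<\mathrm{b}\}$. Here we use only the trivial bound $|\hat g-g^\ast|^2\le 4(G^\ast)^2$. Lemma \ref{lem:negative_moments} gives $\rho_t(h^\ast<\mathrm{b})\le\Xi_{\mathrm s}\mathrm{b}^{\mathrm s}$ uniformly in $t$, so after integrating against the probability measure $\nu$ this region contributes at most $4(G^\ast)^2\Xi_{\mathrm s}\mathrm{b}^{\mathrm s}$. In the case $\mathrm s=\infty$ with $\mathrm b\le B$ this set is empty.

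\textbf{Integration.} Integrate the good-set bound over $I^\circ$ against $\nu\otimes\rho$. The gradient part gives the stated $2\mathrm{b}^{-2}\int_{I^\circ}\|\nabla\varphi\|^2_{\rho_t}\nu(dt)$. In the value part, enlarging $I^\circ$ to $I$ gives at most $8d\bar B^2\mathrm b^{-4}\sigma_\ast^{-4}\mathcal{D}(h)$. Adding the bad-set contribution yields \eqref{eq:guidance_control}.

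There is no serious obstacle. The only delicate points are that the clip $\vee\,\mathrm{b}$ must be placed in the reference term as well, so that the Lipschitz difference of reciprocals is controlled by $\mathrm b^{-2}$ rather than by $(h^\ast)^{-1}$. In addition, the projection has to be justified as non-expansive toward $g^\ast$, which is exactly where the uniform bound of Lemma \ref{lem:bounded_guidance} enters.
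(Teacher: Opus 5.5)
Your proposal is correct and follows essentially the same route as the paper: non-expansiveness of $\Pi_{G^\ast}$ toward $g^\ast$ via Lemma \ref{lem:bounded_guidance}, the split into $\{h^\ast\ge\mathrm b\}$ and $\{h^\ast<\mathrm b\}$, the same decomposition of the ratio error with the $\mathrm b^{-2}$ control of the reciprocal difference, and Lemma \ref{lem:negative_moments} on the bad set. The differences are cosmetic. You obtain $|h\vee\mathrm b-h^\ast|\le|\varphi|$ from the $1$-Lipschitz property of $u\mapsto u\vee\mathrm b$, where the paper checks two cases, and you take $|\nabla h^\ast|\le 2\sqrt d\,\bar B\sigma_\ast^{-2}$ from \eqref{eq:h_der_bounds} rather than from $\nabla h^\ast=h^\ast g^\ast$.
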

\begin{proof}
    By Lemma \ref{lem:bounded_guidance}, $ | g^* | \leq G^*, $ so $ \Pi_{G^*} (g^*) = g^* $ and $ \Pi_{G^*} $ is the Euclidean projection onto a convex set, hence 1-Lipschitz. Therefore $ | \hat{g} - g^* | \leq | \nabla h / (h \lor \mathrm{b} ) - g^* | $ pointwise, and it suffices to bound the right-hand side. We split according to the size of the true $ h^*. $

    On $ \{ h^* \geq \mathrm{b} \}. $ Put $ u := h \lor \mathrm{b} \geq \mathrm{b} $ and $ v := h^* \geq \mathrm{b}, $ so that
    \begin{equation}
        \frac{\nabla h}{u} - \frac{\nabla h^*}{v} = \frac{\nabla \varphi}{u} + \nabla h^* \frac{v - u}{u v}.
    \end{equation}
    Here $ | v - u | \leq | \varphi |: $ if $ h \geq \mathrm{b} $ then $ u = h $ and $ v - u = - \varphi. $ If $ h < \mathrm{b} $ then $ u = \mathrm{b} $ and $ 0 \leq v - u = h^* - \mathrm{b} \leq h^* - h = - \varphi. $ With $ | \nabla h^* | \leq 2 \sqrt{d} \bar{B} \sigma_*^{-2} $ from \eqref{eq:guidance_bound} and $ u, v \geq \mathrm{b} $
    \begin{equation}
        | \nabla h / (h \lor \mathrm{b}) - g^* | \leq \mathrm{b}^{-1} | \nabla \varphi | + 2 \sqrt{d} \bar{B} \mathrm{b}^{-2} \sigma_*^{-2} | \varphi |,
    \end{equation}
    and squaring with $ (a_1 + a_2)^2 \leq 2 a_1^2 + 2 a_2^2 $ and integrating over $ I^\circ $ gives the first two of \eqref{eq:guidance_control}.

    On $ \{ h^* < \mathrm{b} \}. $ Both $ \hat{g} $ and $ g^* $ lie in the ball of radius $ G^*, $ the former by construction and the latter by Lemma \ref{lem:bounded_guidance}, so $ | \hat{g} - g^* | \leq 2 G^* $ regardless of how small $ h^* $ is. The contribution is at most $ 4 (G^*)^2 \int_{I^\circ} \rho_t (h^* < \mathrm{b}) \nu(dt), $ which Lemma \ref{lem:negative_moments} bounds by $ 4 (G^*)^2 \Xi_s \mathrm{b}^s. $ 
\end{proof}

\section{The hypothesis classes} \label{app:classes}

Neither property requires $ \mathcal{H} $ or $ \Psi $ to be linear or convex. We record the second as a lemma. 

\begin{definition}(Network classes) \label{def:network_classes}
    Fix the evaluation domain and the spatial cube
    \begin{equation} \label{eq:Q_R_def}
        Q_R := \{ x \in \mathbb{R}^d: | x|_\infty \leq R \}, \quad Z := I \times Q_R, \quad R_* := \max (t_1, R).
    \end{equation}
    The sup-norm ball is the right choice, since it is exactly the range of the spatial clip $ \chi_R $ in Definition \ref{def:trial_test_classes}, so a constraint verified on $ Z $ is automatically global for clipped functions. Note that $ |x| \leq \sqrt{d} R $ on $ Q_R, $ and $ \| z \|_\infty \leq R_* $ for $ z = (t,x) \in Z. $ Let $ \varrho $ be an activation function. For $ W, L \in \mathbb{N} $ and $\Lambda, \mathcal{A} \geq 1 $ and $ P_0 \in \mathbb{N}, $ let $ \mathcal{NN}_\varrho (W, L, \Lambda, P_0, \mathcal{A}) $ be the set of functions
    \begin{equation} \label{eq:network_f}
        f_\vartheta (z) = A_L x^{(L)} (z) + b_L, ~ x^{(0)}(z) : = z, ~ x^{(\ell+1)}(z) := \varrho ( A_\ell x^{(\ell)}(z) + b_\ell) \quad (0 \leq \ell \leq L-1),  
    \end{equation}
    with parameter vector $ \vartheta := (A_0, b_0, \ldots, A_L, b_L) \in \mathbb{R}^P, $ subject to:
    \begin{enumerate}[label=(N\arabic*), leftmargin=2.8em]
        \item all hidden widths at most $ W, $ where $ W \geq d + 1, $ so that $ P \leq (L + 1) (W + 1)^2, $ \label{cond:hidden_widths}
        \item all entries of $ \vartheta $ bounded in modulus by $ \Lambda, $ and at most $ P_0 $ of them nonzero, \label{cond:vartheta_entries}
        \item all hidden activations bounded on the evaluation domain: $ \underset{z \in Z}{\sup} \| x^{(\ell)}(z) \|_\infty \leq \mathcal{A} $ for $ 1 \leq \ell \leq L. $ \label{cond:bounded_activations}
    \end{enumerate}
\end{definition}

Condition \ref{cond:bounded_activations} is a constraint on the realized network, not on its parameters. It is checkable and satisfied by construction we invoke and is what makes the entropy bound in Lemma \ref{lem:complexity} polylogarithmic in the parameters. Write $ \mathcal{A}_{\mathrm{max}} := \mathcal{A} \lor R_* $ and 
\begin{equation} \label{eq:mathrm_m_def}
    \mathrm{m} := 3 \Lambda W ( \mathcal{A}_{\mathrm{max}} + 1 ) \geq 3,
\end{equation}
a single quantity in terms of which all the constants below are expressed.

\begin{lemma}(Differences of networks are networks) \label{lem:networks_difference}
    Let $ \Theta : \R \to \R $ be bounded, write $ \norm{\Theta}_\infty := \sup_{u\in\R}|\Theta(u)|, $ and put $ \Theta\circ\mathcal{NN}_\varrho(W,L,\Lambda,P_0,\mathcal{A}) := \{\Theta\circ g : g\in\mathcal{NN}_\varrho(W,L,\Lambda,P_0,\mathcal{A})\}. $ Then, for $ \Lambda\geq1, $
    \begin{multline} \label{eq:difference}
        \Theta\circ\mathcal{NN}_\varrho(W,L,\Lambda,P_0,\mathcal{A}) \;-\; \Theta\circ\mathcal{NN}_\varrho(W,L,\Lambda,P_0,\mathcal{A}) \\
        \;\subseteq\; \mathcal{NN}_{\varrho,\Theta}\bigl( 2W,\, L+1,\, \Lambda,\, 2P_0+2,\, \mathcal{A}\vee\norm{\Theta}_\infty \bigr) ,
    \end{multline}
    where $ \mathcal{NN}_{\varrho,\Theta} $ denotes the same architecture with $ \Theta $ used as the activation of the last hidden layer.
\end{lemma}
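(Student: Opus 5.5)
The construction is explicit: given $g_1,g_2\in\mathcal{NN}_\varrho(W,L,\Lambda,P_0,\mathcal{A})$ with parameters $\vartheta^{(1)},\vartheta^{(2)}$, we build a single network of depth $L+1$ whose last hidden layer outputs $(\Theta(g_1(z)),\Theta(g_2(z)))$ and whose output layer computes their difference. We place the two networks in parallel: for $0\le \ell\le L-1$ we take block-diagonal weights $A_\ell=\mathrm{diag}(A^{(1)}_\ell,A^{(2)}_\ell)$ and stacked biases $b_\ell=(b^{(1)}_\ell,b^{(2)}_\ell)$. The input layer is the exception, since both networks read the same $z$, so there we stack vertically, $A_0=\binom{A^{(1)}_0}{A^{(2)}_0}$. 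By induction on $\ell$, the hidden state is then $x^{(\ell)}(z)=(x^{(1,\ell)}(z),x^{(2,\ell)}(z))$ for $1\le\ell\le L$, with width at most $2W$.

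Layer $L$ of the new network uses $A_L=\mathrm{diag}(A^{(1)}_L,A^{(2)}_L)$ and $b_L=(b^{(1)}_L,b^{(2)}_L)$, each block a row vector since the $g_i$ are scalar, followed by the activation $\Theta$. This produces the new hidden layer $x^{(L+1)}=(\Theta(g_1),\Theta(g_2))$ of width $2$. The output layer is $A_{L+1}=(1,-1)$ with $b_{L+1}=0$. The architecture therefore has $L+1$ hidden layers, $\varrho$ on the first $L$ and $\Theta$ on the last, which is exactly the $\mathcal{NN}_{\varrho,\Theta}$ class.

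It remains to verify the budget constraints (N1)--(N3).
\begin{itemize}
\item[(N1)] The hidden widths are at most $2W$, and $2W\ge d+1$ holds trivially.
\item[(N2)] Every entry is an entry of some $\vartheta^{(i)}$, or $0$, or $\pm1$. Since $\Lambda\ge1$, all entries are bounded by $\Lambda$. The nonzero count is at most $P_0+P_0$ from the two copies plus the $2$ new output entries, giving $2P_0+2$.
\item[(N3)] For $1\le\ell\le L$, $\|x^{(\ell)}(z)\|_\infty=\max_i\|x^{(i,\ell)}(z)\|_\infty\le\mathcal{A}$ on $Z$ by (N3) for each $g_i$. The new last hidden layer satisfies $\|x^{(L+1)}\|_\infty\le\|\Theta\|_\infty$. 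Together these give the bound $\mathcal{A}\vee\|\Theta\|_\infty$.
\end{itemize}

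The only point needing care, which I expect to be the main obstacle (mild), is bookkeeping. First, the shared input layer must be stacked rather than block-diagonal, so its sparsity count is still at most the sum of the two counts. Second, the original output layers $(A^{(i)}_L,b^{(i)}_L)$ become a hidden affine map feeding $\Theta$, so they must be counted once, not twice, in $2P_0$. Third, the "$+2$" accounts solely for the new $\pm1$ output weights.
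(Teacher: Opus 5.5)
Your proposal is correct and matches the paper's proof: you stack the two networks in parallel (vertically stacked input layer, block-diagonal hidden layers), turn the two scalar readouts into a width-two hidden layer with activation $\Theta$, read out with weights $(1,-1)$ and zero bias, and check (N1)--(N3) exactly as the paper does. Your sparsity count of $2P_0+2$ also matches the paper's: the original readouts are reused once inside the appended layer, and only the two new output weights are added.
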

\begin{proof}
    Let $ g_1, g_2 \in \mathcal{NN}_\varrho(W,L,\Lambda,P_0,\mathcal{A}) $ have parameters $ (A^{(i)}_\ell, b^{(i)}_\ell)_{0\leq\ell\leq L}. $ Build a new network whose hidden layers are the two originals stacked side by side: duplicate the input by $ \tilde A_0 := (A^{(1)}_0; A^{(2)}_0), $ and for $ 1\leq\ell\leq L-1 $ take block-diagonal weight matrices $ \tilde A_\ell := \operatorname{diag}(A^{(1)}_\ell, A^{(2)}_\ell) $ and stacked biases $ \tilde b_\ell := (b^{(1)}_\ell, b^{(2)}_\ell). $ Because $ \varrho $ acts coordinatewise, no information crosses between the two blocks, so after $ L $ hidden layers the two halves of the activation vector are the $ x^{(L)} $ of $ g_1 $ and of $ g_2. $

    Append one more layer, of width $ 2, $ with weights $ \operatorname{diag}(A^{(1)}_L, A^{(2)}_L) $ and biases $ (b^{(1)}_L, b^{(2)}_L) $ and with $ \Theta $ as its activation: its pre-activations are $ (g_1(z), g_2(z)) $ and its activations are $ (\Theta(g_1(z)), \Theta(g_2(z))). $ Read out with the output weights $ (1,-1) $ and zero bias. The result is $ \Theta(g_1(z)) - \Theta(g_2(z)), $ which is the general element of the left-hand side of \eqref{eq:difference}.

    It remains to check \textup{(N1)--(N3)} for the new network. \textup{(N1)}: the hidden widths of layers $ 1,\dots,L $ are at most $ 2W, $ and the appended layer has width $ 2\leq2W, $ the depth is $ L+1. $ \textup{(N2)}: every parameter used is a parameter of $ g_1 $ or of $ g_2, $ a structural zero, or one of the two output weights $ \pm1, $ so all entries are bounded in modulus by $ \Lambda\vee1 = \Lambda, $ and the number of nonzero entries is at most $ 2P_0 $ (those of $ g_1 $ and $ g_2, $ all of which are reused, the readout weights $ A^{(i)}_L $ and biases $ b^{(i)}_L $ having been moved into the appended layer) plus the two output weights, i.e.\ at most $ 2P_0+2. $ \textup{(N3)}: the activations of layers $ 1,\dots,L $ are those of $ g_1 $ and $ g_2, $ bounded by $ \mathcal{A} $ on $ Z. $ Those of the appended layer lie in the range of $ \Theta, $ hence are bounded by $ \norm{\Theta}_\infty $ everywhere. Since \textup{(N3)} is a single bound over all hidden layers, it holds with budget $ \mathcal{A}\vee\norm{\Theta}_\infty. $
\end{proof}

\begin{definition}(Trial and test classes) \label{def:trial_test_classes}
    Fix a resolution $ N, $ a time degree $ K, $ a truncation radius $ R $ and a weight budget $ \Lambda = \Lambda(N), $ a sparsity budget $ P_0 = P_0(N) $ and an activation budget $ \mathcal{A} = \mathcal{A}(N). $ Let $ \varrho $ be the rectified cubic $ \varrho(u) = (u \lor 0)^3, $ let $ \Theta \in C^\infty (\mathbb{R}, [- 2 \bar{B}, 3 \bar{B}]) $ be a fixed 1-Lipschitz retraction with $ \Theta = \mathrm{id} $ on $ [- \bar{B}, 2 \bar{B}] $ and bounded second derivative, $ \kappa_\Theta := \| \Theta'' \|_\infty < \infty, $ and set $ W_N \asymp K N^d $ with $ W_N \geq d + 1. $ Fix also a spatial clip $ \chi_R : \mathbb{R}^d \rightarrow \mathbb{R}^d, $ acting coordinatewise by a fixed $ C^\infty $ function that is the identity on $ [ - (R - 1), R - 1 ], $ has modulus at most $ R, $ derivative at most 1 in modulus and bounded second derivative. For a function $ f $ on $ I \times \mathbb{R}^d $ write
    \begin{equation}
        f^\chi (t, x) := f( t, \chi_R (x) ).
    \end{equation}
    Like the gate $ \omega, $ the clip is a fixed map with no free parameters. Define
    \begin{align}
        & \mathcal{H}_N := \{ (\Theta \circ g)^\chi : g \in \mathcal{NN}_\varrho (W_N, L, \Lambda, P_0, \mathcal{A}), ~ \underset{I \times \mathbb{R}^d}{\sup} | \nabla h | \leq G, ~ \underset{I \times \mathbb{R}^d}{\sup} | \partial_t h | \leq G_t \}, \label{eq:h_n_class} \\
        & \Psi_N := \{ c \, \omega \, \phi^\chi : c \in \mathbb{R}, ~ \phi \in \mathcal{NN}_{\varrho, \Theta} (2 W_N, L + 1, \Lambda, 2 P_0 + 2, \mathcal{A} \lor 3 \bar{B}), \nonumber \\
        & \qquad\qquad\qquad ~ \underset{I \times \mathbb{R}^d}{\sup} (| \phi^\chi | + | \nabla \phi^\chi | + | \partial_t \phi^\chi |) \leq \Gamma \}, \label{eq:psi_n_class}
    \end{align}
    the constraint in \eqref{eq:h_n_class} being on $ h = (\Theta \circ g)^\chi $ itself. The three constants are
    \begin{equation} \label{eq:three_constants}
        G := \frac{4 \sqrt{d} \bar{B}}{\sigma^2_*}, ~ G_t = \frac{C_t d \bar{B} (R + 1)}{\sigma^4_*}, ~ \Gamma := 6 \bar{B} + 2 G + 2 G_t + \frac{6 \bar{B}}{\ell},
    \end{equation}
    with $ C_t $ absolute.
\end{definition}

\begin{proposition}(The classes have properties \ref{property:cone}--\ref{property:absorption}) \label{prop:classes_props}
    With Definition \ref{def:trial_test_classes}:
    \begin{enumerate}[label=(\roman*), leftmargin=2.2em]
        \item $ \Psi_N $ is a symmetric cone, i.e. \ref{property:cone} holds,
        \item $ \omega \cdot (\mathcal{H}_N - \mathcal{H}_N) \subseteq \Psi_N, $ i.e. \ref{property:absorption} holds,
        \item $ \mathcal{H}_N \cup \Psi_N \subset C^{1,2} (I \times \mathbb{R}^d), $ every element has space-time derivatives up to second order bounded on all $ I \times \mathbb{R}^d, $ and in particular the qualitative condition \eqref{eq:wei_bound} holds at every $ h \in \mathcal{H}_N, $ so the hypotheses of Lemmas \ref{lem:weak_form}--\ref{lem:continuity} and Theorems \ref{thm:energy_identity} and \ref{thm:weighted_energy_identity} are met there,
        \item $ h^* $ satisfies both sup constraints in \eqref{eq:h_n_class} on $ Z, $ and $ \Theta \circ h^* = h^*, $
        \item every $ \varphi_1 \in \mathcal{H}_N - \mathcal{H}_N $ satisfies $ \underset{I \times \mathbb{R}^d}{\sup} ( | \omega \varphi_1 | + | \nabla ( \omega \varphi_1) | + | \partial_t (\omega \varphi_1 ) | ) \leq \Gamma, $ so the inclusion in (ii) lands inside the constrained cone \eqref{eq:psi_n_class}.
    \end{enumerate}
\end{proposition}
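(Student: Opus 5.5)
I would verify the five items in the order (iii), (iv), (v), (i), (ii), since (ii) relies on (v) and on Lemma \ref{lem:networks_difference}.

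\emph{Item (iii).} Regularity comes from the composition. The activation $\varrho(u)=(u\lor 0)^3$ is $C^2$ with locally bounded second derivative. Hence every network $g$ is $C^2$ in $z=(t,x)$, with derivatives up to order two bounded on the compact set $\bar Z$, a bound that depends on $\Lambda,W,L,\mathcal{A}$. The maps $\Theta$ and $\chi_R$ are smooth with bounded first and second derivatives, and $\chi_R$ takes values in $Q_R$. Therefore $(\Theta\circ g)^\chi$ only evaluates $g$ on $I\times Q_R$, and the chain rule shows that its derivatives up to order two are bounded on all of $I\times\mathbb{R}^d$. The same reasoning applies to $\phi^\chi$, whose last hidden activation is the smooth map $\Theta$, and multiplying by $c\,\omega$ with $\omega$ affine in $t$ preserves the property. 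Bounded derivatives give polynomial growth and condition \eqref{eq:wei_bound}, so Lemmas \ref{lem:weak_form}--\ref{lem:continuity} and Theorems \ref{thm:energy_identity} and \ref{thm:weighted_energy_identity} apply.

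\emph{Item (iv).} Proposition \ref{prop:derivative_bounds} gives $0<h^*\le\bar B$, so $h^*$ lies in the interval where $\Theta=\mathrm{id}$, and hence $\Theta\circ h^*=h^*$. For the gradient constraint, \eqref{eq:h_der_bounds} summed over coordinates gives $|\nabla h^*|\le 2\sqrt d\,\bar B\sigma_*^{-2}\le G$. For the time derivative I would use harmonicity: $|\partial_t h^*|\le |b||\nabla h^*|+|\Delta h^*|$. On $Q_R$ we have $|b|\le |x|+2|s|\lesssim \sqrt d (R+1)\sigma_*^{-2}$ by \eqref{eq:score_identity_1}. This bound together with $|\Delta h^*|\le dC_2\bar B\sigma_*^{-4}$ yields $|\partial_t h^*|\le C_t d\bar B(R+1)\sigma_*^{-4}=G_t$ once $C_t$ is taken large enough. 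Choosing this absolute constant is the only real choice in this item.

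\emph{Item (v).} Write $\varphi_1=h-h'$ with $h,h'\in\mathcal{H}_N$. Then $|\omega\varphi_1|\le 6\bar B$, since $|\Theta|\le 3\bar B$. Next, $|\nabla(\omega\varphi_1)|\le|\nabla h|+|\nabla h'|\le 2G$. Finally, $\partial_t(\omega\varphi_1)=\omega'\varphi_1+\omega\partial_t\varphi_1$ with $|\omega'|=1/\ell$, which gives a bound of $6\bar B/\ell+2G_t$. Summing the three bounds gives exactly $\Gamma$.

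\emph{Items (i) and (ii).} For (i), replacing $c$ by $-c$ or by $rc$ keeps the same $\phi$ and the same constraint, so $\Psi_N$ is a symmetric cone. For (ii), take $h=(\Theta\circ g_1)^\chi$ and $h'=(\Theta\circ g_2)^\chi$. Lemma \ref{lem:networks_difference} gives $\Theta\circ g_1-\Theta\circ g_2=\phi\in\mathcal{NN}_{\varrho,\Theta}(2W_N,L+1,\Lambda,2P_0+2,\mathcal{A}\vee\|\Theta\|_\infty)$, with $\|\Theta\|_\infty\le3\bar B$. Since the clip commutes with the difference, $\phi^\chi=h-h'$. Taking $c=1$ gives $\omega(h-h')=\omega\phi^\chi$.

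The main obstacle is the sup constraint in $\Psi_N$, which is stated for $\phi^\chi$ rather than for $\omega\phi^\chi$, whereas (v) bounds $\omega\varphi_1$. I would first check the unweighted version directly: $|\phi^\chi|+|\nabla\phi^\chi|+|\partial_t\phi^\chi|\le 6\bar B+2G+2G_t\le\Gamma$, which follows because the extra $6\bar B/\ell$ term only absorbs $\omega'$. This shows the constraint holds with room to spare. The same computation gives $\|\omega(h-h')\|_\bullet\le 1$, which is the claim $C_0=1$.
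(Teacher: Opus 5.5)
Your proof is correct and follows the paper's argument item by item: the chain rule through $\chi_R$ onto the compact $Z$ gives (iii), the Tweedie bounds plus harmonicity with $|b|\lesssim\sqrt d\,(R+1)\sigma_*^{-2}$ on $Q_R$ give (iv), Lemma \ref{lem:networks_difference} with $c=1$ gives (ii), and the same three-term estimate gives (v). Your separate check that the \emph{unweighted} sum satisfies $|\phi^\chi|+|\nabla\phi^\chi|+|\partial_t\phi^\chi|\le 6\bar B+2G+2G_t\le\Gamma$ is exactly what membership in \eqref{eq:psi_n_class} requires; the paper's item (v) states only the $\omega$-weighted bound, and because $\omega(t_1)=0$ that bound gives no control of $\varphi_1$ near $t_1$, so your version settles this step more precisely. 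One further note: the paper records the stronger slack $|\nabla h^*|\le G/2$ and $|\partial_t h^*|\le G_t/2$ in (iv). Your estimates give this slack after enlarging $C_t$, and Lemma \ref{lem:space_time_approx} later relies on it.
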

\begin{proof}
    (i) The scalar $ c $ in \eqref{eq:psi_n_class} ranges over all of $ \mathbb{R}, $ so $ \Psi_N $ is closed under multiplication by any real number, in particular under negation and under nonnegative scaling.

    (ii) The clip commutes with differences, $ ( \Theta \circ g_1 )^\chi - (\Theta \circ g_2)^\chi = ((\Theta \circ g_1) - (\Theta \circ g_2))^\chi, $ so it suffices to difference the un-clipped networks. By Lemma \ref{lem:networks_difference}, $ (\Theta \circ g_1) - (\Theta \circ g_2) \in \mathcal{NN}_{\varrho, \Theta} ( 2 W_N, L+1, \Lambda, 2 P_0 + 2, \mathcal{A} \lor 3 \bar{B} ) $ -- the constraints in \eqref{eq:h_n_class} only shrink the set being differed, so they do not affect the inclusion. Multiplying by $ \omega $ and taking $ c = 1 $ in \eqref{eq:psi_n_class} gives the claim. Note that $ \omega $ is a fixed function of $ t, $ not a parameter: the test architecture is "network, then multiply the output by $ \omega(t), $" a gate with no free parameters.

    (iii) $ \varrho (u) = (u \lor 0)^3 $ has $ \varrho'(u) = 3 (u \lor 0)^2 $ and $ \varrho''(u) = 6 (u \lor 0), $ both continuous, so $ \varrho \in C^2 $ and any finite composition of affine maps and coordinatewise $ \varrho $ is $ C^2 $ in $ (t,x). $ $ \Theta $ and $ \chi_R $ are $ C^\infty $ and $ \omega $ is affine, so composing, clipping and gating preserves $ C^2. $ In particular the classes are $ C^{1,2}, $ which is what Theorem \ref{thm:energy_identity} requires and what a $ C^1 $ activation such as the rectified quadratic would not give. For the boundedness claim, every $ h = (\Theta \circ g)^\chi $ factors through the clip, so each $ \underset{I \times \mathbb{R}^d}{\sup} | \partial^\alpha h| $ with $ | \alpha | \leq 2 $ is a supremum of derivatives of $ \Theta \circ g $ over the compact set $ Z, $ multiplied by fixed powers of $ \| D \chi_R \|_\infty \leq 1 $ and $ \| D^2 \chi_R \|_\infty < \infty, $ the derivatives of $ \Theta \circ g $ up to second order are continuous, because $ \varrho \in C^2 $ and $ \Theta \in C^\infty, $ hence bounded on the compact set $ Z. $ 

    (iv) All three constraints are verified on $ Z, $ which is what a clipped function needs. By \eqref{eq:h_der_bounds}, $ | \partial_{x_k} h^* | \leq 2 \bar{B} \sigma^{-2}_{T-t}, $ so $ | \nabla h^* | \leq 2 \sqrt{d} \bar{B} \sigma_*^{-2} = G / 2. $ For the time derivative, harmonicity \eqref{eq:harmonicity} gives $ \partial_t h^* = - b \cdot  \nabla h^* - \Delta h^*, $ and on $ Z $ we have $ |x| \leq \sqrt{d} R $ by \eqref{eq:Q_R_def}, so $ |b| = |x + 2s| \leq |x| + 2 \sigma^{-2}_{T-t} ( |x| + \sqrt{d}) \leq 3 \sqrt{d} \sigma_*^{-2} (R+1) $ by \eqref{eq:score_identity_1} and $ \sigma_* \leq 1, $ while $ | \Delta h^* | \leq d C_2 \bar{B} \sigma_*^{-4} $ by \eqref{eq:h_der_bounds}. Hence $ | \partial_t h^* | \leq 6 d \bar{B} \sigma_*^{-4} (R + 1) + d C_2 \bar{B} \sigma_*^{-4} \leq C d \bar{B} \sigma^{-4}_* (R + 1) = G_t / 2 $ for $ C_t $ large enough. Finally, $ 0 < h^* \leq \bar{B} $ lies in the range on which $ \Theta $ is the identity.

    (v) For $ h, h' \in \mathcal{H}_N $ and $ \varphi_1 = h - h', $ we have, on all of $ I \times \mathbb{R}^d $ now that \eqref{eq:h_n_class} is global, $ | \varphi_1 | \leq 6 \bar{B} $ (both lie in $ [- 2\bar{B}, 3 \bar{B}] $), $ | \nabla \varphi_1 | \leq 2 G $ and $ | \partial_t \varphi_1 | \leq 2 G_t. $ Since $ 0 \leq \omega \leq 1 $ and $ | \dot{\omega} | = \ell^{-1}, $ the function $ \omega \varphi_1 $ obeys $ | \omega \varphi_1 | \leq 6 \bar{B}, ~ | \nabla ( \omega \varphi) | = \omega | \nabla \varphi_1 | \leq 2 G $ and $ | \partial_t (\omega \varphi_1) | \leq 2 G_t + 6 \bar{B} / \ell, $ and the sum of the three is at most $ \Gamma $ by \eqref{eq:three_constants}.
\end{proof}

The statistical analysis measures a test function on two scales: its size in $ \mathcal{V}, $ and the size of the scalar that generates it. For $ \psi \in \Psi_N $ put
\begin{equation} \label{eq:bullet_norm}
    \| \psi \|_\bullet := \inf \{ | c | : \psi = c \, \omega \, \phi^\chi \text{ as in } \eqref{eq:psi_n_class} \},
\end{equation}
the cone scale of $ \psi. $ It is symmetric and positively homogeneous -- the only two properties used below -- and it dominates the supremum norm globally, $ \underset{I \times \mathbb{R}^d}{\sup} ( | \psi | + | \nabla \psi | ) \leq \Gamma \| \psi \|_\bullet, $ because every representation $ \psi = c \, \omega \, \phi^\chi $ gives $ | \psi | + | \nabla \psi | = | c | \, \omega \, ( | \phi^\chi | + | \nabla \phi^\chi | ) \leq \Gamma | c | $ by \eqref{eq:psi_n_class} and $ 0 \leq \omega \leq 1, $ so that the quantity
\begin{equation}
    \Gamma_\mathcal{V} : = \sup \{ \| \psi \|_\mathcal{V} : \psi \in \Psi_N, \| \psi \|_\bullet \leq 1 \}
\end{equation}
satisfies $ \Gamma_\mathcal{V} \leq \Gamma, ~ \rho_t $ being a probability measure. By parts (ii) and (v) of Proposition \ref{prop:classes_props}, every displacement obeys
\begin{equation} \label{eq:omega_phi_bullet_bound}
    \| \omega \varphi_1 \|_\bullet \leq 1, \quad \text{for all } \varphi_1 \in \mathcal{H}_N - \mathcal{H}_N,
\end{equation}
since $ c = 1 $ is an admissible representation. The two scales are genuinely different: under the mild conditions of Remark \ref{rem:v_ball} the ratio $ \sup ( | \psi | + | \nabla \psi | ) / \| \psi \|_\mathcal{V}, $ and a fortiori $ \| \psi \|_\bullet / \| \psi \|_\mathcal{V}, $ is unbounded on $ \Psi_N, $ a generator being able to be small in $ L^2 (\nu \otimes \rho) $ without being small in the supremum norm. This is why the deviation bounds of Section \ref{app:statistical} are two-norm bounds, why the estimator carries a ridge at the scale $ \| \cdot \|_\bullet, $ and why the complexity of the test class is measured in \eqref{eq:eff_dim} on the unit ball of $ \| \cdot \|_\bullet $ and not on that of $ \| \cdot \|_\mathcal{V}. $

\begin{remark}(Why the rectified cubic)
    Three requirements pull in different directions and $ \varrho (u) = ( u \lor 0)^3 $ meets all of them. The energy identity needs $ C^{1,2} $ trial functions, which excludes ReLU (not $ C^1 $) and the rectified quadratic (only $ C^1 $). Exact emulation of products is needed for the space-time construction of Lemma \ref{lem:space_time_approx}, and it is available here, since
    \begin{equation} \label{eq:multip_trick}
        (u+1)^3 - (u-1)^3 = 6 u^2 + 2, \quad u^3 = \varrho (u) - \varrho (-u),
    \end{equation}
    a two-node layer computes $ u \mapsto u^2 $ exactly, hence $ uv = \frac{1}{4} [(u+v)^2 - (u-v)^2] $ exactly, hence any polynomial exactly. Smooth activations such as tanh satisfy the first requirement but emulate products only approximately.
\end{remark}

The quantity that the statistical analysis consumes is the metric entropy of first-order jets. For a function $ F $ on $ I \times \mathbb{R}^d $ with values in a Euclidean space write $ \| F \|_\infty := \sup_{I \times \mathbb{R}^d} | F |, $ with $ | \cdot | $ the Euclidean norm, and put
\begin{align}
    & \Psi^\bullet_N := \{ \psi \in \Psi_N : \| \psi \|_\bullet \leq 1 \}, \nonumber \\
    & J \mathcal{H}_N := \{ (h, \partial_t h, \nabla h) : h \in \mathcal{H}_N \}, \qquad J \Psi^\bullet_N := \{ (\psi, \nabla \psi) : \psi \in \Psi^\bullet_N \}, \label{eq:jets}
\end{align}
\begin{equation} \label{eq:eff_dim}
    \mathbb{V}_N := 1 \lor \sup_{0 < \varepsilon \leq 1} \frac{\max \{ \log N (\varepsilon, J \mathcal{H}_N, \| \cdot \|_\infty), \, \log N (\varepsilon, J \Psi^\bullet_N, \| \cdot \|_\infty) \}}{\log (e / \varepsilon)},
\end{equation}
the normalization by $ \log (e / \varepsilon) $ being the one under which a $P$-dimensional parametric class has $ \mathbb{V}_N \asymp P. $ Three features of \eqref{eq:eff_dim} are used below. First, jets are covered jointly, because the summands of Section \ref{app:statistical} depend on $ (\partial_t h, \nabla h) $ and on $ (\psi, \nabla \psi) $ simultaneously, $ \partial_t h $ enters through the bilinear form and $ \nabla \psi $ because the adversary is normed in $ H^1. $ Second, no lower cutoff is imposed on $ \varepsilon, $ so that every scale at which an entropy integral is evaluated in Section \ref{app:statistical} is covered. Third, the free scalar $ c $ of \eqref{eq:psi_n_class} is removed by the cone scale: on $ \Psi^\bullet_N $ one has $ \sup ( | \psi | + | \nabla \psi | ) \leq \Gamma. $ The unit ball of $ \| \cdot \|_\mathcal{V} $ would define the same projected residual, since every ray of the cone meets both balls and $ \psi \mapsto A(h, \psi) / \| \psi \|_{\mathcal{V}, \tau} $ is invariant under positive scaling, but it is not a totally bounded index set: by Remark \ref{rem:v_ball} its gradients are in general unbounded in $ \| \cdot \|_\infty, $ so its covering numbers are infinite at every radius.

\begin{remark}(The unit $\mathcal{V}$-ball of $ \Psi_N $ is not totally bounded) \label{rem:v_ball}
    Suppose that $ \chi_R $ is nondecreasing and that $ R \geq 2, $ $ W_N \geq 3, $ $ 2 P_0 + 2 \geq 40 L + 10 $ and $ \mathcal{A} \geq (R + 1)^3. $ Then
    \begin{equation} \label{eq:v_ball_unbounded}
        \sup \{ \| \nabla \psi \|_\infty : \psi \in \Psi_N, \, \| \psi \|_{\mathcal{V}} \leq 1 \} = \infty.
    \end{equation}
    Consequently $ \{ \nabla \psi : \psi \in \Psi_N, \, \| \psi \|_{\mathcal{V}} \leq 1 \} $ has infinite covering numbers in $ \| \cdot \|_\infty $ at every radius, and, since $ \| \nabla \psi \|_\infty \leq \Gamma \| \psi \|_\bullet, $ the ratio $ \| \psi \|_\bullet / \| \psi \|_{\mathcal{V}} $ is unbounded on $ \Psi_N \setminus \{ 0 \}. $
\end{remark}
\begin{proof}
    Let $ B $ be the centred cardinal cubic B-spline, supported on $ [-2, 2], $ which satisfies $ 0 \leq B \leq 2/3, $ $ | B' | \leq 2/3 $ and $ B'(1) = -1/2, $ and put $ \kappa_B := \| B \|^2_{L^2 (\mathbb{R})} + \| B' \|^2_{L^2 (\mathbb{R})} < \infty. $ Its truncated-power representation gives, for $ \varepsilon > 0 $ and $ y \in \mathbb{R}, $
    \begin{equation*}
        g_\varepsilon (y) := \varepsilon^3 B (y / \varepsilon) = \sum_{k = -2}^{2} a_k \, \varrho (y - k \varepsilon), \qquad (a_{-2}, \ldots, a_2) := \tfrac{1}{6} (1, -4, 6, -4, 1).
    \end{equation*}
    Fix $ 0 < \varepsilon \leq \min \{ 1/4, (3 \bar{B})^{1/3}, \Gamma^{1/2} \}. $ Then $ 0 \leq g_\varepsilon \leq 2 \varepsilon^3 / 3 \leq \min \{ 1, 2 \bar{B} \}, $ $ \mathrm{supp} \, g_\varepsilon \subseteq [-2 \varepsilon, 2 \varepsilon] $ and $ \| g_\varepsilon \|_\infty + \| g'_\varepsilon \|_\infty \leq \tfrac{2}{3} (\varepsilon^3 + \varepsilon^2) \leq \varepsilon^2 \leq \Gamma. $

    Realization. We exhibit $ \phi_\varepsilon \in \mathcal{NN}_{\varrho, \Theta} (2 W_N, L + 1, \Lambda, 2 P_0 + 2, \mathcal{A} \lor 3 \bar{B}) $ with $ \phi_\varepsilon (t, x) = g_\varepsilon (x_1) $ on $ I \times \mathbb{R}^d. $ Hidden layer $ 1 $ consists of the five units $ \varrho (x_1 - k \varepsilon), \, | k | \leq 2, $ so that the affine combination $ \sum_k a_k \varrho (x_1 - k \varepsilon) $ of its outputs is $ u := g_\varepsilon (x_1). $ If $ L \geq 2, $ each of the hidden layers $ 2, \ldots, L $ consists of the six units $ \varrho (u + 1), \varrho (- u - 1), \varrho (u - 1), \varrho (1 - u), \varrho (u), \varrho (-u), $ whose pre-activations are affine combinations of the outputs of the previous layer, and $ u $ is recovered exactly from them by
    \begin{equation*}
        6 u = \varrho (u+1) - \varrho (-u-1) + \varrho (u-1) - \varrho (1-u) - 2 \varrho (u) + 2 \varrho (-u),
    \end{equation*}
    which follows from $ \varrho (v) - \varrho (-v) = v^3 $ and $ (u+1)^3 + (u-1)^3 - 2 u^3 = 6 u. $ Hidden layer $ L + 1 $ is the single unit $ \Theta (u), $ read out with weight $ 1 $ and bias $ 0. $ Since $ u \in [0, 2 \bar{B}] $ and $ \Theta = \mathrm{id} $ there, $ \phi_\varepsilon (t, x) = g_\varepsilon (x_1) $ everywhere. All entries lie in $ \{ 0, \pm 1/6, \pm 1/3, \pm 2/3, \pm 1, \pm \varepsilon, \pm 2 \varepsilon \} $ and are therefore bounded by $ 1 \leq \Lambda. $ Hidden widths are at most $ 6 \leq 2 W_N. $ The number of nonzero entries is $ 15 $ for $ L = 1 $ and $ 9 + 34 + 40 (L - 2) + 6 + 1 $ for $ L \geq 2, $ in both cases at most $ 40 L + 10 \leq 2 P_0 + 2, $ and on $ Z, $ where $ | x_1 | \leq R, $ the units of layer $ 1 $ take values in $ [0, (R + 1)^3], $ those of layers $ 2, \ldots, L $ in $ [0, 8] $ because $ 0 \leq u \leq 1, $ and the last one in $ [0, 2 \bar{B}], $ all within $ \mathcal{A} \lor 3 \bar{B}. $

    Clip and constraint. If $ | y | \leq R - 1 $ then $ \chi_R (y) = y. $ If $ y > R - 1 $ then $ y > 2 \varepsilon $ and, $ \chi_R $ being nondecreasing, $ \chi_R (y) \geq R - 1 > 2 \varepsilon, $ symmetrically for $ y < -(R-1). $ In all cases $ g_\varepsilon (\chi_R (y)) = g_\varepsilon (y), $ so $ \phi_\varepsilon^\chi (t, x) = g_\varepsilon (x_1) $ and $ | \phi^\chi_\varepsilon | + | \nabla \phi^\chi_\varepsilon | + | \partial_t \phi^\chi_\varepsilon | \leq \Gamma. $ Thus $ \psi_\varepsilon (t, x) := \omega(t) \, g_\varepsilon (x_1) $ belongs to $ \Psi_N, $ with $ c = 1. $

    Two norms. The $ x_1 $-marginal of $ \rho_t, $ the law of $ \mu_{T-t} X_0 + \sigma_{T-t} \xi $ with $ \xi \sim \mathcal{N}(0, I_d) $ independent of $ X_0, $ has a positive density bounded by $ (2 \pi)^{-1/2} \sigma_{T-t}^{-1} \leq (2 \pi)^{-1/2} \sigma_*^{-1} $ for $ t \in I. $ Since $ 0 \leq \omega \leq 1 $ and $ \varepsilon \leq 1, $
    \begin{equation*}
        0 < \| \psi_\varepsilon \|^2_{\mathcal{V}} = \int_I \omega^2 \left( \| g_\varepsilon (x_1) \|^2_{\rho_t} + \| g'_\varepsilon (x_1) \|^2_{\rho_t} \right) \nu(dt) \leq \frac{\varepsilon^7 \| B \|^2_{L^2 (\mathbb{R})} + \varepsilon^5 \| B' \|^2_{L^2 (\mathbb{R})}}{\sqrt{2 \pi} \, \sigma_*} \leq \frac{\kappa_B \, \varepsilon^5}{\sqrt{2 \pi} \, \sigma_*},
    \end{equation*}
    with positivity because $ g_\varepsilon \not\equiv 0 $ and $ \omega > 0 $ on $ [t_0, t_1). $ On the other hand $ \| \nabla \psi_\varepsilon \|_\infty \geq \omega (t_0) \, | g'_\varepsilon (\varepsilon) | = \varepsilon^2 | B'(1) | = \varepsilon^2 / 2. $ Hence $ \bar{\psi}_\varepsilon := \psi_\varepsilon / \| \psi_\varepsilon \|_{\mathcal{V}} \in \Psi_N $ has $ \| \bar{\psi}_\varepsilon \|_{\mathcal{V}} = 1 $ and
    \begin{equation*}
        \| \nabla \bar{\psi}_\varepsilon \|_\infty \geq \tfrac{1}{2} (2 \pi)^{1/4} \sigma_*^{1/2} \kappa_B^{-1/2} \, \varepsilon^{-1/2} \longrightarrow \infty \quad (\varepsilon \downarrow 0),
    \end{equation*}
    which is \eqref{eq:v_ball_unbounded}. The first consequence holds because a set covered by finitely many $ \| \cdot \|_\infty $-balls of finite radius is bounded, the second because $ \| \bar{\psi}_\varepsilon \|_\bullet \geq \Gamma^{-1} \| \nabla \bar{\psi}_\varepsilon \|_\infty $ by \eqref{eq:bullet_norm}.
\end{proof}

We first record three elementary facts about covering numbers. Throughout, for a subset $ S $ of a metric space $ (X, \mathrm{d}), $ $ N(\varepsilon, S, \mathrm{d}) $ denotes the minimal cardinality of an $\varepsilon$-net of $ S, $ that is, of a set $ \mathcal{C} \subseteq X $ such that every point of $ S $ lies at distance at most $ \varepsilon $ from $ \mathcal{C}, $ in particular $ N(\varepsilon, S, \mathrm{d}) \leq N(\varepsilon, S', \mathrm{d}) $ whenever $ S \subseteq S'. $

\begin{lemma}(Three covering facts) \label{lem:three_covering}
    \begin{enumerate}[label=(E\arabic*), leftmargin=2.8em]
        \item (Cube) For $ Q := [ - \Lambda, \Lambda]^m $ and $ 0 < \delta \leq \Lambda, $ one has $ N(\delta, Q, \| \cdot \|_\infty) \leq \lceil \Lambda / \delta \rceil^m \leq ( 2 \Lambda / \delta)^m, $ \label{cond:cube}
        \item (Internal nets) If $ S \subseteq X $ and $ N(\delta / 2, S, \mathrm{d}) < \infty, $ then $ S $ contains a $\delta$-net of $ S $ consisting of itself points and of cardinality at most $ N(\delta / 2, S, \mathrm{d}), $ \label{cond:internal_nets}
        \item (Sparse supports) For $ 1 \leq m \leq P, $ the number of subsets of $ \{ 1, \ldots, P \} $ of cardinality at most $ m $ is at most $ (e P / m)^m. $ \label{cond:sparse_supports}
    \end{enumerate}
\end{lemma}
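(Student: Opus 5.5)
The three facts are independent and elementary, so I would prove them one at a time, each by a direct construction or a counting argument.

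For \ref{cond:cube}, I would build an explicit net coordinatewise. I partition $[-\Lambda,\Lambda]$ into $k := \lceil \Lambda/\delta \rceil$ consecutive intervals of length $2\Lambda/k \leq 2\delta$ and take their midpoints. Every point of $[-\Lambda,\Lambda]$ is then within $\Lambda/k \leq \delta$ of some midpoint. The product of these midpoint grids is a $\delta$-net of $Q$ in $\|\cdot\|_\infty$, of cardinality $k^m$, since the sup norm decouples across coordinates. For the second inequality I use $\delta \leq \Lambda$, so that $\Lambda/\delta \geq 1$ and hence $\lceil \Lambda/\delta \rceil \leq \Lambda/\delta + 1 \leq 2\Lambda/\delta$.

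For \ref{cond:internal_nets}, I start from a minimal $\delta/2$-net $\mathcal{C}$ of $S$, possibly external, with $|\mathcal{C}| = N(\delta/2,S,\mathrm{d})$. Discarding the points $c \in \mathcal{C}$ whose closed $\delta/2$-ball misses $S$, I pick for each remaining $c$ one point $s_c \in S$ with $\mathrm{d}(s_c,c) \leq \delta/2$. Any $s \in S$ lies within $\delta/2$ of some $c$, and that $c$ was necessarily kept, so the triangle inequality gives $\mathrm{d}(s,s_c) \leq \delta$. The set $\{s_c\}$ is therefore an internal $\delta$-net with at most $|\mathcal{C}|$ elements. One technicality deserves a remark: if the infimum defining a net is attained only up to closed balls, the convention ``at distance at most $\varepsilon$'' in the paper makes the choice of $s_c$ possible without any limiting argument.

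For \ref{cond:sparse_supports}, I would bound $\sum_{j=0}^m \binom{P}{j}$ by the standard exponential trick. For $x := m/P \in (0,1]$,
\[
\sum_{j=0}^m \binom{P}{j} \leq x^{-m}\sum_{j=0}^P \binom{P}{j} x^j = x^{-m}(1+x)^P \leq (P/m)^m e^{m},
\]
which uses $(1+x)^P \leq e^{xP} = e^m$. The case $m = P$ is covered as well, since then the bound reads $e^P \geq 2^P$.

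None of the three steps is a real obstacle. The only points needing care are the rounding in \ref{cond:cube}, which is exactly where $\delta \leq \Lambda$ is used, and the inclusion of the empty set (the $j = 0$ term) in \ref{cond:sparse_supports}, which the generating-function bound already absorbs.
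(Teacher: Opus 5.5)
Your proofs of (E1) and (E3) follow the paper. (E1) uses the same midpoint grid together with $\lceil u\rceil \le 2u$ for $u \ge 1$. (E3) uses the same estimate $\sum_{j\le m}\binom{P}{j} \le x^{-m}(1+x)^P \le (eP/m)^m$ at $x = m/P$.

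For (E2) you take a different but equally valid route. You pull each center of a minimal $\delta/2$-net back to a point $s_c \in S$ in its closed $\delta/2$-ball, after discarding the centers whose ball misses $S$. The triangle inequality then finishes the argument.

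The paper argues by packing instead. Any $\delta$-separated subset of $S$ maps injectively into the $\delta/2$-net, since two points with a common image would be within $\delta$ of each other. Such subsets therefore have at most $N(\delta/2,S,\mathrm{d})$ points. A separated subset of maximal cardinality is then maximal under inclusion, and hence is a $\delta$-net.

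Your argument is shorter and needs no maximality step. The paper's also yields a packing bound and a net that is itself $\delta$-separated. Neither extra is used later: Lemma \ref{lem:complexity}(ii) and Lemma \ref{lem:entropy_index_classes} need only an internal net of the stated size. The paper in fact uses your pull-back construction itself later, in the proof of Lemma \ref{lem:critical_radius}, to bound internal covering numbers by external ones.
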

\begin{proof}
    \ref{cond:cube} Partition $ [ - \Lambda, \Lambda ] $ into $ \lceil \Lambda / \delta \rceil $ intervals of length at most $ 2 \delta $ and take their midpoints. The product grid has the stated cardinality and every point of $ Q $ lies within $ \delta $ of a grid point in each coordinate, hence within $ \delta $ in $ \| \cdot \|_\infty. $ The final inequality uses $ \lceil u \rceil \leq 2 u $ for $ u \geq 1. $

    \ref{cond:internal_nets} Call $ \mathcal{S} \subseteq S $ $\delta$-separated if distinct points of $ \mathcal{S} $ are at distance larger than $ \delta. $ Let $ \mathcal{C} $ be a $ (\delta / 2) $-net of $ S $ of minimal size and assign to each point of a $\delta$-separated $ \mathcal{S} $ a point of $ \mathcal{C} $ within $ \delta / 2 $ of it. The assignment is injective, since two points with the same image are within $ \delta $ of each other. Hence every $\delta$-separated subset of $ S $ has at most $ | \mathcal{C} | = N(\delta / 2, S, \mathrm{d}) $ points, and one of maximal cardinality is maximal under inclusion. By maximality no point of $ S $ is at distance larger than $ \delta $ from all of $ \mathcal{S}, $ which is exactly the $\delta$-net property.

    \ref{cond:sparse_supports} $ \sum_{k \leq m} \binom{P}{k} \leq (e P / m)^m. $ Indeed, for $ \lambda := m / P \leq 1, ~ \sum_{k \leq m} \binom{P}{k} \lambda^k \leq (1 + \lambda)^P \leq e^{\lambda P} = e^m, $ while every term on the left with $ k \leq m $ satisfies $ \binom{P}{k} \lambda^k \geq \binom{P}{k} \lambda^m. $ Dividing by $ \lambda^m $ gives the claim.
\end{proof}

Content of the next lemma is that the map from the parameters to functions is Lipschitz with a constant that is exponential in depth but polynomial in everything else. 

\begin{lemma}(Complexity) \label{lem:complexity}
    Let $ \mathcal{F} $ be either $ \mathcal{NN}_\varrho (W, L, \Lambda, P_0, \mathcal{A}) $ with $ \varrho (u) = (u \lor 0)^3, $ or $ \mathcal{NN}_{\varrho, \Theta} (W, L, \Lambda, P_0, \mathcal{A}) $ with $ \Theta $ as in Definition \ref{def:trial_test_classes}. Let $ \mathrm{m} $ be as in \eqref{eq:mathrm_m_def} in the first case and $ \mathrm{m} := 3 \Lambda W (\mathcal{A}_{\mathrm{max}} + 1) \lor \kappa_\Theta $ in the second, put $ \mathrm{L}_* := 2 L \mathrm{m}^{3 L + 1} $ and $ \mathrm{L}'_* := 12 L^2 \mathrm{m}^{9 L + 3}, $ and write $ \nabla_z := (\partial_t, \nabla_x) $ for the space-time gradient and $ | \cdot |_1, | \cdot | $ for the $ \ell^1 $ and the Euclidean norm. Then:
    \begin{enumerate}[label=(\roman*), leftmargin=2.2em]
        \item (Parameter Lipschitz bound) For any two parameter vectors $ \vartheta, \tilde{\vartheta} \in \mathbb{R}^P $ of elements of $ \mathcal{F}, $ realized in the common layout of the proof,
        \begin{equation} \label{eq:lipschitz_bound}
            \sup_{z \in Z} | f_\vartheta (z) - f_{\tilde{\vartheta}} (z) | \leq \mathrm{L}_* \| \vartheta - \tilde{\vartheta} \|_\infty, \qquad \sup_{z \in Z} | \nabla_z f_\vartheta (z) - \nabla_z f_{\tilde{\vartheta}} (z) |_1 \leq \mathrm{L}'_* \| \vartheta - \tilde{\vartheta} \|_\infty.
        \end{equation} \label{cond:lipschitz_bound}

        \item (Entropy of jets) Suppose $ 1 \leq P_0 \leq P. $ For every nonempty $ \mathcal{F}' \subseteq \mathcal{F} $ and $ 0 < \varepsilon \leq 1, $ the jet class $ J \mathcal{F}' := \{ (f, \nabla_z f) : f \in \mathcal{F}' \}, $ normed by $ \| F \|_{\infty, Z} := \sup_{z \in Z} | F(z) |, $ has an $\varepsilon$-net consisting of jets of elements of $ \mathcal{F}' $ and of cardinality at most $ (e P / P_0)^{P_0} (8 \Lambda \mathrm{L}'_* / \varepsilon)^{P_0}, $ in particular
        \begin{equation} \label{eq:entropy}
            \log N( \varepsilon, J \mathcal{F}', \| \cdot \|_{\infty, Z}) \leq P_0 \left[ \log \frac{e P}{P_0} + \log (8 \Lambda \mathrm{L}'_*) + \log \frac{1}{\varepsilon} \right].
        \end{equation} \label{cond:entropy}

        \item (Effective dimension) Let $ \mathcal{H}_N, \Psi_N $ be as in Definition \ref{def:trial_test_classes}, let $ P_{\mathcal{H}} $ be the number of entries of a parameter vector of $ \mathcal{NN}_\varrho (W_N, L, \Lambda, P_0, \mathcal{A}) $ in that layout, and assume $ 1 \leq P_0 \leq P_{\mathcal{H}}, $ a larger sparsity budget being no constraint. Put
        \begin{equation} \label{eq:bar_constants}
            \bar{P}_0 := 2 P_0 + 2, \quad \bar{P} := (L + 2) (2 W_N + 1)^2, \quad \bar{\mathrm{m}} := 6 \Lambda W_N \big( (\mathcal{A} \lor 3 \bar{B} \lor R_*) + 1 \big) \lor \kappa_\Theta.
        \end{equation}
        Then $ \mathbb{V}_N $ of \eqref{eq:eff_dim} satisfies
        \begin{equation} \label{eq:eff_dim_bound}
            \mathbb{V}_N \leq \log (1 + 4 \Gamma) + \bar{P}_0 \left[ \log \frac{e \bar{P}}{\bar{P}_0} + \log \big( 384 \Lambda (L + 1)^2 \big) + (9 L + 12) \log \bar{\mathrm{m}} \right].
        \end{equation}
        In particular, if $ P_0 \lesssim K N^d, \, W_N \lesssim K N^d, \, 2 \leq N \leq n, \, L \lesssim \log N $ and $ \Lambda, \mathcal{A} \leq \mathrm{poly}(n), $ then
        \begin{equation} \label{eq:eff_dim_bound_particular}
            \mathbb{V}_N \lesssim K N^d \log N \log n \lesssim N^d \mathrm{polylog}(n).
        \end{equation} \label{cond:eff_dim}
    \end{enumerate}
\end{lemma}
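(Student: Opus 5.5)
We prove the three parts in order, each feeding the next.

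For (i), we use a common layout. Every element of $\mathcal{F}$ is realized on the full (dense) architecture of width $W$ and depth $L$ (plus the $\Theta$ layer in the second case), with zeros in the unused slots. We then run a layerwise perturbation induction on $Z$. Write $\Delta_\ell := \sup_Z \|x^{(\ell)}_\vartheta - x^{(\ell)}_{\tilde\vartheta}\|_\infty$. On the interval $[-W\Lambda(\mathcal{A}_{\max}+1), W\Lambda(\mathcal{A}_{\max}+1)]$, where all pre-activations lie by (N3), the derivative of $\varrho$ is at most $3(W\Lambda(\mathcal{A}_{\max}+1))^2 \le \mathrm{m}^2$. Hence
$$\Delta_{\ell+1} \le \mathrm{m}^2\big(W\Lambda \Delta_\ell + W(\mathcal{A}_{\max}+1)\|\vartheta-\tilde\vartheta\|_\infty\big) \le \mathrm{m}^3(\Delta_\ell + \|\vartheta-\tilde\vartheta\|_\infty),$$
and unrolling gives $\Delta_L \le L\mathrm{m}^{3L}\|\vartheta-\tilde\vartheta\|_\infty$. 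The readout adds one factor of $\mathrm{m}$, which yields $\mathrm{L}_*$. For the gradient we carry the pair $(x^{(\ell)}, \nabla_z x^{(\ell)})$ through the induction. The forward-mode recursion is $\nabla x^{(\ell+1)} = \varrho'(\cdot)\, A_\ell \nabla x^{(\ell)}$. We first bound $\sup_Z |\nabla x^{(\ell)}|_1 \le \mathrm{m}^{3\ell}$ by the same induction. We then bound the perturbation of a product by the sum of the perturbations of its three factors. Here $\varrho'$ has Lipschitz constant $\varrho'' \le 6\,\mathrm{m}$ on the relevant interval, and the derivatives of $\Theta$ are handled through $\kappa_\Theta \le \mathrm{m}$. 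Each layer then contributes a factor of at most $\mathrm{m}^9$ and an additive term proportional to $\ell$, which gives $\mathrm{L}'_* = 12L^2\mathrm{m}^{9L+3}$. This exponent bookkeeping is the step I expect to be the main obstacle. The exact powers are unimportant, but making the constants close requires tracking both the growth of $\nabla x^{(\ell)}$ and the Lipschitz constant of $\varrho'$.

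For (ii), every parameter vector has at most $P_0$ nonzero entries. By (E3) there are at most $(eP/P_0)^{P_0}$ possible supports. On a fixed support, the parameters range over $[-\Lambda,\Lambda]^{P_0}$. We cover this cube by (E1) at scale $\varepsilon/(2\mathrm{L}'_*)$, so by (i) the resulting cube nets give a $\varepsilon/2$-net of the jets, with $\mathrm{L}'_*\ge \mathrm{L}_*$ controlling the value part as well. This net is not necessarily inside $J\mathcal{F}'$, because nearby parameters may violate (N3) or the sup constraints. We fix this with (E2), which turns it into an internal $\varepsilon$-net of size at most $N(\varepsilon/2)$. The factor $8$ absorbs the two halvings and $\lceil\cdot\rceil \le 2\cdot$.

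For (iii), we treat the two classes separately. For $J\mathcal{H}_N$, the map $g\mapsto (\Theta\circ g)^\chi$ and its jet are Lipschitz in the jet of $g$ on $Z$. The constants involve $\|\Theta'\|\le1$, $\kappa_\Theta$, $|D\chi_R|\le 1$, and $\sup|\nabla g|\le \mathrm{m}^{3L+1}$. Moreover, the global sup over $I\times\mathbb{R}^d$ equals the sup over $Z$ because of the clip. Hence (ii) applied to $\mathcal{NN}_\varrho$ gives a bound of the form \eqref{eq:entropy} with $\log \varepsilon^{-1}$ shifted by a term of order $L\log\bar{\mathrm m}$.

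For $J\Psi_N^\bullet$, we write $\psi=c\,\omega\,\phi^\chi$. The constraint $\|\psi\|_\bullet\le 1$ lets us take $|c|\le 1$ up to a limit; note that the infimum is approached, and we use a representation with $|c|\le 1+\eta$. The product of $\omega$ with the clip is again Lipschitz in the jet of $\phi$, with $\omega\le 1$. We then cover $c\in[-1,1]$ at scale $\varepsilon/(4\Gamma)$, which contributes the $\log(1+4\Gamma)$ term, and cover $\phi$ using (ii) on $\mathcal{NN}_{\varrho,\Theta}(2W_N,L+1,\ldots)$ with $\bar P_0,\bar P,\bar{\mathrm m}$.

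To conclude, we take the maximum of the two bounds and divide by $\log(e/\varepsilon)\ge \max(1,\log\varepsilon^{-1})$. This turns each $\log(\text{const}/\varepsilon)$ into a sum of constants, giving \eqref{eq:eff_dim_bound}. Finally, we insert the stated budgets: $\bar P_0\lesssim KN^d$, $L\lesssim\log N$, and $\log\bar{\mathrm m}\lesssim\log n$. This yields \eqref{eq:eff_dim_bound_particular}.
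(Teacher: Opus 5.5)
The gap is in (ii), in the order in which you use (i) and (E2). Part (i) compares two parameter vectors that \emph{both} realize elements of $\mathcal{F}$. Its proof needs the pre-activations of both networks to lie in $[-\mathrm{m},\mathrm{m}]$, which is where (N3) is invoked for $f_{\tilde\vartheta}$ as well as for $f_\vartheta$. Only on that interval are $|\sigma_\ell'|\le 3\mathrm{m}^2$ and the Lipschitz bound $6\mathrm{m}$ on $\sigma_\ell'$ available. A point of the (E1) grid on $[-\Lambda,\Lambda]^{P_0}$ is an arbitrary parameter vector and, as you observe yourself, may violate (N3). So (i) cannot be used to turn the external cube net into an $\varepsilon/2$-net of the jets, and (E2) in jet space then has no covering bound to start from.

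The fix is to internalize in parameter space \emph{before} applying (i). Let $\mathcal{P}'_S$ be the set of parameter vectors, in the common layout, of elements of $\mathcal{F}'$ supported in $S$. Since $\mathcal{P}'_S$ lies in a cube isometric to $[-\Lambda,\Lambda]^{P_0}$, (E1) gives $N(\delta/2,\mathcal{P}'_S,\|\cdot\|_\infty)\le(4\Lambda/\delta)^{P_0}$. Then (E2) yields a $\delta$-net $\mathcal{S}_S\subseteq\mathcal{P}'_S$ of that size. Every pair now consists of two elements of $\mathcal{F}'\subseteq\mathcal{F}$, so (i) applies. With $\delta:=\varepsilon/(2\mathrm{L}'_*)$ the jet distance is at most $(\mathrm{L}_*+\mathrm{L}'_*)\delta\le\varepsilon$, using $(8\Lambda\mathrm{L}'_*/\varepsilon)^{P_0}$ points per support. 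This is how the paper argues. It also corrects your scale bookkeeping: at parameter scale $\varepsilon/(2\mathrm{L}'_*)$ the jet error can reach $\varepsilon$, not $\varepsilon/2$.

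Parts (i) and (iii) are otherwise sound, with two differences from the paper.

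In (i), for the gradient you propagate $(x^{(\ell)},\nabla_z x^{(\ell)})$ forward, whereas the paper telescopes the product $A_L D_{L-1}A_{L-1}\cdots D_0A_0$. The ingredients are the same. In your recursion the multiplicative factor per layer is $\|\tilde D_\ell\tilde A_\ell\|\le 3\mathrm{m}^2\Lambda W\le\mathrm{m}^3$, not $\mathrm{m}^9$. So you land well below $\mathrm{L}'_*$, and the bookkeeping you flagged is not an obstacle.

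In (iii), you cover $J\mathcal{H}_N$ through the Lipschitz map $(g,\nabla_z g)\mapsto(\Theta(g),\Theta'(g)\nabla_z g)$ composed with the clip, at the cost of a factor $\lesssim 1+\kappa_\Theta\mathrm{m}^{3L+1}$. The paper instead embeds $\mathcal{H}_N$ into the test architecture $\mathcal{NN}_{\varrho,\Theta}(2W_N,L+1,\Lambda,\bar P_0,\mathcal{A}\vee 3\bar B)$ by turning the readout of $g$ into a width-one $\Theta$ layer. A single application of (ii) with $(\bar P_0,\bar P,\bar{\mathrm{m}})$ then covers both classes. Your route also reaches \eqref{eq:eff_dim_bound}: the extra $O(L\log\bar{\mathrm{m}})$ is absorbed because $P_0\le\bar P_0/2$ and $x\mapsto x\log(e\bar P/x)$ is increasing. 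You should write out that comparison, and also check that $\bar P_0$ does not exceed the parameter count of the test architecture, so that (ii) applies to it.
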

\begin{proof}
    \ref{cond:lipschitz_bound} Layout. Padding a hidden layer with a unit whose incoming weights, bias and outgoing weights vanish changes neither $ f_\vartheta $ nor \ref{cond:vartheta_entries}--\ref{cond:bounded_activations}, since $ \varrho(0) = \Theta(0) = 0. $ We therefore realize every element of $ \mathcal{F} $ with all hidden widths equal to $ W, $ so that $ A_0 \in \mathbb{R}^{W \times (d+1)}, $ $ A_\ell \in \mathbb{R}^{W \times W} $ for $ 1 \leq \ell \leq L - 1 $ and $ A_L \in \mathbb{R}^{1 \times W}, $ and a parameter vector is an element of $ \mathbb{R}^P $ with
    \begin{equation} \label{eq:layout_count}
        P = W (d + 2) + (L - 1) W (W + 1) + W + 1.
    \end{equation}
    Write $ \sigma_\ell $ for the activation producing $ x^{(\ell + 1)}, $ so that $ \sigma_\ell = \varrho $ except for $ \sigma_{L-1} = \Theta $ in the second case. The only properties of the activations used below are, for $ u, u' \in [ - \mathrm{m}, \mathrm{m} ], $
    \begin{equation} \label{eq:activation_bounds}
        | \sigma_\ell' (u) | \leq 3 \mathrm{m}^2, \qquad | \sigma_\ell' (u) - \sigma_\ell' (u') | \leq 6 \mathrm{m} | u - u' |.
    \end{equation}
    For $ \varrho $ they follow from $ \varrho'(u) = 3 (u \lor 0)^2 $ and $ \varrho''(u) = 6 (u \lor 0), $ and for $ \Theta $ from $ | \Theta' | \leq 1 \leq 3 \mathrm{m}^2 $ and $ | \Theta'' | \leq \kappa_\Theta \leq \mathrm{m}. $

    Write $ x^{(\ell)}, \tilde{x}^{(\ell)} $ for the hidden activations \eqref{eq:network_f} of $ f_\vartheta, f_{\tilde{\vartheta}}, $ set $ v^{(\ell)} := A_\ell x^{(\ell)} + b_\ell $ and $ \tilde{v}^{(\ell)} := \tilde{A}_\ell \tilde{x}^{(\ell)} + \tilde{b}_\ell $ and put
    \begin{equation}
        e_\ell := \underset{z \in Z}{\sup} \| x^{(\ell)} (z) - \tilde{x}^{(\ell)} (z) \|_\infty, \quad \delta := \| \vartheta - \tilde{\vartheta} \|_\infty.
    \end{equation}
    Since $ x^{(0)} = \tilde{x}^{(0)} = z, $ we have $ e_0 = 0. $

    First, the pre-activations are bounded. By \ref{cond:hidden_widths}-\ref{cond:bounded_activations}, each row of $ A_\ell $ has at most $ W $ entries of modulus at most $ \Lambda $ (for $ \ell = 0 $ because $ d + 1 \leq W $), and $ \| x^{(\ell)} \|_\infty \leq \mathcal{A}_{\mathrm{max}} $ for every $ \ell $ (including $ \ell = 0, $ where $ \| z \|_\infty \leq R_* $ ), so
    \begin{equation} \label{eq:v_ell_bound}
        \| v^{(\ell)} \|_\infty \leq \Lambda ( W \mathcal{A}_{\mathrm{max}} + 1) \leq \mathrm{m},
    \end{equation}
    and identically for $ \tilde{v}^{(\ell)}, $ since $ f_{\tilde{\vartheta}} $ also lies in $ \mathcal{F}. $ This is the step that \ref{cond:bounded_activations} exists to supply.

    Second, one layer of the recursion. Splitting the difference into a term from the activations and a term from the parameters
    \begin{equation}
        \| v^{(\ell)} - \tilde{v}^{(\ell)} \|_\infty \leq \| A_\ell ( x^{(\ell)} - \tilde{x}^{(\ell)} ) \|_\infty + \| (A_\ell - \tilde{A}_\ell )\tilde{x}^{(\ell)} \|_\infty + \| b_\ell - \tilde{b}_\ell \|_\infty \leq \Lambda W e_\ell + \delta (W \mathcal{A}_{\mathrm{max}} + 1).
    \end{equation}
    On the interval $ [- \mathrm{m}, \mathrm{m} ] $ containing both $ v^{(\ell)} $ and $ \tilde{v}^{(\ell)} $ by \eqref{eq:v_ell_bound}, the mean value theorem and \eqref{eq:activation_bounds} give $ | \sigma_\ell (u) - \sigma_\ell (u') | \leq 3 \mathrm{m}^2 | u - u'|. $ Hence
    \begin{equation}
        e_{\ell + 1} \leq 3 \mathrm{m}^2 \left[ \Lambda W e_\ell + \delta (W \mathcal{A}_{\mathrm{max}} + 1) \right] = \kappa e_\ell + c_0 \delta, \quad \kappa := 3 \mathrm{m}^2 \Lambda W, \quad c_0 := 3 \mathrm{m}^2 (W \mathcal{A}_{\mathrm{max}} + 1).
    \end{equation}
    Note $ \kappa \leq \mathrm{m}^3 $ and $ c_0 \leq \mathrm{m}^3, $ both by the definition of $ \mathrm{m} $, and $ \kappa \geq 1. $

    Third, unroll. From $ e_0 = 0 $ and $ e_{\ell + 1} \leq \kappa e_\ell + c_0 \delta $ one gets, for $ 1 \leq \ell \leq L, $ $ e_\ell \leq c_0 \delta \sum_{j = 0}^{\ell-1} \kappa^j \leq c_0 \delta \ell \kappa^{\ell-1} \leq \delta L \mathrm{m}^{3 L}. $ Finally the output layer contributes 
    \begin{equation}
        | f_\vartheta (z) - f_{\tilde{\vartheta}}(z) | \leq \Lambda W e_L + \delta (W \mathcal{A}_{\mathrm{max}} + 1) \leq \delta ( \mathrm{m} \cdot L \mathrm{m}^{3 L} + \mathrm{m}) \leq 2 L \mathrm{m}^{3 L + 1} \delta,
    \end{equation}
    which is the first bound in \eqref{eq:lipschitz_bound}.

    By the chain rule applied to \eqref{eq:network_f}, $ \nabla_z f_\vartheta = A_L D_{L-1} A_{L-1} \cdots D_0 A_0 $ with $ D_\ell := \mathrm{diag} (\sigma_\ell' (v^{(\ell)})). $ Each factor is bounded in the $ \ell_\infty \rightarrow \ell_\infty $ operator norm, the maximal absolute row sum: $ \| A_\ell \| \leq \Lambda W \leq \mathrm{m} $ and $ \| D_\ell \| \leq 3 \mathrm{m}^2 \leq \mathrm{m}^3 $ by \eqref{eq:v_ell_bound} and \eqref{eq:activation_bounds}. Each factor is also stable: $ \| A_\ell - \tilde{A}_\ell \| \leq W \delta \leq \mathrm{m} \delta, $ while \eqref{eq:activation_bounds} gives $ \| D_\ell - \tilde{D}_\ell \| \leq 6 \mathrm{m} \| v^{(\ell)} - \tilde{v}^{(\ell)} \|_\infty \leq 6 \mathrm{m} ( \Lambda W e_\ell + \delta \mathrm{m}) \leq 12 L \mathrm{m}^{3 L + 2} \delta \leq 4 L \mathrm{m}^{3 L + 3} \delta, $ the last step by $ \mathrm{m} \geq 3. $ Writing the product difference as a telescopic sum
    \begin{equation}
        \prod_{i=1}^{2L + 1} M_i - \prod_{i=1}^{2 L + 1} \tilde{M}_i = \sum_{i=1}^{2 L + 1} \left( \prod_{j < i} M_j \right) (M_i - \tilde{M}_i) \left( \prod_{j > i} \tilde{M}_j \right),
    \end{equation}
    and bounding every retained factor by $ \mathrm{m}^3 $ and every difference by $ 4 L \mathrm{m}^{3 L + 3} \delta, $ we obtain $ \| \nabla_z f_\vartheta - \nabla_z f_{\tilde{\vartheta}} \|_\infty \leq (2 L + 1) \mathrm{m}^{6 L} \cdot 4 L \mathrm{m}^{3 L + 3} \delta \leq 12 L^2 \mathrm{m}^{9 L + 3} \delta $ using $ 2 L + 1 \leq 3 L, $ at every $ z \in Z. $ The $ \ell_\infty \rightarrow \ell_\infty $ operator norm of a row vector is its $ \ell^1 $ norm, so this is the second bound in \eqref{eq:lipschitz_bound}. 

    \ref{cond:entropy} Let $ \mathcal{P}' $ be the set of parameter vectors, in the layout above, of elements of $ \mathcal{F}'. $ For $ S \subseteq \{ 1, \ldots, P \} $ with $ | S | = P_0 $ let $ \mathcal{P}'_S $ consist of those $ \vartheta \in \mathcal{P}' $ whose entries outside $ S $ vanish. By \ref{cond:vartheta_entries} and $ P_0 \leq P $ every element of $ \mathcal{P}' $ lies in some $ \mathcal{P}'_S, $ and $ \mathcal{P}'_S $ is contained in $ Q_S := \{ \vartheta : \vartheta_i = 0 \text{ for } i \notin S, \, \| \vartheta \|_\infty \leq \Lambda \}, $ which is isometric to $ [ - \Lambda, \Lambda ]^{P_0}. $ Put $ \delta := \varepsilon / (2 \mathrm{L}'_*), $ so that $ \delta / 2 \leq 1 \leq \Lambda. $ By \ref{cond:cube}, $ N(\delta / 2, \mathcal{P}'_S, \| \cdot \|_\infty) \leq N (\delta / 2, Q_S, \| \cdot \|_\infty) \leq (4 \Lambda / \delta)^{P_0}, $ so by \ref{cond:internal_nets} each nonempty $ \mathcal{P}'_S $ contains a $\delta$-net $ \mathcal{S}_S \subseteq \mathcal{P}'_S $ with $ | \mathcal{S}_S | \leq (4 \Lambda / \delta)^{P_0} = (8 \Lambda \mathrm{L}'_* / \varepsilon)^{P_0}. $ For $ \vartheta \in \mathcal{P}'_S $ and $ \tilde{\vartheta} \in \mathcal{S}_S $ with $ \| \vartheta - \tilde{\vartheta} \|_\infty \leq \delta, $ part \ref{cond:lipschitz_bound}, $ | \cdot | \leq | \cdot |_1 $ and $ \mathrm{L}_* \leq \mathrm{L}'_* $ give, for every $ z \in Z, $
    \begin{equation*}
        \big| (f_\vartheta, \nabla_z f_\vartheta)(z) - (f_{\tilde{\vartheta}}, \nabla_z f_{\tilde{\vartheta}})(z) \big| \leq | f_\vartheta (z) - f_{\tilde{\vartheta}} (z) | + | \nabla_z f_\vartheta (z) - \nabla_z f_{\tilde{\vartheta}} (z) |_1 \leq (\mathrm{L}_* + \mathrm{L}'_*) \delta \leq \varepsilon.
    \end{equation*}
    Hence the jets of $ f_{\tilde{\vartheta}}, \, \tilde{\vartheta} \in \bigcup_S \mathcal{S}_S, $ form an $\varepsilon$-net of $ J \mathcal{F}' $ consisting of jets of elements of $ \mathcal{F}'. $ By \ref{cond:sparse_supports} there are at most $ (e P / P_0)^{P_0} $ sets $ S, $ which gives the cardinality bound and, taking logarithms, \eqref{eq:entropy}.

    \ref{cond:eff_dim} Put $ \mathfrak{F} := \mathcal{NN}_{\varrho, \Theta} (2 W_N, L + 1, \Lambda, \bar{P}_0, \mathcal{A} \lor 3 \bar{B}), $ the architecture of $ \Psi_N. $ Its constant $ \mathrm{m} $ in part \ref{cond:lipschitz_bound} is $ \bar{\mathrm{m}} $ of \eqref{eq:bar_constants} and its depth is $ L + 1, $ so its constant $ \mathrm{L}'_* $ is $ \bar{\mathrm{L}}' := 12 (L + 1)^2 \bar{\mathrm{m}}^{9 L + 12}. $ By \eqref{eq:layout_count} and $ d + 1 \leq W_N, $ its number of parameter entries $ P_{\mathfrak{F}} $ satisfies
    \begin{equation*}
        P_{\mathfrak{F}} - (2 P_{\mathcal{H}} + 2) = 2 (L + 1) W_N^2 + 2 W_N - 3 > 0, \qquad P_{\mathfrak{F}} \leq (2 W_N + 1) \big( 2 (L + 1) W_N + 1 \big) \leq \bar{P}.
    \end{equation*}
    Hence $ 1 \leq \bar{P}_0 \leq P_{\mathfrak{F}}, $ part \ref{cond:entropy} applies to every nonempty subset of $ \mathfrak{F}, $ and since $ P \mapsto \log (e P / \bar{P}_0) $ is increasing, $ \bar{P} $ may replace $ P_{\mathfrak{F}} $ in \eqref{eq:entropy}. Fix $ 0 < \varepsilon \leq 1 $ and write $ \mathfrak{e}(\varepsilon) := \bar{P}_0 [ \log (e \bar{P} / \bar{P}_0) + \log (8 \Lambda \bar{\mathrm{L}}') + \log (1 / \varepsilon) ]. $

    Trial class. Every $ h \in \mathcal{H}_N $ equals $ f^\chi $ for some $ f \in \mathfrak{F}. $ Indeed, let $ h = (\Theta \circ g)^\chi $ with $ g = A_L x^{(L)} + b_L \in \mathcal{NN}_\varrho (W_N, L, \Lambda, P_0, \mathcal{A}). $ Turning the readout of $ g $ into a hidden layer of width one with activation $ \Theta, $ and reading this layer out with weight $ 1 $ and bias $ 0, $ gives a network that computes $ \Theta \circ g $ on all of $ I \times \mathbb{R}^d, $ has depth $ L + 1 $ and hidden widths at most $ W_N, $ entries bounded by $ \Lambda \geq 1, $ at most $ P_0 + 1 \leq \bar{P}_0 $ nonzero entries, and hidden activations bounded on $ Z $ by $ \mathcal{A} \lor 3 \bar{B}, $ as $ | \Theta | \leq 3 \bar{B}. $ Let $ \mathcal{F}_{\mathcal{H}} := \{ f \in \mathfrak{F} : f^\chi \in \mathcal{H}_N \}. $ For $ f \in \mathfrak{F} $ and $ (t, x) \in I \times \mathbb{R}^d $ the chain rule gives
    \begin{equation} \label{eq:clip_jet}
        (f^\chi, \partial_t f^\chi, \nabla f^\chi) (t, x) = \big( f, \partial_t f, D \chi_R (x)^\top \nabla_x f \big) (t, \chi_R (x)),
    \end{equation}
    where $ D \chi_R (x) $ is diagonal with entries in $ [-1, 1] $ and $ (t, \chi_R (x)) \in Z. $ Hence, for $ f, \tilde{f} \in \mathfrak{F}, $ $ \| J (f^\chi) - J (\tilde{f}^\chi) \|_\infty \leq \| (f, \nabla_z f) - (\tilde{f}, \nabla_z \tilde{f}) \|_{\infty, Z}, $ and part \ref{cond:entropy} applied to $ \mathcal{F}_{\mathcal{H}} $ yields an $\varepsilon$-net of $ J \mathcal{H}_N $ consisting of jets of elements of $ \mathcal{H}_N, $ so that $ \log N (\varepsilon, J \mathcal{H}_N, \| \cdot \|_\infty) \leq \mathfrak{e}(\varepsilon). $

    Generators. Let $ \Phi_N := \{ \phi^\chi : \phi \in \mathfrak{F}, \, \sup_{I \times \mathbb{R}^d} ( | \phi^\chi | + | \nabla \phi^\chi | + | \partial_t \phi^\chi | ) \leq \Gamma \}, $ so that $ \Psi_N = \{ c \, \omega \, u : c \in \mathbb{R}, \, u \in \Phi_N \} $ by \eqref{eq:psi_n_class}, and $ J \Phi_N := \{ (u, \nabla u) : u \in \Phi_N \}. $ Deleting the time component in \eqref{eq:clip_jet} does not increase distances, so the same argument gives $ \log N (\varepsilon, J \Phi_N, \| \cdot \|_\infty) \leq \mathfrak{e}(\varepsilon). $

    Unit ball of the cone scale. Let $ \psi \in \Psi^\bullet_N. $ By \eqref{eq:bullet_norm} there is a representation $ \psi = c \, \omega \, u $ with $ u \in \Phi_N $ and $ | c | \leq 2, $ and since $ \omega $ depends on $ t $ only, $ (\psi, \nabla \psi) = c \, \omega \, (u, \nabla u). $ Put $ \eta := \varepsilon / (2 \Gamma) $ and let $ \mathcal{C} $ consist of the points $ \max \{ -2, \min \{ 2, -2 + (2k - 1) \eta \} \}, \, 1 \leq k \leq \lceil 2 / \eta \rceil. $ Every point of $ [-2, 2] $ lies within $ \eta $ of $ \mathcal{C}, $ because the intervals $ [ -2 + 2 (k-1) \eta, -2 + 2 k \eta ] $ cover $ [-2, 2] $ and projecting onto $ [-2, 2] $ does not increase distances to its points, and $ | \mathcal{C} | \leq 1 + 2 / \eta = 1 + 4 \Gamma / \varepsilon. $ Let $ \mathcal{U} $ be an $ (\varepsilon / 4) $-net of $ J \Phi_N. $ If $ c' \in \mathcal{C} $ and $ U' \in \mathcal{U} $ satisfy $ | c - c' | \leq \eta $ and $ \| (u, \nabla u) - U' \|_\infty \leq \varepsilon / 4, $ then, using $ 0 \leq \omega \leq 1, $ $ | (u, \nabla u) | \leq | u | + | \nabla u | \leq \Gamma $ and $ | c' | \leq 2, $
    \begin{equation*}
        | c \, \omega \, (u, \nabla u) - c' \omega \, U' | \leq | c - c' | \, | (u, \nabla u) | + | c' | \, | (u, \nabla u) - U' | \leq \eta \Gamma + \varepsilon / 2 = \varepsilon
    \end{equation*}
    everywhere on $ I \times \mathbb{R}^d. $ Hence $ N (\varepsilon, J \Psi^\bullet_N, \| \cdot \|_\infty) \leq (1 + 4 \Gamma / \varepsilon) \, N (\varepsilon / 4, J \Phi_N, \| \cdot \|_\infty), $ and with $ \log (1 + 4 \Gamma / \varepsilon) \leq \log (1 + 4 \Gamma) + \log (1 / \varepsilon) $ for $ \varepsilon \leq 1, $
    \begin{equation*}
        \log N (\varepsilon, J \Psi^\bullet_N, \| \cdot \|_\infty) \leq \log (1 + 4 \Gamma) + \bar{P}_0 \left[ \log \frac{e \bar{P}}{\bar{P}_0} + \log (32 \Lambda \bar{\mathrm{L}}') \right] + (\bar{P}_0 + 1) \log \frac{1}{\varepsilon}.
    \end{equation*}

    Conclusion. With $ u := \log (1 / \varepsilon) \geq 0, $ both logarithms are at most $ a + (\bar{P}_0 + 1) u, $ where $ a := \log (1 + 4 \Gamma) + \bar{P}_0 [ \log (e \bar{P} / \bar{P}_0) + \log (32 \Lambda \bar{\mathrm{L}}') ]. $ The quotient $ (a + (\bar{P}_0 + 1) u) / (1 + u) $ is a convex combination of $ a $ and $ \bar{P}_0 + 1, $ and $ a \geq \bar{P}_0 (1 + \log 32) \geq \bar{P}_0 + 1 \geq 1, $ since $ \bar{P}_0 \leq \bar{P} $ and $ \Lambda, \bar{\mathrm{L}}' \geq 1. $ Therefore $ \mathbb{V}_N \leq a, $ and substituting $ \log \bar{\mathrm{L}}' = \log (12 (L + 1)^2) + (9 L + 12) \log \bar{\mathrm{m}} $ gives \eqref{eq:eff_dim_bound}.

    For \eqref{eq:eff_dim_bound_particular}, insert the stated magnitudes. Then $ \bar{P}_0 \lesssim K N^d, $ $ \log (e \bar{P} / \bar{P}_0) \leq \log (e \bar{P}) \lesssim \log (L K N^d) \lesssim \log n $ and $ \log (384 \Lambda (L + 1)^2) \lesssim \log n. $ Next, $ \log \bar{\mathrm{m}} \lesssim \log n, $ because $ \Lambda, W_N, \mathcal{A} $ are $ \mathrm{poly}(n) $ while $ \bar{B}, \kappa_\Theta $ are fixed and $ R_* = \mathrm{polylog}(n), $ and likewise $ \log (1 + 4 \Gamma) \lesssim \log n, $ since $ \Gamma $ depends on $ n $ only through $ R. $ As $ 9 L + 12 \leq 21 L $ and $ L \lesssim \log N $ with $ N \geq 2, $ the bracket in \eqref{eq:eff_dim_bound} is $ \lesssim \log N \log n, $ and multiplying by $ \bar{P}_0 $ gives $ \mathbb{V}_N \lesssim K N^d \log N \log n. $ With $ K \leq \sigma^{-c} (\log n)^{3/2} $ from \eqref{eq:R_K_choice} this is $ N^d \, \mathrm{polylog}(n). $
\end{proof}

\begin{remark}(The sparsity budget is attained)
    Equation \eqref{eq:eff_dim_bound_particular} rests on the sparsity budget $ P_0 \lesssim K N^d $ of \ref{cond:vartheta_entries}: a dense layer of width $ W_N \asymp K N^d $ carries $ \bar{P} \asymp K^2 N^{2d} $ parameters, and \eqref{eq:eff_dim_bound} would then give $ \mathbb{V}_N \asymp N^{2d} \mathrm{polylog}(n) $ and the exponent $ 2 (\beta - 1) / (2 (\beta - 1) + 2d) $ in Theorem \ref{thm:guidance_rate}. Two facts from \citep{guhring2021} help us. Proposition 4.8 there produces $ O(\varepsilon^{-d/(\beta - k)}) $ nonzero weights, which at the accuracy $ \varepsilon \asymp N^{-(\beta - k)} $ used in Lemma \ref{lem:simult_approx} is exactly $ O(N^d) $ per Chebyshev mode, hence $ O(N^d \mathrm{polylog}(n)) $ in total. Also Corollary 3.8 shows that any architecture with encodable weights achieving accuracy $ \varepsilon $ in $ W^{k, \infty} $ on the unit ball of $ W^{\beta, \infty} $ needs at least $ C \varepsilon^{-d /(\beta - k)} / \log_2 (1 / \varepsilon) $ weights.
\end{remark}

\section{The approximation floor} \label{app:approx}

Every constant in this section and the next depends on $ \beta. $ Write
\begin{equation} \label{eq:A_beta_C_beta_def}
    \mathrm{A}_\beta := C_\beta \bar{B} \sigma_*^{- 2 \beta} (2 R)^{\beta - 1}, \quad C_\beta := \mathfrak{B}_{\beta + 1} \beta! \, (2 \sec (\pi / 6))^{\beta + 1},
\end{equation}
with $ \mathfrak{B}_{\beta + 1} $ the Bell number. The factor $ C_\beta \sigma_*^{-2 \beta} $ is the derivative envelope of Step 1 of Lemma \ref{lem:space_time_approx}. The factor $ (2 R)^{\beta - 1} $ is the price of rescaling $ Q_R $ onto the unit cube in Step 3. Since $ R \asymp \sqrt{\log n} $ by \eqref{eq:R_K_choice}, the whole of $ \mathrm{A}_\beta $ is $ \mathrm{polylog}(n) $ at fixed $ \beta, $ but the bound grows factorially in $ \beta, $ see the remark on constants in Section \ref{sec:discussion}. We write 
\begin{equation} \label{eq:vareps_n_def}
    \varepsilon_N := \mathrm{A}_\beta N^{- (\beta - 1)}
\end{equation}
for the approximation scale, and from here on $ \mathrm{A}_\beta $ replaces the $ \sigma^{-c} $ used to this point wherever a $\beta$-dependent constant occurs. 

\begin{lemma}(Weak residual approximation) \label{lem:weak_res_approx}
    Suppose Assumptions \ref{ass:bounded}--\ref{ass:compact} hold and that $ h_N \in \mathcal{H}_N $ and $ e_N > 0 $ satisfy, with $ \varphi_N := h_N - h^*, $
    \begin{equation} \label{eq:approx_error_eN}
        \| \varphi_N \|_{L^4 (\nu \otimes \rho)} + \sup_{t \in I} \| \varphi_N (t, \cdot) \|_{\rho_t} + \| | \nabla \varphi_N | \|_{L^4 (\nu \otimes \rho)} + \| \partial_t \varphi_N \|_{L^2 (\nu \otimes \rho)} \leq e_N.
    \end{equation}
    Then $ \underset{h \in \mathcal{H}_N}{\inf} \mathcal{P}^w (h) \leq \mathcal{P}^w(h_N) \leq 4 (1 + M_4)^2 e_N^2 \leq C d \sigma_*^{-4} e_N^2. $
\end{lemma}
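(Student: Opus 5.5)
The first inequality holds trivially because $h_N \in \mathcal{H}_N$. For the second, I will bound $\mathcal{P}^w(h_N) = \int_I \| \mathcal{R}_t[h_N] \|^2_{H^{-1}(\rho_t)} \nu(dt)$ pointwise in $t$ with the dual-norm consequence of Lemma \ref{lem:continuity}, and then integrate. By Proposition \ref{prop:classes_props}(iii), $h_N$ is $C^{1,2}$ with bounded derivatives, so it is of polynomial growth. Hence Lemma \ref{lem:continuity} applies at each $t \in I$ and gives
\begin{equation*}
    \| \mathcal{R}_t[h_N] \|_{H^{-1}(\rho_t)} \leq \| \partial_t \varphi_N \|_{\rho_t} + \| \nabla \varphi_N \|_{\rho_t} + M_4 \| |\nabla \varphi_N| \|_{L^4(\rho_t)} .
\end{equation*}
Since $\rho_t$ is a probability measure, $\| \nabla \varphi_N \|_{\rho_t} \leq \| |\nabla \varphi_N| \|_{L^4(\rho_t)}$. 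The right-hand side is therefore at most $a_t + (1+M_4) b_t$, where $a_t := \| \partial_t \varphi_N \|_{\rho_t}$ and $b_t := \| |\nabla \varphi_N| \|_{L^4(\rho_t)}$.

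Next I square and integrate against $\nu$. Using $(a+c)^2 \leq 2a^2 + 2c^2$,
\begin{equation*}
    \mathcal{P}^w(h_N) \leq 2 \int_I a_t^2 \, \nu(dt) + 2 (1+M_4)^2 \int_I b_t^2 \, \nu(dt).
\end{equation*}
The first integral is $\| \partial_t \varphi_N \|^2_{L^2(\nu\otimes\rho)} \leq e_N^2$. For the second, Jensen (or Cauchy--Schwarz in $\nu$, which is a probability measure) gives $\int_I b_t^2 \, \nu(dt) \leq \big( \int_I b_t^4 \, \nu(dt) \big)^{1/2} = \| |\nabla \varphi_N| \|^2_{L^4(\nu\otimes\rho)} \leq e_N^2$. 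Combining the two and using $2 + 2(1+M_4)^2 \leq 4(1+M_4)^2$ yields $\mathcal{P}^w(h_N) \leq 4(1+M_4)^2 e_N^2$.

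The last inequality comes from Lemma \ref{lem:score_identity}, which gives $M_4 \leq C\sqrt d \sigma_*^{-2}$. Since $\sigma_* \leq 1$ and $d \geq 1$, this implies $(1+M_4)^2 \leq C' d \sigma_*^{-4}$.

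The only delicate point is checking that Lemma \ref{lem:continuity} is applicable. It needs the polynomial-growth and $C^{1,2}$ hypotheses on $h_N$, and it needs the reduction $a_t(h_N,\psi) = a_t(\varphi_N,\psi)$, which relies on $\nabla h^\ast$ being bounded (Proposition \ref{prop:derivative_bounds}). Everything else is just the passage from pointwise-in-$t$ $L^4$ norms to space-time $L^4$ norms, and that step goes through because $\nu$ is a probability measure.
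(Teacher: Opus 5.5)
Your proposal is correct and follows the paper's proof essentially step for step. It applies Lemma~\ref{lem:continuity} pointwise in $t$, uses $\|\cdot\|_{\rho_t}\le\|\cdot\|_{L^4(\rho_t)}$ and $(a+c)^2\le 2a^2+2c^2$, then Jensen/Cauchy--Schwarz in $\nu$ to pass to the space-time $L^4$ norm, and finally invokes $M_4\le C\sqrt d\,\sigma_*^{-2}$ from Lemma~\ref{lem:score_identity}.
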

\begin{proof}
    Lemma \ref{lem:continuity} bounds the dual norm of the residual by first-order quantities only: $ \| \mathcal{R}_t [h_N] \|_{H^{-1} (\rho_t)} \leq \| \partial_t \varphi_N \|_{\rho_t} + \| | \nabla \varphi_N | \|_{\rho_t} + M_4 \| | \nabla \varphi_N | \|_{L^4 (\rho_t)}. $ Since $ \rho_t $ is a probability measure, $ \| \cdot \|_{\rho_t} \leq \| \cdot \|_{L^4 (\rho_t)}, $ so the right-hand side is at most $ \| \partial_t \varphi_N \|_{\rho_t} + (1 + M_4) \| | \nabla \varphi_N | \|_{L^4 (\rho_t)}. $ Squaring, using $ (a + b)^2 \leq 2 a^2 + 2 b^2 $ and integrating against $ \nu, $
    \begin{equation*}
        \mathcal{P}^w (h_N) \leq 2 \| \partial_t \varphi_N \|^2_{L^2 (\nu \otimes \rho)} + 2 (1 + M_4)^2 \int_I \| | \nabla \varphi_N | \|^2_{L^4 (\rho_t)} \nu (dt).
    \end{equation*}
    By Jensen's inequality for the concave map $ \sqrt{\cdot}, $ $ \int_I ( \int | \nabla \varphi_N |^4 \rho_t )^{1/2} \nu (dt) \leq \| | \nabla \varphi_N | \|^2_{L^4 (\nu \otimes \rho)}. $ Hence $ \mathcal{P}^w (h_N) \leq 2 e_N^2 + 2 (1 + M_4)^2 e_N^2 \leq 4 (1 + M_4)^2 e_N^2, $ and $ M_4 \leq C \sqrt{d} \sigma^{-2}_* $ by Lemma \ref{lem:score_identity}.
\end{proof}

It remains to exhibit an $ h_N \in \mathcal{H}_N $ satisfying \eqref{eq:approx_error_eN} with $ e_N $ of order $ \mathrm{A}_\beta N^{-(\beta - 1)} $ up to polylogarithmic factors. The construction separates the two variables -- Chebyshev truncation in $ t $ and spline quasi-interpolation in $ x. $

 We will use fact that a function analytic in a neighbourhood of an interval has geometrically decaying Chebyshev coefficients. For $ \mathrm{r} > 1 $ the Bernstein ellipse $ E_{\mathrm{r}}  \subset \mathbb{C} $ is the image of the circle $ \{ | w | = \mathrm{r} \} $ under the map $ w \mapsto \frac{1}{2} (w + w^{-1}). $ It is the open ellipse with foci $ \pm 1, $ semi-axes
 \begin{equation} \label{eq:chebyshev_coeffs}
     a_{\mathrm{r}} = \frac{1}{2} (\mathrm{r} + \mathrm{r}^{-1}), \quad b_{\mathrm{r}} = \frac{1}{2} (\mathrm{r} - \mathrm{r}^{-1}),
 \end{equation}
 so that $  E_{\mathrm{r}} $ shrinks to $ [-1, 1] $ as $ \mathrm{r} \downarrow 1. $ Write $ T_j $ for the Chebyshev polynomials on $ [-1, 1] $ and, for $ g: [-1, 1] \rightarrow \mathbb{R} $ integrable against the Chebyshev weight, $ a_j (g) := \frac{2}{\pi} \int_{-1}^1 g (\theta) T_j (
 \theta) (1 - \theta^2)^{-1/2} d \theta $ for $ j \geq 1 $ (and half that for $ j = 0 $).

 \begin{lemma}(Chebyshev truncation on a strip) \label{lem:chebyshev}
     Let $ J := [t_0, t_1] $ with $ \ell = t_1 - t_0, $ let $ \iota (\theta) := t_0 + \frac{\ell}{2} (\theta + 1) $ map $ [-1, 1] $ onto $ J, $ and for $ \eta > 0 $ let $ S_\eta := \{ z \in \mathbb{C}: \mathrm{dist}(z, J) < \eta \} $ be the $\eta$-neighbourhood of J. Suppose $ g $ is holomorphic on $ S_\eta $ with $ | g | \leq M $ there. Then:
     \begin{enumerate}[label=(\roman*),leftmargin=2.2em]
         \item $ \iota^{-1} (S_\eta) $ contains the Bernstein ellipse $ E_{\mathrm{r}} $ for every $ \mathrm{r} \leq 1 + 2 \eta / \ell, $
         \item the Chebyshev coefficients of $ g \circ \iota $ obey $ | a_j (g \circ \iota) | \leq 2 M \mathrm{r}^{-j}, $
         \item for every $ K \geq 1, $ writing $ \Pi_K g := \sum_{j \leq K} a_j (g \circ \iota) T_j \circ \iota^{-1}, $
         \begin{equation} \label{eq:chebyshev_bounds}
             \underset{t \in J}{\sup} \left| g(t) - \Pi_K g(t) \right| \leq \frac{2 M \mathrm{r}^{-K}}{\mathrm{r} - 1}, \quad \underset{t \in J}{\sup} \left| g'(t) - ( \Pi_K g)' (t) \right| \leq \frac{2}{\ell} \cdot \frac{10 M (K+1)^2 \, \mathrm{r}^{2-K}}{(\mathrm{r} - 1)^3}.
         \end{equation}
     \end{enumerate}
 \end{lemma}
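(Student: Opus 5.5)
The plan is to reduce everything to the classical theory of Chebyshev expansions of functions analytic in a Bernstein ellipse, transported to $J$ by the affine map $\iota$. The three parts are proved in order: a geometric containment, a contour-integral bound on the coefficients, and a summation of tails using $|T_j| \leq 1$ and $|T_j'| \leq j^2$ on $[-1,1]$.

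\emph{Step (i): containment.} Since $\iota$ scales distances by $\ell/2$ and maps $[-1,1]$ onto $J$, it suffices to show that every $z \in E_{\mathrm{r}}$ satisfies $\mathrm{dist}(z,[-1,1]) < b_{\mathrm{r}}$. Then use $b_{\mathrm{r}} = \tfrac12(\mathrm{r} - \mathrm{r}^{-1}) \leq \mathrm{r} - 1 \leq 2\eta/\ell$, so that $\iota(E_{\mathrm{r}}) \subseteq S_\eta$. The distance bound reduces to points on the boundary ellipse $(a_{\mathrm{r}}\cos\theta, b_{\mathrm{r}}\sin\theta)$, split into two cases.
\begin{itemize}
\item If $|a_{\mathrm{r}}\cos\theta| \leq 1$, the vertical drop to the segment is $b_{\mathrm{r}}|\sin\theta| \leq b_{\mathrm{r}}$.
\item Otherwise, say $a_{\mathrm{r}}\cos\theta > 1$. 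The focal-radius formula for an ellipse with foci $\pm1$ (eccentricity $1/a_{\mathrm{r}}$) gives distance $a_{\mathrm{r}} - \cos\theta$ to the focus $1$. This is less than $a_{\mathrm{r}} - a_{\mathrm{r}}^{-1} = b_{\mathrm{r}}^2/a_{\mathrm{r}} \leq b_{\mathrm{r}}$, using $a_{\mathrm{r}}^2 - b_{\mathrm{r}}^2 = 1$.
\end{itemize}
Openness of $E_{\mathrm{r}}$ handles strictness at the boundary.

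\emph{Step (ii): coefficients.} Put $G := g\circ\iota$, holomorphic and bounded by $M$ on $E_{\mathrm{r}}$. The function $F(w) := G(\tfrac12(w + w^{-1}))$ is holomorphic on the annulus $\mathrm{r}^{-1} < |w| < \mathrm{r}$ and symmetric under $w \mapsto w^{-1}$. The substitution $\theta = \cos\vartheta$ turns $a_j(G)$ into the Laurent coefficient $\tfrac{1}{\pi i}\oint_{|w| = \varrho} F(w)\, w^{-j-1}\,dw$ (halved for $j = 0$). Shifting the contour to any $\varrho < \mathrm{r}$ gives $|a_j| \leq 2M\varrho^{-j}$, and letting $\varrho \uparrow \mathrm{r}$ gives the claim. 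The only care needed is that $|G| \leq M$ holds on the open ellipse, which is enough for every $\varrho < \mathrm{r}$.

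\emph{Step (iii): tails.} Since the coefficients decay geometrically, the expansion converges uniformly, as does its termwise derivative, so $g - \Pi_K g = \sum_{j > K} a_j\, T_j\circ\iota^{-1}$.
\begin{itemize}
\item With $|T_j| \leq 1$, the value error is at most $\sum_{j > K} 2M\mathrm{r}^{-j} = 2M\mathrm{r}^{-K}/(\mathrm{r} - 1)$.
\item For the derivative, the chain rule contributes $(\iota^{-1})' = 2/\ell$, and Markov's inequality gives $|T_j'| \leq j^2$. So the error is at most $\tfrac{2}{\ell}\, 2M\sum_{j > K} j^2\mathrm{r}^{-j}$.
\end{itemize}
The main (purely computational) obstacle is bounding this series by $5(K+1)^2\mathrm{r}^{2-K}/(\mathrm{r} - 1)^3$. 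I would write $j = K + m$ and use $(K + m)^2 \leq (K+1)^2 m^2$ for $m \geq 1$. I would then use $\sum_{m \geq 1} m^2 x^m = x(1 + x)/(1 - x)^3$ with $x = \mathrm{r}^{-1}$, which gives $\mathrm{r}^{-K}(K+1)^2\,\mathrm{r}(\mathrm{r} + 1)/(\mathrm{r} - 1)^3$. Finally I would absorb $\mathrm{r} + 1 \leq 2\mathrm{r}$ into the stated constant, which is generous.
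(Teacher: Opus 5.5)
Your proof is correct and follows essentially the same route as the paper. Part (i) uses the same two-case boundary-distance argument; your focal radius $a_{\mathrm{r}}-\cos\vartheta$ is exactly the square root of the paper's $F(c)=(a_{\mathrm{r}}-c)^2$. Part (ii) is the Bernstein coefficient bound, which you prove by the Joukowski/Laurent contour shift where the paper cites Trefethen. Part (iii) is the geometric tail plus Markov's $|T_j'|\le j^2$, and your estimate via $(K+m)^2\le(K+1)^2m^2$ and $\sum_{m\ge1}m^2x^m=x(1+x)/(1-x)^3$ gives the constant $2$ where the paper has $5$.
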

 \begin{proof}
     (i) A point of $ E_{\mathrm{r}} $ is $ a_{\mathrm{r}} \cos \vartheta + i b_{\mathrm{r}} \sin \vartheta. $ Its distance to $ [-1, 1] $ is $ b_{\mathrm{r}} | \sin \vartheta | \leq b_{\mathrm{r}} $ where $ | a_{\mathrm{r}} \cos \vartheta | \leq 1. $ Elsewhere, with $ c := | \cos \vartheta | > 1 / a_{\mathrm{r}}, $ the squared distance is $ F(c) = ( a_{\mathrm{r}} c - 1 )^2 + b_{\mathrm{r}}^2 (1 - c^2) $ and $ F'(c) = 2 c (a^2_{\mathrm{r}} - b^2_{\mathrm{r}}) - 2 a_{\mathrm{r}} = 2 (c - a_{\mathrm{r}}) < 0 $ by \eqref{eq:chebyshev_coeffs}, so $ F(c) \leq F(1 / a_{\mathrm{r}}) = b^4_{\mathrm{r}} / a^2_{\mathrm{r}} \leq b^2_{\mathrm{r}}. $ Hence $ \mathrm{dist} (E_\mathrm{r}, [-1, 1]) = b_{\mathrm{r}} $ exactly. Under $ \iota $ lengths scale by $ \ell / 2, $ and writing $ \mathrm{r} = 1 + \varepsilon $ with $ \varepsilon \leq 1 $ one has $ b_{\mathrm{r}} = \frac{\varepsilon (2 + \varepsilon)}{2 (1 + \varepsilon)} \leq \varepsilon, $ so the distance is at most $ \ell \varepsilon / 2 \leq \eta $ when $ \varepsilon \leq 2 \eta / \ell. $

     (ii) and the first half of (iii) are the Bernstein estimates: for $ g \circ \iota $ holomorphic and bounded by $ M $ on $ E_{\mathrm{r}}, \, |a_j| \leq 2 M \mathrm{r}^{-j} $ and the truncation error is at most $ 2 M \mathrm{r}^{-K} / (\mathrm{r} - 1) $ \citep{trefethen2013} (Theorems 8.1--8.2). The second follows from the first by summing the geometric tail, $ \sum_{j > K} | a_j | \| T_j \|_\infty \leq 2 M \sum_{j > K} \mathrm{r}^{-j}. $

     For the derivative bound, Markov's inequality gives $ \| T_j' \|_{\infty, [-1,1]} = j^2, $ so $ \sum_{j > K} | a_j | \| T_j' \|_\infty \leq 2 M \sum_{j > K} j^2 \mathrm{r}^{-j}. $ For $ x \in (0, 1) $ and $ m \geq 1 $
     \begin{equation}
         \sum_{j \geq m} j^2 x^j = x^m \sum_{i \geq 0} (i + m)^2 x^i \leq x^m \left[ \frac{m^2}{1 - x} + \frac{2 m x}{(1 - x)^2} + \frac{x ( 1 + x)}{(1-x)^3} \right] \leq \frac{5 m^2 x^m}{(1-x)^3},
     \end{equation}
     using $ m^2 + 2m + 2 \leq 5 m^2, $ with $ x = \mathrm{r}^{-1}, $ so $ 1 - x = (\mathrm{r} - 1) / \mathrm{r}, $ and $ m = K + 1 $ this is $ 5 (K + 1)^2 \mathrm{r}^2 \mathrm{r}^{-K} (\mathrm{r} - 1)^{-3}. $ The chain rule contributes $ | (\iota^{-1})'| = 2 / \ell. $ 
 \end{proof}

 \begin{lemma}(Exact arithmetic with the rectified cubic) \label{lem:exact_arithmetic}
     Let $ \varrho (u) = ( u \lor 0)^3 $ and let $ M \geq 1. $ Then as identities of functions:
     \begin{enumerate}[label=(\roman*),leftmargin=2.2em]
     \item (Cube) $ u^3 = \varrho(u) - \varrho(-u) $ for all $ u \in \mathbb{R}, $
     \item (Square) $ u^2 = \frac{1}{6M} \left[ \varrho(u+M) + \varrho(M-u) - 2 M^3\right] $ for all $ |u| \leq M, $
     \item (Identity) $ u = \frac{1}{6 M^2} \left[ \varrho (u + M) - \varrho (M - u) - 2 \varrho(u) + 2 \varrho(-u)  \right] $ for all $ | u | \leq M, $
     \item (Product) $ uv = \frac{1}{4} \left[ (u+v)^2 - (u-v)^2 \right] $ for all $ |u|, |v| \leq M / 2, $ hence $ (u,v) \mapsto uv $ is realized exactly by one hidden layer of four $\varrho$-units,
     \item (Cubic B-splines) For knots $ \tau_0 < \ldots < \tau_4 $ the cardinal cubic $B$-spline $ B(u) := (\tau_4 - \tau_0) \sum_{i=0}^4 ( \prod_{j \neq i} (\tau_i - \tau_j))^{-1} \varrho (\tau_i - u) $ is realized exactly by one hidden layer of five $\varrho$-units, with inner weights $ \pm 1 $ and outer weights of modulus $ O (\mathfrak{h}^{-3}) $ for a uniform mesh of width $ \mathfrak{h}. $
     \end{enumerate}
 \end{lemma}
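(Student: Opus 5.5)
The plan is to verify each item as an algebraic identity. The only inputs are the sign of the argument of $\varrho$ and the binomial expansion of a cube. Items (ii)--(iv) restrict to $|u|\leq M$ so that every argument fed to $\varrho$ is nonnegative, which lets $\varrho$ act as the plain cube there.

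For (i) I will split on the sign of $u$. If $u\geq0$ then $\varrho(u)=u^3$ and $\varrho(-u)=0$. If $u<0$ then $\varrho(u)=0$ and $-\varrho(-u)=-(-u)^3=u^3$.

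For (ii), when $|u|\leq M$ both $u+M$ and $M-u$ are nonnegative, so $\varrho(u+M)+\varrho(M-u)=(M+u)^3+(M-u)^3$. The odd powers cancel and the sum is $2M^3+6Mu^2$, which rearranges to the claim.

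For (iii), under the same condition $\varrho(u+M)-\varrho(M-u)=(M+u)^3-(M-u)^3=2u^3+6M^2u$. By (i), $-2\varrho(u)+2\varrho(-u)=-2u^3$. Adding the two gives $6M^2u$.

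For (iv), $|u|,|v|\leq M/2$ gives $|u\pm v|\leq M$, so (ii) applies to both $u+v$ and $u-v$. In the difference $(u+v)^2-(u-v)^2$ the constants $-2M^3$ cancel. What remains is
\[
uv=\frac{1}{24M}\big[\varrho(u+v+M)+\varrho(M-u-v)-\varrho(u-v+M)-\varrho(M-u+v)\big].
\]
This is one hidden layer of four $\varrho$-units, each with affine pre-activation in $(u,v)$, followed by a linear readout.

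For (v), I will use the divided-difference representation of the B-spline, $B(u)=(\tau_4-\tau_0)\,[\tau_0,\dots,\tau_4](\cdot-u)_+^3$. Here $(\tau-u)_+^3=\varrho(\tau-u)$, and for distinct knots the divided difference of order four has the explicit form $\sum_{i}f(\tau_i)/\prod_{j\neq i}(\tau_i-\tau_j)$. This gives exactly the displayed sum, realized by five units with inner weight $-1$ on $u$, biases $\tau_i$, and the stated outer coefficients. On a uniform mesh of width $\mathfrak h$, each $\prod_{j\neq i}|\tau_i-\tau_j|$ is a fixed integer multiple of $\mathfrak h^4$ and $\tau_4-\tau_0=4\mathfrak h$, so the outer weights are $O(\mathfrak h^{-3})$.

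There is no real obstacle. The only care needed is to track the domain conditions that keep every argument of $\varrho$ nonnegative in (ii)--(iv), and to cite the standard divided-difference formula for B-splines in (v).
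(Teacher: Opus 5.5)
Your proposal is correct and matches the paper's proof item by item. You split on the sign for (i), expand $(M\pm u)^3$ on $|u|\le M$ for (ii)--(iii), use polarization with (ii) applied to $u\pm v$ for (iv), and use the divided-difference representation with the $\mathfrak{h}^4$ scaling of the knot products for (v). Your one addition is the explicit four-unit formula for $uv$, which makes visible that the constants $-2M^3$ cancel, so the readout needs no bias.
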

 \begin{proof}
     (i) For $ u \geq 0 $ the right side is $ u^3 - 0, $ for $ u < 0 $ its is $ 0 - (- u)^3 = u^3. $

     (ii) For $ |u| \leq M $ both $ u + M $ and $ M - u $ are nonnegative, so $ \varrho(u + M) + \varrho(M - u) = (u+M)^3 + (M-u)^3 = (u+M)^3 - (u - M)^3 = 6 u^2 M + 2 M^3, $ using $ (a+b)^3 - (a-b)^3 = 6 a^2 b + 2 b^3, $ with $ a = u, b = M. $

     (iii) Similarly $ \varrho (u + M) - \varrho(M - u) = (u + M)^3 + (u - M)^3 = 2 u^3 + 6 u M^2, $ using $ (a + b)^3 + (a - b)^3 = 2 a^3 + 6 ab^2. $ Subtracting $ 2 u^3 $ in the form (i) leaves $ 6 u M^2. $

     (iv) The polarization identity, with (ii) applied to $ u \pm v, $ both of modulus at most $ M. $

     (v) The divided-difference representation of the $B$-spline of degree 3 in the knots $ \tau_0, \ldots, \tau_4 $ is exactly the stated combination of truncated cubics $ (\tau_i - u)^3_+ = \varrho (\tau_i - u) $ \citep{deboor1978} (Chapter IX). On a uniform mesh of width $ \mathfrak{h}, \, \prod_{j \neq i} | \tau_i - \tau_j | \asymp \mathfrak{h}^4 $ and $ \tau_4 - \tau_0 = 4 \mathfrak{h}, $ giving outer weights of modulus $ O (\mathfrak{h}^{-3}). $
 \end{proof}

 \begin{definition}(Exact partition of unity of radius $ \mathrm{a}_* $) \label{def:exact_partition}
     Let $ j \in \mathbb{N}_0 $ and $ \mathrm{a}_* \in \mathbb{N}. $ For $ N \in \mathbb{N} $ put $ M_N := \{ - \mathrm{a}_*, \ldots, N + \mathrm{a}_* \}^d $ A family $ \Psi^{(j, N)} := \{ \varphi_m : m \in M_N \} $ of functions $ \varphi_m: \mathbb{R}^d \rightarrow \mathbb{R} $ is an exact $j$-PU of radius $ \mathrm{a}_* $ for the activation $ \varrho $ if there is $ C = C(d, j) $ such that for every $ N $ and every $ k \in \{0, \ldots, j \}: $
     \begin{enumerate}[label=(\roman*), leftmargin=2.2em]
         \item $ \| \varphi_m \|_{W^{k, \infty}(\mathbb{R}^d)} \leq C N^k $ for every $ m \in M_N, $
         \item $ \varphi_m \equiv 0 $ on $ \{ x: \| x - m / N \|_\infty \geq \mathrm{a}_* / N \}, $
         \item $ \sum_{m \in M_N} \varphi_m = 1 $ on $ (0, 1)^d, $
         \item for each $ m $ there is a network $ \Phi_m $ with $d$-dimensional input and output, two layers and at most $ C $ nonzero weights, such that $ \prod_{l=1}^d [ R_\varrho (\Phi_m) ]_l = \varphi_m, \, \| R_\varrho (\Phi_m) \|_{W^{k, \infty}((0,1)^d)} \leq C N^k $ and $ \| \Phi_m \|_{\max} \leq CN. $
     \end{enumerate}
 \end{definition}

 \begin{lemma}(The cubic $B$-spline family is an exact 3-PU of radius 2 for $ \varrho_3 $) \label{lem:3_pu_from_cubic}
     Let $ B $ be the cardinal cubic $B$-spline of Lemma \ref{lem:exact_arithmetic} with unit knot spacing, i.e. $ \tau_i = i - 2 $ for $ i = 0, \ldots, 4, $ and set
     \begin{equation} \label{eq:varphi_for_pu}
         \varphi_m (x) := \prod_{l=1}^d B (N x_l - m_l), \quad m \in M_N = \{ -2, \ldots, N+2 \}^d.
     \end{equation}
     Then $ \Psi^{(3, N)} := \{ \varphi_m : m \in M_N \} $ is an exact 3-PU of radius $ \mathrm{a}_* = 2 $ for $ \varrho_3, $ with $ C = 3^d \lor 15 d. $
 \end{lemma}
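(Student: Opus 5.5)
We check the four conditions of Definition \ref{def:exact_partition} for $j = 3$ and $\mathrm{a}_* = 2$ one at a time. Everything reduces to properties of the one-dimensional cardinal cubic $B$-spline $B$ with knots $-2,-1,0,1,2$. We use four standard facts about $B$. It is supported on $[-2,2]$. It is $C^2$ with piecewise cubic pieces. It satisfies $0 \le B \le 2/3$. Its integer translates sum to one: $\sum_{k\in\mathbb{Z}} B(u-k) = 1$ for all $u$ (\citealp{deboor1978}).

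\textbf{Derivative bounds, condition (i).} For a multi-index $\gamma$ with $|\gamma| = k \le 3$, the derivative $\partial^\gamma \varphi_m$ factorizes into one-dimensional factors $N^{\gamma_l} B^{(\gamma_l)}(N x_l - m_l)$. The third derivative $B'''$ is piecewise constant with values in $\{\pm 1, \pm 3\}$, so $W^{3,\infty}$ is meant in the Lipschitz/a.e.\ sense at the knots. Hence every factor is bounded by $3 N^{\gamma_l}$, and the product is bounded by $3^d N^k$. Summing over all multi-indices of order at most $k$ is absorbed into $C(d)$.

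\textbf{Support, condition (ii).} The factor $B(N x_l - m_l)$ vanishes whenever $|N x_l - m_l| \ge 2$, that is, whenever $|x_l - m_l/N| \ge 2/N$. So $\varphi_m$ vanishes once $\|x - m/N\|_\infty \ge 2/N$.

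\textbf{Partition of unity, condition (iii).} Fix $x \in (0,1)^d$ and a coordinate $l$. Then $N x_l \in (0,N)$, so the only integers $k$ with $B(N x_l - k) \neq 0$ satisfy $|N x_l - k| < 2$. These lie in $\{-1,\dots,N+1\} \subseteq \{-2,\dots,N+2\}$. Therefore the one-dimensional sum over $m_l \in \{-2,\dots,N+2\}$ equals the full sum over $\mathbb{Z}$, which is $1$. Expanding the product over the index set $M_N = \{-2,\dots,N+2\}^d$ gives $\sum_m \varphi_m(x) = \prod_l 1 = 1$.

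\textbf{Network realization, condition (iv).} Set $\tau_i = i-2$ in Lemma \ref{lem:exact_arithmetic}(v). Then $B(u) = \sum_{i=0}^4 c_i\, \varrho(\tau_i - u)$ with $|c_i| \le 4\cdot(1/1) = O(1)$, indeed $|c_i| \le 2/3$. Substituting $u = N x_l - m_l$, the $l$-th output coordinate of $\Phi_m$ is
\[
[R_\varrho(\Phi_m)]_l(x) = \sum_{i=0}^4 c_i\, \varrho(-N x_l + m_l + \tau_i),
\]
which is a two-layer network with five hidden units. The inner weights are $-N$, and the biases are $m_l + \tau_i$, of modulus at most $N+4$. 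Across all $d$ coordinates this uses at most $5d$ hidden units and at most $15d$ nonzero weights: five inner weights, five biases and five outer weights per coordinate. The largest parameter is at most $N+4 \le 5N$, so $\|\Phi_m\|_{\max} \le CN$. The product of the output coordinates is $\varphi_m$ by definition \eqref{eq:varphi_for_pu}. On $(0,1)^d$ each coordinate function is $B(N x_l - m_l)$, whose $W^{k,\infty}$ norm is at most $3N^k$ by the computation in (i).

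\textbf{Constant.} Taking $C = 3^d \vee 15d$ covers both the derivative envelope and the weight and parameter counts. The only delicate point is the treatment of $k = 3$ at the knots, where $B'''$ jumps; interpreting $W^{3,\infty}$ via essential suprema resolves it.
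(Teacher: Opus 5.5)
Your proposal is correct and follows the paper's proof essentially step for step. Like the paper, you verify conditions (i)--(iv) of Definition \ref{def:exact_partition} directly from four ingredients: the bounds $\|B^{(j)}\|_\infty \le 3$ for $j \le 3$, the support of $B$, the integer partition of unity (the overhang of $M_N$ captures every contributing index), and the five-unit realization with inner weight $-N$ and biases of modulus at most $N+4$, parallelized over coordinates for at most $15d$ nonzero weights.

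One harmless slip: with $\tau_i = i-2$ the outer coefficients are $(\tfrac16,-\tfrac23,1,-\tfrac23,\tfrac16)$, so the correct bound is $|c_i| \le 1$, not $|c_i| \le 2/3$. This changes nothing, because the largest parameter is the bias $N+4$.
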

 \begin{proof}
     Lemma \ref{lem:exact_arithmetic} writes $ B(u) = 4 \sum_{i=0}^4 ( \prod_{j \neq i} (\tau_i - \tau_j))^{-1} \varrho_3 (\tau_i - u), $ whose outer coefficients are $ (\frac{1}{6}, - \frac{2}{3}, 1 - \frac{2}{3}, \frac{1}{6}). $ The standard properties of the cardinal cubic B-spline \citep[Ch.~IX]{deboor1978} are $ B \geq 0, \, \mathrm{supp} B = [-2, 2], \, \sum_{m \in \mathbb{Z}} B(\cdot - m) \equiv 1, \, B \in C^2 (\mathbb{R}) \cap W^{3, \infty} (\mathbb{R}), $ with
     \begin{equation} \label{eq:b_spline_properties}
         \| B \|_\infty = \| B' \|_\infty = \frac{2}{3}, \quad \| B'' \|_\infty = 2, \quad \| B''' \|_\infty = 3.
     \end{equation}

     \begin{enumerate}[label=(\roman*), leftmargin=2.2em]
         \item For $ | \alpha | = k \leq 3, \, \partial^\alpha \varphi_m (x) = N^k \prod_l B^{(\alpha_l)} (N x_l - m_l), $ so $ \| \varphi_m \|_{W^{k, \infty}} \leq 3^d N^k $ by \eqref{eq:b_spline_properties}.
         \item $ B (N x_l - m_l) = 0 $ once $ | x_l - m_l / N | \geq 2 / N, $ so $ \varphi_m $ vanishes off $ \{ \| x - m / N \|_\infty < 2 / N \}. $ This is where the radius $ \mathrm{a}_* = 2 $ enters. The radius is a convenience: $ b(u) := B(3u - 1) + B(3u) + B(3u + 1) $ vanishes for $ |u| \geq 1, $ is realized by fifteen $ \varrho_3 $-units, and satisfies $ \sum_{m \in \mathbb{Z}} b(u - m) = \sum_{k \in \mathbb{Z}} B(3u - k) = 1, $ so radius $ 1 $ is also attainable, radius $ 2 $ is used because it keeps the construction and its constants simplest.
         \item Fix $ x \in (0, 1)^d $ and $ l. $ Then $ B ( N x_l - m_l) \neq 0 $ forces $ m_l \in (N x_l - 2, N x_l + 2) \subset (-2, N + 2), $ so every index contributing at $ x $ already lies on $ M_N $ and 
         \begin{equation}
             \sum_{m \in M_N} \varphi_m (x) = \prod_{l=1}^d \sum_{m_l \in \mathbb{Z}} B (N x_l - m_l) = 1.
         \end{equation}
         The overhang is needed, since over $ \{0, \ldots, N \}^d $ the sum equals $ (5/6)^d $ at the corners of the cube and differs from 1 on a boundary layer of width $ 2 / N. $
         \item $ u \mapsto B (N u - m_l) $ is one hidden layer of five $ \varrho_3 $ units with inner weight $ -N, $ biases $ \tau_i + m_l $ of modulus at most $ N + 4, $ and outer weights of modulus at most 1. Parallelizing the $ d $ coordinates (Lemma C.2 of \citep{guhring2021}) gives $ \Phi_m $ with two layers, at most $ 15d $ nonzero weights, $ \prod_l [R_{\varrho_3} (\Phi_m) ]_l = \varphi_m, \, \| R_{\varrho_3} (\Phi_m) \|_{W^{k, \infty}} \leq 3 N^k $ and $ \| \Phi_m \|_{\max} \leq N + 4 \leq 3 N $ for $ N \geq 2. $
     \end{enumerate}
 \end{proof}

\begin{lemma}(Localization under a widened partition of unity) \label{lem:localization_widened_partition}
    Let $ \Psi^{(j,N)} $ be an exact $j$-PU of radius $ \mathrm{a}_*, \, k \in \{ 0, \ldots, j \}, \, n \in \mathbb{N}_{\geq k+1} $ and $ 1 \leq p \leq \infty. $ Then the conclusion of Lemma D.1 of \citep{guhring2021} holds: there are polynomials $ p_{f, m} $ of degree $ \leq n - 1, $ the averaged Taylor polynomials of $ f $ on the balls $ \hat{\Omega}_{m,N} := B_{\mathrm{a}_* / N, \| \cdot \|_\infty} (m/N), $ independent of $ k, $ such that $ f_N := \sum_{m \in M_N} \varphi_m p_{f,m} $ satisfies 
    \begin{equation}
        \| f - f_N \|_{W^{k, p}((0,1)^d)} \leq C \| f \|_{W^{n,p}((0,1)^d)} N^{-(n-k)}, \quad C = C (d, n, p, k, \mathrm{a}_*).
    \end{equation}
\end{lemma}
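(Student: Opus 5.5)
The plan is to re-run the proof of Lemma D.1 of \citep{guhring2021} and check that it uses only properties (i)--(iii) of Definition \ref{def:exact_partition}, with the radius $\mathrm{a}_*$ entering the constants. Property (iv), the network realization, plays no role in a purely approximation-theoretic statement.

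The first step is to fix the patches $\hat\Omega_{m,N} = B_{\mathrm{a}_*/N,\|\cdot\|_\infty}(m/N)$ and, for $f \in W^{n,p}((0,1)^d)$, to extend $f$ to $\tilde f \in W^{n,p}(\mathbb{R}^d)$ with $\|\tilde f\|_{W^{n,p}} \le C\|f\|_{W^{n,p}((0,1)^d)}$ via Stein's extension operator. This is needed because for boundary indices (components in $\{-\mathrm{a}_*,\dots,-1\}$ or $\{N+1,\dots,N+\mathrm{a}_*\}$) the patches leave the cube. Let $p_{f,m}$ be the averaged Taylor polynomial of $\tilde f$ of degree $n-1$, averaged over a ball of radius comparable to $\mathrm{a}_*/N$ centered at $m/N$. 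The Bramble--Hilbert lemma, rescaled from unit size to size $\mathrm{a}_*/N$, gives for $0\le s\le n$
\[
  |\tilde f - p_{f,m}|_{W^{s,p}(\hat\Omega_{m,N})} \le C(d,n,p,\mathrm{a}_*)\,N^{-(n-s)}\,|\tilde f|_{W^{n,p}(\hat\Omega_{m,N})}.
\]
The chunkiness parameter of a cube relative to its inscribed ball is fixed, so the constant depends only on $d,n,p$ and $\mathrm{a}_*$. The polynomials $p_{f,m}$ do not involve $k$, as the statement requires.

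The second step uses (iii) to write $f - f_N = \sum_m \varphi_m(\tilde f - p_{f,m})$ on $(0,1)^d$. By (ii) only the indices with $x \in \hat\Omega_{m,N}$ contribute at a given point $x$, and there are at most $(2\mathrm{a}_*+1)^d$ of them. Apply Leibniz to $\partial^\alpha$ with $|\alpha|\le k$. Each term is bounded by $\|\varphi_m\|_{W^{k-s,\infty}}|\tilde f-p_{f,m}|_{W^{s,p}(\hat\Omega_{m,N})}$, which by (i) is at most $C N^{k-s}$ times the Bramble--Hilbert bound. The product is $N^{-(n-k)}|\tilde f|_{W^{n,p}(\hat\Omega_{m,N})}$ for every $s$.

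The third step sums over $m$. For $p<\infty$, the finite overlap gives $\|\sum_m g_m\|_{L^p}^p \le (2\mathrm{a}_*+1)^{d(p-1)}\sum_m\|g_m\|^p_{L^p(\hat\Omega_{m,N})}$. Each point of $\mathbb{R}^d$ also lies in at most $(2\mathrm{a}_*+1)^d$ patches, so $\sum_m|\tilde f|^p_{W^{n,p}(\hat\Omega_{m,N})}\le(2\mathrm{a}_*+1)^d\|\tilde f\|^p_{W^{n,p}}$. Combining this with the extension bound gives the claim. The case $p=\infty$ is the same argument with maxima in place of sums.

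I expect the main obstacle to be the bookkeeping that keeps every constant independent of $N$ and of $m$, particularly for the boundary patches that leave $(0,1)^d$. The extension step is where I would concentrate the care: the Bramble--Hilbert estimate must be applied on the full patch $\hat\Omega_{m,N}$ rather than on its intersection with the cube, since that intersection can be a degenerate sliver, and on the full patch it is uniform.
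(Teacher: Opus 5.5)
Your proposal is correct and takes essentially the paper's route. The paper also re-runs Lemma D.1 of Gühring--Raslan with three adjustments: the averaged Taylor (Bramble--Hilbert) polynomials are taken on the widened patches $\hat\Omega_{m,N}$, so they stay independent of $k$; the support condition (ii) bounds how many indices interact; and the exactness (iii) kills the partition-of-unity defect term. The only differences are bookkeeping: you sum by a pointwise finite-overlap H\"older bound with count $(2\mathrm{a}_*+1)^d$, whereas the paper keeps Gühring's near/far split over reference cells, with $(2\mathrm{a}_*+3)^d$ near indices and $(2\mathrm{a}_*)^d$ disjoint classes.
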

\begin{proof}
    First, in Step 1 the Bramble-Hilbert polynomial is taken from their Lemma B.4 on $ \hat{\Omega}_{m, N} $ rather than on $ \Omega_{m, N} = B_{1 / N, \| \cdot \|_\infty} (m / N). $ That lemma holds on a ball of any radius $ c / N $ with a constant $ C(n, d, c), $ and returns $ ( \mathrm{a}_* / N)^{n - k}, $ i.e. the same rate with the constant multiplied by $ \mathrm{a}_*^{n - k}. $ The polynomial is still the averaged Taylor polynomial, hence still independent of $ k, $ which is what makes the simultaneity in Lemma \ref{lem:simult_approx} possible. 

    Second, their Steps 2 and 3 split the index set at $ \| \tilde{m} - m \|_\infty > \mathrm{a}_*  + 1 $ instead of $ > 1. $ By (ii) of Definition \ref{def:exact_partition}, $ \mathrm{supp} \, \varphi_m \, \cap \, \Omega_{\tilde{m}, N} = \emptyset $ as soon as $ \| \tilde{m} - m \|_\infty > \mathrm{a}_* + 1, $ so the far contribution vanishes identically, as it does for an exact PU in their Step 2. The near contribution now runs over at most $ (2 \mathrm{a}_* + 3)^d $ indices instead of $ 3^d $ and is estimated as in their Step 1, on $ \hat{\Omega}_{m, N}. $ For $ \mathrm{a}_* = 2 $ the count is $ 7^d. $

    Third, in their Step 4a the term $ \| \tilde{f}( \mathbf{1}_{(0,1)^d} - \sum_m \varphi_m ) $ vanishes identically by (iii), and in their Step 4b H\"{o}lder inequality is applied with $ (2 \mathrm{a}_* + 3)^d $ in place of $ 3^d $ while the overlap decomposition preceding their Eq. (D.9) uses $ (2 \mathrm{a}_*)^d $ disjoint classes in place of $ 2^d. $ Both are $d$-dependent constants.
\end{proof}

 \begin{lemma}(Simultaneous approximation by $ \varrho_3 $-networks) \label{lem:simult_approx}
     Let $ \beta \geq 2 $ be an integer and let $ f $ lie in the unit ball of $ W^{\beta, \infty} ((0, 1)^d). $ There are constants $ C, \tilde{N} $ and a depth $ L_0 $ depending only on $ (d, \beta) $ such that for every $ N \geq \tilde{N} $ there is a $ \varrho_3 $-network $ \Phi_N $ with at most $ L_0 $ layers and at most $ C (N + 1)^d $ nonzero weights, all of modulus at most $ C N^C $ and encodable with $ \lceil C \beta \log_2 N \rceil $ bits, whose realization satisfies simultaneously
     \begin{equation} \label{eq:simult_approx}
         \| f - R (\Phi_N) \|_{W^{0, \infty}} \leq C N^{-\beta}, \quad \| f - R (\Phi_N) \|_{W^{1, \infty}} \leq C N^{-(\beta - 1)}.
     \end{equation}
 \end{lemma}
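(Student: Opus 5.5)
The plan is to build the network by emulating the classical localized-polynomial approximant of Lemma \ref{lem:localization_widened_partition} exactly, with the exact arithmetic of Lemma \ref{lem:exact_arithmetic}. That approximant is built on the cubic $B$-spline partition of unity of Lemma \ref{lem:3_pu_from_cubic}, which is an exact $3$-PU of radius $\mathrm{a}_\ast=2$.

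\textbf{Step 1: the approximant.} Apply Lemma \ref{lem:localization_widened_partition} with $j=3$, $n=\beta$ and $p=\infty$, once with $k=0$ and once with $k=1$. Both choices are admissible since $\beta\geq 2 = k+1$ and $k\leq 1\leq j$.

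The key point is that the averaged Taylor polynomials $p_{f,m}$ are independent of $k$. Hence a single function $f_N=\sum_{m\in M_N}\varphi_m p_{f,m}$ satisfies both estimates in \eqref{eq:simult_approx} simultaneously, with $\|f\|_{W^{\beta,\infty}}\leq 1$.

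Two quantitative facts are recorded here. The coefficients of $p_{f,m}$, written in the local monomials $(x-m/N)^\alpha$ with $|\alpha|\leq\beta-1$, are bounded by $C(d,\beta)$ because $f$ lies in the unit ball. The index set has $|M_N|=(N+5)^d\leq C(N+1)^d$ elements.

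\textbf{Step 2: exact realization.} For each $m$, Lemma \ref{lem:3_pu_from_cubic}(iv) gives a two-layer $\varrho_3$-network producing the $d$ univariate factors $B(Nx_l-m_l)$. In parallel, an affine layer produces $u=x-m/N$.

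Every remaining operation is exact polynomial arithmetic:
\begin{itemize}
\item the monomials $u^\alpha$ of degree at most $\beta-1$;
\item the product of the $d$ spline factors;
\item the product of that result with $p_{f,m}(x)$.
\end{itemize}
By Lemma \ref{lem:exact_arithmetic}(iv), each of these is a fixed-depth tree of exact multiplications. The identity map of Lemma \ref{lem:exact_arithmetic}(iii) pads shorter branches to a common depth.

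The arguments are bounded on $(0,1)^d$, with $|B|\leq 2/3$ and $|u|_\infty\leq 1$. So a fixed $M=M(d,\beta)$ makes every square/product identity valid. For (iv) one needs the inputs in $[-M/2,M/2]$, and products of bounded quantities stay bounded by a $(d,\beta)$-constant.

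Consequently each local block has depth at most $L_0(d,\beta)$ and a number of nonzero weights depending only on $(d,\beta)$. Summing the $|M_N|$ blocks in a final linear layer realizes $f_N$ exactly on $(0,1)^d$, with at most $C(N+1)^d$ nonzero weights. Since the realization equals $f_N$ identically, the simultaneous bounds of Step 1 transfer verbatim.

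\textbf{Step 3: weight magnitudes and encodability.} This is the step I expect to need the most care. The inner weights are $\pm N$, the biases are at most $N+4$, the spline outer weights are $O(1)$, and the square-identity coefficients $1/(6M)$ and $2M^3$ are constants. The Taylor coefficients are bounded but real.

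To get encodable weights, I would quantize each $p_{f,m}$ coefficient to a grid of mesh $N^{-\beta}$. This costs an additional $W^{1,\infty}$ error of order $N\cdot N^{-\beta}$ times the number of overlapping supports, $7^d$, and a $W^{0,\infty}$ error of order $N^{-\beta}$. Both are absorbed into \eqref{eq:simult_approx}. The $N$ in the first-order error comes from the $\|\varphi_m\|_{W^{1,\infty}}\leq CN$ bound of Definition \ref{def:exact_partition}(i).

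After quantization, all weights lie in a grid of $C N^{C}$ values of modulus at most $CN^C$, hence are encodable with $\lceil C\beta\log_2 N\rceil$ bits once $N\geq\tilde N$. The constraint $N\geq\tilde N$ also absorbs $N\geq 2$ from Lemma \ref{lem:3_pu_from_cubic}(iv).

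The main obstacle is bookkeeping the product tree so that every intermediate stays within the validity range of the exact identities without rescalings that inflate weights beyond polynomial size. I would handle this by fixing $M$ once as a bound on all intermediate quantities, derived from the coefficient bound of Step 1.
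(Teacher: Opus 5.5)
Your proposal is correct, and it departs from the paper at the realization step.

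\textbf{The paper's route.} The paper does not emulate $f_N$ exactly. It invokes Lemma~D.5 of Gühring et al.\ as a black box, which realizes $f_N$ only approximately: the weight count is independent of the accuracy $\varepsilon$, but the weight magnitudes grow like $\varepsilon^{-2}\,\mathrm{poly}(N)$. It then overspends accuracy, taking $\varepsilon=N^{-\beta}$ in $W^{1,\infty}$, so that one network serves both norms. Encodability comes from rounding every weight to a grid of spacing $N^{-C\beta}$ and controlling the perturbation with the parameter-Lipschitz constant $\mathrm{L}'_*$ of Lemma~\ref{lem:complexity}.

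\textbf{Your route.} You use the fact that $\varrho_3$ multiplies exactly (Lemma~\ref{lem:exact_arithmetic}). The localized approximant is therefore realized with zero error and weights of size $O(N)$. Simultaneity is inherited directly from the localization step, with no accuracy to overspend. The only rounding is of the $O(1)$ Taylor coefficients, and you bound its effect directly through $\|\varphi_m\|_{W^{1,\infty}}\le CN$ and the bounded overlap of supports. This replaces a Lipschitz constant that is exponential in depth.

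\textbf{What your route costs.} It needs the bookkeeping you identified. First, a coefficient bound for the averaged Taylor polynomials in the local monomials $(x-m/N)^\alpha$; this follows from the extension operator and from averaging against a nonnegative normalized cutoff. Second, a single range parameter $M(d,\beta)$. Two harmless slips: on $(0,1)^d$ one only has $|x-m/N|_\infty\le 1+2/N\le 2$, not $\le 1$; and at most $4^d$ supports overlap at a point, not $7^d$.

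\textbf{One caveat on encodability.} You read it as all weights lying in a fixed finite set of $CN^C$ values, which your construction satisfies. If a specific grid such as $N^{-C\beta}\mathbb{Z}$ or a dyadic grid is intended, the fixed constants $1/6$ and $1/(6M)$ in the exact identities are not on it. You would then need the paper's rounding-plus-Lipschitz argument for those weights as well, at the cost of exactness.
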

 \begin{proof}
     We assemble this from \citep{guhring2021} with the partition of unity supplied by Lemma \ref{lem:3_pu_from_cubic} rather than by Remark 4.6 of that paper.

     Localization. By Lemma \ref{lem:3_pu_from_cubic} the family \eqref{eq:varphi_for_pu} is an exact 3-PU of radius 2 for $ \varrho_3, $ so Lemma \ref{lem:localization_widened_partition} applies with $ j = 3, \, \mathrm{a}_* = 2, \, n = \beta, \, p = \infty. $ It produces one localized approximant $ f_N = \sum_{m \in M_N} \varphi_m p_{f, m}, $ whose polynomials are averaged Taylor polynomials and therefore do not depend on $ k, $ with
     \begin{equation} \label{eq:simult_approx_proof}
         \| f - f_N \|_{W^{k, \infty}((0,1)^d)} \leq C N^{- (\beta - k)} \quad \text{simultaneously for } k \in \{ 0, 1 \}. 
     \end{equation}

     Lemma \ref{lem:localization_widened_partition} requires $ k \leq j = 3 $ and $ n = \beta \geq k + 1, $ so \eqref{eq:simult_approx_proof} holds for $ k \in \{ 0, \ldots, \min (3, \beta - 1) \}, $ which contains $ \{ 0, 1 \} $ because $ \beta \geq 2. $ Only $ k \in \{ 0, 1 \} $ is used, so neither this restriction nor the fact that $ \varrho_3 \notin C^3 $ makes the chain rule of Corollary B.6 of \citep{guhring2021} unavailable at order three ever binds. 

     Realization. Lemma D.5 of \citep{guhring2021} is now applied unchanged. It is insensitive to the widening, since its proof invokes only conditions (i) and (iv) of the partition of unity -- the bump network of Definition \ref{def:exact_partition} and the derivative bound $ \| R_\varrho ( \Phi_m) \|_{W^{k, \infty}} \leq C N^k $ of (i) -- and never the support condition (ii) or the summation condition (iii), which are the two that Lemma \ref{lem:localization_widened_partition} had to absorb. The only trace of the enlarged index set is that its count $ C (N + 1)^d $ becomes $ C | M_N | = C (N+5)^d \leq C' (N + 1)^d. $ Its side hypothesis that $ \varrho $ be three times continuously differentiable near some $ x_0 $ with $ \varrho''(x_0) \neq 0 $ -- holds at any $ x_0 > 0, $ where $ \varrho_3 $ agrees with $ u \mapsto u^3 $ and $ \varrho''_3 (x_0) = 6x_0 \neq 0. $ The lemma realizes $ f_N $ to accuracy $ \varepsilon $ in $ W^{k, \infty} $ with at most $ C $ layers and $ C(N+1)^d $ nonzero weights independently of $ \varepsilon, $ only the weight magnitude growing, as $ \varepsilon^{-2} $ times a power of $ N. $

     Simultaneity. Apply it once, at the same $ N, $ with $ k = 1 $ and $ \varepsilon := N^{-\beta}. $ Since $ \| \cdot \|_{W^{0, \infty}} \leq \| \cdot \|_{W^{1, \infty}}, $ the resulting network approximates $ f_N $ to $ N^{-\beta} $ in both norms at once, and the triangle inequality against \eqref{eq:simult_approx_proof} gives \eqref{eq:simult_approx}. It is the $\varepsilon$-independence of the weight count that makes this free. One may spend accuracy $ N^{- \beta} $ on the realization while the localization error is only $ N^{- (\beta - 1)}, $ and pay for it in weight magnitude rather than in weight count.
     
     Encodability. The weights are $ \mathrm{poly}(N). $ Round each of them to the grid of spacing $ N^{-C \beta}. $ The resulting perturbation is controlled by the parameter-Lipschitz bound of Lemma \ref{lem:complexity}: $ \| R (\Phi_\vartheta) - R (\Phi_{\tilde{\vartheta}}) \|_{C^1} \leq \mathrm{L}'_* \| \vartheta - \tilde{\vartheta} \|_\infty $ with $ \mathrm{L}'_* = 12 L^2 \mathrm{m}^{9L + 3} = \mathrm{poly}(N) $ at the constant depth $ L_0. $ Taking $ C $ large enough makes the perturbation at most $ N^{- \beta}, $ absorbed into \eqref{eq:simult_approx}, and each surviving weight is encoded by $ \lceil C \beta \log_2 N \rceil $ bits.
 \end{proof}

\begin{lemma}(Space-time approximation) \label{lem:space_time_approx}
    Grant Assumptions \ref{ass:bounded}--\ref{ass:compact}, let $ \beta \geq 2 $ be an integer and $ n \geq 2, $ and put
    \begin{equation} \label{eq:R_K_choice}
        R := 2 + \sqrt{(8 \beta + 16) \log (2 d n)}, \quad \eta := \min \Big\{ \frac{\ell}{2}, \, \frac{\pi \sigma_*^4}{12 d (5 R + 3)} \Big\}, \quad K := \Big\lceil \frac{\ell}{\eta} (\beta + 2) \log n \Big\rceil,
    \end{equation}
    so that $ R \asymp \sqrt{\log n} $ and $ K \lesssim (\log n)^{3/2}, $ and
    \begin{equation} \label{eq:Q_n_def}
        Q_n := 16 C_{\mathrm{F}} \sqrt{d} \, R K^2 (1 + \eta^{-1}) \leq \sigma^{-c} (\log n)^4,
    \end{equation}
    where $ C_{\mathrm{F}} \geq 1, \tilde{N} $ and $ L_0 $ are the constants of Lemma \ref{lem:simult_approx}. There are constants $ C^\sharp, c^\sharp \geq 1 $ depending only on $ (d, \beta), $ and $ n_0 $ depending only on $ (d, \beta, \bar{B}, \ell, \sigma_*, C_t, \kappa_\Theta), $ with the following property. Suppose $ n \geq n_0, $
    \begin{equation} \label{eq:approx_regime}
        \tilde{N} \leq N \leq n, \qquad Q_n \mathrm{A}_\beta N^{-(\beta - 1)} \leq \min \{ \bar{B}, \, G/2, \, G_t/2 \},
    \end{equation}
    and that the budgets of Definition \ref{def:trial_test_classes} satisfy
    \begin{equation} \label{eq:budgets}
        \begin{aligned}
            & W_N \geq C^\sharp K (N+1)^d, \quad L = L_0 + \lceil \log_2 (K + 1) \rceil + 1, \quad P_0 \geq C^\sharp K \big( (N+1)^d + L \big), \\
            & \Lambda \geq n^{c^\sharp}, \quad \mathcal{A} \geq n^{c^\sharp}.
        \end{aligned}
    \end{equation}
    Then $ \mathcal{H}_N $ contains an $ h_N $ such that $ \varphi_N := h_N - h^* $ satisfies
    \begin{equation} \label{eq:space_time_bounds}
        \begin{aligned}
            \| \varphi_N \|_{L^4 (\nu \otimes \rho)} + \sup_{t \in I} \| \varphi_N (t, \cdot) \|_{\rho_t} &\leq 4 Q_n \mathrm{A}_\beta N^{-\beta}, \\
            \| | \nabla \varphi_N | \|_{L^4 (\nu \otimes \rho)} + \| \partial_t \varphi_N \|_{L^2 (\nu \otimes \rho)} &\leq 4 Q_n \mathrm{A}_\beta N^{-(\beta - 1)},
        \end{aligned}
    \end{equation}
    so that \eqref{eq:approx_error_eN} holds with $ e_N := 8 Q_n \mathrm{A}_\beta N^{-(\beta - 1)}. $
\end{lemma}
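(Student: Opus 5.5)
The construction separates the two variables: Chebyshev truncation in $t$, spline-network approximation in $x$ on the rescaled cube $Q_R$, clipping by $\chi_R$, and a tail estimate outside $Q_{R-1}$. The proof runs in four steps, followed by a verification that the resulting function lies in $\mathcal{H}_N$.

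\textbf{Step 1 (analytic envelope).} First show that for each $x$ the map $t\mapsto h^*(t,x)$ extends holomorphically to the strip $S_\eta$. On that strip, the $x$-derivatives of order at most $\beta$ should be bounded by $C_\beta\bar B\sigma_*^{-2\beta}$ uniformly for $x\in Q_{R}$. The natural route is the ratio-of-Gaussian-convolutions formula from Proposition~\ref{prop:derivative_bounds}, with $\mu_{T-t},\sigma^2_{T-t}$ continued to complex $t$. The width $\eta\le \pi\sigma_*^4/(12d(5R+3))$ is chosen to keep the imaginary part of the Gaussian exponent below $\pi/3$ on $|x|_\infty\le R+1$, $|x_0|_\infty\le1$. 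There the denominator has real part at least $\cos(\pi/3)$ times its modulus, which gives the factor $\sec(\pi/6)$. Faà di Bruno for the $x$-derivatives of the ratio then produces the Bell number $\mathfrak{B}_{\beta+1}$ and $\beta!$, giving exactly $C_\beta$. This step is the main obstacle. The delicate part is controlling the complexified Gaussian kernel uniformly while keeping $\eta$ only polylogarithmically small. I would first try to bound the phase $|\mathrm{Im}|$ of $|x-\mu x_0|^2/(2\sigma^2)$ linearly in $|\mathrm{Im}\,t|$.

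\textbf{Step 2 (time truncation).} Apply Lemma~\ref{lem:chebyshev} to $\partial_x^\gamma h^*(\cdot,x)$ with $\mathrm{r}=1+2\eta/\ell$. Since $K=\lceil(\ell/\eta)(\beta+2)\log n\rceil$, the factor $\mathrm{r}^{-K}$ is at most $n^{-(\beta+2)}$, up to $(1+\eta^{-1})K^2$ factors in the derivative bound. Hence $\Pi_K h^*=\sum_{j\le K}a_j(x)T_j(\iota^{-1}t)$ approximates $h^*$ and $\partial_t h^*$ to $O(n^{-\beta-1})$, which is negligible against $N^{-\beta}$ because $N\le n$. The coefficients $a_j(x)$ inherit the $W^{\beta,\infty}(Q_R)$ envelope times $2$.

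\textbf{Step 3 (space approximation and assembly).} Rescale $Q_R$ to $(0,1)^d$, which costs $(2R)^{\beta-1}$, and apply Lemma~\ref{lem:simult_approx} to each normalized $a_j$ at resolution $N$. This gives networks with $C(N+1)^d$ weights and depth $L_0$, accurate to $\mathrm{A}_\beta N^{-\beta}$ in value and $\mathrm{A}_\beta N^{-(\beta-1)}$ in gradient. Compute $T_j(\iota^{-1}t)$ exactly by a depth-$\lceil\log_2(K+1)\rceil$ subnetwork via the Chebyshev recursion and the exact products of Lemma~\ref{lem:exact_arithmetic}. Multiply each $T_j$ by its spatial network with one exact product layer, and sum. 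The resulting error carries a factor $\sum_j\|T_j\|\le K$ in value and $\sum_j\|T_j'\|\cdot 2/\ell\lesssim K^2\eta^{-1}$ in time derivative, both absorbed by $Q_n$. Width, sparsity, weight and activation budgets follow from \eqref{eq:budgets}, with polynomial weights in $n$. Finally apply $\Theta$ and the clip $\chi_R$ to obtain $h_N$.

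\textbf{Step 4 (tail outside $Q_{R-1}$).} Outside $Q_{R-1}$ the clip makes $h_N\ne \Pi_K h^*$. There $|\varphi_N|\lesssim\bar B$ and $|\nabla\varphi_N|\lesssim G$. The $\rho_t$-mass of this region is at most $2d\,e^{-(R-2)^2/2}\le n^{-(4\beta+8)}$ by the choice of $R$, so its $L^4$ contribution is $n^{-\beta-2}$, again negligible.

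\textbf{Membership in $\mathcal{H}_N$.} By \eqref{eq:approx_regime}, $\|h_N-h^*\|_\infty$, $|\nabla(h_N-h^*)|$ and $|\partial_t(h_N-h^*)|$ are at most $\bar B$, $G/2$ and $G_t/2$ respectively on $Z$. Combined with Proposition~\ref{prop:classes_props}(iv), this gives that the pre-activation stays in $[-\bar B,2\bar B]$, so $\Theta$ acts as the identity. It also gives the constraints $|\nabla h_N|\le G$ and $|\partial_t h_N|\le G_t$, so $h_N\in\mathcal{H}_N$. Collecting the sup bounds into the $L^4$ and $L^2$ norms yields \eqref{eq:space_time_bounds}.
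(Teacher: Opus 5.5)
Your plan has the same architecture as the paper's proof: a holomorphic envelope in $t$, Chebyshev truncation, Lemma~\ref{lem:simult_approx} on rescaled coefficients, exact network assembly, the clip, and a Gaussian tail. However, Step~1 fails as you propose to carry it out, and it fails against the $\eta$ fixed in the statement. You plan to bound the imaginary part of the full exponent $-|x-\mu_{T-z}x_0|^2/(2\sigma^2_{T-z})$. Its $x_0$-free piece $-|x|^2/(2\sigma^2_{T-z})$ has $z$-derivative of modulus $|x|^2|\mu_{T-z}|^2/|\sigma_{T-z}|^4$. So its imaginary part on the strip is quadratic in $R$, of size up to order $\eta\, dR^2$ times $\sigma_*$-dependent factors. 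With $\eta\asymp\sigma_*^4/(dR)$ this is of order $R\to\infty$, so it cannot be kept below $\pi/3$, and no sector argument follows.

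The missing step is to cancel the common factor $e^{-|x|^2/(2\sigma^2_{T-z})}$ from numerator and denominator. You then work with $u_z(x,x_0)=\mu_{T-z}\,x\cdot x_0/\sigma^2_{T-z}-\mu^2_{T-z}|x_0|^2/(2\sigma^2_{T-z})$, whose $x_0$-dependence is only linear in $x$.
\begin{itemize}
\item On $S_\eta$ you have $|\sigma^2_{T-z}|\geq\sigma_*^2/2$ and $\mathrm{Re}\,z<T$, both from $\eta\leq\sigma_*^2/8$.
\item Hence $|\partial_z u_z|\leq\sigma_*^{-4}d(10R+6)$ on $S_\eta\times Q_R\times\mathrm{supp}\,p_0$.
\item Since $u_s$ is real for real $s$, $|\mathrm{Im}\,u_z|\leq\eta\sigma_*^{-4}d(10R+6)\leq\pi/6$. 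This is where the $5R+3$ in $\eta$ comes from.
\end{itemize}
All phases then lie in a sector of half-angle $\pi/6$ about $0$. This gives the non-vanishing of the denominator and $|\int f e^{u_z}p_0|\leq\sec(\pi/6)\sup|f|\,|\int e^{u_z}p_0|$. Your ``real part at least $\cos(\pi/3)$ times the modulus'' would give the factor $2$ instead, so it would not reproduce $C_\beta$. Put $\lambda_k=\mu_{T-z}x_{0,k}/\sigma^2_{T-z}$, so $|\lambda_k|\leq 2\sigma_*^{-2}$. The $x$-derivatives of $h^*$ are then joint cumulants of $(w,\lambda_{k_1},\dots,\lambda_{k_m})$ under the complex average, and the partition expansion yields the Bell-number bound. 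Your Fa\`a di Bruno route gives a bound of the same type once this sector bound is in place.

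Three further points are incomplete rather than wrong.
\begin{itemize}
\item \textbf{Tail, time derivative.} In Step~4 you bound only $\varphi_N$ and $\nabla\varphi_N$ on the tail. There $\partial_t\varphi_N$ involves $\partial_t h^*=-b\cdot\nabla h^*-\Delta h^*$, which is only bounded by $C d\bar B\sigma_*^{-4}(|x|+\sqrt d)$, so ``sup bound times mass'' does not apply. Use H\"older instead: $\|\partial_t\varphi_N\mathbf 1\{|x|_\infty>R-1\}\|_{\rho_t}\leq\|G_t+|\partial_t h^*|\|_{L^4(\rho_t)}\,p_R^{1/4}$, with $\sup_{t\in I}\|\partial_t h^*\|_{L^4(\rho_t)}\lesssim d\bar B\sigma_*^{-4}$.
\item \textbf{Coefficient decay.} Keep the decay $\|c^*_j\|_{W^{\beta,\infty}(Q_R)}\leq 2M_\beta\mathrm{r}^{-j}$ from Lemma~\ref{lem:chebyshev}(ii). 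You drop it in Step~2 but your $K^2\eta^{-1}$ factor in Step~3 implicitly uses it. With it, the sums over $j$ cost $\ell/\eta$ and fit into $Q_n$ as defined.
\item \textbf{Assembly.} Pad all blocks to the common depth $L-1=L_0+\lceil\log_2(K+1)\rceil$ using the exact identity of Lemma~\ref{lem:exact_arithmetic}(iii). Also establish $\max_j\sup_{Q_R}|\hat c_j|\leq n$ before invoking the exact product of Lemma~\ref{lem:exact_arithmetic}(iv). This is what fixes the form of $L$ in \eqref{eq:budgets} and keeps the hidden activations within $n^{c^\sharp}$.
\end{itemize}
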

\begin{proof}
    The residual is a space-time object, so $ h $ must depend on $ t $ as well as $ x. $ A class of resolution $ N $ in $ d + 1 $ variables would have complexity $ N^{d+1}. $ The proof shows that the time direction costs only a polylogarithmic factor, because $ h^* $ is analytic in $ t $ on the window while being merely controlled in $ W^{\beta, \infty} $ in $ x. $ Norms of vectors are Euclidean, and for a function $ f $ on $ Q_R $ we write $ \| f \|_{W^{\beta, \infty}(Q_R)} := \max_{|\gamma| \leq \beta} \sup_{Q_R} | \partial^\gamma f |. $ The bound in \eqref{eq:Q_n_def} follows from $ R \leq C_\star \sqrt{\log n}, $ $ \eta^{-1} \leq \sigma^{-c} R $ and $ K \leq \sigma^{-c} R \log n. $

    Step 1: holomorphic extension in $ t. $ Let $ S_\eta := \{ z \in \mathbb{C} : \mathrm{dist}(z, I) < \eta \} $ and write $ z = s + i \tau. $ Since $ \sigma^2_* = 1 - e^{-2 (T - t_1)} \leq 2 (T - t_1) $ and $ \eta \leq \sigma_*^2 / 8, $ every $ z \in S_\eta $ has $ s < t_1 + \eta < T. $ The functions $ \mu_{T-z} = e^{-(T-z)} $ and $ \sigma^2_{T-z} = 1 - e^{-2(T-z)} $ are entire, with $ | \mu_{T-z} | = e^{-(T-s)} \leq 1 $ and $ \partial_z \mu_{T-z} = \mu_{T-z}, \, \partial_z \sigma^2_{T-z} = - 2 \mu_{T-z}^2. $ Moreover $ s \mapsto \sigma^2_{T-s} $ is nonincreasing with derivative in $ [-2, 0], $ so $ \sigma^2_{T-s} \geq \sigma_*^2 - 2 \eta $ for $ s < t_1 + \eta, $ and $ | \sigma^2_{T-z} - \sigma^2_{T-s} | = e^{-2(T-s)} | e^{2 i \tau} - 1 | \leq 2 \eta, $ hence $ | \sigma^2_{T-z} | \geq \sigma_*^2 - 4 \eta \geq \sigma_*^2 / 2 $ on $ S_\eta. $ By Bayes' rule, cancelling the factor $ e^{- |x|^2 / 2 \sigma^2_{T-t}} $ from numerator and denominator,
    \begin{equation} \label{eq:h_u_frac_form}
        h^* (t,x) = \frac{\int w (x_0) e^{u_t (x, x_0)} p_0 (dx_0)}{\int e^{u_t (x, x_0)} p_0 (dx_0)}, \quad u_z (x, x_0) := \frac{\mu_{T-z} \, x \cdot x_0}{\sigma^2_{T-z}} - \frac{\mu^2_{T-z} |x_0|^2}{2 \sigma^2_{T-z}},
    \end{equation}
    and for $ x \in Q_R $ both integrals extend holomorphically to $ z \in S_\eta, $ their integrands being holomorphic in $ z $ and locally uniformly bounded because $ p_0 $ is supported in the unit cube. For $ z \in S_\eta, $ $ | \partial_z (\mu / \sigma^2) | = | \mu / \sigma^2 + 2 \mu^3 / \sigma^4 | \leq 2 \sigma_*^{-2} + 8 \sigma_*^{-4} \leq 10 \sigma_*^{-4} $ and $ | \partial_z (\mu^2 / 2 \sigma^2) | = | \mu^2/\sigma^2 + \mu^4 / \sigma^4 | \leq 6 \sigma_*^{-4}, $ and $ | x \cdot x_0 | \leq |x|_1 |x_0|_\infty \leq d R, $ $ | x_0 |^2 \leq d. $ Since $ u_s $ is real for real $ s $ and the vertical segment from $ s $ to $ z $ stays in $ S_\eta, $
    \begin{equation*}
        | \mathrm{Im} \, u_z (x, x_0) | \leq | \tau | \sup_{S_\eta} | \partial_z u | \leq \eta \, \sigma_*^{-4} d (10 R + 6), \quad \text{hence} \quad \mathrm{osc}_{x_0} \mathrm{Im} \, u_z \leq \frac{\pi}{3}
    \end{equation*}
    by the choice of $ \eta. $ Then all numbers $ e^{i \, \mathrm{Im} \, u_z (x, x_0)} $ lie in a sector of half-angle $ \pi / 6, $ so $ | \int f e^{u_z} p_0 (dx_0) | \leq \sec(\pi/6) \sup_{\mathrm{supp} \, p_0} |f| \, | \int e^{u_z} p_0 (dx_0) | $ for bounded measurable $ f, $ and in particular the denominator does not vanish. Writing $ \langle f \rangle := \int f e^{u_z} p_0 / \int e^{u_z} p_0, $ we get $ | \langle f \rangle | \leq \sec (\pi / 6) \sup |f| $ and $ | h^* | \leq \sec(\pi/6) \bar{B} $ on $ S_\eta \times Q_R. $ For the $ x $-derivatives, $ u_z $ is affine in $ x $ with $ \partial_{x_k} u_z = \lambda_k (x_0) := \mu_{T-z} x_{0,k} / \sigma^2_{T-z}, $ $ | \lambda_k | \leq 2 \sigma_*^{-2}, $ so $ \partial_{x_k} \langle f \rangle = \langle f \lambda_k \rangle - \langle f \rangle \langle \lambda_k \rangle $ for $ f $ independent of $ x. $ Iterating, $ \partial_x^\gamma h^* $ with $ |\gamma| = m $ is the joint cumulant of $ (w, \lambda_{k_1}, \ldots, \lambda_{k_m}) $ under $ \langle \cdot \rangle, $ that is $ \sum_\pi (-1)^{|\pi| - 1} (|\pi| - 1)! \prod_{B \in \pi} \langle \prod_{i \in B} \cdot \rangle $ over the $ \mathfrak{B}_{m+1} $ partitions $ \pi $ of the $ m + 1 $ symbols. Each product is bounded by $ \sec(\pi/6)^{m+1} \bar{B} (2 \sigma_*^{-2})^m, $ so, with $ C_\beta $ as in \eqref{eq:A_beta_C_beta_def},
    \begin{equation} \label{eq:eta_asymp}
        \sup_{S_\eta \times Q_R} | \partial_x^\gamma h^* | \leq \mathfrak{B}_{m + 1} m! \, (2 \sec (\pi/6))^{m+1} \bar{B} \sigma_*^{-2m} \leq M_\beta := C_\beta \bar{B} \sigma_*^{-2 \beta}, \quad |\gamma| = m \leq \beta,
    \end{equation}
    and $ \partial_x^\gamma h^* (\cdot, x) $ is holomorphic on $ S_\eta $ for every $ x \in Q_R. $ Note $ M_\beta (2R)^{\beta - 1} = \mathrm{A}_\beta. $

    Step 2: truncation in $ t. $ Put $ \mathrm{r} := 1 + 2 \eta / \ell \in (1, 2] $ and let $ \Pi_K $ be the Chebyshev truncation of Lemma \ref{lem:chebyshev}, acting in $ t $ only. Since $ \log (1 + y) \geq y/2 $ on $ [0, 1], $ $ \mathrm{r}^{-K} \leq e^{- K \eta / \ell} \leq n^{-(\beta + 2)}, $ and $ \mathrm{r} / (\mathrm{r} - 1) \leq \ell / \eta. $ Lemma \ref{lem:chebyshev} applied to $ \partial_x^\gamma h^* (\cdot, x), $ $ | \gamma | \leq 1, $ with $ M = M_\beta, $ and the fact that $ \Pi_K $ commutes with $ \partial_x^\gamma, $ give uniformly in $ (t, x) \in I \times Q_R $
    \begin{gather*}
        | h^* - \Pi_K h^* | \leq M_\beta \ell \eta^{-1} n^{-(\beta + 2)}, \qquad | \nabla (h^* - \Pi_K h^*) | \leq \sqrt{d} M_\beta \ell \eta^{-1} n^{-(\beta+2)}, \\
        | \partial_t (h^* - \Pi_K h^*) | \leq 10 M_\beta (K + 1)^2 \ell^2 \eta^{-3} n^{-(\beta + 2)}.
    \end{gather*}
    Let $ n_0 $ be so large that $ \sqrt{d} \eta^{-1} \leq n^2 $ and $ 10 (K+1)^2 \eta^{-3} \leq n^3 $ for $ n \geq n_0, $ this is possible since the left-hand sides are $ \sigma^{-c} \mathrm{polylog}(n). $ Using $ M_\beta \leq \mathrm{A}_\beta, $ $ \ell \leq 1 $ and $ N \leq n, $ the three errors are then at most $ \mathrm{A}_\beta N^{-\beta}, $ $ \mathrm{A}_\beta N^{-\beta} $ and $ \mathrm{A}_\beta N^{-(\beta - 1)} $ respectively.

    Step 3: approximation of the coefficients. Write $ \Pi_K h^* (t, x) = \sum_{j = 0}^K c_j^* (x) T_j (\tilde{t}), $ $ \tilde{t} := 2 (t - t_0) / \ell - 1. $ Since $ \partial_x^\gamma c_j^* $ is the $ j $-th Chebyshev coefficient of $ \partial_x^\gamma h^* (\cdot, x), $ Lemma \ref{lem:chebyshev}(ii) and \eqref{eq:eta_asymp} give $ \| c_j^* \|_{W^{\beta, \infty} (Q_R)} \leq 2 M_\beta \mathrm{r}^{-j}. $ Put $ S_j := 2 M_\beta (2R)^\beta \mathrm{r}^{-j} $ and $ f_j (y) := c_j^* (2 R y - R \mathbf{1}) / S_j, $ $ y \in [0,1]^d, $ since $ 2R \geq 1, $ $ f_j $ lies in the unit ball of $ W^{\beta, \infty} ((0,1)^d). $ Let $ \Phi_j $ be the network of Lemma \ref{lem:simult_approx} for $ f_j $ and put $ \hat{c}_j (x) := S_j R(\Phi_j) ((x + R \mathbf{1}) / 2R). $ By \eqref{eq:simult_approx}, with $ | \nabla | \leq \sqrt{d} \max_k | \partial_k |, $ on $ Q_R $
    \begin{gather*}
        | \hat{c}_j - c_j^* | \leq C_{\mathrm{F}} S_j N^{-\beta} = 4 C_{\mathrm{F}} R \mathrm{A}_\beta \mathrm{r}^{-j} N^{-\beta}, \\
        | \nabla (\hat{c}_j - c_j^*) | \leq \frac{\sqrt{d} C_{\mathrm{F}} S_j}{2R} N^{-(\beta - 1)} = 2 \sqrt{d} C_{\mathrm{F}} \mathrm{A}_\beta \mathrm{r}^{-j} N^{-(\beta - 1)}.
    \end{gather*}
    Set $ H_N (t, x) := \sum_{j=0}^K \hat{c}_j (x) T_j (\tilde{t}). $ Using $ | T_j | \leq 1, $ $ | T_j' | \leq j^2 \leq K^2 $ on $ [-1, 1], $ $ \partial_t T_j (\tilde{t}) = \frac{2}{\ell} T_j' (\tilde{t}) $ and $ \sum_{j \leq K} \mathrm{r}^{-j} \leq \ell / \eta, $ on $ I \times Q_R $
    \begin{gather*}
        | H_N - \Pi_K h^* | \leq 4 C_{\mathrm{F}} R \ell \eta^{-1} \mathrm{A}_\beta N^{-\beta}, \qquad | \nabla (H_N - \Pi_K h^*) | \leq 2 \sqrt{d} C_{\mathrm{F}} \ell \eta^{-1} \mathrm{A}_\beta N^{-(\beta - 1)}, \\
        | \partial_t (H_N - \Pi_K h^*) | \leq 8 C_{\mathrm{F}} R K^2 \eta^{-1} \mathrm{A}_\beta N^{-\beta}.
    \end{gather*}

    Step 4: bounds on $ I \times Q_R, $ constraints, and tails. By Steps 2--3 and the definition of $ Q_n, $ on $ I \times Q_R $
    \begin{equation} \label{eq:inner_errors}
        | H_N - h^* | \leq Q_n \mathrm{A}_\beta N^{-\beta}, \qquad | \nabla (H_N - h^*) | + | \partial_t (H_N - h^*) | \leq Q_n \mathrm{A}_\beta N^{-(\beta - 1)},
    \end{equation}
    and by \eqref{eq:approx_regime} each left-hand side is at most $ \bar{B}, $ resp.\ $ \min \{ G/2, G_t/2 \}. $ Let $ h_N := (\Theta \circ H_N)^\chi. $ Step 5 shows that $ H_N $ is a network of the required architecture. Constraints: for $ (t, x) \in I \times \mathbb{R}^d $ put $ y := \chi_R (x) \in Q_R. $ By the chain rule, $ | \Theta' | \leq 1 $ and $ | D \chi_R | \leq 1, $ $ | \nabla h_N (t,x) | \leq | \nabla H_N (t, y) | \leq | \nabla h^* (t, y) | + G/2 \leq G $ and $ | \partial_t h_N (t,x) | \leq | \partial_t H_N (t, y) | \leq | \partial_t h^* (t, y) | + G_t / 2 \leq G_t, $ using $ | \nabla h^* | \leq G/2 $ and $ | \partial_t h^* | \leq G_t/2 $ on $ Z $ (Proposition \ref{prop:classes_props}(iv)). Hence $ h_N \in \mathcal{H}_N. $ Inside: on $ I \times Q_{R-1} $ the clip is the identity with $ D \chi_R = I, $ and $ H_N $ takes values in $ (-\bar{B}, 2 \bar{B}] $ since $ 0 < h^* \leq \bar{B}, $ as $ \Theta = \mathrm{id} $ on $ [-\bar{B}, 2 \bar{B}], $ also $ \Theta' = 1 $ there, so $ \varphi_N = H_N - h^* $ together with its first derivatives on $ I \times Q_{R-1}, $ and \eqref{eq:inner_errors} applies. Tails: on $ \{ |x|_\infty > R - 1 \} $ we have $ | \varphi_N | \leq 3 \bar{B} $ because $ h_N \in [-2 \bar{B}, 3 \bar{B}] $ and $ h^* \in (0, \bar{B}], $ $ | \nabla \varphi_N | \leq 3 G / 2, $ and $ | \partial_t \varphi_N | \leq G_t + | \partial_t h^* |. $ Under $ \rho_t, $ $ x = \mu_{T-t} x_0 + \sigma_{T-t} \xi $ with $ | x_0 |_\infty \leq 1 $ and $ \xi \sim \mathcal{N}(0, I_d), $ so a union bound over coordinates and the choice of $ R $ give $ p_R := \rho_t (|x|_\infty > R - 1) \leq 2 d e^{-(R-2)^2/2} \leq n^{-(4 \beta + 8)}. $ By harmonicity, $ | \partial_t h^* | \leq | x + 2 s | \, | \nabla h^* | + | \Delta h^* | \leq 3 \sigma_*^{-2} (|x| + \sqrt{d}) \cdot 2 \sqrt{d} \bar{B} \sigma_*^{-2} + d C_2 \bar{B} \sigma_*^{-4} $ on $ I $ by Lemma \ref{lem:bounded_guidance} and \eqref{eq:h_der_bounds}, and $ \| |x| \|_{L^4 (\rho_t)} \leq \sqrt{d} + (d^2 + 2d)^{1/4} \leq 3 \sqrt{d}, $ so $ C_h := \sup_{t \in I} \| \partial_t h^* \|_{L^4 (\rho_t)} \leq (24 + C_2) d \bar{B} \sigma_*^{-4}. $ Enlarge $ n_0 $ so that $ 3 G + 2 G_t + 2 C_h \leq 2 \mathrm{A}_\beta n^3 $ for $ n \geq n_0 $ (the left-hand side is $ \sigma^{-c} \sqrt{\log n} $). Splitting each norm into $ I \times Q_{R-1} $ and its complement, and using Hölder on the complement,
    \begin{align*}
        \| \varphi_N (t, \cdot) \|_{L^4 (\rho_t)} + \| \varphi_N (t, \cdot) \|_{\rho_t} &\leq 2 Q_n \mathrm{A}_\beta N^{-\beta} + 6 \bar{B} p_R^{1/4} \leq 4 Q_n \mathrm{A}_\beta N^{-\beta}, \\
        \| | \nabla \varphi_N (t, \cdot) | \|_{L^4 (\rho_t)} + \| \partial_t \varphi_N (t, \cdot) \|_{\rho_t} &\leq 2 Q_n \mathrm{A}_\beta N^{-(\beta - 1)} + \big( \tfrac{3}{2} G + G_t + C_h \big) p_R^{1/4} \\
        &\leq 4 Q_n \mathrm{A}_\beta N^{-(\beta - 1)},
    \end{align*}
    because $ p_R^{1/4} \leq n^{-(\beta + 2)}, $ $ N \leq n $ and $ n \geq 2 $ give $ 6 \bar{B} p_R^{1/4} \leq 6 \bar{B} n^{-2} N^{-\beta} \leq 2 \mathrm{A}_\beta N^{-\beta} \leq 2 Q_n \mathrm{A}_\beta N^{-\beta} $ (as $ \bar{B} \leq \mathrm{A}_\beta $ and $ Q_n \geq 1 $), and $ ( \frac{3}{2} G + G_t + C_h ) p_R^{1/4} \leq ( \frac{3}{2} G + G_t + C_h ) n^{-3} N^{-(\beta - 1)} \leq \mathrm{A}_\beta N^{-(\beta - 1)}. $ For the $ \partial_t $ term on the complement we used $ \| \partial_t \varphi_N \mathbf{1} \|_{\rho_t} \leq \| G_t + | \partial_t h^* | \|_{L^4(\rho_t)} p_R^{1/4}. $ Taking the supremum over $ t \in I, $ and noting that $ L^4(\nu \otimes \rho) $ and $ L^2 (\nu \otimes \rho) $ norms are bounded by the corresponding suprema over $ t, $ gives \eqref{eq:space_time_bounds}.

    Step 5: the network. All constants $ C^\sharp, c^\sharp $ below depend only on $ (d, \beta) $ and may be enlarged from line to line, $ n_0 $ is enlarged finitely many times. By Lemma \ref{lem:simult_approx}, each $ \Phi_j $ has at most $ L_0 $ hidden layers, at most $ C_{\mathrm{F}} (N+1)^d $ nonzero weights (hence width at most $ C_{\mathrm{F}} (N+1)^d $) and weights of modulus at most $ C_{\mathrm{F}} N^{C_{\mathrm{F}}}. $ For a network with at most $ L_0 $ hidden layers, widths at most $ W' \geq d + 1, $ weights of modulus at most $ \Lambda' \geq 1 $ and inputs in $ [-1, 1]^{d+1}, $ induction over the layers gives hidden activations bounded by $ (2 W' \Lambda')^{3^{L_0 + 1}}, $ since each layer maps $ a := \max(1, \| x^{(\ell)} \|_\infty) $ to at most $ (2 W' \Lambda' a)^3. $ On $ Z $ the inputs $ y = (x + R \mathbf{1})/2R \in [0,1]^d $ and $ \tilde{t} \in [-1,1] $ are affine in $ z, $ so they are absorbed in the first-layer parameters.

    (a) Coefficient blocks. Realize $ \hat{c}_j $ as $ \Phi_j $ composed with the input map, with output weights multiplied by $ S_j. $ Since $ S_j \leq 4 M_\beta (2 R)^\beta = \sigma^{-c} \mathrm{polylog}(n) $ and $ \sup_{Q_R} | \hat{c}_j | \leq 2 M_\beta + C_{\mathrm{F}} S_j, $ for $ n \geq n_0 $ we have $ S_j \leq n $ and $ M_c := 1 + \max_j \sup_{Q_R} | \hat{c}_j | \leq n. $

    (b) Chebyshev block. With $ T_0 = 1, T_1 = \tilde{t}, T_{2m} = 2 T_m T_m - 1, T_{2m+1} = 2 T_m T_{m+1} - T_1, $ hidden layer $ k \leq k_* := \lceil \log_2 (K+1) \rceil $ of this block outputs, as affine functions of its units, all $ T_j (\tilde{t}) $ with $ j \leq \min \{ 2^k, K \}: $ each new $ T_j $ is a product of two values available at layer $ k - 1, $ realized by four units via Lemma \ref{lem:exact_arithmetic}(iv) with $ M = 2, $ and each value already available, including $ T_1, $ is carried by four units via Lemma \ref{lem:exact_arithmetic}(iii) with $ M = 2. $ This is exact because $ | T_j | \leq 1 $ on $ [-1, 1]. $ Each layer has at most $ 8 (K + 1) $ units, each with at most $ 17 $ nonzero incoming parameters of modulus at most $ 4, $ and activations at most $ 64 $ on $ Z. $

    (c) Alignment and products. Let $ L_1 := L - 1 = L_0 + \lceil \log_2 (K+1) \rceil, $ which exceeds the depth of every block. Pad every block to $ L_1 $ hidden layers by carrying its outputs through Lemma \ref{lem:exact_arithmetic}(iii) with $ M = M_c $ (resp.\ $ M = 2 $). Hidden layer $ L $ computes the $ K + 1 $ products $ \hat{c}_j T_j $ by Lemma \ref{lem:exact_arithmetic}(iv) with $ M := 2 M_c + 2, $ and the readout sums them, which gives $ H_N $ exactly. The resulting network has depth $ L, $ width at most $ (K + 1) (C_{\mathrm{F}} (N+1)^d + 20) \leq C^\sharp K (N+1)^d, $ at most $ 6 (K + 1) C_{\mathrm{F}} (N+1)^d + 320 (K+1) L \leq C^\sharp K ((N+1)^d + L) $ nonzero parameters (biases created by the input map, and the first carrying layer after each coefficient block, whose units read the whole last layer of $ \Phi_j, $ account for the factor $ 6 $), parameters of modulus at most $ n^{c^\sharp}, $ and hidden activations on $ Z $ at most $ n^{c^\sharp} $ by the induction above, for $ n \geq n_0 $ (using $ N \leq n, $ $ M_c \leq n, $ $ 2/\ell + 2 t_1 / \ell + 2 \leq n $). Under \eqref{eq:budgets} and after padding widths with inactive units, $ H_N \in \mathcal{NN}_\varrho (W_N, L, \Lambda, P_0, \mathcal{A}). $
\end{proof}

\section{Removing the inf-sup assumption} \label{app:infsup}

\begin{definition}(Projected residual) \label{def:proj_res}
    Let $ \Psi \subset \mathcal{V} $ be a test class carrying a symmetric, positively homogeneous scale $ \| \cdot \|_\bullet $ -- for $ \Psi_N $ that of \eqref{eq:bullet_norm} -- and let $ \tau \geq 0. $ Set 
    \begin{equation} \label{eq:proj_res}
        \| \psi \|^2_{\mathcal{V}, \tau} := \| \psi \|^2_{\mathcal{V}} + \tau^2 \| \psi \|^2_\bullet, \quad 
        \mathcal{N}_{\Psi, \tau} (h) := \underset{0 \neq \psi \in \Psi}{\sup} \frac{A (h, \psi)}{\| \psi \|_{\mathcal{V}, \tau}} \leq \mathcal{N}_\Psi (h)\leq \| \mathcal{R}[h] \|_{\mathcal{V}^*},
    \end{equation}
    where $ \mathcal{N}_\Psi := \mathcal{N}_{\Psi, 0}, $
    the largest amount of the residual that the test class can detect at slack $ \tau. $ The second inequality is immediate from $ \Psi \subseteq \mathcal{V}, $ and the first from $ \| \cdot \|_{\mathcal{V}, \tau} \geq \| \cdot \|_\mathcal{V}. $ Together they make the approximation input of Lemma \ref{lem:weak_res_approx} available at every $ \tau $ without any further hypothesis. The slack is $ \tau = 0 $ at the population level and $ \tau = \delta_n $ in the estimator \eqref{eq:h_empirical}, where it is what makes the empirical and population adversary norms comparable multiplicatively.
\end{definition}

\begin{lemma}(Fenchel identity for a cone) \label{lem:fenchel_cone}
    Let $ \Psi $ satisfy \ref{property:cone} and let $ \alpha > 0. $ Then
    \begin{equation}
        \underset{\psi \in \Psi}{\sup} \left[ A(h, \psi) - \frac{\alpha}{2} \| \psi \|^2_{\mathcal{V}} \right] = \frac{\mathcal{N}_\Psi (h)^2}{2 \alpha}.
    \end{equation}
    The same identity holds with $ \| \cdot \|_{\mathcal{V}} $ replaced by any other symmetric positively homogeneous functional -- the proof uses nothing else -- in particular by $ \| \cdot \|_{\mathcal{V}, \tau}, $ by the empirical norm $ \| \cdot \|_{\hat{\mathcal{V}}} $ and by the ridged empirical norm $ \| \cdot \|_{\hat{\mathcal{V}}, \tau} $ of \eqref{eq:h_empirical}.
\end{lemma}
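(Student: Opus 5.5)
The plan is to reduce the supremum over the cone to a scalar maximization along each ray. By \ref{property:cone}, every nonzero $\psi \in \Psi$ can be written as $\psi = r\,u$ with $r > 0$ and $u \in \Psi$ normalized by $\|u\|_{\mathcal{V}} = 1$, at least whenever $\|\psi\|_{\mathcal{V}} > 0$. The point $\psi = 0$ lies in $\Psi$ and contributes the value $0$. Bilinearity of $A(h,\cdot)$ in its second argument, which is immediate from \eqref{eq:weak_main}, gives $A(h, r u) = r\,A(h,u)$. Positive homogeneity gives $\|r u\|_{\mathcal{V}} = r$. The objective along the ray is therefore the concave quadratic $q_u(r) := r\,A(h,u) - \tfrac{\alpha}{2} r^2$ on $r \geq 0$.

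Next I maximize $q_u$ over $r \geq 0$. Its maximum is $A(h,u)_+^2/(2\alpha)$, attained at $r = A(h,u)_+/\alpha$. Taking the supremum over directions $u$ and writing $\mathcal{N}_\Psi(h) = \sup_u A(h,u)$, the left-hand side becomes $\sup_u A(h,u)_+^2/(2\alpha) = (\mathcal{N}_\Psi(h)_+)^2/(2\alpha)$. This uses that $x \mapsto x_+^2$ is nondecreasing and continuous, so it commutes with the supremum. Symmetry of the cone makes $\mathcal{N}_\Psi(h) \geq 0$: replacing $u$ by $-u \in \Psi$ flips the sign of $A(h,u)$ and leaves the norm unchanged. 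The positive part can then be dropped, which yields the identity. If $\mathcal{N}_\Psi(h) = +\infty$, both sides are $+\infty$, since letting $r \to \infty$ along a direction with $A(h,u) > \alpha$ gives an unbounded objective.

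Two degenerate issues need care; neither should be a real obstacle. The first is test functions $\psi \neq 0$ with $\|\psi\|_{\mathcal{V}} = 0$, possible for a seminorm such as the empirical $\|\cdot\|_{\hat{\mathcal{V}}}$. I would adopt the convention that the ratio in $\mathcal{N}_\Psi$ ranges over $\psi$ with positive norm and check that such null directions do not break the identity. For $\mathcal{V}$ itself they do not occur, because $\rho_t$ has full support and continuous $\psi$ with $\|\psi\|_{\mathcal{V}} = 0$ vanish. For the empirical norm, a null direction with $A(h,\psi) \neq 0$ would make both sides infinite under the natural reading. The second issue is that the argument used only two facts: (i) $\psi \mapsto A(h,\psi)$ is positively homogeneous and odd, and (ii) the penalty is symmetric and positively homogeneous of degree one before squaring. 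Hence the same computation applies verbatim to $\|\cdot\|_{\mathcal{V},\tau}$, to $\|\cdot\|_{\hat{\mathcal{V}}}$ with $\hat a_t$ in place of $a_t$, and to $\|\cdot\|_{\hat{\mathcal{V}},\tau}$. For $\|\cdot\|_{\mathcal{V},\tau}$, homogeneity follows from the homogeneity of $\|\cdot\|_\bullet$ stated after \eqref{eq:bullet_norm}.
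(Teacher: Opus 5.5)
Your proof is correct and is essentially the paper's: write $\psi=r\hat\psi$ along rays of the cone, maximize the scalar concave quadratic to get $(A(h,\hat\psi))_+^2/(2\alpha)$, and use symmetry of $\Psi$ to replace $\sup(A)_+$ by $\mathcal{N}_\Psi(h)$. Your treatment of null directions of the empirical seminorm goes slightly beyond the paper, which tacitly assumes the functional is positive on nonzero elements; this holds automatically for the ridged norms with $\tau>0$, since $\|\psi\|_\bullet>0$ for $\psi\neq0$. One small slip: on a fixed ray $q_u(r)\to-\infty$ as $r\to\infty$, so the case $\mathcal{N}_\Psi(h)=+\infty$ does not come from sending $r\to\infty$ along one direction. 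That case is already covered by your monotone-supremum step, since $\sup_u A(h,u)_+^2/(2\alpha)=+\infty$ there.
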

\begin{proof}
    Every nonzero $ \psi \in \Psi $ factors as $ \psi = r \hat{\psi} $ with $ r := \| \psi \|_{\mathcal{V}} > 0 $ and $ \| \hat{\psi} \|_{\mathcal{V}} = 1. $ By \ref{property:cone} both $ \hat{\psi} $ and $ r' \hat{\psi} $ lie in $ \Psi $ for every $ r' \geq 0. $ Hence the supremum may be computed in two stages, first over the radius and then over the direction:
    \begin{equation}
        \underset{\psi \in \Psi}{\sup} \left[ A(h, \psi) - \frac{\alpha}{2} \| \psi \|^2  \right] = \underset{\hat{\psi}}{\sup} \, \underset{r \geq 0}{\sup} \left[ r A(h, \hat{\psi}) - \frac{\alpha}{2} r^2 \right] = \underset{\hat{\psi}}{\sup} \frac{(A (h, \hat{\psi})^2_+)}{2 \alpha},
    \end{equation}
    the inner supremum being attained at $ r = (A (h, \hat{\psi}))_+ / \alpha, $ which is admissible precisely because $ \Psi $ is a cone. Since $ \Psi $ is symmetric, $ \underset{\hat{\psi}}{\sup} (A(h, \hat{\psi}))_+ = \underset{\hat{\psi}}{\sup} | A(h, \hat{\psi}) | = \mathcal{N}_\Psi (h). $
\end{proof}

\begin{lemma}(Coefficient envelopes) \label{lem:coeffs_envelopes}
Under Assumptions \ref{ass:bounded}--\ref{ass:compact},
\begin{equation}
    \Lambda_* := \underset{t \in I}{\sup} \left( \| | x + s | \|_{L^4 (\rho_t)} + \| d + \nabla \cdot s \|_{L^4 (\rho_t)} + \| | x + s | | s | \|_{L^4 (\rho_t)} + \| \partial_t \log \rho_t \|_{L^4 (\rho_t)} \right) \lesssim d^2 \sigma^{-4}_*.
\end{equation}
\end{lemma}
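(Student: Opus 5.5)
The plan is to bound each of the four summands separately, uniformly in $t \in I$. All of them reduce to posterior moments of $X_0$ under $\pi_{t,x}$, which live in the cube, and to Gaussian moments of $X_{T-t}$. The common representation comes from Lemma \ref{lem:score_identity}:
\[
s = \sigma^{-2}_{T-t}\bigl(\mu_{T-t}\,m(x) - x\bigr), \qquad m(x) := \mathbb{E}[X_0 \mid X_{T-t} = x], \qquad |m| \leq \sqrt{d}.
\]
Under $\rho_t$ we have $X_{T-t} = \mu_{T-t} X_0 + \sigma_{T-t}\xi$. Hence $\| |x| \|_{L^p(\rho_t)} \lesssim \sqrt{d}$ for $p \leq 8$, and $\| |s| \|_{L^8(\rho_t)} \lesssim \sqrt{d}\,\sigma_\ast^{-2}$, using $\sigma_{T-t} \geq \sigma_\ast$ on $I$ and $\sigma_\ast \leq 1$.

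\textbf{First term.} The first term is exactly $M_4 \lesssim \sqrt{d}\,\sigma_\ast^{-2}$, already proved in Lemma \ref{lem:score_identity}.

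\textbf{Third term.} For $\| |x+s|\,|s| \|_{L^4}$ I apply Cauchy--Schwarz:
\[
\| |x+s|\,|s| \|_{L^4} \leq \| |x+s| \|_{L^8}\, \| |s| \|_{L^8}.
\]
Both factors are $\lesssim \sqrt{d}\,\sigma_\ast^{-2}$ by the $\psi_2$ bound of Lemma \ref{lem:score_identity} and the display above, so the product is $\lesssim d\,\sigma_\ast^{-4}$.

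\textbf{Second term.} For $d + \nabla\cdot s$ I take the trace of the Hessian identity \eqref{eq:hessian}:
\[
\nabla\cdot s = \Delta \log \rho_t = -d\,\sigma^{-2}_{T-t} + \mu^2_{T-t}\,\sigma^{-4}_{T-t}\,\mathrm{tr}\,\Cov(X_0 \mid x).
\]
The trace lies in $[0,d]$, so $|d + \nabla\cdot s| \leq d + d\sigma_\ast^{-2} + d\sigma_\ast^{-4} \lesssim d\,\sigma_\ast^{-4}$ pointwise. A pointwise bound of course gives the $L^4$ bound.

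\textbf{Fourth term.} The term $\partial_t \log \rho_t$ is the main obstacle, because it needs a time derivative of the marginal. I would use the Fokker--Planck equation of the forward OU process, $\partial_\tau p = \nabla\cdot(x p) + \Delta p$, with $\tau = T - t$. This gives
\[
\partial_t \log \rho_t = -\bigl(d + x\cdot s + \nabla\cdot s + |s|^2\bigr),
\]
since $\Delta p / p = \nabla\cdot s + |s|^2$. Regrouping, this equals $-(d + \nabla\cdot s) - (x+s)\cdot s$. Its $L^4$ norm is therefore at most the second term plus the third term, which are already bounded by $d\,\sigma_\ast^{-4}$.

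The identity requires differentiating $p_\tau$ under the integral in $\tau$ and $x$. This is justified, as in Proposition \ref{prop:derivative_bounds}, by the compact support of $p_0$ and Gaussian tails.

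\textbf{Conclusion.} Summing the four bounds gives $\Lambda_\ast \lesssim d\,\sigma_\ast^{-4} \leq d^2\sigma_\ast^{-4}$, which is the claim (with room to spare in $d$).
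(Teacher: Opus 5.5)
Your proposal is correct and follows the paper's proof term by term: the $\psi_2$ envelope of Lemma \ref{lem:score_identity} for the first term, the trace of the Hessian identity \eqref{eq:hessian} for the second, Cauchy--Schwarz in $L^8$ with $|s| \leq \sigma_{T-t}^{-2}(|x| + \sqrt{d})$ for the third, and the Fokker--Planck identity $\partial_t \log \rho_t = -(d + x\cdot s + \nabla\cdot s + |s|^2)$ for the fourth. You make two small refinements, both valid, which together give the sharper $\Lambda_\ast \lesssim d\,\sigma_\ast^{-4}$: you bound $\mathrm{tr}\,\Cov(X_0 \mid x) \leq d$ directly rather than through $\Cov \preceq d I_d$, and you regroup the Fokker--Planck expression as $(d + \nabla\cdot s) + (x+s)\cdot s$ so that it is controlled exactly by the second and third terms.
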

\begin{proof}
    The first term is Lemma \ref{lem:score_identity}, together with $ \| Z \|_{L^4} \leq C \| Z \|_{\psi_2}. $ For the second, \eqref{eq:hessian} gives the deterministic bound $ | \nabla \cdot s | = | \mathrm{tr} \nabla^2 \log \rho_t | \leq d \sigma^{-2}_{T-t} (1 + d \mu^2_{T-t} \sigma^{-2}_{T-t}). $ For the third \eqref{eq:score_identity_1} gives $ | s | \leq \sigma^{-2}_{T-t} ( |x | + \sqrt{d}), $ whence $ \| | s | \|_{\psi_2, \rho_t} \lesssim \sqrt{d} \sigma^{-2}_{T-t} $ exactly as in Lemma \ref{lem:score_identity}, and Cauchy-Schwarz in $ L^8 $ bounds the product. For the fourth, Fokker-Planck equation for $ d X_\tau = - X_\tau d \tau + \sqrt{2} d B_\tau $ reads $ \partial_\tau p_\tau = \nabla \cdot (x p_\tau) + \Delta p_\tau $ and dividing by $ p_\tau $ and using $ \rho_t = p_{T-t} $ (so that $ \partial_t = - \partial_{\tau}) $ gives $ \partial_t \log \rho_t = - ( d + x \cdot s + \nabla \cdot s + | s |^2), $ each summand of which was just bounded in $ L^4 (\rho_t). $ All bounds are largest at $ t = t_1, $ where $ \sigma_{T-t} = \sigma_*. $
\end{proof}

\begin{theorem}(Coercivity without an inf-sup constant) \label{thm:coercivity}
    Let Assumptions \ref{ass:bounded}--\ref{ass:compact} hold and let $ \mathcal{H}_N, \Psi_N $ satisfy \ref{property:cone}--\ref{property:absorption} and the following two conditions: every $ h \in \mathcal{H}_N $ is $ C^{1,2} $ on $ I \times \mathbb{R}^d $ with $ | h | \leq 3 \bar{B} $ and $ \sup_{I \times \mathbb{R}^d} ( | \partial_t h | + | \nabla h | + | \nabla^2 h | ) < \infty, $ and, for some $ C_0 \geq 0 $ and the scale $ \| \cdot \|_\bullet $ of Definition \ref{def:proj_res},
    \begin{equation} \label{eq:scale_condition}
        \| \omega (h - h') \|_\bullet \leq C_0 \quad \text{for all } h, h' \in \mathcal{H}_N.
    \end{equation}
    Let $ h_N \in \mathcal{H}_N $ satisfy \eqref{eq:approx_error_eN} with some $ e_N > 0, $ and let $ \tau \geq 0 $ and $ \delta \in (0,1). $ Then for every $ h \in \mathcal{H}_N, $ writing $ \varphi := h - h^* $ and $ I_\delta := [t_0, t_1 - \delta \ell], $
    \begin{equation} \label{eq:coercivity_thm}
        \int_{I_\delta} \| \nabla \varphi \|^2_{\rho_t} \nu (dt) \leq \frac{C}{\delta} \left(  \frac{1 + \ell}{\ell} \mathcal{D}(h) + \mathcal{N}_{\Psi_N, \tau} (h)^2 + C_0^2 \tau^2 + (1 + \Lambda_*^2 + \ell^{-2}) e^2_N \right),
    \end{equation}
    with $ C $ absolute and $ \Lambda_* \lesssim d^2 \sigma_*^{-4} $ the coefficient envelope of Lemma \ref{lem:coeffs_envelopes}. For the classes of Definition \ref{def:trial_test_classes} all hypotheses other than \eqref{eq:approx_error_eN} hold, with $ C_0 = 1, $ by Proposition \ref{prop:classes_props}(iii) and \eqref{eq:omega_phi_bullet_bound}, for $ \tau = 0 $ the scale condition is not used.
\end{theorem}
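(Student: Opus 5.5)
The plan is to start from the exact weighted energy identity of Theorem~\ref{thm:weighted_energy_identity}, $2\ell E_\omega + \|\varphi(t_0,\cdot)\|^2_{\rho_{t_0}} = \mathcal{D}(h) - 2\ell A(h,\omega\varphi)$, which applies to $h$ by the regularity hypotheses (Proposition~\ref{prop:classes_props}(iii) for the concrete classes). Corollary~\ref{cor:coercivity} cannot be used directly, since only $\mathcal{N}_{\Psi_N,\tau}(h)$ is available. Instead the test function $\omega\varphi$ is split through the approximant: $\omega\varphi = \omega(h-h_N) + \omega\varphi_N$ with $\varphi_N := h_N - h^*$. By absorption \ref{property:absorption}, $\omega(h-h_N)\in\Psi_N$, and by \eqref{eq:scale_condition} its cone scale is at most $C_0$. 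The definition of the projected residual then gives
\[
|A(h,\omega(h-h_N))| \le \mathcal{N}_{\Psi_N,\tau}(h)\bigl(\|\omega(h-h_N)\|_{\mathcal{V}} + \tau C_0\bigr).
\]
Here symmetry \ref{property:cone} handles the sign. By the triangle inequality and the computation in Step~3 of Corollary~\ref{cor:coercivity} (using $\omega^2\le\omega\le 1$),
\[
\|\omega(h-h_N)\|_{\mathcal{V}} \le \sqrt{\mathcal{D}(h)+E_\omega} + \|\varphi_N\|_{\mathcal{V}},
\]
and $\|\varphi_N\|_{\mathcal{V}}\lesssim e_N$ by \eqref{eq:approx_error_eN}. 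Young's inequality with a small parameter times $\ell$ then absorbs the resulting $E_\omega$ into the left-hand side. What remains from this piece is $\mathcal{D}(h)$, $\mathcal{N}^2$, $C_0^2\tau^2$ and $e_N^2$, with the $\ell$-dependence arranged as in Corollary~\ref{cor:coercivity}.

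The remaining piece is the cross term $A(h,\omega\varphi_N) = A(\varphi,\omega\varphi_N)$, which follows from linearity and gap-freeness (Lemma~\ref{lem:weak_form}). Expanding $a_t$ gives three terms. For the gradient term, $|\int\omega\nabla\varphi\cdot\nabla\varphi_N\rho_t| \le \|\omega^{1/2}\nabla\varphi\|\,\|\nabla\varphi_N\|$, since $\omega\le\omega^{1/2}$. For the drift term, $\int\omega((x+s)\cdot\nabla\varphi)\varphi_N\rho_t$ is bounded by $\|\omega^{1/2}\nabla\varphi\|\,\|\,|x+s|\,\|_{L^4}\|\varphi_N\|_{L^4}$. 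Both are $\sqrt{E_\omega}\,(1+\Lambda_*)e_N$ and are absorbed by Young. The time-derivative term $\int_I\omega\langle\partial_t\varphi,\varphi_N\rangle_{\rho_t}\nu(dt)$ is the delicate one, because nothing controls $\partial_t\varphi$ for the estimated $h$. I would integrate it by parts in $t$, writing $\partial_t(\varphi\varphi_N\rho_t\omega)$. The boundary term at $t_1$ vanishes since $\omega(t_1)=0$. The boundary term at $t_0$ is $-\ell^{-1}\langle\varphi(t_0),\varphi_N(t_0)\rangle_{\rho_{t_0}}$ in $\nu$-normalization, and after multiplying by $2\ell$ it is absorbed by the $\|\varphi(t_0)\|^2_{\rho_{t_0}}$ term on the left of the identity, with $\sup_t\|\varphi_N(t)\|_{\rho_t}\le e_N$ paying for it. The bulk terms are $\varphi\,\partial_t\varphi_N$, $\varphi\,\varphi_N\,\dot\omega$ with $|\dot\omega|=\ell^{-1}$, and $\varphi\,\varphi_N\,\partial_t\log\rho_t$. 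These are bounded by $\sqrt{\mathcal{D}(h)}$ times $\|\partial_t\varphi_N\|$, $\ell^{-1}\|\varphi_N\|$ and $\Lambda_*\|\varphi_N\|_{L^4}$ respectively, using $|\varphi|\le 4\bar B$ and Hölder with the $L^4$ envelope $\|\partial_t\log\rho_t\|_{L^4}\le\Lambda_*$ of Lemma~\ref{lem:coeffs_envelopes}. Young's inequality produces $\mathcal{D}(h) + (1+\Lambda_*^2+\ell^{-2})e_N^2$. If the drift term is instead better treated by a spatial integration by parts, the envelopes $\|d+\nabla\cdot s\|_{L^4}$ and $\|\,|x+s|\,|s|\,\|_{L^4}$ in $\Lambda_*$ cover that route; I would keep this as a fallback.

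Collecting the pieces gives $E_\omega \lesssim \frac{1+\ell}{\ell}\mathcal{D}(h)+\mathcal{N}_{\Psi_N,\tau}(h)^2+C_0^2\tau^2+(1+\Lambda_*^2+\ell^{-2})e_N^2$. Step~5 of Corollary~\ref{cor:coercivity} ($\omega\ge\delta$ on $I_\delta$) then gives \eqref{eq:coercivity_thm}. For the concrete classes, $C_0=1$ is \eqref{eq:omega_phi_bullet_bound}, and at $\tau=0$ the scale term is absent. I expect the main obstacle to be the time integration by parts: it requires justifying differentiability of $t\mapsto\int\varphi\varphi_N\rho_t$, using the Fokker--Planck identity for $\partial_t\rho_t$ and the boundedness of $\varphi$. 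It also requires bookkeeping of the $\ell$ factors so that the $t_0$ boundary term matches the free $\|\varphi(t_0)\|^2$ on the left exactly.
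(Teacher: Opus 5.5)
Your proposal is correct, and its skeleton is the paper's. Like the paper, you split the test function through the approximant, $\varphi = (h-h_N) + \varphi_N$, and bound the in-class part by $\mathcal{N}_{\Psi_N,\tau}(h)$ via \ref{property:absorption} and \eqref{eq:scale_condition}. The gradient piece is handled by weighted Cauchy--Schwarz. The $\partial_t\varphi$ piece is integrated by parts in $t$: the $t_1$ boundary term vanishes because $\omega(t_1)=0$, and the $t_0$ boundary term is absorbed by the free $\|\varphi(t_0,\cdot)\|^2_{\rho_{t_0}}$ on the left.

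The one genuine difference is the drift piece $\int_I \omega \langle (x+s)\cdot\nabla\varphi, \varphi_N\rangle_{\rho_t}\,\nu(dt)$.
\begin{itemize}
\item \textbf{Paper's route.} The paper claims Cauchy--Schwarz would force $\||\nabla\varphi|\|_{L^4}$, so it integrates by parts in $x$ and moves the derivative onto $\omega\varphi_N\rho_t$. This is why the envelopes $\|d+\nabla\cdot s\|_{L^4(\rho_t)}$ and $\||x+s|\,|s|\|_{L^4(\rho_t)}$ appear in $\Lambda_*$. It yields $3\Lambda_*\sqrt{\mathcal{D}(h)}\,e_N$.
\item \textbf{Your route.} You keep $\nabla\varphi$ in weighted $L^2$ and put the two $L^4$ norms on $|x+s|$ and on $\varphi_N$. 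With $\omega^2\le\omega$ and Jensen in $t$ this gives $M_4\sqrt{E_\omega}\,\|\varphi_N\|_{L^4(\nu\otimes\rho)} \le M_4\sqrt{E_\omega}\,e_N$. This is legitimate because \eqref{eq:approx_error_eN} controls $\|\varphi_N\|_{L^4(\nu\otimes\rho)}$. The term is then absorbed by Young exactly like the gradient piece.
\end{itemize}

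Your route is the leaner one. It needs no spatial integration by parts, and hence no justification of the boundary term at infinity. It needs only $M_4$ for this piece, not those two extra envelopes. The paper's route produces $\sqrt{\mathcal{D}(h)}$ instead of $\sqrt{E_\omega}$, but that buys nothing here since $E_\omega$ is absorbed into the left-hand side anyway. So your fallback is unnecessary.

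$\Lambda_*$ still enters your argument through $\|\partial_t\log\rho_t\|_{L^4(\rho_t)}$ in the bulk of the time term. The final bound therefore has the same form, with the $\ell^{-1}$ from the $t_0$ boundary term fitting under $(1+\ell^{-2})e_N^2$.
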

\begin{proof}
    By the first hypothesis and Lemma \ref{lem:bounded_guidance}, $ h $ satisfies the hypotheses of Theorem \ref{thm:weighted_energy_identity}, whose exact identity reads
    \begin{equation} \label{eq:weighted_energy_for_proof}
        2 \ell E_\omega + \| \varphi (t_0, \cdot) \|^2_{\rho_{t_0}} = \mathcal{D}(h) - 2 \ell A(h, \omega \varphi), \quad E_\omega = \int_I \omega \| \nabla \varphi \|^2_{\rho_t} \nu(dt).
    \end{equation}
    Both left-hand terms are favourable, so the whole problem is to bound the cross term $ A(h, \omega \varphi). $ The naive bound $ | A(h, \omega \varphi)| \leq \| \mathcal{R} [h] \|_{\mathcal{V}^*} \| \omega \varphi \|_{\mathcal{V}} $ is what produces the inf-sup constant, since the estimator controls $ \mathcal{N}_{\Psi_N} (h) $ and not $ \| \mathcal{R}[h] \|_{\mathcal{V}^*}. $ We split the test function into a part that lies in $ \Psi_N $ and a part that is small.

    Set $ \varphi_1 := h - h_N $ and $ \varphi_2 := h_N - h^*, $ so that $ \varphi = \varphi_1 + \varphi_2, $ and abbreviate $ E := E_\omega, \, D := \mathcal{D}(h), \, \mathcal{N} := \mathcal{N}_{\Psi_N, \tau}(h). $ Since $ A(h^*, \cdot) = 0 $ and $ A $ is linear in its first argument, $ A(h, \psi) = A(\varphi, \psi) $ for every admissible $ \psi. $ Throughout we use $ 0 \leq \omega \leq 1, $ hence $ \omega^2 \leq \omega, $ and the consequences $ \int_I \| \varphi_2 \|^2_{\rho_t} \nu \leq e_N^2, $ $ \int_I \| \nabla \varphi_2 \|^2_{\rho_t} \nu \leq \| |\nabla \varphi_2| \|^2_{L^4 (\nu \otimes \rho)} \leq e_N^2 $ of \eqref{eq:approx_error_eN}.

    (a) The in-class part. Both $ h $ and $ h_N $ lie in $ \mathcal{H}_N, $ so $ \omega \varphi_1 \in \Psi_N $ by \ref{property:absorption} and $ \| \omega \varphi_1 \|_\bullet \leq C_0 $ by \eqref{eq:scale_condition}. By the definition \eqref{eq:proj_res} of $ \mathcal{N}, $ $ | A (h, \omega \varphi_1) | \leq \mathcal{N} \| \omega \varphi_1 \|_{\mathcal{V}, \tau}. $ Since $ \omega $ depends only on $ t, $ $ \nabla (\omega \varphi_1) = \omega \nabla \varphi_1, $ and using $ \omega^2 \leq \omega $ and $ (a + b)^2 \leq 2 a^2 + 2 b^2 $ inside each integral,
    \begin{equation*}
        \| \omega \varphi_1 \|^2_{\mathcal{V}, \tau} \leq \int_I \omega ( \| \varphi_1 \|^2_{\rho_t} + \| \nabla \varphi_1 \|^2_{\rho_t} ) \nu (dt) + C_0^2 \tau^2 \leq 2 D + 2 E + 4 e^2_N + C_0^2 \tau^2.
    \end{equation*}
    The $ E $ produced here is the weighted energy on the left of \eqref{eq:weighted_energy_for_proof}. Young's inequality $ 2 \ell \mathcal{N} \kappa \leq \frac{\ell}{4} \kappa^2 + 4 \ell \mathcal{N}^2 $ with $ \kappa := \| \omega \varphi_1 \|_{\mathcal{V}, \tau} $ gives
    \begin{equation*}
        2 \ell | A (h, \omega \varphi_1) | \leq \frac{\ell}{2} D + \frac{\ell}{2} E + \ell e^2_N + \frac{\ell}{4} C_0^2 \tau^2 + 4 \ell \mathcal{N}^2.
    \end{equation*}

    (b) The out-of-class part, gradient term. By \eqref{eq:weak_res} the cross term $ A(\varphi, \omega \varphi_2) $ has three pieces. The last is $ - \int_I \omega \int \nabla \varphi \cdot \nabla \varphi_2 \rho_t \, dx \, \nu (dt), $ which by Cauchy--Schwarz against the weight $ \omega $ and $ ab \leq \frac{1}{8} a^2 + 2 b^2 $ is at most $ \frac{1}{8} E + 2 \int_I \omega \| \nabla \varphi_2 \|^2_{\rho_t} \nu \leq \frac{1}{8} E + 2 e^2_N. $

    (c) The out-of-class part, drift term. The piece $ \int_I \omega \int ((x + s) \cdot \nabla \varphi) \varphi_2 \rho_t \, dx \, \nu (dt) $ cannot be treated by Cauchy--Schwarz directly: that would produce $ \| | \nabla \varphi | \|_{L^4}, $ which the estimator does not control. For $ V := x + s $ and $ g $ bounded, $ C^1 $ and with bounded gradient,
    \begin{equation*}
        \int ( V \cdot \nabla \varphi) g \rho_t = - \int \varphi \, \nabla \cdot (g V \rho_t) = - \int \varphi [ (\nabla \cdot V) g + V \cdot \nabla g + (V \cdot s) g ] \rho_t,
    \end{equation*}
    where the boundary term at infinity vanishes because $ \varphi $ and $ g $ are bounded, $ |V| $ has at most linear growth and $ \rho_t $ has Gaussian tails, and $ \nabla \rho_t = s \rho_t. $ Take $ g := \omega \varphi_2 $ and note $ \nabla \cdot V = d + \nabla \cdot s. $ Cauchy--Schwarz in $ x, $ H\"older with the envelopes of Lemma \ref{lem:coeffs_envelopes}, and Cauchy--Schwarz in $ t $ give
    \begin{equation*}
        \begin{aligned}
        \left| \int_I \omega \int (V \cdot \nabla \varphi) \varphi_2 \rho_t \, dx \, \nu (dt) \right| &\leq \Lambda_* \sqrt{D} \left( 2 \| \varphi_2 \|_{L^4 (\nu \otimes \rho)} + \| | \nabla \varphi_2 | \|_{L^4 (\nu \otimes \rho)} \right) \\
        &\leq 3 \Lambda_* \sqrt{D} e_N \leq \frac{1}{8} D + 18 \Lambda_*^2 e^2_N.
        \end{aligned}
    \end{equation*}

    (d) The out-of-class part, time term. With $ F(t) := \int \varphi \, \omega \varphi_2 \, \rho_t \, dx, $ the function $ F $ is absolutely continuous on $ I $ with $ F' = \int \partial_t (\varphi \omega \varphi_2 \rho_t) \, dx, $ by dominated convergence: $ \varphi, \varphi_2 $ are bounded, $ | \partial_t \varphi |, | \partial_t \varphi_2 | $ grow at most linearly in $ x $ by the first hypothesis and harmonicity, and $ \partial_t \rho_t = \rho_t \, \partial_t \log \rho_t $ with $ \partial_t \log \rho_t $ of polynomial growth. Hence, with $ \nu(dt) = dt / \ell, $
    \begin{equation*}
        \int_I \omega \int \partial_t \varphi \, \varphi_2 \rho_t \, dx \, \nu(dt) = \frac{F(t_1) - F(t_0)}{\ell} - \int_I \int \varphi \, \partial_t (\omega \varphi_2 \rho_t ) \, dx \, \nu(dt).
    \end{equation*}
    The boundary term at $ t_1 $ vanishes because $ \omega (t_1) = 0. $ At $ t_0, $ $ \ell^{-1} | F (t_0) | \leq \frac{1}{4 \ell} \| \varphi (t_0) \|^2_{\rho_{t_0}} + \frac{1}{\ell} \| \varphi_2 (t_0) \|^2_{\rho_{t_0}} \leq \frac{1}{4 \ell} \| \varphi (t_0) \|^2_{\rho_{t_0}} + \frac{1}{\ell} e_N^2. $ For the bulk term, $ \partial_t (\omega \varphi_2 \rho_t) = \rho_t [ \dot{\omega} \varphi_2 + \omega \partial_t \varphi_2 + \omega \varphi_2 \partial_t \log \rho_t ] $ with $ | \dot{\omega} | = \ell^{-1}, $ so by Cauchy--Schwarz, H\"older and Lemma \ref{lem:coeffs_envelopes} it is at most $ \sqrt{D} ( \ell^{-1} \| \varphi_2 \|_{L^2(\nu \otimes \rho)} + \| \partial_t \varphi_2 \|_{L^2 (\nu \otimes \rho)} + \Lambda_* \| \varphi_2 \|_{L^4 (\nu \otimes \rho)} ) \leq (1 + \ell^{-1} + \Lambda_*) \sqrt{D} e_N \leq \frac{1}{8} D + 2 (1 + \ell^{-1} + \Lambda_*)^2 e^2_N. $

    (e) Assembly. Multiplying the bounds of (b), (c), (d) by $ 2 \ell $ and inserting them with (a) into \eqref{eq:weighted_energy_for_proof}: the terms in $ E $ total $ \frac{\ell}{2} E + \frac{\ell}{4} E = \frac{3 \ell}{4} E, $ absorbed by $ 2 \ell E, $ the boundary term of (d) contributes $ \frac{1}{2} \| \varphi (t_0) \|^2_{\rho_{t_0}}, $ absorbed by the favourable term, the terms in $ D $ total $ (1 + \ell) D, $ the projected residual contributes $ 4 \ell \mathcal{N}^2 $ and the slack $ \frac{\ell}{4} C_0^2 \tau^2. $ The terms in $ e_N^2 $ are $ \ell e_N^2 + 4 \ell e_N^2 + 36 \ell \Lambda_*^2 e_N^2 + 2 e_N^2 + 4 \ell (1 + \ell^{-1} + \Lambda_*)^2 e_N^2 \leq C \ell (1 + \Lambda_*^2 + \ell^{-2}) e_N^2, $ using $ 2 \leq \ell (1 + \ell^{-2}) $ and $ (1 + \ell^{-1} + \Lambda_*)^2 \leq 3 (1 + \ell^{-2} + \Lambda_*^2). $ Hence
    \begin{equation*}
        \frac{5 \ell}{4} E \leq (1 + \ell) D + 4 \ell \mathcal{N}^2 + \frac{\ell}{4} C_0^2 \tau^2 + C \ell (1 + \Lambda^2_* + \ell^{-2}) e^2_N,
    \end{equation*}
    and dividing by $ \ell $ gives $ E \leq C ( \frac{1 + \ell}{\ell} D + \mathcal{N}^2 + C_0^2 \tau^2 + (1 + \Lambda_*^2 + \ell^{-2}) e_N^2 ). $ Finally $ \omega (t) = (t_1 - t) / \ell \geq \delta $ on $ I_\delta, $ so $ \int_{I_\delta} \| \nabla \varphi \|^2_{\rho_t} \nu(dt) \leq \delta^{-1} \int_{I_\delta} \omega \| \nabla \varphi \|^2_{\rho_t} \nu (dt) \leq \delta^{-1} E, $ which is \eqref{eq:coercivity_thm}. The bound $ \Lambda_* \lesssim d^2 \sigma^{-4}_* $ is Lemma \ref{lem:coeffs_envelopes}.
\end{proof}

\section{Statistical guarantee} \label{app:statistical}

Let $ \{ (X_0^i, \varepsilon^i) \}_{i=1}^n $ be i.i.d. with $ X_0^i \sim p_0, \, \varepsilon^i \sim \mathcal{N}(0, I_d) $ independent, and $ X^i_{T-t} := \mu_{T-t} X_0^i + \sigma_{T-t} \varepsilon^i \sim \rho_t. $ With $ \hat{a}_t, \hat{\mathcal{D}} $ the empirical versions of $ a_t, \, \mathcal{D} $ and $ \| \cdot \|_{\hat{\mathcal{V}}} $ the empirical $\mathcal{V}$-norm, the estimator is
\begin{equation} \label{eq:h_empirical}
    \hat{h} \in \underset{h \in \mathcal{H}_N}{\mathrm{argmin}} \, \hat{\mathcal{D}}(h) + \mu \underset{\psi \in \Psi_N}{\sup} \left[ \int_I \hat{a}_t (h, \psi) \nu (dt) - \frac{\alpha}{2} \| \psi \|^2_{\hat{\mathcal{V}}, \delta_n} \right], \quad \| \psi \|^2_{\hat{\mathcal{V}}, \tau} := \| \psi \|^2_{\hat{\mathcal{V}}} + \tau^2 \| \psi \|^2_\bullet,
\end{equation}
with $ \hat{g} = \Pi_{G^*} (\nabla \hat{h} / (\hat{h} \lor \mathrm{b})) $ as in \eqref{eq:proj_guidance} and $ \delta_n $ as in \eqref{eq:delta_n}.

The second term in the adversary norm is a ridge at the statistical scale. $ \| \psi \|_\bullet / \| \psi \|_{\mathcal{V}} $ is unbounded on the cone (Remark \ref{rem:v_ball}) and no bound of the form $ \delta_n \| \psi \|_{\mathcal{V}} + \delta^2_n $ holds uniformly on $ \Psi_N. $ The naive estimator -- ridged in $ \| \cdot \|_{\hat{\mathcal{V}}} $ alone -- has an inner supremum that the deviation bounds do not control in directions where the empirical $\mathcal{V}$-norm nearly degenerates. As we will see at Theorem \ref{thm:dev_bound}, adding $ \delta_n^2 \| \cdot \|^2_\bullet $ removes the difficulty. The cost is one extra term of order $ \delta_n^2 $ in Theorem \ref{thm:coercivity}, taken there at $ \tau = \delta_n. $ 

\begin{align}
    G_{h, \psi} (z) := & \int_I \left[  \partial_t h \, \psi + ((x + s) \cdot \nabla h) \psi - \nabla h \cdot \nabla \psi \right] (t, X_{T-t}(z)) \nu(dt), \label{eq:G_h_psi_def} \\
    D_h(z) := & \int_I \left[ (h - w(x_0))^2 - (h^* - w(x_0))^2 \right] (t, X_{T-t}(z)) \nu (dt), \label{eq:D_h_def} \\
    Q_\psi (z) := & \int_I \left[ \psi^2 + | \nabla \psi |^2 \right] (t, X_{T-t} (z)) \nu (dt), \label{eq:Q_psi_def}
\end{align}
for $ h \in \mathcal{H}_N $ and $ \psi \in \Psi_N. $

All three quantities are ordinary empirical processes over a single i.i.d. sample and the integration in $ t $ having been absorbed into the summand. Indeed, put $ Z_i := (X_0^i, \varepsilon^i) \in \mathcal{Z} := \mathrm{supp}(p_0) \times \mathbb{R}^d, $ which are i.i.d., and write $ X_{T-t}(z) := \mu_{T-t} x_0 + \sigma_{T-t} e $ for $ z = (z_0, e). $

Writing $ P_n $ for the empirical and $ P $ for population measure of $ Z $
\begin{equation} \label{eq:three_empirical_things}
    \int_I \hat{a}_t (h, \psi) \nu(dt) = P_n G_{h, \psi}, ~ \hat{\mathcal{D}}(h) - \hat{\mathcal{D}}(h^*) = P_n D_h, ~ \| \psi \|^2_{\hat{\mathcal{V}}} = P_n Q_\psi,
\end{equation}
with population counterparts $ P G_{h, \psi} = A(h, \psi), ~ P D_h = \mathcal{D}(h) $ and $ P Q_\psi = \| \psi \|^2_{\mathcal{V}}. $ Only $ \mathcal{G} := \{ G_{h, \psi} \} $ has an unbounded envelope, and only through the factor $ x + s. $

We will also use that $ \mathcal{G} $ is linear in $ \psi $ and $ \mathcal{Q} $ quadratic, while $ \Psi_N $ is a cone whose scale is measured by $ \| \cdot \|_\bullet. $ Exact homogeneity therefore removes the radial variable from the empirical process problem: it suffices to work at generator scale $ \| \psi \|_\bullet \leq 1 $ and multiply back. No restriction of the adversary to a norm ball is imposed: with the ridge of \eqref{eq:h_empirical} the bounds below hold on all of $ \Psi_N. $

\begin{lemma}(Envelopes and variances) \label{lem:envelopes_and_variances}
    Let $ \mathcal{T} > 0 $ and, for $ z = (x_0, e), $ write $ \bar{V}(z) := \sigma_*^{-2} ( 2 \sqrt{d} + | e |). $ Under Assumptions \ref{ass:bounded}--\ref{ass:compact} and Definition \ref{def:trial_test_classes}:
    \begin{enumerate}[label=(\roman*), leftmargin=2.2em]
        \item (Uniform drift envelope) $ \underset{t \in I}{\sup} | (x + s) (t, X_{T-t} (z)) | \leq \bar{V}(z) $ for every $ z, $ and $ \| \bar{V} \|_{\psi_2} \leq C \sqrt{d} \sigma_*^{-2}. $

        \item (Envelope) $ | G_{h, \psi} | \leq \Gamma \| \psi \|_\bullet ( \max (G_t, G) + G \bar{V}) $ with $ \| \psi \|_\bullet $ the cone scale \eqref{eq:bullet_norm}, and for the envelope $ F := \Gamma (\max (G_t, G) + G \bar{V}) $ of $ \{ G_{h, \psi}: \| \psi \|_\bullet \leq 1 \}, $
        \begin{equation} \label{eq:envelope}
            \| F \|_{\psi_2} \leq C \sigma^{-c} \log n, \quad \text{hence } \| \underset{i \leq n}{\max} F (Z_i) \|_{\psi_1} \leq C \log n \| F \|_{\psi_2} \leq C \sigma^{-c} \log^2 n.
        \end{equation}

        \item (Bounded surrogates) Let $ G_{h, \psi}^{\mathcal{T}} $ be defined as in \eqref{eq:G_h_psi_def} with $ | x + s | $ replaced by $ (x + s) \mathbf{1} \{ | x + s| \leq \mathcal{T} \}. $ Then $ \| G_{h, \psi}^{\mathcal{T}} \|_\infty \leq \Gamma (G_t + G + G \mathcal{T}) \| \psi \|_\bullet $ and 
        \begin{equation} \label{eq:P_G_T_2}
            P (G_{h, \psi}^{\mathcal{T}})^2 \leq 3 (G_t^2 + G^2 \mathcal{T}^2 + G^2) \| \psi \|^2_{\mathcal{V}},
        \end{equation}
        moreover $ G_{h, \psi} = G^{\mathcal{T}}_{h, \psi} $ for every $ (h, \psi) $ simultaneously on the event $ \{ \underset{i \leq n}{\max} \bar{V}(Z_i) \leq \mathcal{T} \}, $ whose complement has probability at most $ 2 n e^{- c \mathcal{T}^2 \sigma^4_* / d}, $ and, if $ \mathcal{T} \geq C \sqrt{d} \sigma_*^{-2}, $ $ P | G_{h, \psi} - G^{\mathcal{T}}_{h, \psi} | \leq \Gamma G \| \psi \|_\bullet P [ \bar{V} \mathbf{1} \{ \bar{V} > \mathcal{T} \} ] \leq C \mathcal{T} \, \Gamma G \| \psi \|_\bullet e^{-c \mathcal{T}^2 \sigma^4_* / d}. $

        \item (Bernstein condition for $ \mathcal{D} $) $ \| D_h \|_\infty \leq 12 \bar{B}^2 $ and $ P D_h^2 \leq 16 \bar{B}^2 P D_h, $

        \item (Bernstein condition for $ \mathcal{Q} $) $ 0 \leq Q_\psi \leq \Gamma^2 \| \psi \|^2_\bullet $ and $ P Q_\psi^2 \leq \Gamma^2 \| \psi \|^2_\bullet P Q_\psi. $
    \end{enumerate}
\end{lemma}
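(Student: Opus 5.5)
Each of the five items is a direct pointwise or moment computation from earlier results, and I would prove them in order.

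For (i), I would use the second identity in \eqref{eq:score_identity_1} with $x = X_{T-t}(z) = \mu_{T-t}x_0 + \sigma_{T-t}e$. This gives
\[
x+s = -\frac{\mu_{T-t}}{\sigma_{T-t}^2}\bigl(\mu_{T-t}x_0 - \mathbb{E}[X_0\mid x]\bigr) - \frac{e}{\sigma_{T-t}} + \sigma_{T-t} e\cdot 1,
\]
and more simply $|x+s| \le |x|\,|1-\sigma^{-2}| + \sigma^{-2}\sqrt d$. Substituting $|x| \le \sqrt d + |e|$ and using $\sigma_{T-t} \ge \sigma_*$ and $\sigma \le 1$ bounds every term by $\sigma_*^{-2}(2\sqrt d + |e|)$ uniformly in $t \in I$. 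The $\psi_2$ norm bound then follows from $\||e|\|_{\psi_2} \lesssim \sqrt d$.

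For (ii), I would insert the global sup bounds: $|\partial_t h| \le G_t$ and $|\nabla h| \le G$ from \eqref{eq:h_n_class}, and $|\psi| + |\nabla\psi| \le \Gamma\|\psi\|_\bullet$ from the cone-scale domination stated after \eqref{eq:bullet_norm}. Summing the three terms of $G_{h,\psi}$ and integrating over $\nu$ gives the envelope. Since $G, G_t, \Gamma$ are $\sigma^{-c}(R+1) \lesssim \sigma^{-c}\sqrt{\log n}$, combining with (i) gives $\|F\|_{\psi_2} \le C\sigma^{-c}\log n$. The maximal inequality is the standard bound $\|\max_{i\le n} F(Z_i)\|_{\psi_1} \lesssim \log n\,\|F\|_{\psi_1}$, with $\|F\|_{\psi_1} \lesssim \|F\|_{\psi_2}$.

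For (iii), the sup bound is the same computation with $|V|$ replaced by $\mathcal T$. For the second moment I would use Cauchy--Schwarz in $t$ (Jensen against $\nu$), followed by $(a+b+c)^2 \le 3(a^2+b^2+c^2)$. The sup bounds on $h$ leave $\psi^2$ and $|\nabla\psi|^2$ integrated against $\nu\otimes\rho$, which is exactly $\|\psi\|_{\mathcal V}^2$. On the event $\{\max_i \bar V(Z_i) \le \mathcal T\}$ the indicator is identically $1$ at every sample point by (i), so equality holds for all $(h,\psi)$ at once. The complement probability follows from a union bound and the Gaussian tail of $|e|$: $\bar V > \mathcal T$ forces $|e| \ge \sigma_*^2\mathcal T - 2\sqrt d \ge \sigma_*^2\mathcal T/2$ once $\mathcal T \ge C\sqrt d\sigma_*^{-2}$. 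The $L^1$ truncation error is $\Gamma G\|\psi\|_\bullet\,P[\bar V\mathbf 1\{\bar V > \mathcal T\}]$, and integrating the sub-Gaussian tail gives $C\mathcal T e^{-c\mathcal T^2\sigma_*^4/d}$. The constant-threshold caveat on the probability bound (it needs $\mathcal T$ large relative to $\sqrt d\sigma_*^{-2}$) is where I would be most careful, absorbing it into $c$ or restricting $\mathcal T$ accordingly.

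For (iv), I would write $D_h = (h - h^*)(h + h^* - 2w)$ pointwise. Since $|h| \le 3\bar B$ and $h^*, w \in (0, \bar B]$, we have $|h - h^*| \le 4\bar B$ and $|h + h^* - 2w| \le 4\bar B$ (because $-2\bar B \le h + h^* - 2w$ and $h + h^* - 2w \le 4\bar B$), so $|D_h| \le 16\bar B^2$. The stated $12\bar B^2$ should come from a sharper count, e.g.\ using $h \ge -2\bar B$ on the range of $\Theta$. For the variance, Jensen in $t$ gives $D_h^2 \le \int_I (h - h^*)^2 (h + h^* - 2w)^2\,\nu(dt) \le 16\bar B^2 \int_I (h - h^*)^2\,\nu(dt)$. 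Taking expectations, and noting that the cross term of $PD_h$ vanishes because $h^*$ is the conditional mean of $w$, gives $PD_h = \mathcal D(h)$, hence $PD_h^2 \le 16\bar B^2\,\mathcal D(h) = 16\bar B^2\,PD_h$.

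For (v), nonnegativity is clear, and $Q_\psi \le \Gamma^2\|\psi\|_\bullet^2$ follows from $\psi^2 + |\nabla\psi|^2 \le (|\psi| + |\nabla\psi|)^2$. Then $Q_\psi^2 \le \|Q_\psi\|_\infty Q_\psi$ gives the Bernstein condition. I expect no serious obstacle; the only delicate points are the constant bookkeeping in (iv) and the tail integration in (iii).
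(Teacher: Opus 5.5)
Your proposal follows the paper's proof item by item: the Tweedie bound $|x+s|\le\sigma_{T-t}^{-2}(|x|+\sqrt d)$ for (i), the global sup constraints of $\mathcal{H}_N$ together with the cone-scale domination $|\psi|+|\nabla\psi|\le\Gamma\|\psi\|_\bullet$ for (ii)--(iii), the factorization $D_h=\varphi\,(h+h^*-2w)$ plus the conditional-mean identity $PD_h=\mathcal{D}(h)$ for (iv), and $PQ^2\le\|Q\|_\infty PQ$ for (v). There is one slip that breaks a stated constant, and two smaller remarks.

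\textbf{The slip is in (iv).} You claim $-2\bar B\le h+h^*-2w$. This is false. Take $h=-2\bar B$, which lies in the range of $\Theta$, and $w(x_0)=\bar B$; then $h+h^*-2w\le-3\bar B$ whatever $h^*$ is. Moreover, from the input you actually invoke, $|h|\le 3\bar B$, you only get $|h+h^*-2w|\le 5\bar B$. That gives $PD_h^2\le 25\bar B^2PD_h$, not the stated $16\bar B^2$. Both constants in (iv) need the one-sided range $h\in[-2\bar B,3\bar B]$ of $\Theta$, not merely $|h|\le 3\bar B$, together with $0<h^*\le\bar B$ and $0<w\le\bar B$. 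With these, $-3\bar B\le\varphi<3\bar B$, the positivity of $h^*$ cutting one side. Likewise $-4\bar B<h+h^*-2w\le 4\bar B$. This yields $\|D_h\|_\infty\le 12\bar B^2$ directly, so no unexecuted ``sharper count'' is needed. Through your Jensen step it also yields $PD_h^2\le 16\bar B^2PD_h$, which is exactly the paper's bookkeeping.

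\textbf{Remark on (i).} The first display has the wrong $x_0$ coefficient. It should read $x+s=-\mu_{T-t}\sigma_{T-t}^{-2}\bigl(\mu_{T-t}^2x_0-\mathbb{E}[X_0\mid x]\bigr)+(\sigma_{T-t}-\sigma_{T-t}^{-1})e$. You never use it, and the bound you do use is correct.

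\textbf{Remark on (iii).} No restriction on $\mathcal T$ is needed for the complement probability. The $\psi_2$ bound from (i) gives $P(\bar V>\mathcal T)\le 2e^{-\mathcal T^2/\|\bar V\|_{\psi_2}^2}\le 2e^{-c\mathcal T^2\sigma_*^4/d}$ for every $\mathcal T>0$, and a union bound over $i\le n$ finishes it. The hypothesis $\mathcal T\ge C\sqrt d\sigma_*^{-2}$ is only used for the $L^1$ truncation remainder.
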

\begin{proof}
    (i) By Tweedie's formula, $ x + s(t,x) = (1 - \sigma_{T-t}^{-2}) x + \mu_{T-t} \sigma^{-2}_{T-t} \mathbb{E} [ X_0 | x], $ so $ | x + s| \leq \sigma^{-2}_{T-t}  ( | x| + \sqrt{d} )$ using $ | 1 - \sigma^{-2} | \leq \sigma^{-2} $ for $ \sigma \leq 1, \mu \leq 1 $ and $ | X_0 | \leq \sqrt{d}. $ At $ x = X_{T-t} (z) = \mu_{T-t} x_0 + \sigma_{T-t} e $ we have $ |x| \leq \sqrt{d} + |e|, $ so the supremum over $ t \in I $ is at most $ \bar{V} (z). $ Since $  \| | e | \|_{\psi_2} \leq C \sqrt{d}, $ also $ \| \bar{V} \|_{\psi_2} \leq C \sqrt{d} \sigma_*^{-2}. $ 
    
    (ii) The three terms of \eqref{eq:G_h_psi_def} are bounded pointwise by $ G_t | \psi |, \, | x + s| G |\psi| $ and $ G | \nabla \psi |, $ using \eqref{eq:h_n_class}, whose suprema are over $ I \times \mathbb{R}^d $ and so are valid at the sample points $ (t, X_{T-t}(z)) $ -- this is what the clip of Definition \ref{def:trial_test_classes} is for. Grouping the first and third through $ G_t | \psi | + G | \nabla \psi | \leq \max (G_t, G) ( |\psi | + | \nabla \psi |) \leq \max (G_t, G) \Gamma \| \psi \|_\bullet, $ bounding the middle by $ \bar{V}(z) G \Gamma \| \psi \|_\bullet $ through (i), and integrating the probability measure $ \nu $ gives the envelope. For \eqref{eq:envelope}, $ F $ is affine in $ \bar{V}, $ so $ \| F \|_{\psi_2} \leq C \Gamma (\max (G_t, G) + G \sqrt{d} \sigma_*^{-2}). $ Since $ \Gamma \asymp 2 G_t $ and $ G_t \asymp \sigma^{-c} \sqrt{\log n} $ through $ R \asymp \sqrt{\log n}, $ this is $ C \sigma^{-c} \log n, $ and the standard $ \| \underset{i \leq n}{\max} \xi_i \|_{\psi_1} \leq C \log n \max_i \| \xi_i \|_{\psi_1} $ with $ \| \cdot \|_{\psi_1} \leq C \| \cdot \|_{\psi_2} $ gives $ C \sigma^{-c} \log^2 n. $ This term is tight against $ \delta_n^2 $ rather than of lower order.
    
    (iii) The sup bound is immediate. For \eqref{eq:P_G_T_2}, Cauchy-Schwarz in $ \nu $ gives $ ( G_{h, \psi}^{\mathcal{T}} )^2 \leq \int_I ( \ldots)^2 \nu (dt), $ and $ (\partial_t h \psi + (\cdot) \psi - \nabla h \cdot \nabla \psi)^2 \leq 3 (G_t^2 + G^2 \mathcal{T}^2) \psi^2 + 3 G^2 | \nabla \psi |^2 $ pointwise. Taking $ P $ and using $ P \int_I \psi^2 \nu = \| \psi \|^2_{L^2 (\nu \otimes \rho)} $ and likewise for $ \nabla \psi $ gives \eqref{eq:P_G_T_2}, since both are at most $ \| \psi \|^2_{\mathcal{V}}. $ On the event $ \{ \max_i \bar{V} (Z_i) \leq \mathcal{T} \} $ part (i) gives $ | x + s | (t, X_{T-t} (Z_i)) \leq \mathcal{T} $ for every $ t $ and every $ i, $ so the truncation is inactive on the sample. Its complement has probability at most $ 2n e^{-c \mathcal{T}^2 \sigma_*^4 / d} $ by a union bound over $ i \leq n $ alone. Using the averaged $ \int_I | x + s | \nu $ here would not help, since that quantity being below $ \mathcal{T} $ does not force $ |x + s| \leq \mathcal{T} $ at every $ t, $ and the event would involve an uncountable index set. The population remainder uses $ P [ \bar{V} \mathbf{1} \{ \bar{V} > \mathcal{T} \} ] \leq C \mathcal{T} \, \| \bar{V} \|_{\psi_2} e^{-c \mathcal{T}^2 / \| \bar{V} \|^2_{\psi_2} } $ for $ \mathcal{T} \geq 2 \| \bar{V} \|_{\psi_2}. $
    
    (iv) Pointwise in $ t, $ $ (h - w)^2 - (h^* - w)^2 = \varphi (h + h^* - 2w) $ with $ \varphi := h - h^*. $ Since $ h \in [- 2 \bar{B}, 3 \bar{B} ], \, h^* \in (0, \bar{B}] $ and $ w \in (0, \bar{B}], $ the extreme values are $ | \varphi | \leq 3 \bar{B} $ -- the constraint $ h^* > 0 $ cutting one side -- and $ | h + h^* - 2 w | \leq 4 \bar{B}, $ so $ \| D_h \|_\infty \leq 12 \bar{B}^2. $ By Cauchy-Schwarz in $ \nu, \, D_h^2 \leq \int_I \varphi^2 \nu \cdot \int_I (h + h^* - 2w)^2 \nu \leq 16 \bar{B}^2 \int_I \varphi^2 \nu, $ and taking $ P $ gives $ P D_h^2 \leq 16 \bar{B}^2 \mathcal{D}(h) = 16 \bar{B}^2 P D_h. $ 
    
    (v) $ Q_\psi \geq 0, $ and $ \psi^2 + | \nabla \psi |^2 \leq ( |\psi| + |\nabla \psi|)^2 \leq \Gamma^2 \| \psi \|^2_\bullet $ by \eqref{eq:psi_n_class}, with no factor 2. For a nonnegative bounded function, $ P Q^2 \leq \| Q \|_\infty P Q. $ 
\end{proof}

\begin{lemma}(Entropy of the index classes) \label{lem:entropy_index_classes}
	Let $ \mathcal{T} > 0 $ and let $ \mathcal{G}_1^\mathcal{T} := \{ G^\mathcal{T}_{h, \psi} : h \in \mathcal{H}_N, \, \psi \in \Psi^\bullet_N \}, \, \mathcal{D}_\bullet := \{ D_h : h \in \mathcal{H}_N \} $ and $ \mathcal{Q}_1 := \{ Q_\psi : \psi \in \Psi^\bullet_N \}, $ where $ \Psi^\bullet_N = \{ \psi \in \Psi_N : \| \psi \|_\bullet \leq 1 \} $ as in \eqref{eq:jets}. Put
	\begin{equation} \label{eq:frak_b_l}
		\mathfrak{b} := \Gamma (\Gamma + G \mathcal{T}) + 12 \bar{B}^2, \qquad \mathfrak{l} := (1 + \Gamma)^2 (1 + \mathcal{T}) (1 + G_t + G) + 12 \bar{B}^2.
	\end{equation}
	Then every member of $ \mathcal{G}_1^{\mathcal{T}} \cup \mathcal{D}_\bullet \cup \mathcal{Q}_1 $ is a Borel function on $ \mathcal{Z} $ bounded in modulus by $ \mathfrak{b}, $ one has $ 1 \lor \mathfrak{b} \leq \mathfrak{l}, $ and for every probability measure $ Q $ on $ \mathcal{Z} $ and every $ 0 < \varepsilon \leq \mathfrak{l} $
	\begin{equation} \label{eq:entropy_index_classes}
	\log N ( \varepsilon, \mathcal{G}_1^{\mathcal{T}} \cup \mathcal{D}_\bullet \cup \mathcal{Q}_1, \, L^2 (Q) ) \leq 6 \, \mathbb{V}_N \log \frac{e \mathfrak{l}}{\varepsilon},
	\end{equation}
	with $ \mathbb{V}_N $ as in \eqref{eq:eff_dim}, bounded in Lemma \ref{lem:complexity}\ref{cond:eff_dim}. For $ \varepsilon \geq \mathfrak{b} $ the covering number equals $ 1. $ Each of the three classes, and each of their subclasses, is separable in the supremum norm on $ \mathcal{Z}. $
\end{lemma}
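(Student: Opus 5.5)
The plan is to reduce every member of the three index classes to a Lipschitz image of a first-order jet, and then to import the sup-norm entropy bound \eqref{eq:eff_dim} on $J\mathcal{H}_N$ and $J\Psi^\bullet_N$ from Lemma \ref{lem:complexity}\ref{cond:eff_dim}. Since sup-norm covers on $\mathcal{Z}$ are $L^2(Q)$ covers for every probability measure $Q$, the bound will be uniform in $Q$.

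\textbf{Measurability and the envelope.} The map $z\mapsto X_{T-t}(z)$ is continuous, and the integrands are continuous in $(t,x)$, apart from the indicator in the truncation and $w(x_0)$, both of which are Borel. Integrating in $t$ against $\nu$ preserves Borel measurability by Fubini. The envelope then follows directly. Lemma \ref{lem:envelopes_and_variances}(iii) gives $|G^{\mathcal{T}}_{h,\psi}|\le\Gamma(G_t+G+G\mathcal{T})$ on $\|\psi\|_\bullet\le1$, and $G_t+G\le\Gamma$ by \eqref{eq:three_constants}. Parts (iv) and (v) give $|D_h|\le12\bar B^2$ and $0\le Q_\psi\le\Gamma^2$. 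All of these are at most $\mathfrak b$. The inequality $1\vee\mathfrak b\le\mathfrak l$ is immediate termwise. For $\varepsilon\ge\mathfrak b$ the single function $0$ is a cover.

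\textbf{Lipschitz reduction.} Fix internal $\varepsilon'$-nets of $J\mathcal{H}_N$ and $J\Psi^\bullet_N$ in $\|\cdot\|_\infty$. Internal nets are available by Lemma \ref{lem:three_covering}\ref{cond:internal_nets}, or directly from the construction in Lemma \ref{lem:complexity}. Their elements are jets of class members, so the sup constraints $|\partial_t h|\le G_t$, $|\nabla h|\le G$ and $|\psi|+|\nabla\psi|\le\Gamma$ hold at the net points. Suppose $\|J h-J h'\|_\infty\le\varepsilon'$ and $\|J\psi-J\psi'\|_\infty\le\varepsilon'$. The integrand of $G^{\mathcal T}$ is bilinear in $((\partial_t h,\nabla h),(\psi,\nabla\psi))$, with the coefficient $(x+s)\mathbf 1\{|x+s|\le\mathcal T\}$ bounded by $\mathcal T$ pointwise. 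Splitting the difference by the triangle inequality therefore gives
\[
|G^{\mathcal T}_{h,\psi}-G^{\mathcal T}_{h',\psi'}|\le\varepsilon'\big[(1+\mathcal T)\Gamma+(G_t+G\mathcal T+G)\big]\le\mathfrak l\,\varepsilon'/2 .
\]
Next, $D_h-D_{h'}=\int_I(h-h')(h+h'-2w)\nu$, so $|D_h-D_{h'}|\le5\bar B\varepsilon'$. Finally $|Q_\psi-Q_{\psi'}|\le2\Gamma\cdot2\varepsilon'$, using $|a^2-b^2|\le|a-b|(|a|+|b|)$ componentwise.

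These bounds require the sup norm to be taken over $I\times\mathbb{R}^d$, which is exactly how $\|\cdot\|_\infty$ is defined in \eqref{eq:jets}. Evaluation at $X_{T-t}(z)$ therefore loses nothing, thanks to the clip. The constants are absorbed into $\mathfrak l$, and I would check that the factors $(1+\Gamma)^2(1+\mathcal T)(1+G_t+G)$ dominate every Lipschitz constant; the $12\bar B^2$ term covers the $D$ part.

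\textbf{Counting.} Take $\varepsilon'=\varepsilon/\mathfrak l\le1$. The union of the three classes is then covered by at most
\[
N(\varepsilon',J\mathcal{H}_N)\,N(\varepsilon',J\Psi^\bullet_N)+N(\varepsilon',J\mathcal{H}_N)+N(\varepsilon',J\Psi^\bullet_N)
\]
functions. By \eqref{eq:eff_dim}, each factor has log-cardinality at most $\mathbb{V}_N\log(e\mathfrak l/\varepsilon)$. The total log is therefore at most $\log 3+2\mathbb{V}_N\log(e\mathfrak l/\varepsilon)$. Since $\mathbb V_N\ge1$ and $\log(e\mathfrak l/\varepsilon)\ge1$, this is at most $6\mathbb V_N\log(e\mathfrak l/\varepsilon)$, with slack absorbing any halving of $\varepsilon'$.

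\textbf{Separability and the main obstacle.} Separability follows because each class is a Lipschitz image of a subset of a totally bounded jet set in sup norm. Totally bounded metric spaces are separable, separability passes to subsets, and the Lipschitz maps above are continuous from jet sup norm to sup norm on $\mathcal Z$. The main obstacle is the bookkeeping in the Lipschitz step for $G^{\mathcal T}$. One must ensure that the truncated drift coefficient is what keeps the constant finite, since the untruncated class has no such bound, and that $\psi$ ranges over the $\bullet$-unit ball rather than $\Psi_N$. Everything else is routine.
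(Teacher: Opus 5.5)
Your proof is correct and follows the paper's argument essentially step for step. It uses the envelopes from Lemma~\ref{lem:envelopes_and_variances}, the Lipschitz bounds of the three maps in the sup-norm jets, internal nets from Lemma~\ref{lem:three_covering}\ref{cond:internal_nets} so that the net points obey the class constraints, and the product-plus-sum count giving $\log 3 + 2\,\mathbb{V}_N \log(\cdot)$, absorbed into $6\,\mathbb{V}_N\log(e\mathfrak{l}/\varepsilon)$.

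The departures are minor. You get separability from total boundedness of the jet classes, which is legitimate since $\mathbb{V}_N<\infty$ by Lemma~\ref{lem:complexity}\ref{cond:eff_dim}, whereas the paper uses continuity of the parameter map. Also, the Lipschitz constant for $D_h$ should be $6\bar B$, not $5\bar B$, because $|h+h'-2w|$ can approach $6\bar B$; this is harmless since $6\bar B\le\Gamma\le\mathfrak{l}$.
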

\begin{proof}
	If $ \mathbb{V}_N = \infty $ there is nothing to prove, so let $ \mathbb{V}_N < \infty. $

	Envelopes. For $ h \in \mathcal{H}_N $ and $ \psi \in \Psi^\bullet_N $ we have, globally on $ I \times \mathbb{R}^d, $ $ h \in [-2 \bar{B}, 3 \bar{B}], \, | \partial_t h | \leq G_t $ and $ | \nabla h | \leq G $ by \eqref{eq:h_n_class}, and $ | \psi | + | \nabla \psi | \leq \Gamma \| \psi \|_\bullet \leq \Gamma. $ Write $ v^{\mathcal{T}} := (x + s) \mathbf{1} \{ | x + s | \leq \mathcal{T} \}, $ so that $ | v^{\mathcal{T}} | \leq \mathcal{T}. $ The integrands of $ G^{\mathcal{T}}_{h, \psi}, D_h, Q_\psi $ are bounded Borel functions of $ (t, z), $ so by Fubini the three functions are bounded Borel functions on $ \mathcal{Z}, $ and their $ L^2(Q) $-distances are at most their supremum distances. Pointwise, $ | \partial_t h \, \psi + (v^{\mathcal{T}} \cdot \nabla h) \psi - \nabla h \cdot \nabla \psi | \leq (\max (G_t, G) + G \mathcal{T}) ( | \psi | + | \nabla \psi | ) \leq \Gamma (\Gamma + G \mathcal{T}), $ using $ \Gamma \geq 2 G + 2 G_t, $ while $ | D_h | \leq 12 \bar{B}^2 $ and $ 0 \leq Q_\psi \leq \Gamma^2 $ by Lemma \ref{lem:envelopes_and_variances}. Each bound is at most $ \mathfrak{b}. $ Moreover $ (1 + \Gamma)^2 (1 + \mathcal{T}) (1 + G_t + G) \geq (1 + \Gamma)^2 (1 + G \mathcal{T}) \geq \Gamma^2 + \Gamma G \mathcal{T}, $ so $ \mathfrak{b} \leq \mathfrak{l}, $ and clearly $ \mathfrak{l} \geq 1. $ The single function $ 0 $ covers the union at every radius $ \varepsilon \geq \mathfrak{b}. $

	Lipschitz bounds in the jets. Let also $ h' \in \mathcal{H}_N $ and $ \psi' \in \Psi^\bullet_N, $ and put, pointwise on $ I \times \mathbb{R}^d, $ $ \Delta_h := | (h, \partial_t h, \nabla h) - (h', \partial_t h', \nabla h') | $ and $ \Delta_\psi := | (\psi, \nabla \psi) - (\psi', \nabla \psi') |. $ Adding and subtracting, and using the bounds above for $ h $ and for $ \psi', $
	\begin{align*}
		| \partial_t h \, \psi - \partial_t h' \, \psi' | &\leq | \psi' | \, | \partial_t h - \partial_t h' | + | \partial_t h | \, | \psi - \psi' | \leq \Gamma \Delta_h + G_t \Delta_\psi, \\
		| (v^{\mathcal{T}} \cdot \nabla h) \psi - (v^{\mathcal{T}} \cdot \nabla h') \psi' | &\leq \mathcal{T} | \psi' | \, | \nabla h - \nabla h' | + \mathcal{T} | \nabla h | \, | \psi - \psi' | \leq \Gamma \mathcal{T} \Delta_h + G \mathcal{T} \Delta_\psi, \\
		| \nabla h \cdot \nabla \psi - \nabla h' \cdot \nabla \psi' | &\leq | \nabla \psi' | \, | \nabla h - \nabla h' | + | \nabla h | \, | \nabla \psi - \nabla \psi' | \leq \Gamma \Delta_h + G \Delta_\psi.
	\end{align*}
	Also $ | (h - w)^2 - (h' - w)^2 | = | h - h' | \, | (h - w) + (h' - w) | \leq 6 \bar{B} \Delta_h, $ since $ w \in (0, \bar{B}] $ gives $ | h - w |, | h' - w | \leq 3 \bar{B}, $ and $ | \psi^2 + | \nabla \psi |^2 - \psi'^2 - | \nabla \psi' |^2 | \leq 2 \Gamma ( | \psi - \psi' | + | \nabla \psi - \nabla \psi' | ) \leq 2 \sqrt{2} \, \Gamma \Delta_\psi. $ Integrating against the probability measure $ \nu $ at $ x = X_{T-t}(z) $ and taking the supremum over $ z \in \mathcal{Z}, $
	\begin{align} 
		& \| G^{\mathcal{T}}_{h, \psi} - G^{\mathcal{T}}_{h', \psi'} \|_\infty \leq \mathrm{L}_{\mathcal{G}} \max (\| \Delta_h \|_\infty, \| \Delta_\psi \|_\infty), \nonumber \\
		& \| D_h - D_{h'} \|_\infty \leq 6 \bar{B} \| \Delta_h \|_\infty, \qquad \| Q_\psi - Q_{\psi'} \|_\infty \leq 3 \Gamma \| \Delta_\psi \|_\infty, \label{eq:index_lipschitz}
	\end{align}
	with $ \mathrm{L}_{\mathcal{G}} := \Gamma (\mathcal{T} + 2) + G_t + G (\mathcal{T} + 1). $ All three constants are at most $ \mathfrak{l}: $ indeed $ 6 \bar{B} \leq \Gamma \leq 3 \Gamma \leq (1 + \Gamma)^2, $ and $ \mathrm{L}_{\mathcal{G}} \leq (1 + \mathcal{T}) (2 \Gamma + G_t + G) \leq (1 + \mathcal{T}) (1 + 2 \Gamma) (1 + G_t + G) \leq (1 + \mathcal{T}) (1 + \Gamma)^2 (1 + G_t + G). $

	Separability. By \eqref{eq:clip_jet} and Lemma \ref{lem:complexity}\ref{cond:lipschitz_bound}, the map from admissible parameters to $ J(f^\chi) $ is continuous into the supremum norm, for $ \mathcal{H}_N $ and for the internal nets of $ \Psi_N, $ hence $ J \mathcal{H}_N $ is separable, and $ J \Psi^\bullet_N, $ contained in the continuous image of $ [-2, 2] \times \{ \text{parameters} \} $ under $ (c, \vartheta) \mapsto c \, \omega J(\phi_\vartheta^\chi), $ is separable. By \eqref{eq:index_lipschitz} the three classes are Lipschitz images of $ J \mathcal{H}_N \times J \Psi^\bullet_N, $ $ J \mathcal{H}_N $ and $ J \Psi^\bullet_N, $ hence separable, and subsets of separable metric spaces are separable.

	Nets. Fix $ 0 < \varepsilon \leq \mathfrak{l} $ and put $ \eta := \varepsilon / (2 \mathfrak{l}) \leq 1/2. $ Since $ \eta / 2 \leq 1, $ \eqref{eq:eff_dim} gives $ \log N (\eta / 2, J \mathcal{H}_N, \| \cdot \|_\infty) \leq \mathbb{V}_N \log (4 e \mathfrak{l} / \varepsilon), $ and the same for $ J \Psi^\bullet_N. $ By \ref{cond:internal_nets} there are $ \mathcal{S}_{\mathcal{H}} \subseteq \mathcal{H}_N $ and $ \mathcal{S}_\Psi \subseteq \Psi^\bullet_N, $ each of cardinality at most $ \exp (\mathbb{V}_N \log (4 e \mathfrak{l} / \varepsilon)), $ whose jets form $\eta$-nets of $ J \mathcal{H}_N $ and $ J \Psi^\bullet_N $ respectively. Given $ h \in \mathcal{H}_N $ and $ \psi \in \Psi^\bullet_N, $ choose $ h' \in \mathcal{S}_{\mathcal{H}} $ and $ \psi' \in \mathcal{S}_\Psi $ with $ \| \Delta_h \|_\infty, \| \Delta_\psi \|_\infty \leq \eta. $ By \eqref{eq:index_lipschitz}, $ G^{\mathcal{T}}_{h', \psi'}, D_{h'}, Q_{\psi'} $ lie within $ \mathfrak{l} \eta = \varepsilon / 2 $ of $ G^{\mathcal{T}}_{h, \psi}, D_h, Q_\psi $ in the supremum norm on $ \mathcal{Z}, $ hence in $ L^2(Q). $ The union class therefore has an $\varepsilon$-net with at most $ | \mathcal{S}_{\mathcal{H}} | \, | \mathcal{S}_\Psi | + | \mathcal{S}_{\mathcal{H}} | + | \mathcal{S}_\Psi | \leq 3 | \mathcal{S}_{\mathcal{H}} | \, | \mathcal{S}_\Psi | $ elements, and
	\begin{equation*}
		\log N ( \varepsilon, \mathcal{G}_1^{\mathcal{T}} \cup \mathcal{D}_\bullet \cup \mathcal{Q}_1, \, L^2 (Q) ) \leq \log 3 + 2 \, \mathbb{V}_N \log \frac{4 e \mathfrak{l}}{\varepsilon}.
	\end{equation*}
	Finally, $ \log (e \mathfrak{l} / \varepsilon) \geq 1 $ and $ \mathbb{V}_N \geq 1 $ give $ \log 3 \leq (\log 3) \, \mathbb{V}_N \log (e \mathfrak{l} / \varepsilon) $ and $ \log (4 e \mathfrak{l} / \varepsilon) \leq (1 + \log 4) \log (e \mathfrak{l} / \varepsilon), $ and $ \log 3 + 2 (1 + \log 4) < 6 $ yields \eqref{eq:entropy_index_classes}.
\end{proof}

We restate the external results in the form used below. Throughout, $ \varpi_1, \ldots, \varpi_n $ are i.i.d.\ Rademacher signs independent of the sample, and for a class $ \mathcal{G} $ of functions on $ \mathcal{Z} $ we write $ \mathfrak{R}_n (\mathcal{G}) := \sup_{f \in \mathcal{G}} | n^{-1} \sum_{i=1}^n \varpi_i f(Z_i) |. $ A class is called separable if it is a separable subset of the bounded functions on $ \mathcal{Z} $ with the supremum norm. Suprema of empirical and Rademacher processes over a separable class coincide with suprema over a countable dense subclass, so they are measurable and the results below, stated for countable classes, apply.

\begin{theorem}(Localization, Theorem 3.3 of \citep{Bartlett_2005} with $ K = 2 $) \label{thm:localization_bbm}
	Let $ \mathcal{F} $ be a separable class of functions on $ \mathcal{Z}, $ star-shaped around $ 0, $ with $ \| f \|_\infty \leq b $ and $ P f^2 \leq \beta_0 P f $ for all $ f \in \mathcal{F}. $ Let $ \mathfrak{s} $ be a sub-root function with fixed point $ r^* $ such that $ \mathfrak{s}(r) \geq \beta_0 \mathbb{E} \mathfrak{R}_n \{ f \in \mathcal{F}: P f^2 \leq r \} $ for all $ r \geq r^*. $ Then for every $ u > 0, $ with probability at least $ 1 - 2 e^{-u}, $ simultaneously for all $ f \in \mathcal{F} $
\begin{equation} \label{eq:localization_bbm}
	\frac{1}{2} Pf - c_1 \left( \frac{r^*}{\beta_0} + \frac{(b + \beta_0) u}{n} \right) \leq P_n f \leq 2 P f + c_1 \left( \frac{r^*}{\beta_0} + \frac{( b + \beta_0) u}{n} \right),
\end{equation}	 
with $ c_1 $ absolute. (The Rademacher average of \citep{Bartlett_2005} carries no absolute value and is dominated by $ \mathfrak{R}_n. $)
\end{theorem}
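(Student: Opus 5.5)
The plan is to obtain the statement as a direct specialization of Theorem 3.3 of \citet{Bartlett_2005}. The work is to match hypotheses and constants rather than to prove anything new. In that theorem one is given a class with values in an interval $[a,b']$, a functional $T$ with $\Var f \leq T(f) \leq B\,Pf$, and a sub-root $\psi$ dominating $B\,\mathbb{E} R_n\{f \in \mathrm{star}(\mathcal{F},0) : T(f) \leq r\}$ for $r \geq r^\ast$. Its conclusion, for every $K > 1$ and $x > 0$ and with probability at least $1 - e^{-x}$, is
\[
Pf \leq \tfrac{K}{K-1} P_n f + \tfrac{704 K}{B} r^\ast + \tfrac{x\,(11 (b'-a) + 26 B K)}{n}.
\]
The same theorem also gives the companion bound $P_n f \leq \tfrac{K+1}{K} Pf + (\text{same remainder})$ with probability at least $1-e^{-x}$.

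First I will set up the dictionary. I take $T(f) := Pf^2$, $B := \beta_0$, the interval $[a,b'] := [-b,b]$ (so $b'-a = 2b$), and $x := u$. The variance condition holds because $\Var f \leq Pf^2 \leq \beta_0 Pf$. Since $\mathcal{F}$ is star-shaped around $0$, $\mathrm{star}(\mathcal{F},0) = \mathcal{F}$, so the localized set in the sub-root hypothesis is exactly $\{f \in \mathcal{F} : Pf^2 \leq r\}$. The Rademacher average of \citet{Bartlett_2005} has no absolute value, so it is dominated by $\mathfrak{R}_n$, and our assumption on $\mathfrak{s}$ implies theirs. Measurability is handled by separability: the suprema coincide with suprema over a countable dense subclass, so the countable-class version applies.

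Second, I will fix $K = 2$ in both inequalities. The first gives $Pf \leq 2 P_n f + 1408\, r^\ast/\beta_0 + u(22 b + 52 \beta_0)/n$. Dividing by $2$ yields the left inequality of \eqref{eq:localization_bbm} with $c_1 \geq 704$. The second gives $P_n f \leq \tfrac32 Pf + (\ldots) \leq 2 Pf + (\ldots)$, since $Pf \geq Pf^2/\beta_0 \geq 0$ on $\mathcal{F}$. A union bound over the two events gives probability at least $1 - 2e^{-u}$, simultaneously for all $f \in \mathcal{F}$. I then collect $22b + 52\beta_0 \leq 52(b+\beta_0)$, so one absolute constant $c_1$ serves both sides.

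The only step needing care is confirming that the upper-deviation half in \citet{Bartlett_2005} holds under the same one-sided hypotheses. If that is in doubt, I will apply the lower-tail argument, namely Talagrand's inequality on the weighted class $\{ r f/\max(r, Pf^2) \}$ combined with the peeling fixed-point argument of their proof, to $-f$. This works because the variance and localization conditions depend on $f$ only through $Pf^2$, $\|f\|_\infty$ and the symmetric $\mathfrak{R}_n$, all invariant under $f \mapsto -f$. The nonnegativity $Pf \geq 0$ used to relax $\tfrac32$ to $2$ follows from the Bernstein condition itself.
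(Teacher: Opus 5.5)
Your proposal is correct and matches the paper, which gives no proof and simply restates Theorem 3.3 of \citet{Bartlett_2005}. That theorem does contain both the lower and the upper half, so your specialization works as written: $T(f)=Pf^2$, $B=\beta_0$, range $[-b,b]$, $K=2$, a union bound, the relaxation $\tfrac32 Pf\le 2Pf$ (valid since $Pf\ge Pf^2/\beta_0\ge 0$), and separability for measurability, which together give $c_1=1408$. Your fallback via $f\mapsto -f$ is therefore unnecessary; as phrased it is also slightly off, because only the deviation bound for the weighted class is symmetric, and the final conversion must still use $Pf^2\le\beta_0 Pf$ for $f$ itself, not for $-f$.
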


\begin{theorem}(Unbounded suprema, Theorem 4 of \citep{adamczak2008tailinequalitysupremaunbounded} with $ \eta = \delta = \alpha = 1 $) \label{thm:localization_adamczak}
	Let $ \mathcal{F} $ be a separable class of functions on $ \mathcal{Z} $ with a measurable envelope $ F \geq \sup_{f \in \mathcal{F}} |f| $ such that $ \| \max_{i \leq n} F(Z_i) \|_{\psi_1} < \infty, $ and let $ \varsigma^2_{\mathcal{F}} := \sup_{f \in \mathcal{F}} \Var (f). $ Then for every $ u \geq 1, $ with probability at least $ 1 - e^{-u} $
	\begin{equation} \label{eq:localization_adamczak}
	\underset{f \in \mathcal{F}}{\sup} | P_n f - P f| \leq 2 \mathbb{E} \, \underset{f \in \mathcal{F}}{\sup} | P_n f - Pf | + c_2 \varsigma_{\mathcal{F}} \sqrt{\frac{u}{n}} + c_2 \frac{u \, \| \max_{i \leq n} F (Z_i) \|_{\psi_1}}{n},
	\end{equation}
	with $ c_2 $ absolute.
\end{theorem}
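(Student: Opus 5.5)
The plan is to obtain the statement as a direct specialization of Theorem 4 of \citet{adamczak2008tailinequalitysupremaunbounded}, so the work consists of checking hypotheses and converting the constants.

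\textbf{Step 1: reduce to a countable, centered, symmetric class.} Since $\mathcal{F}$ is separable in the supremum norm, the supremum of $|P_n f - Pf|$ over $\mathcal{F}$ equals the supremum over a countable dense subclass $\mathcal{F}_0$. This holds because $f \mapsto P_n f - Pf$ is $2$-Lipschitz in $\|\cdot\|_\infty$. It also makes the supremum measurable. I then pass to the centered class
\[
\bar{\mathcal{F}} := \{\pm (f - Pf) : f \in \mathcal{F}_0\}.
\]
Adamczak's theorem is stated for $S := \sup_{g} |\sum_{i} g(Z_i)|$ with mean-zero summands, and adding $-\mathcal{F}_0$ makes the absolute value harmless. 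The variance parameter is unchanged: $\sup_{g} \E g^2 = \varsigma^2_{\mathcal{F}}$.

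\textbf{Step 2: control the envelope.} The centered class has envelope $F + PF$, and $PF \leq \E \max_i F(Z_i)$. Hence
\[
\| \max_i (F + PF)(Z_i) \|_{\psi_1} \leq C \, \| \max_i F(Z_i) \|_{\psi_1}.
\]
Here I use $\|c\|_{\psi_1} \asymp c$ for a constant $c$ and $\E|\xi| \leq C\|\xi\|_{\psi_1}$.

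\textbf{Step 3: invert the tail bound.} With $\eta = \delta = \alpha = 1$, Theorem 4 reads, for all $t > 0$,
\[
\mathbb{P}\big( S \geq 2 \E S + t \big) \leq \exp\!\Big( -\frac{t^2}{3 n \varsigma^2_{\mathcal{F}}} \Big) + 3 \exp\!\Big( -\frac{t}{C \|\max_i (F+PF)(Z_i)\|_{\psi_1}} \Big).
\]
I will choose $t = \sqrt{3 n \varsigma^2_{\mathcal{F}} u'} + C u' \|\max_i (F+PF)(Z_i)\|_{\psi_1}$ with $u' := u + \log 4$. Then the failure probability is at most $4 e^{-u'} = e^{-u}$.

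Because $u \geq 1$, we have $u' \leq 3u$, so the change from $u'$ to $u$ can be absorbed into the absolute constant $c_2$. Dividing by $n$ and using $S/n = \sup_{\mathcal{F}} |P_n f - Pf|$ gives \eqref{eq:localization_adamczak}.

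\textbf{Expected difficulty.} The only delicate point is bookkeeping in Adamczak's formulation. His version states the expectation term with a $(1+\eta)$ factor, possibly on a non-absolute supremum, and with envelope normalizations that differ from ours. I expect Steps 1--2 to handle both: symmetrizing the class by $\pm$ handles the absolute value, and the envelope comparison of Step 2 handles the centering.
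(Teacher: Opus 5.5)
Your proposal is correct and is essentially the paper's proof: apply Adamczak's Theorem 4 to the centred class $\{f - Pf\}$ with summed variance $n\varsigma^2_{\mathcal{F}}$, bound its envelope by an absolute multiple of $\|\max_i F(Z_i)\|_{\psi_1}$ using $PF \le \|\max_i F(Z_i)\|_{\psi_1}$ and the $\psi_1$-norm of a constant, then invert the tail at $u' = u + \log 4$ and absorb $u' \lesssim u$ into $c_2$. The differences are cosmetic and do not affect the conclusion: your $\pm$ symmetrization is unnecessary because Adamczak's supremum already carries the absolute value, and $\delta = 1$ gives $4n\varsigma^2_{\mathcal{F}}$ rather than $3n\varsigma^2_{\mathcal{F}}$ in the Gaussian term.
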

\begin{proof}
    Theorem 4 of \citep{adamczak2008tailinequalitysupremaunbounded} applies to the centred class $ \{ f - Pf \}, $ whose summed variance is $ n \varsigma_{\mathcal{F}}^2 $ and whose envelope satisfies $ \| \max_i \sup_f | f(Z_i) - Pf | \|_{\psi_1} \leq (1 + 1/\log 2) \| \max_i F(Z_i) \|_{\psi_1} =: \mathfrak{M}, $ since $ \sup_f |Pf| \leq PF \leq \| F(Z_1) \|_{\psi_1} \leq \| \max_i F(Z_i) \|_{\psi_1} $ and a constant $ a \geq 0 $ has $ \psi_1 $-norm $ a / \log 2. $ With $ Y := n \sup_f | P_n f - P f | $ it gives $ \mathbb{P} ( Y \geq 2 \mathbb{E} Y + t ) \leq \exp ( - t^2 / (4 n \varsigma^2_{\mathcal{F}}) ) + 3 \exp ( - t / (C \mathfrak{M}) ) $ with $ C $ absolute. Choosing $ t := 2 \sqrt{n \varsigma^2_{\mathcal{F}} u'} + C \mathfrak{M} u' $ with $ u' := u + \log 4 \leq (1 + \log 4) u $ makes the right-hand side at most $ 4 e^{-u'} = e^{-u}, $ dividing by $ n $ gives the claim.
\end{proof}

\begin{lemma}(Closure) \label{lem:entropy_closure}
    Let $ \mathcal{F} $ be a separable class with $ \sup_{f \in \mathcal{F}} \| f \|_\infty \leq b, $ and suppose that for some $ V \geq 1 $ and $ \mathfrak{l} \geq b $
    \begin{equation} \label{eq:uniform_entropy_generic}
        \log N (\varepsilon, \mathcal{F}, L^2 (Q)) \leq V \log (e \mathfrak{l} / \varepsilon) \quad \text{for every probability measure } Q \text{ on } \mathcal{Z} \text{ and every } 0 < \varepsilon \leq \mathfrak{l}.
    \end{equation}
    Then every subclass of $ \mathcal{F} $ is separable and satisfies \eqref{eq:uniform_entropy_generic}, and the star hull $ \mathrm{star} \, \mathcal{F} := \{ \lambda f : \lambda \in [0,1], f \in \mathcal{F} \} $ is separable, star-shaped around $ 0, $ bounded by $ b, $ and satisfies \eqref{eq:uniform_entropy_generic} with $ 4 V $ in place of $ V. $
\end{lemma}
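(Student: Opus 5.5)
The statement has three parts: subclass heredity, the elementary properties of the star hull, and the entropy bound for the star hull with constant $4V$. I would take them in that order.

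\emph{Subclasses.} For a subclass $\mathcal{F}' \subseteq \mathcal{F}$, separability holds because a subset of a separable metric space (bounded functions with the sup norm) is separable. The entropy bound transfers as follows. By \ref{cond:internal_nets}, an $\varepsilon$-net of $\mathcal{F}'$ can be extracted from an $\varepsilon/2$-net of $\mathcal{F}$. This costs a factor $2$ inside the logarithm. To avoid that loss, I would instead note that $N(\varepsilon,\cdot,L^2(Q))$ as defined before Lemma \ref{lem:three_covering} allows net points outside the set. Hence it is monotone under inclusion, and \eqref{eq:uniform_entropy_generic} holds verbatim for $\mathcal{F}'$.

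\emph{Star hull: separability, star shape, bound.} Star-shapedness around $0$ is immediate: $\mu\lambda f = (\mu\lambda) f$ with $\mu\lambda\in[0,1]$. The bound by $b$ follows from $|\lambda f|\le |f|$. For separability, take a countable sup-norm dense $\mathcal{F}_0\subseteq\mathcal{F}$. Then $\{\lambda f : \lambda\in\mathbb{Q}\cap[0,1],\ f\in\mathcal{F}_0\}$ is dense in $\mathrm{star}\,\mathcal{F}$, because
\[
\|\lambda f-\lambda' f'\|_\infty \le |\lambda-\lambda'|\,b+\|f-f'\|_\infty .
\]

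\emph{Star hull: entropy.} This is the only step with content. Fix $Q$ and $0<\varepsilon\le\mathfrak{l}$. Let $\mathcal{C}$ be an $\varepsilon/2$-net of $\mathcal{F}$ in $L^2(Q)$ with $\log|\mathcal{C}|\le V\log(2e\mathfrak{l}/\varepsilon)$. This step needs $\varepsilon/2\le\mathfrak{l}$, which holds. Let $\Lambda_\varepsilon$ be an $\varepsilon/(2b)$-grid of $[0,1]$ of cardinality at most $1+b/\varepsilon\le 2\mathfrak{l}/\varepsilon$, where the last inequality uses $b\le\mathfrak{l}$ and $\varepsilon\le\mathfrak{l}$. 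The grid may be taken empty-safe: when $b=0$ the claim is trivial. Given $\lambda f$, choose $\lambda'\in\Lambda_\varepsilon$ and $g\in\mathcal{C}$. Then
\[
\|\lambda f-\lambda' g\|_{L^2(Q)}\le |\lambda-\lambda'|\,\|f\|_{L^2(Q)}+\lambda'\|f-g\|_{L^2(Q)}\le \varepsilon/2+\varepsilon/2 .
\]
The net points $\lambda' g$ need not lie in the hull, which is allowed. Therefore
\[
\log N\le V\log(2e\mathfrak{l}/\varepsilon)+\log(2\mathfrak{l}/\varepsilon).
\]
Since $\log(e\mathfrak{l}/\varepsilon)\ge1$ and $V\ge1$, we have
\[
\log(2e\mathfrak{l}/\varepsilon)\le(1+\log 2)\log(e\mathfrak{l}/\varepsilon)
\quad\text{and}\quad
\log(2\mathfrak{l}/\varepsilon)\le\log(2e\mathfrak{l}/\varepsilon)\le(1+\log2)\,V\log(e\mathfrak{l}/\varepsilon).
\]
Summing gives the total $2(1+\log 2)V\log(e\mathfrak{l}/\varepsilon)\le 4V\log(e\mathfrak{l}/\varepsilon)$, since $2(1+\log2)\approx3.39$.

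The main obstacle is only bookkeeping: keeping the radii within the range $\varepsilon\le\mathfrak{l}$ where the hypothesis applies, and absorbing the additive logarithms into the factor $4$ using $V\ge1$ and $\log(e\mathfrak{l}/\varepsilon)\ge1$.
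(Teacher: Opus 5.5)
Your proof is correct and follows the paper's argument. For subclasses, you use monotonicity of covering numbers when centres are unrestricted. For the star hull, you take the product of an $\varepsilon/(2b)$-grid in $\lambda$ with an $(\varepsilon/2)$-net of $\mathcal{F}$, and you absorb the additive logarithm using $V\ge 1$ and $\log(e\mathfrak{l}/\varepsilon)\ge 1$. The remaining differences are cosmetic: a midpoint grid of size $\lceil b/\varepsilon\rceil$ rather than $\lceil b/\varepsilon\rceil+1$, a rational countable dense set in place of the continuous-image argument for separability, and the explicit $b=0$ case, which the paper leaves implicit.
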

\begin{proof}
    Subsets of separable metric spaces are separable, and covering numbers, with centres anywhere, do not increase under inclusion. The star hull is the image of $ [0, 1] \times \mathcal{F} $ under the continuous map $ (\lambda, f) \mapsto \lambda f, $ hence separable. Fix $ Q $ and $ 0 < \varepsilon \leq \mathfrak{l}, $ an $ (\varepsilon/2) $-net $ \{ g_k \} $ of $ \mathcal{F} $ in $ L^2(Q), $ and the points $ \lambda_m := \min \{ 1, m \varepsilon / b \}, \, 0 \leq m \leq \lceil b / \varepsilon \rceil, $ so that every $ \lambda \in [0,1] $ is within $ \varepsilon / (2b) $ of some $ \lambda_m. $ Then $ \| \lambda f - \lambda_m g_k \|_{L^2(Q)} \leq | \lambda - \lambda_m | \, b + \lambda_m \| f - g_k \|_{L^2(Q)} \leq \varepsilon, $ so $ N(\varepsilon, \mathrm{star} \, \mathcal{F}, L^2(Q)) \leq (2 + b / \varepsilon) N(\varepsilon / 2, \mathcal{F}, L^2(Q)). $ Since $ 2 + b/\varepsilon \leq 3 \mathfrak{l} / \varepsilon $ and $ \log (e \mathfrak{l} / \varepsilon) \geq 1, $ we get $ \log (3 \mathfrak{l} / \varepsilon) \leq 2 \log (e \mathfrak{l}/\varepsilon) $ and $ V \log (2 e \mathfrak{l} / \varepsilon) \leq V (1 + \log 2) \log (e \mathfrak{l} / \varepsilon), $ and $ 2 + 1.7 V \leq 4 V. $
\end{proof}

\begin{lemma}(Critical radius from entropy) \label{lem:critical_radius}
	Let $ \mathcal{F} $ be a separable class with $ 0 \in \mathcal{F}, $ $ \sup_{f \in \mathcal{F}} \| f \|_\infty \leq b $ for some $ b > 0, $ satisfying \eqref{eq:uniform_entropy_generic} for some $ V \geq 1 $ and $ \mathfrak{l} \geq b, $ and put $ \mathrm{L} := \log (2 e \mathfrak{l} n / b). $ Then for every $ r > 0 $
	\begin{equation} \label{eq:critical_radius}
	\mathbb{E} \mathfrak{R}_n \{ f \in \mathcal{F}: P f^2 \leq r \} \leq C_1 \left( \sqrt{\frac{r V \mathrm{L}}{n}} + \frac{b V \mathrm{L}}{n} \right),
	\end{equation}
	with $ C_1 $ absolute. Consequently, if $ \beta_0 \geq b, $ then $ \mathfrak{s}(r) := \beta_0 C_1 ( \sqrt{r V \mathrm{L} / n} + b V \mathrm{L} / n ) $ is a sub-root function dominating $ \beta_0 \mathbb{E} \mathfrak{R}_n \{ f \in \mathcal{F} : Pf^2 \leq r \} $ for all $ r > 0, $ and its fixed point satisfies $ r^* \leq 4 C_1^2 \beta_0^2 V \mathrm{L} / n. $
\end{lemma}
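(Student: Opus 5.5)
The plan is the standard chaining route to a local Rademacher bound via Dudley's entropy integral, followed by a fixed-point computation for the sub-root majorant. Write $\mathcal{F}_r := \{ f \in \mathcal{F} : Pf^2 \leq r \}$. First symmetrize nothing further: $\mathfrak{R}_n$ is already a Rademacher process. Conditionally on the sample, apply Dudley's bound in the empirical metric $L^2(P_n)$:
\[
\mathbb{E}_\varpi \mathfrak{R}_n(\mathcal{F}_r) \leq \frac{C}{\sqrt{n}} \int_0^{\hat\sigma_r} \sqrt{\log N(\varepsilon, \mathcal{F}_r, L^2(P_n))}\, d\varepsilon, \qquad \hat\sigma_r^2 := \sup_{f \in \mathcal{F}_r} P_n f^2 ,
\]
where $0 \in \mathcal{F}$ lets us center the chain at $0$ and separability makes the supremum measurable. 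The hypothesis \eqref{eq:uniform_entropy_generic}, applied with $Q = P_n$ (and inherited by the subclass $\mathcal{F}_r$, as in Lemma \ref{lem:entropy_closure}), bounds the integrand by $\sqrt{V \log(e\mathfrak{l}/\varepsilon)}$. Since $\hat\sigma_r \leq b \leq \mathfrak{l}$, a direct computation gives $\int_0^{\sigma} \sqrt{\log(e\mathfrak{l}/\varepsilon)}\,d\varepsilon \lesssim \sigma \sqrt{\log(e\mathfrak{l}/\sigma)}$. To avoid a diverging logarithm when $\hat\sigma_r$ is tiny, we truncate the integral at $b/n$ and bound the lower piece trivially by $b/n$. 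This gives $\mathbb{E}_\varpi \mathfrak{R}_n(\mathcal{F}_r) \lesssim \sqrt{V\mathrm{L}/n}\,\hat\sigma_r + b/n$, with $\mathrm{L} = \log(2e\mathfrak{l}n/b)$ absorbing $\log(e\mathfrak{l}/\max(\hat\sigma_r,b/n))$.

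Next take expectations and apply Jensen to obtain $\mathbb{E}\hat\sigma_r \leq (\mathbb{E}\hat\sigma_r^2)^{1/2}$. The empirical second moment is controlled by the usual contraction and symmetrization step:
\[
\mathbb{E}\sup_{\mathcal{F}_r} P_n f^2 \leq r + \mathbb{E}\sup_{\mathcal{F}_r}|(P_n-P)f^2| \leq r + 4b\,\mathbb{E}\mathfrak{R}_n(\mathcal{F}_r),
\]
using Ledoux--Talagrand contraction for $u \mapsto u^2$, which is $2b$-Lipschitz on $[-b,b]$. Writing $x := \mathbb{E}\mathfrak{R}_n(\mathcal{F}_r)$, we obtain $x \leq C\sqrt{V\mathrm{L}/n}\,\sqrt{r+4bx} + Cb/n$. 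Solving this quadratic inequality in $\sqrt{x}$ yields $x \lesssim \sqrt{rV\mathrm{L}/n} + bV\mathrm{L}/n$ (using $V,\mathrm{L} \geq 1$), which is \eqref{eq:critical_radius}. This self-bounding step is the main obstacle. Care is needed because contraction requires functions vanishing at $0$ (here $u^2$ does) and because the absolute-value version of the Rademacher average costs only a factor $2$.

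For the consequences, $\mathfrak{s}(r) = \beta_0 C_1(\sqrt{rV\mathrm{L}/n} + bV\mathrm{L}/n)$ is nonnegative and nondecreasing, and $\mathfrak{s}(r)/\sqrt{r}$ is nonincreasing, so it is sub-root. By the first part it dominates $\beta_0\mathbb{E}\mathfrak{R}_n(\mathcal{F}_r)$. At the fixed point, $r^* = \mathfrak{s}(r^*) \leq \beta_0 C_1\sqrt{r^*V\mathrm{L}/n} + \beta_0 C_1 bV\mathrm{L}/n$. If the first term dominates, then $r^* \leq 4\beta_0^2C_1^2V\mathrm{L}/n$. Otherwise $r^* \leq 2\beta_0C_1bV\mathrm{L}/n \leq 2C_1\beta_0^2V\mathrm{L}/n$, using $b \leq \beta_0$. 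Enlarging $C_1 \geq 1$ gives $r^* \leq 4C_1^2\beta_0^2V\mathrm{L}/n$ in both cases.
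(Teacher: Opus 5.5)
Your proposal is correct and follows essentially the same route as the paper. Both arguments condition on the sample, chain in $L^2(P_n)$ with the entropy bound cut off at scale $b/n$, apply Jensen, use the self-bounding step $\mathbb{E}\hat\sigma^2 \leq r + 8b\,\mathbb{E}\mathfrak{R}_n(\mathcal{F}_r)$ (symmetrization plus contraction with $\phi(v) = \min\{v^2,b^2\}/(2b)$), and finish with the same two-case fixed-point computation; the paper uses the truncated Dudley bound (Theorem 5.22 of Wainwright with $\delta = 4b/n$ and internal nets built from external $(u/2)$-nets), and your $4b$ becomes $8b$ once the absolute-value factor you mention is included, both differences being absorbed into $C_1$.
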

\begin{proof}
    Fix $ r > 0, $ let $ \mathcal{F}_r := \{ f \in \mathcal{F} : Pf^2 \leq r \}, $ which contains $ 0, $ and write $ \mathfrak{r} := \mathbb{E} \mathfrak{R}_n (\mathcal{F}_r) \leq b $ and $ \hat{\sigma}^2 := \sup_{f \in \mathcal{F}_r} P_n f^2 \leq b^2. $ Conditionally on the sample, $ X_f := n^{-1/2} \sum_i \varpi_i f(Z_i) $ is a centred sub-Gaussian process with respect to $ \mathrm{d}(f, g) := \| f - g \|_{L^2(P_n)}, $ and $ | X_f - X_g | \leq \sqrt{n} \, \mathrm{d}(f,g). $ The diameter of $ \mathcal{F}_r $ is at most $ 2 \hat{\sigma}. $ Since $ X_0 = 0, $ Theorem 5.22 of \citep{wainwright2019} with $ \delta := 4 b / n $ (or, when $ \delta $ exceeds the diameter $ D_r $ of $ \mathcal{F}_r, $ the trivial bound $ \sup (X_f - X_g) \leq \sqrt{n} D_r < \sqrt{n} \delta $) gives
    \begin{equation*}
        \mathbb{E}_\varpi \mathfrak{R}_n (\mathcal{F}_r) \leq \frac{1}{\sqrt{n}} \mathbb{E}_\varpi \sup_{f, g \in \mathcal{F}_r} (X_f - X_g) \leq \frac{8 b}{n} + \frac{32}{\sqrt{n}} \int_{b/n}^{2 \hat{\sigma}} \sqrt{\log N_{\mathrm{int}} (u, \mathcal{F}_r, L^2(P_n))} \, du,
    \end{equation*}
    with the integral read as $ 0 $ if $ 2 \hat{\sigma} \leq b/n, $ where $ N_{\mathrm{int}} $ counts nets with centres in $ \mathcal{F}_r. $ For $ u \leq 2 \hat{\sigma} \leq 2 \mathfrak{l}, $ choosing a point of $ \mathcal{F}_r $ in each ball of an external $ (u/2) $-net shows $ \log N_{\mathrm{int}} (u, \mathcal{F}_r, L^2(P_n)) \leq \log N(u/2, \mathcal{F}, L^2(P_n)) \leq V \log (2 e \mathfrak{l} / u) \leq V \mathrm{L} $ for $ u \geq b/n. $ Hence $ \mathbb{E}_\varpi \mathfrak{R}_n (\mathcal{F}_r) \leq 8b/n + 64 \hat{\sigma} \sqrt{V \mathrm{L} / n}, $ and taking expectations, with Jensen, $ \mathfrak{r} \leq 8 b / n + 64 \sqrt{V \mathrm{L} / n} \, ( \mathbb{E} \hat{\sigma}^2 )^{1/2}. $ Next, $ \hat{\sigma}^2 \leq r + \sup_{f \in \mathcal{F}_r} | P_n f^2 - P f^2 |. $ By symmetrization (Lemma 2.3.1 of \citep{vdvaart1996}) and the contraction principle (Theorem 4.12 of \citep{ledoux1991}) applied to $ \phi(v) := \min \{ v^2, b^2 \} / (2b), $ which is a contraction with $ \phi(0) = 0 $ and agrees with $ v^2 / (2b) $ on $ [-b, b], $
    \begin{equation*}
        \mathbb{E} \sup_{f \in \mathcal{F}_r} | P_n f^2 - P f^2 | \leq 2 \mathbb{E} \mathfrak{R}_n \{ f^2 : f \in \mathcal{F}_r \} = 4 b \, \mathbb{E} \mathfrak{R}_n \{ \phi \circ f : f \in \mathcal{F}_r \} \leq 8 b \, \mathfrak{r}.
    \end{equation*}
    Therefore $ \mathfrak{r} \leq 8 b/n + 64 \sqrt{V \mathrm{L}/n} \, ( \sqrt{r} + \sqrt{8 b \mathfrak{r}} ). $ By $ x y \leq x^2 / 2 + y^2 / 2 $ with $ x := \sqrt{\mathfrak{r}}, $ $ 64 \sqrt{8 b V \mathrm{L} / n} \sqrt{\mathfrak{r}} \leq \mathfrak{r} / 2 + 16384 \, b V \mathrm{L} / n, $ hence $ \mathfrak{r} \leq 16 b / n + 128 \sqrt{r V \mathrm{L} / n} + 32768 \, b V \mathrm{L} / n, $ which is \eqref{eq:critical_radius} with $ C_1 := 32784, $ because $ V \mathrm{L} \geq 1. $

    The function $ \mathfrak{s} $ is nonnegative, nondecreasing, and $ \mathfrak{s}(r) / \sqrt{r} $ is nonincreasing, so it is sub-root and has a unique positive fixed point. At it, $ r^* = X + Y $ with $ X := \beta_0 C_1 \sqrt{r^* V \mathrm{L} / n} $ and $ Y := \beta_0 C_1 b V \mathrm{L} / n, $ so $ r^* \leq 2 \max(X, Y). $ If $ X \geq Y, $ dividing $ r^* \leq 2 X $ by $ \sqrt{r^*} $ and squaring gives $ r^* \leq 4 \beta_0^2 C_1^2 V \mathrm{L} / n. $ If $ Y > X, $ then $ r^* \leq 2 Y \leq 2 C_1 \beta_0^2 V \mathrm{L} / n $ by $ b \leq \beta_0. $ Both are at most $ 4 C_1^2 \beta_0^2 V \mathrm{L} / n. $
\end{proof}

\begin{theorem}(Deviation bound) \label{thm:dev_bound}
	Grant Assumptions \ref{ass:bounded}--\ref{ass:compact}, let $ \mathcal{H}_N, \Psi_N $ be as in Definition \ref{def:trial_test_classes} with $ R $ as in \eqref{eq:R_K_choice}, and fix $ \zeta \in (0, 1). $ Put
	\begin{equation} \label{eq:delta_n}
		\delta_n^2 := C_0 \sigma^{-c} \frac{(\mathbb{V}_N + \log (1 / \zeta)) \log^2 (n / \zeta)}{n},
	\end{equation}
	where $ C_0 \sigma^{-c} $ is a sufficiently large constant of the form fixed in Appendix \ref{app:notation}. Then with probability at least $ 1 - \zeta, $ simultaneously for all $ h \in \mathcal{H}_N $ and all $ \psi \in \Psi_N: $
	\begin{enumerate}[label=(\roman*), leftmargin=2.2em]
	\item $ | \int_I \left[ \hat{a}_t - a_t \right] (h, \psi) \nu (dt) | \leq \delta_n \| \psi \|_{\mathcal{V}} + \delta_n^2 \| \psi \|_\bullet \leq \sqrt{2} \delta_n \| \psi \|_{\mathcal{V}, \delta_n}, $
	\item $ \frac{1}{2} \mathcal{D}(h) - \delta_n^2 \leq \hat{\mathcal{D}}(h) - \hat{\mathcal{D}}(h^*) \leq 2 \mathcal{D}(h) + \delta_n^2, $
	\item $ \frac{1}{2} \| \psi \|^2_{\mathcal{V}} - \delta_n^2 \| \psi \|^2_\bullet \leq \| \psi \|^2_{\hat{\mathcal{V}}} \leq 2 \| \psi \|^2_{\mathcal{V}} + \delta_n^2 \| \psi \|^2_\bullet, $ and consequently
	\begin{equation} \label{eq:psi_v_emp_bernst}
	\frac{1}{4} \| \psi \|^2_{\mathcal{V}, \delta_n} \leq \| \psi \|^2_{\hat{\mathcal{V}}, \delta_n} \leq 2 \| \psi \|^2_{\mathcal{V}, \delta_n},
	\end{equation}
	a purely multiplicative comparison.
	\end{enumerate}
    For $ \zeta \geq n^{-C'} $ one has $ \log (n/\zeta) \leq (C' + 1) \log n, $ so that $ \delta_n^2 \leq C'' \sigma^{-c} (\mathbb{V}_N + \log (1/\zeta)) \log^2 n / n. $
\end{theorem}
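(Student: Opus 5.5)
We prove the three parts separately, each on an event of probability at least $1-\zeta/3$, and take a union bound. Throughout, $\mathbb{V}_N$ enters only through Lemma \ref{lem:entropy_index_classes}, which gives uniform $L^2(Q)$ entropy $6\mathbb{V}_N\log(e\mathfrak{l}/\varepsilon)$ for the union $\mathcal{G}_1^{\mathcal{T}}\cup\mathcal{D}_\bullet\cup\mathcal{Q}_1$. Lemma \ref{lem:entropy_closure} transfers this bound to star hulls and subclasses. Lemma \ref{lem:critical_radius} then turns it into critical radii $r^\ast\lesssim\beta_0^2\mathbb{V}_N\mathrm{L}/n$ with $\mathrm{L}\lesssim\log(n/\zeta)$. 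We choose the truncation level $\mathcal{T}:=C\sqrt{d}\,\sigma_\ast^{-2}\sqrt{\log(n/\zeta)}$, so that $\mathfrak{b},\mathfrak{l}=\sigma^{-c}\mathrm{polylog}(n/\zeta)$.

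\textbf{Parts (ii) and (iii).} For (ii) we apply Theorem \ref{thm:localization_bbm} to $\mathrm{star}\,\mathcal{D}_\bullet$. This class is star-shaped, bounded by $12\bar B^2$, and satisfies the Bernstein condition $PD_h^2\le16\bar B^2PD_h$ by Lemma \ref{lem:envelopes_and_variances}(iv), so we take $\beta_0=16\bar B^2$ and $u=\log(6/\zeta)$. By \eqref{eq:three_empirical_things}, $P_nD_h=\hat{\mathcal D}(h)-\hat{\mathcal D}(h^\ast)$, and \eqref{eq:localization_bbm} gives (ii) once $\delta_n^2\ge c_1(r^\ast/\beta_0+(b+\beta_0)u/n)$.

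Part (iii) is the same argument applied to $\mathrm{star}\,\mathcal{Q}_1$, using Lemma \ref{lem:envelopes_and_variances}(v) with $\beta_0=\Gamma^2$. For $\|\psi\|_\bullet\le1$ this gives $\tfrac12 PQ_\psi-\delta_n^2\le P_nQ_\psi\le 2PQ_\psi+\delta_n^2$. For general $\psi\in\Psi_N$ we write $\psi=\|\psi\|_\bullet\bar\psi$ with $\|\bar\psi\|_\bullet=1$ and multiply by $\|\psi\|_\bullet^2$, using exact quadratic homogeneity of $Q$. The comparison \eqref{eq:psi_v_emp_bernst} then follows by adding $\delta_n^2\|\psi\|_\bullet^2$ to each side:
\[
\tfrac12\|\psi\|^2_{\mathcal V}+0\cdot\delta_n^2\|\psi\|^2_\bullet\le\|\psi\|^2_{\hat{\mathcal V},\delta_n}\le 2\|\psi\|^2_{\mathcal V}+2\delta_n^2\|\psi\|^2_\bullet .
\]
The lower bound is at least $\tfrac14\|\psi\|^2_{\mathcal V,\delta_n}$, which is the claimed multiplicative comparison.

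\textbf{Part (i).} This part carries the unbounded envelope and is the main obstacle. By linearity in $\psi$ it suffices to show, for $\|\psi\|_\bullet\le1$,
\[
|(P_n-P)G_{h,\psi}|\le\delta_n\|\psi\|_{\mathcal V}+\delta_n^2 .
\]
We split $G_{h,\psi}=G^{\mathcal T}_{h,\psi}+(G_{h,\psi}-G^{\mathcal T}_{h,\psi})$.

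\emph{Truncation remainder.} On the event $\{\max_i\bar V(Z_i)\le\mathcal T\}$, whose failure probability is at most $2ne^{-c\mathcal T^2\sigma_\ast^4/d}\le\zeta/6$, the empirical part of the remainder vanishes. The population part is at most $C\mathcal T\,\Gamma G\,e^{-c\mathcal T^2\sigma_\ast^4/d}$ by Lemma \ref{lem:envelopes_and_variances}(iii), which is $\le\delta_n^2/2$ once $\mathcal T^2$ exceeds $C(d/\sigma_\ast^4)\log(n/\zeta)$.

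\emph{Truncated process.} The class $\mathcal G_1^{\mathcal T}$ is bounded but has no Bernstein condition relative to its mean, so localization is done in the variance instead. We peel over shells $\{\|\psi\|_{\mathcal V}\in(2^{-k-1},2^{-k}]\}$, $k\ge0$; there are only $O(\log n)$ shells down to the level $\|\psi\|_{\mathcal V}\le\delta_n$, and below that level the localized bound at radius $\delta_n$ suffices. On each shell the variance satisfies $\varsigma^2\lesssim\sigma^{-c}\log(n/\zeta)\,2^{-2k}$ by \eqref{eq:P_G_T_2}. We apply Talagrand's inequality (equivalently Theorem \ref{thm:localization_adamczak} with the bounded envelope $\mathfrak b$), together with symmetrization and Lemma \ref{lem:critical_radius} for the expected supremum, which is $\lesssim 2^{-k}\sqrt{\mathbb V_N\mathrm L\,\mathcal T^2/n}+\mathfrak b\,\mathbb V_N\mathrm L/n$. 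A union bound over shells, with $u=\log(6K_{\rm sh}/\zeta)$, gives $\lesssim\delta_n\|\psi\|_{\mathcal V}+\delta_n^2$ uniformly. The factors $\mathcal T^2$, $\mathfrak b$ and $\log$ are what force $\log^2(n/\zeta)$ and the $\sigma^{-c}$ into $\delta_n^2$.

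The delicate point is the star-hull/shell bookkeeping, since the variance proxy is $\|\psi\|_{\mathcal V}$ while the sup bound is in $\|\psi\|_\bullet$. We resolve it by localizing only in $\|\psi\|_{\mathcal V}$ at fixed $\|\psi\|_\bullet\le1$, so the sup-norm term contributes only the additive $\delta_n^2$. The last inequality in (i) is $a+b\le\sqrt2\sqrt{a^2+b^2}$ applied with $a=\delta_n\|\psi\|_{\mathcal V}$ and $b=\delta_n^2\|\psi\|_\bullet$.

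Finally, the constant $C_0\sigma^{-c}$ in \eqref{eq:delta_n} is enlarged to dominate all of the above thresholds. The closing remark follows from $\log(n/\zeta)\le(C'+1)\log n$.
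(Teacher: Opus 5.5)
Your argument follows the paper's proof: localization via Theorem \ref{thm:localization_bbm} on the star hulls of $\mathcal{D}_\bullet$ and $\mathcal{Q}_1$ for (ii)--(iii), and truncation at $\mathcal{T}\asymp\sqrt d\,\sigma_\ast^{-2}\sqrt{\log(n/\zeta)}$ followed by peeling in $\|\psi\|_{\mathcal V}$ at generator scale $\|\psi\|_\bullet\le1$ with Theorem \ref{thm:localization_adamczak} (constant envelope $\mathfrak b$) for (i). Homogeneity then passes everything to the whole cone. Two steps need repair, both minor.

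First, the lower half of \eqref{eq:psi_v_emp_bernst} does not follow from what you wrote. The inequality $\tfrac12\|\psi\|^2_{\mathcal V}\ge\tfrac14\|\psi\|^2_{\mathcal V,\delta_n}$ is equivalent to $\|\psi\|_{\mathcal V}\ge\delta_n\|\psi\|_\bullet$. That fails on $\Psi_N$, because $\|\psi\|_\bullet/\|\psi\|_{\mathcal V}$ is unbounded there (Remark \ref{rem:v_ball}), and these are exactly the directions the ridge exists for. Add the trivial bound $\|\psi\|^2_{\hat{\mathcal V},\delta_n}\ge\delta_n^2\|\psi\|^2_\bullet$, which holds since $\|\psi\|^2_{\hat{\mathcal V}}\ge0$. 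Averaging the two lower bounds gives $\|\psi\|^2_{\hat{\mathcal V},\delta_n}\ge\tfrac14\|\psi\|^2_{\mathcal V}+\tfrac12\delta_n^2\|\psi\|^2_\bullet\ge\tfrac14\|\psi\|^2_{\mathcal V,\delta_n}$, which is how the paper closes its Step 4.

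Second, on $\Psi^\bullet_N$ you only know $\|\psi\|_{\mathcal V}\le\Gamma$. In general $\Gamma$ exceeds $1$; it grows like $\sqrt{\log n}$ through $G_t$. So your shells $\{\|\psi\|_{\mathcal V}\in(2^{-k-1},2^{-k}]\}$, $k\ge0$, miss part of the class. Start the peeling at $\Gamma$ instead: the paper takes $r_j=2^{-j}\Gamma$ for $j\le J=\max\{0,\lceil\log_2(\Gamma/\eta_n)\rceil\}$. This adds only $O(\log\Gamma)$ shells to the union bound and changes nothing else.
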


\begin{proof}
	Constants. Write $ \eta_n := \delta_n / 3 $ and fix an absolute $ C_T \geq 1 $ with $ c C_T^2 \geq 2 $ and $ C_T \sqrt{\log 8} \geq C, $ where $ c, C $ are the absolute constants of Lemma \ref{lem:envelopes_and_variances}(iii). Put $ \mathcal{T} := C_T \sqrt{d} \sigma_*^{-2} \sqrt{\log (8n/\zeta)} $ and $ \Sigma^2 := 3 (G_t^2 + G^2 \mathcal{T}^2 + G^2), $ and let $ \mathfrak{b}, \mathfrak{l} $ be as in \eqref{eq:frak_b_l} at this $ \mathcal{T}. $ From \eqref{eq:three_constants}, \eqref{eq:R_K_choice}, $ n \geq 2 $ and $ \log (8n/\zeta) \leq 4 \log (n / \zeta), $ with $ \sigma^{-c} $ as in Appendix \ref{app:notation}:
	\begin{equation} \label{eq:dev_constants}
	    \begin{aligned}
	    & \Gamma^2 + G_t^2 \leq \sigma^{-c} \log n, \quad \Sigma^2 + \mathfrak{b} \leq \sigma^{-c} \log (n/\zeta), \quad \mathfrak{l} \leq \sigma^{-c} \log^2 (n/\zeta), \\
	    & \log (2 e \mathfrak{l} n / b) \leq \sigma^{-c} \log (n / \zeta) \quad \text{for every } b \geq 12 \bar{B}^2.
	    \end{aligned}
	\end{equation}
	Choosing $ C_0 \geq 9 / \log^2 2 $ gives $ \eta_n^2 \geq 1/n, $ hence $ J := \max \{ 0, \lceil \log_2 (\Gamma / \eta_n) \rceil \} $ obeys $ J \leq \log_2 (\Gamma \sqrt{n}) + 1 $ and $ \log (J + 1) \leq \sigma^{-c} \log n. $ Every inequality ``$ \leq \delta_n^2 $'' or ``$ \leq \eta_n^2 $'' below holds once $ C_0 $ is large enough, because its left-hand side is at most $ \sigma^{-c} (\mathbb{V}_N + \log (1/\zeta)) \log^2 (n/\zeta) / n $ by \eqref{eq:dev_constants} and $ \mathbb{V}_N \geq 1. $ By Lemma \ref{lem:entropy_index_classes}, the three index classes are separable, bounded by $ \mathfrak{b} \leq \mathfrak{l}, $ and satisfy \eqref{eq:uniform_entropy_generic} with $ V = 6 \mathbb{V}_N. $ Put $ u := \log (8 / \zeta). $

	Step 1 (value class). $ \mathcal{F}_1 := \mathrm{star} \, \mathcal{D}_\bullet $ is separable, star-shaped, bounded by $ b_1 := 12 \bar{B}^2, $ and satisfies \eqref{eq:uniform_entropy_generic} with $ V = 24 \mathbb{V}_N $ by Lemma \ref{lem:entropy_closure}. Since $ P D_h = \mathcal{D}(h) \geq 0 $ and $ P D_h^2 \leq 16 \bar{B}^2 P D_h $ (Lemma \ref{lem:envelopes_and_variances}(iv)), $ P (\lambda D_h)^2 \leq \lambda \beta_1 P(\lambda D_h) \leq \beta_1 P (\lambda D_h) $ for $ \lambda \in [0,1] $ with $ \beta_1 := 16 \bar{B}^2 \geq b_1. $ Lemma \ref{lem:critical_radius} gives $ r_1^* \leq 96 C_1^2 \beta_1^2 \mathbb{V}_N \mathrm{L}_1 / n $ with $ \mathrm{L}_1 := \log (2 e \mathfrak{l} n / b_1), $ and Theorem \ref{thm:localization_bbm} gives, on an event $ E_1 $ of probability at least $ 1 - 2 e^{-u} = 1 - \zeta/4, $ for all $ h \in \mathcal{H}_N, $
	\begin{equation*}
	\frac{1}{2} \mathcal{D}(h) - \Delta_1 \leq \hat{\mathcal{D}}(h) - \hat{\mathcal{D}}(h^*) \leq 2 \mathcal{D}(h) + \Delta_1, \qquad \Delta_1 := c_1 \Big( 96 C_1^2 \beta_1 \frac{\mathbb{V}_N \mathrm{L}_1}{n} + \frac{28 \bar{B}^2 u}{n} \Big) \leq \delta_n^2,
	\end{equation*}
	using $ P D_h = \mathcal{D}(h) $ and $ P_n D_h = \hat{\mathcal{D}}(h) - \hat{\mathcal{D}}(h^*). $ This is (ii).

	Step 2 (norm class, generator scale). $ \mathcal{F}_2 := \mathrm{star} \, \mathcal{Q}_1 $ is separable, star-shaped, bounded by $ b_2 := \Gamma^2, $ satisfies \eqref{eq:uniform_entropy_generic} with $ V = 24 \mathbb{V}_N, $ and $ P f^2 \leq \Gamma^2 P f $ on it by Lemma \ref{lem:envelopes_and_variances}(v). With $ \beta_2 := \Gamma^2 $ and $ \mathrm{L}_2 := \log (2 e \mathfrak{l} n / \Gamma^2), $ Lemma \ref{lem:critical_radius} and Theorem \ref{thm:localization_bbm} give an event $ E_2 $ of probability at least $ 1 - \zeta / 4 $ on which, for all $ \psi \in \Psi^\bullet_N, $
	\begin{equation*}
		\frac{1}{2} \| \psi \|^2_{\mathcal{V}} - \eta_n^2 \leq \| \psi \|^2_{\hat{\mathcal{V}}} \leq 2 \| \psi \|_{\mathcal{V}}^2 + \eta_n^2,
	\end{equation*}
	because $ P Q_\psi = \| \psi \|^2_{\mathcal{V}}, $ $ P_n Q_\psi = \| \psi \|^2_{\hat{\mathcal{V}}} $ and $ c_1 ( 96 C_1^2 \Gamma^2 \mathbb{V}_N \mathrm{L}_2 + 2 \Gamma^2 u ) / n \leq \eta_n^2 $ by \eqref{eq:dev_constants}.

	Step 3 (bilinear class, generator scale). (a) Truncation. By Lemma \ref{lem:envelopes_and_variances}(iii) and $ c C_T^2 \geq 2, $ the event $ E_{\mathcal{T}} := \{ \max_{i \leq n} \bar{V}(Z_i) \leq \mathcal{T} \} $ has probability at least $ 1 - 2n (\zeta / 8n)^{c C_T^2} \geq 1 - \zeta/4, $ and on it $ P_n G_{h, \psi} = P_n G^{\mathcal{T}}_{h,\psi} $ for all $ (h, \psi). $ Since $ \mathcal{T} \geq C \sqrt{d} \sigma_*^{-2}, $ the same lemma gives, for $ \psi \in \Psi^\bullet_N, $ $ | P G_{h,\psi} - P G^{\mathcal{T}}_{h, \psi} | \leq C \mathcal{T} \Gamma G (\zeta / 8n)^{2} \leq \sigma^{-c} \log (8n/\zeta) (\zeta/8n)^2 \leq \sigma^{-c} / n \leq \eta_n^2, $ using $ y^{-2} \log y \leq y^{-1} $ for $ y \geq 1. $

	(b) Shells. For $ \psi \in \Psi^\bullet_N $ we have $ \| \psi \|_{\mathcal{V}} \leq \Gamma_{\mathcal{V}} \leq \Gamma. $ For $ 0 \leq j \leq J $ put $ r_j := 2^{-j} \Gamma $ and $ \mathcal{G}_j := \{ G^{\mathcal{T}}_{h, \psi} : h \in \mathcal{H}_N, \, \psi \in \Psi^\bullet_N, \, \| \psi \|_{\mathcal{V}} \leq r_j \}. $ Each $ \mathcal{G}_j $ is a separable subclass of $ \mathcal{G}^{\mathcal{T}}_1 $ containing $ 0, $ bounded by $ \mathfrak{b}, $ satisfying \eqref{eq:uniform_entropy_generic} with $ V = 6 \mathbb{V}_N, $ and with $ \Var(f) \leq P f^2 \leq \varsigma_j^2 := \Sigma^2 r_j^2 $ by \eqref{eq:P_G_T_2}. Symmetrization and Lemma \ref{lem:critical_radius} with $ \mathrm{L}_3 := \log (2 e \mathfrak{l} n / \mathfrak{b}) $ give
	\begin{equation*}
	    \mathbb{E} \sup_{f \in \mathcal{G}_j} | P_n f - P f | \leq 2 \mathbb{E} \mathfrak{R}_n \{ f \in \mathcal{G}^{\mathcal{T}}_1 : P f^2 \leq \varsigma_j^2 \} \leq 2 C_1 \Big( \varsigma_j \sqrt{\frac{6 \mathbb{V}_N \mathrm{L}_3}{n}} + \frac{6 \mathfrak{b} \mathbb{V}_N \mathrm{L}_3}{n} \Big).
	\end{equation*}
	Theorem \ref{thm:localization_adamczak} with the constant envelope $ \mathfrak{b} $ (so $ \| \max_i \mathfrak{b} \|_{\psi_1} = \mathfrak{b} / \log 2 $) and $ u_3 := \log (4 (J+1) / \zeta) \geq 1 $ gives, with probability at least $ 1 - \zeta / (4 (J+1)), $
	\begin{equation*}
	    \sup_{f \in \mathcal{G}_j} | P_n f - P f | \leq \varsigma_j \frac{4 C_1 \sqrt{6 \mathbb{V}_N \mathrm{L}_3} + c_2 \sqrt{u_3}}{\sqrt{n}} + \frac{24 C_1 \mathfrak{b} \mathbb{V}_N \mathrm{L}_3 + 2 c_2 \mathfrak{b} u_3}{n} \leq r_j \eta_n + \eta_n^2,
	\end{equation*}
	where the last step uses $ \varsigma_j = \Sigma r_j, $ $ u_3 \leq \sigma^{-c} \log (n / \zeta) $ and \eqref{eq:dev_constants}, so that $ \Sigma^2 ( \mathbb{V}_N \mathrm{L}_3 + u_3 ) \leq \sigma^{-c} (\mathbb{V}_N + \log (1/\zeta)) \log^2 (n/\zeta) $ and $ \mathfrak{b} ( \mathbb{V}_N \mathrm{L}_3 + u_3 ) \leq \sigma^{-c} (\mathbb{V}_N + \log(1/\zeta)) \log^2 (n / \zeta). $ Let $ E_3 $ be the intersection of these $ J + 1 $ events, $ \mathbb{P}(E_3) \geq 1 - \zeta / 4. $

	(c) Combination. On $ E_{\mathcal{T}} \cap E_3, $ let $ h \in \mathcal{H}_N, $ $ \psi \in \Psi^\bullet_N $ and $ j_* := \max \{ j \leq J : \| \psi \|_{\mathcal{V}} \leq r_j \}. $ If $ j_* < J $ then $ \| \psi \|_{\mathcal{V}} > r_{j_* + 1} = r_{j_*}/2, $ if $ j_* = J $ then $ r_J \leq \eta_n. $ Hence $ r_{j_*} \leq 2 \| \psi \|_{\mathcal{V}} + \eta_n $ and, by (a) and (b),
	\begin{equation} \label{eq:step_3_bound}
	| P_n G_{h, \psi} - P G_{h, \psi} | \leq r_{j_*} \eta_n + \eta_n^2 + \eta_n^2 \leq 2 \eta_n \| \psi \|_{\mathcal{V}} + 3 \eta_n^2, \quad \psi \in \Psi^\bullet_N.
	\end{equation}

	Step 4 (from generator scale to the cone). Work on $ E_1 \cap E_2 \cap E_{\mathcal{T}} \cap E_3, $ which has probability at least $ 1 - \zeta. $ Fix $ h $ and $ \psi \in \Psi_N $ and let $ \psi = c \, \omega \phi^\chi $ be any representation as in \eqref{eq:psi_n_class}, then $ \upsilon := \omega \phi^\chi \in \Psi^\bullet_N. $ Since $ \psi \mapsto G_{h, \psi} $ is linear, \eqref{eq:step_3_bound} applied to $ \upsilon $ gives
	\begin{equation*}
		| P_n G_{h, \psi} - P G_{h, \psi} | = | c | \, | P_n G_{h, \upsilon} - P G_{h, \upsilon} | \leq 2 \eta_n \| \psi \|_{\mathcal{V}} + 3 \eta_n^2 | c |.
	\end{equation*}
	The left-hand side does not depend on the representation, so taking the infimum over representations replaces $ |c| $ by $ \| \psi \|_\bullet. $ As $ 2 \eta_n \leq \delta_n $ and $ 3 \eta_n^2 \leq \delta_n^2, $ and by \eqref{eq:three_empirical_things}, this is the first inequality of (i), the second is $ a + b \leq \sqrt{2} (a^2 + b^2)^{1/2}. $ Likewise $ Q_\psi = c^2 Q_\upsilon, $ and Step 2 applied to $ \upsilon $ gives $ \frac{1}{2} \| \psi \|^2_{\mathcal{V}} - \eta_n^2 c^2 \leq \| \psi \|^2_{\hat{\mathcal{V}}} \leq 2 \| \psi \|^2_{\mathcal{V}} + \eta_n^2 c^2 $ for every representation, whence (iii) after taking the infimum and using $ \eta_n \leq \delta_n. $ Adding $ \delta_n^2 \| \psi \|^2_\bullet $ to (iii) yields $ \| \psi \|^2_{\hat{\mathcal{V}}, \delta_n} \geq \frac{1}{2} \| \psi \|^2_{\mathcal{V}} $ and $ \| \psi \|^2_{\hat{\mathcal{V}}, \delta_n} \leq 2 \| \psi \|^2_{\mathcal{V}, \delta_n}, $ since also $ \| \psi \|^2_{\hat{\mathcal{V}}, \delta_n} \geq \delta_n^2 \| \psi \|^2_\bullet, $ averaging the two lower bounds gives \eqref{eq:psi_v_emp_bernst}. Statement (ii) is Step 1.
\end{proof}

\begin{theorem}(Guidance rate) \label{thm:guidance_rate}
	Grant Assumptions \ref{ass:bounded}--\ref{ass:compact}, fix $ \zeta \in (0,1) $ and $ \kappa \geq 1, $ let $ R, K, Q_n $ be as in \eqref{eq:R_K_choice}--\eqref{eq:Q_n_def}, and choose
	\begin{equation} \label{eq:N_mu_alpha_b_choice}
		N := \Big\lceil \left( n \mathrm{A}_\beta^2 \right)^{\frac{1}{2 (\beta - 1) + d}} \Big\rceil, \quad \mu, \alpha \in [\kappa^{-1}, \kappa], \quad \mathrm{b} := (\varepsilon_n / \Xi_\mathrm{s})^{\frac{1}{\mathrm{s} + 4}} \land \bar{B} \ \ (\mathrm{b} := B \text{ if } \mathrm{s} = \infty),
	\end{equation}
    \begin{equation} \label{eq:rate_vareps_n}
        \varepsilon_n := \mathrm{A}_\beta^{\frac{2 d}{2 (\beta - 1) + d}} n^{- \frac{2 (\beta - 1)}{2 (\beta - 1) + d}}.
    \end{equation}
    Let $ \mathcal{H}_N, \Psi_N $ be as in Definition \ref{def:trial_test_classes} with budgets equal to the right-hand sides of \eqref{eq:budgets}, and let $ \hat{h}, \hat{g} $ be given by \eqref{eq:h_empirical} with $ \delta_n $ as in \eqref{eq:delta_n}. There is $ n_0, $ depending only on $ (d, \beta, \bar{B}, \ell, \sigma_*, C_t, \kappa_\Theta), $ such that for $ n \geq n_0, $ on the event of Theorem \ref{thm:dev_bound},
	\begin{equation} \label{eq:g_rate}
		\| \hat{g} - g^* \|^2_{L^2 (\nu \otimes \rho; I^\circ)} \leq \sigma^{-c} \log^8 (e n / \zeta) \, \Xi_\mathrm{s}^{\frac{4}{\mathrm{s} + 4}} \varepsilon_n^{\frac{\mathrm{s}}{\mathrm{s} + 4}},
	\end{equation}
	where $ \sigma^{-c} $ may also depend on $ \kappa, $ and where at $ \mathrm{s} = \infty $ the right-hand side is read as $ \sigma^{-c} \log^8 (e n / \zeta) B^{-4} \varepsilon_n. $ At fixed $ \beta $ the exponent of $ n $ in \eqref{eq:rate_vareps_n} is $ 2 (\beta - 1) / (2 (\beta - 1) + d), $ so \eqref{eq:g_rate} is $ n^{-2 / (d+2)} $ up to logarithmic factors at $ \beta = 2 $ and $ \mathrm{s} = \infty. $ The bound contains no regularization-gap term, for any fixed $ \kappa. $
\end{theorem}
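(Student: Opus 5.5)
The proof follows the five-step sketch of Section \ref{sec:main}. Throughout we work on the event $E$ of Theorem \ref{thm:dev_bound}, which has probability at least $1-\zeta$. First we fix the approximant. Lemma \ref{lem:space_time_approx} applies for $n\ge n_0$: the regime \eqref{eq:approx_regime} holds because $N\asymp (n\mathrm{A}_\beta^2)^{1/(2(\beta-1)+d)}$, so $Q_n\mathrm{A}_\beta N^{-(\beta-1)}\to 0$ up to polylog factors. It gives $h_N\in\mathcal{H}_N$ satisfying \eqref{eq:approx_error_eN} with $e_N=8Q_n\varepsilon_N$. Since $\varepsilon_N^2=\mathrm{A}_\beta^2N^{-2(\beta-1)}\le\varepsilon_n$ and $Q_n\le\sigma^{-c}(\log n)^4$, we get $e_N^2\le\sigma^{-c}\log^8 n\,\varepsilon_n$. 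Lemma \ref{lem:weak_res_approx} then gives $\mathcal{P}^w(h_N)\lesssim d\sigma_*^{-4}e_N^2$, and hence $\mathcal{N}_{\Psi_N,\delta_n}(h_N)^2\le\mathcal{P}^w(h_N)$ by \eqref{eq:proj_res}. Lemma \ref{lem:complexity}\ref{cond:eff_dim} with the budgets \eqref{eq:budgets} gives $\mathbb{V}_N\le\sigma^{-c}N^d(\log n)^{7/2}$. Therefore $\delta_n^2\le\sigma^{-c}(N^d/n)\log^{13/2}(en/\zeta)$, and the choice of $N$ makes $N^d/n\lesssim\varepsilon_n$.

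Next we sandwich the inner maximum. Write $\hat F(h)$ for the inner supremum in \eqref{eq:h_empirical}. By Lemma \ref{lem:fenchel_cone} applied with the ridged empirical norm, $\hat F(h)=\hat{\mathcal{N}}(h)^2/(2\alpha)$, where $\hat{\mathcal{N}}$ is the empirical projected residual normalized by $\|\cdot\|_{\hat{\mathcal{V}},\delta_n}$. On $E$, Theorem \ref{thm:dev_bound}(i) gives $|P_nG_{h,\psi}-A(h,\psi)|\le\sqrt2\delta_n\|\psi\|_{\mathcal{V},\delta_n}$, and \eqref{eq:psi_v_emp_bernst} makes the two ridged norms equivalent within factors $2$ and $1/4$. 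Dividing by the norm and taking the supremum gives
\[
\hat{\mathcal{N}}(h)\le 2\,\mathcal{N}_{\Psi_N,\delta_n}(h)+2\sqrt2\,\delta_n ,
\qquad
\hat{\mathcal{N}}(h)\ge \tfrac{1}{\sqrt2}\,\mathcal{N}_{\Psi_N,\delta_n}(h)-\delta_n .
\]
Squaring yields $\Pi_\Psi(h)/(8\alpha)-\delta_n^2/(2\alpha)\lesssim\hat F(h)\lesssim(\Pi_\Psi(h)+\delta_n^2)/\alpha$.

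Now the basic inequality. Compare $\hat h$ with the feasible $h_N$ and subtract $\hat{\mathcal{D}}(h^*)$. Theorem \ref{thm:dev_bound}(ii) and the sandwich give
\[
\tfrac12\mathcal{D}(\hat h)+\tfrac{\mu}{8\alpha}\Pi_\Psi(\hat h)\le 2\mathcal{D}(h_N)+\tfrac{C\mu}{\alpha}\Pi_\Psi(h_N)+C(1+\mu/\alpha)\delta_n^2 .
\]
With $\mu/\alpha\in[\kappa^{-2},\kappa^2]$ this becomes $\mathcal{D}(\hat h)+\Pi_\Psi(\hat h)\le C_\kappa(d\sigma_*^{-4}e_N^2+\delta_n^2)$. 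We then apply Theorem \ref{thm:coercivity} to $\hat h$ with $\tau=\delta_n$, $C_0=1$ (justified by Proposition \ref{prop:classes_props}) and $\delta=1/2$, so that $I_\delta=I^\circ$. Since $\Lambda_*\lesssim d^2\sigma_*^{-4}$, this gives $\int_{I^\circ}\|\nabla\hat\varphi\|^2_{\rho_t}\nu(dt)\le\sigma^{-c}(e_N^2+\delta_n^2)\le\sigma^{-c}\log^8(en/\zeta)\,\varepsilon_n$, and the same bound holds for $\mathcal{D}(\hat h)$.

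Finally we apply Corollary \ref{cor:guidance_control} to $h=\hat h$. It bounds the squared error by
\[
\sigma^{-c}\log^8(en/\zeta)\big(\mathrm{b}^{-2}+\mathrm{b}^{-4}\big)\varepsilon_n+4(G^*)^2\,\Xi_{\mathrm{s}}\mathrm{b}^{\mathrm{s}} .
\]
Because $\mathrm{b}\le\bar B$, the factor $\mathrm{b}^{-2}$ is absorbed into $\mathrm{b}^{-4}$ up to a constant. At $\mathrm{b}=(\varepsilon_n/\Xi_{\mathrm{s}})^{1/(\mathrm{s}+4)}$ both terms equal $\Xi_{\mathrm{s}}^{4/(\mathrm{s}+4)}\varepsilon_n^{\mathrm{s}/(\mathrm{s}+4)}$. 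If instead the cap $\bar B$ is active, then $\varepsilon_n\ge\Xi_{\mathrm{s}}\bar B^{\mathrm{s}+4}$ and $\Xi_{\mathrm{s}}\bar B^{\mathrm{s}}\ge 1$, since $w\le\bar B$ forces $\Xi_{\mathrm{s}}\ge\bar B^{-\mathrm{s}}$. The trivial bound $4(G^*)^2$ is then dominated by the right-hand side. For $\mathrm{s}=\infty$ we take $\mathrm{b}=B\le h^*$, so the last term vanishes, leaving $B^{-4}\varepsilon_n$.

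The main obstacle is the bookkeeping in the sandwich step. All deviations must stay multiplicative in $\|\cdot\|_{\mathcal{V},\delta_n}$, so that no additive $\delta_n\|\psi\|_\bullet$ survives after normalization; this is exactly what the ridge at scale $\delta_n$ secures. A secondary obstacle is tracking the polylog powers so that the total is $\log^8$.
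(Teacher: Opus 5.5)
Your proposal is correct and follows the paper's proof essentially step for step: the approximant and the $\delta_n$ bookkeeping, the sandwich of the inner supremum, the basic inequality in which only $\mu/\alpha$ enters, coercivity at $\tau=\delta_n$ and $\delta=1/2$, and clipping with the same two cases. The only cosmetic difference is in the sandwich: you route it through Lemma~\ref{lem:fenchel_cone} and compare the empirical and population projected residuals, whereas the paper bounds the bracket directly by a scalar quadratic in $r=\|\psi\|_{\hat{\mathcal{V}},\delta_n}$; both give the same constants $\Pi_\Psi/(8\alpha)-\delta_n^2/(2\alpha)$ and $(4\Pi_\Psi+8\delta_n^2)/\alpha$.
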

\begin{proof}
	Write $ D(h) := \mathcal{D}(h), $ $ \Pi_\Psi (h) := \mathcal{N}_{\Psi_N, \delta_n} (h)^2 $ and $ \mathfrak{D} := 2 (\beta - 1) + d, $ and let $ \hat{F}(h) $ denote the supremum in \eqref{eq:h_empirical}. 

	Step 0 (regime and approximant). Put $ x_n := (n \mathrm{A}_\beta^2)^{1/\mathfrak{D}} \geq 1, $ so $ x_n \leq N \leq 2 x_n. $ Then $ \mathrm{A}_\beta^2 N^{-2 (\beta - 1)} \leq \mathrm{A}_\beta^2 x_n^{-2(\beta - 1)} = \varepsilon_n $ and $ N^d / n \leq 2^d x_n^d / n = 2^d \varepsilon_n. $ Since $ \mathrm{A}_\beta $ and $ Q_n $ grow only polylogarithmically in $ n $ while $ x_n $ grows polynomially, enlarging $ n_0 $ ensures \eqref{eq:approx_regime} and the conditions of Lemma \ref{lem:space_time_approx} for $ n \geq n_0. $ That lemma gives $ h_N \in \mathcal{H}_N $ satisfying \eqref{eq:approx_error_eN} with $ e_N := 8 Q_n \mathrm{A}_\beta N^{-(\beta - 1)}, $ so $ e_N^2 \leq 64 Q_n^2 \varepsilon_n \leq \sigma^{-c} \log^8 (n) \, \varepsilon_n $ by \eqref{eq:Q_n_def}. By Lemma \ref{lem:weak_res_approx} and \eqref{eq:proj_res}, $ \Pi_\Psi (h_N) \leq \mathcal{P}^w (h_N) \leq 4 (1 + M_4)^2 e_N^2 $ and $ D (h_N) \leq \sup_t \| h_N - h^* \|_{\rho_t}^2 \leq e_N^2. $ With the budgets fixed, Lemma \ref{lem:complexity}\ref{cond:eff_dim} (whose side conditions $ W_N \geq d + 1 $ and $ P_0 \leq P_{\mathcal{H}} $ hold once $ C^\sharp \geq d + 1, $ because $ L, W_N \geq 2 $ give $ (L - 1) W_N^2 \geq C^\sharp K (N + 1)^d L \geq P_0 $) gives $ \mathbb{V}_N \leq \sigma^{-c} N^d (\log n)^{7/2}, $ because $ \bar{P}_0 \leq \sigma^{-c} K N^d $ (as $ L \leq (N+1)^d $ for $ n \geq n_0 $), $ K \leq \sigma^{-c} (\log n)^{3/2}, $ and the bracket in \eqref{eq:eff_dim_bound} and $ \log (1 + 4 \Gamma) $ are at most $ \sigma^{-c} \log^2 n. $ Hence, using $ \mathbb{V}_N \geq 1 $ and $ 1 + \log (1/\zeta) \leq \log (e n / \zeta), $
	\begin{gather*}
	    \delta_n^2 \leq \sigma^{-c} \frac{N^d}{n} (\log n)^{7/2} \big( 1 + \log (1/\zeta) \big) \log^2 (n/\zeta) \leq \sigma^{-c} \log^{8} (e n / \zeta) \, \varepsilon_n, \\
	    \bar{\varepsilon}_n := e_N^2 + \delta_n^2 \leq \sigma^{-c} \log^8 (en/\zeta) \, \varepsilon_n.
	\end{gather*}

	Throughout the remaining steps we work on the event of Theorem \ref{thm:dev_bound}, on which the adversary norms $ \| \cdot \|_{\hat{\mathcal{V}}, \delta_n} $ and $ \| \cdot \|_{\mathcal{V}, \delta_n} $ are equivalent within the factors $ \frac{1}{4} $ and $ 2 $ by \eqref{eq:psi_v_emp_bernst}.
	
	Step 1 (the inner maximum). For every $ \psi \in \Psi_N, $ the definition \eqref{eq:proj_res} and Theorem \ref{thm:dev_bound}(i) give
	\begin{equation}
		\int_I \hat{a}_t (h, \psi) \nu (dt) \leq A(h, \psi) + \sqrt{2} \delta_n \| \psi \|_{\mathcal{V}, \delta_n} \leq \left( \sqrt{\Pi_\Psi (h)} + \sqrt{2} \delta_n \right) \| \psi \|_{\mathcal{V}, \delta_n},
	\end{equation}
	and $ \| \psi \|_{\mathcal{V}, \delta_n} \leq 2 \| \psi \|_{\hat{\mathcal{V}}, \delta_n} $ by \eqref{eq:psi_v_emp_bernst}. Writing $ r := \| \psi \|_{\hat{\mathcal{V}}, \delta_n}, $ the bracket in \eqref{eq:h_empirical} is at most $ 2 ( \sqrt{\Pi_\Psi} + \sqrt{2} \delta_n ) r - \frac{\alpha}{2} r^2, $ whose maximum over $ r \geq 0 $ is $ 2 ( \sqrt{\Pi_\Psi} + \sqrt{2} \delta_n )^2 / \alpha, $ so
	\begin{equation} \label{eq:F_hat_upper_bound}
		\hat{F}(h) \leq \frac{4 \Pi_\Psi (h) + 8 \delta_n^2}{\alpha}.
	\end{equation}
	In the other direction, fix $ \epsilon > 0 $ and pick $ \hat{\psi} \in \Psi_N $ with $ \| \hat{\psi} \|_{\mathcal{V}, \delta_n} = 1 $ and $ A(h, \hat{\psi}) \geq \sqrt{\Pi_\Psi (h)} - \epsilon, $ possible because $ \Psi_N $ is a symmetric cone. Evaluating the bracket at $ \psi = r \hat{\psi}, $ using Theorem \ref{thm:dev_bound}(i) in the other direction and $ \| r \hat{\psi} \|^2_{\hat{\mathcal{V}}, \delta_n} \leq 2 r^2, $ and letting $ \epsilon \downarrow 0, $
	\begin{equation} \label{eq:F_hat_lower_bound}
		\hat{F}(h) \geq \underset{r \geq 0}{\sup} \left[ r (\sqrt{\Pi_\Psi (h)} - \sqrt{2} \delta_n ) - \alpha r^2 \right] = \frac{( \sqrt{\Pi_\Psi (h)} - \sqrt{2} \delta_n )^2_+}{4 \alpha} \geq \frac{\Pi_\Psi (h)}{8 \alpha} - \frac{\delta_n^2}{2 \alpha},
	\end{equation}
	the last step by $ (x - y)^2_+ \geq \frac{1}{2} x^2 - y^2 $ for $ x, y \geq 0. $
	
	Step 2 (main inequality). Since $ \hat{h} $ minimizes and $ h_N \in \mathcal{H}_N, $ Theorem \ref{thm:dev_bound}(ii) gives
	\begin{equation*}
	\begin{aligned}
	\tfrac{1}{2} D (\hat{h}) - \delta_n^2 + \mu \hat{F}(\hat{h}) &\leq \hat{\mathcal{D}}(\hat{h}) - \hat{\mathcal{D}}(h^*) + \mu \hat{F} (\hat{h}) \leq \hat{\mathcal{D}}(h_N) - \hat{\mathcal{D}}(h^*) + \mu \hat{F} (h_N) \\
	&\leq 2 D(h_N) + \delta_n^2 + \mu \hat{F} (h_N).
	\end{aligned}
	\end{equation*}
	Inserting \eqref{eq:F_hat_lower_bound} on the left and \eqref{eq:F_hat_upper_bound} on the right, with $ q := \mu / \alpha \in [\kappa^{-2}, \kappa^2], $ gives $ \min \{ \frac{1}{2}, \frac{q}{8} \} ( D(\hat{h}) + \Pi_\Psi (\hat{h}) ) \leq \max \{ 2, 4 q \} ( D(h_N) + \Pi_\Psi (h_N) ) + (2 + 8.5 q) \delta_n^2, $ hence
	\begin{equation} \label{eq:D_h_N_bounds}
	D (\hat{h}) + \Pi_\Psi (\hat{h}) \leq C_\kappa \big( D (h_N) + \Pi_\Psi (h_N) + \delta_n^2 \big) \leq \sigma^{-c} \bar{\varepsilon}_n,
	\end{equation}
	with $ C_\kappa $ depending only on $ \kappa, $ using Step 0 and $ (1 + M_4)^2 \leq C d \sigma_*^{-4}. $ Only the ratio $ \mu / \alpha $ enters, and no choice of it trades bias against variance.
	
	Step 3 (coercivity). By Theorem \ref{thm:coercivity} for the classes of Definition \ref{def:trial_test_classes} ($ C_0 = 1 $), applied to $ \hat{h} $ with $ \tau = \delta_n $ and $ \delta = 1/2, $ together with \eqref{eq:D_h_N_bounds}, $ \Lambda_* \lesssim d^2 \sigma_*^{-4} $ and $ \ell \leq 1, $
	\begin{equation*}
	    \int_{I^\circ} \| \nabla \hat{\varphi} \|^2_{\rho_t} \nu (dt) + D(\hat{h}) \leq \sigma^{-c} \bar{\varepsilon}_n \leq \sigma^{-c} \log^8 (e n / \zeta) \, \varepsilon_n, \qquad \hat{\varphi} := \hat{h} - h^*.
	\end{equation*}
	
	Step 4 (clipping). Corollary \ref{cor:guidance_control}, applicable since $ \hat{h} \in \mathcal{H}_N $ and $ \mathrm{b} \in (0, \bar{B}], $ with $ \mathrm{b}^{-2} \leq \bar{B}^2 \mathrm{b}^{-4}, $ gives
	\begin{equation*}
	    \| \hat{g} - g^* \|^2_{L^2 (\nu \otimes \rho; I^\circ)} \leq \sigma^{-c} \log^8 (e n / \zeta) \, \mathrm{b}^{-4} \varepsilon_n + 4 (G^*)^2 \, \mathcal{E}_{\mathrm{b}},
	\end{equation*}
	where, as in its proof, $ \mathcal{E}_{\mathrm{b}} := \int_{I^\circ} \rho_t (h^* < \mathrm{b}) \nu (dt) \leq \Xi_\mathrm{s} \mathrm{b}^\mathrm{s} $ for $ \mathrm{s} < \infty. $
	Let $ \mathrm{s} < \infty, $ $ \mathrm{b}_0 := (\varepsilon_n / \Xi_\mathrm{s})^{1/(\mathrm{s} + 4)} $ and $ \mathcal{V}_n := \Xi_\mathrm{s}^{4/(\mathrm{s}+4)} \varepsilon_n^{\mathrm{s}/(\mathrm{s}+4)} = \mathrm{b}_0^{-4} \varepsilon_n = \Xi_\mathrm{s} \mathrm{b}_0^\mathrm{s}. $ If $ \mathrm{b}_0 \leq \bar{B}, $ then $ \mathrm{b} = \mathrm{b}_0 $ and both terms are at most $ \sigma^{-c} \log^8 (en/\zeta) \mathcal{V}_n, $ as $ G^* \leq \sigma^{-c}. $ If $ \mathrm{b}_0 > \bar{B}, $ then, because $ w \leq \bar{B} $ on $ \mathrm{supp} (p_0) $ gives $ \Xi_\mathrm{s} \bar{B}^\mathrm{s} = \mathbb{E}_{p_0} (\bar{B} / w)^\mathrm{s} \geq 1, $ we have $ \mathcal{V}_n = \Xi_\mathrm{s} \mathrm{b}_0^\mathrm{s} > \Xi_\mathrm{s} \bar{B}^\mathrm{s} \geq 1; $ since $ | \hat{g} | \leq G^* $ by construction and $ | g^* | \leq G^* $ on $ I $ by Lemma \ref{lem:bounded_guidance}, $ \| \hat{g} - g^* \|^2_{L^2 (\nu \otimes \rho; I^\circ)} \leq 4 (G^*)^2 \leq 4 (G^*)^2 \mathcal{V}_n. $ In both cases \eqref{eq:g_rate} follows. If $ \mathrm{s} = \infty, $ then $ w \geq B $ on $ \mathrm{supp}(p_0) $ gives $ h^* \geq B, $ so $ \mathcal{E}_B = 0 $ and the first term with $ \mathrm{b} = B \leq \bar{B} $ gives the stated bound.
\end{proof}

\begin{remark}(No variance amplification) \label{rem:no_amplification}
    Heuristically, the gap is a factor $N^2$ of variance: a resolution-$N$ value estimator has variance $N^d/n$, differentiating it amplifies this to $N^{d+2}/n$, and balancing against the gradient bias $N^{-2(\beta-1)}$ then forces $N^{2\beta+d} \asymp n$. Our estimator also differentiates its fit, $\hat g = \Pi_{G^\ast}(\nabla \hat h / (\hat h \vee \mathrm{b}))$, but $\hat h$ is not a pure value estimate: by \eqref{eq:weak_main} each test function supplies a linear moment condition on the first derivatives of $h^\ast$, so the gradient of the fit is controlled at complexity $\mathbb{V}_N \asymp N^d$ with no amplification, and the balance is $N^{2(\beta-1)+d} \asymp n$, as in Step 4.
\end{remark}

\section{From guidance error to sampling error} \label{app:sampling}

\begin{lemma}(The controlled marginals are a bounded reweighting) \label{lem:marginals_bounded}
    Under Assumptions \ref{ass:bounded}--\ref{ass:compact}, define 
    \begin{equation}
        \rho^*_t (dx) := \frac{h^* (t,x)}{Z} \rho_t (dx), \quad Z = \mathbb{E}_{p_0} [w] \in (0, \bar{B}].
    \end{equation}
    Then each $ \rho_t^* $ is a probability measure, $ \rho_T^* = q_0, $ and $ ( \rho_t^*)_{t \in [0, T]} $ is the marginal flow of the exactly guided SDE \eqref{eq:rewersed_process} started from $ \rho_0^*. $ Moreover 
    \begin{equation} \label{eq:upsilon_def}
        \Upsilon := \underset{t \in [0, T]}{\sup} \left\| \frac{d \rho_t^*}{d \rho_t} \right\|_\infty \leq \frac{\bar{B}}{Z} \leq \bar{B} \, \Xi_{\mathrm{s}}^{1 / \mathrm{s}},
    \end{equation}
    the last expression read as $ \bar{B} / B $ when $ \mathrm{s} = \infty. $
\end{lemma}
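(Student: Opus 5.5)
The plan is to verify the Doob $h$-transform structure directly. First, $\rho_t^*$ is a probability measure: by the tower property $\int h^*(t,x)\rho_t(dx) = \mathbb{E}[\mathbb{E}[w(X_0)\mid X_{T-t}]] = \mathbb{E}_{p_0}[w] = Z$, and $Z \in (0,\bar B]$ because $0 < w \le \bar B$ on $\mathrm{supp}(p_0)$. At $t = T$ we have $h^*(T,\cdot) = w$ by Theorem \ref{thm:harmonicity} and $\rho_T = p_0$, so $\rho_T^* = w p_0 / Z = q_0$. The bound $\Upsilon \le \bar B/Z$ is immediate from $h^* \le \bar B$ (Proposition \ref{prop:derivative_bounds}). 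For the last inequality, I would apply Jensen to the convex map $u \mapsto u^{-\mathrm{s}}$: $Z^{-\mathrm{s}} = (\mathbb{E}_{p_0} w)^{-\mathrm{s}} \le \mathbb{E}_{p_0}[w^{-\mathrm{s}}] = \Xi_{\mathrm{s}}$, so $1/Z \le \Xi_{\mathrm{s}}^{1/\mathrm{s}}$. When $\mathrm{s}=\infty$, $w \ge B$ gives $Z \ge B$ directly.

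The substantive part is the marginal flow. I would argue at the path level. Let $\mathbb{P}$ be the law of the reverse process \eqref{eq:rev_denoising} on $[0,T]$, and define $d\mathbb{Q} := (w(\overleftarrow{X}_T)/Z)\, d\mathbb{P}$. By the martingale property (Proposition \ref{prop:martingale}) and the representation $dh^*(t,\overleftarrow{X}_t) = \sqrt2\,\nabla h^*\cdot dB_t$ from Theorem \ref{thm:harmonicity}, the density process is $L_t/Z = h^*(t,\overleftarrow{X}_t)/Z$. This gives two facts:
\begin{itemize}
\item the $\mathbb{Q}$-marginal at time $t$ is $\mathbb{E}_{\mathbb{P}}[L_t/Z \mid \overleftarrow{X}_t = x]\,\rho_t(dx) = \rho_t^*(dx)$;
\item since $d\log L_t = \sqrt2\, g^*\cdot dB_t - |g^*|^2\,dt$, Girsanov's theorem shows that under $\mathbb{Q}$ the process $\tilde B_t := B_t - \sqrt2\int_0^t g^*\,ds$ is a Brownian motion, so $\overleftarrow{X}$ solves $d\overleftarrow{X}_t = (b + 2g^*)\,dt + \sqrt2\, d\tilde B_t$ with initial law $\rho_0^*$.
\end{itemize}

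The main obstacle is justifying Girsanov on all of $[0,T]$, since $g^*$ blows up like $\sigma_{T-t}^{-2}$ near $t = T$ (Lemma \ref{lem:bounded_guidance}), so Novikov fails globally. I would avoid Novikov altogether: the density $L_t/Z$ is already known to be a true (bounded) martingale, so the Girsanov step only requires identifying the drift, and this holds on each $[0,T-\epsilon]$, where $g^*$ is bounded by $2\sqrt d/\sigma_\epsilon^2$. Letting $\epsilon\downarrow0$, the marginal at $T$ follows by continuity of paths together with the direct identity $\rho_T^* = q_0$ established above.

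Finally, to conclude that the guided SDE \eqref{eq:rewersed_process} has marginal flow $\rho_t^*$, I need uniqueness in law for that SDE. On $[0,T-\epsilon]$ the drift is locally Lipschitz: $s$ and $g^*$ are smooth, $g^*$ is bounded, and $x+s$ has linear growth. Hence pathwise uniqueness holds there, and the law of any solution started from $\rho_0^*$ coincides with $\mathbb{Q}$.
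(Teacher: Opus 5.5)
Your proposal is correct and follows the paper's proof: the tower property gives the total mass, $h^\ast(T,\cdot)=w$ gives $\rho^\ast_T=q_0$, Jensen on $u\mapsto u^{-\mathrm{s}}$ gives $Z\ge \Xi_{\mathrm{s}}^{-1/\mathrm{s}}$, and a Girsanov change of measure with density process $h^\ast(t,\overleftarrow{X}_t)/Z$ turns $B$ into $\tilde B_t=B_t-\sqrt2\int_0^t g^\ast\,ds$. Your localization to $[0,T-\epsilon]$, which uses only that the density is a bounded true martingale, and your pathwise-uniqueness step identifying the law of the guided SDE are sound extra rigor that the paper skips (it applies Girsanov on all of $[0,T]$ and never addresses uniqueness in law); the one slip is that Lemma \ref{lem:bounded_guidance} only gives an upper bound on $|g^\ast|$, so Novikov cannot be verified near $T$, not that it is known to fail.
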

\begin{proof}
    Total mass: $ \int h^* (t, \cdot) \rho_t = \mathbb{E} [h^*(t, \overleftarrow{X}_t)] = \mathbb{E}[w (X_0)] = Z, $ by Proposition \ref{prop:martingale} and the tower property, so $ \rho_t^* $ is a probability measure. At $ t = T $ it is $ w p_0 / Z = q_0. $

    For the flow statement, $ L_t := h^* (t, \overleftarrow{X}_t) $ is a positive $\mathbb{P}$-martingale with $ d L_t = \sqrt{2} \nabla h^* \cdot d B_t $ by Theorem \ref{thm:harmonicity}, so $ d L_t / L_t = \sqrt{2} \nabla \log h^* \cdot d B_t $ and $ L_t / L_0 = \mathcal{E} ( \int \sqrt{2} \nabla \log h^* \cdot d B)_t. $ Put $ d \mathbb{Q} / d \mathbb{P} |_{\mathcal{F}_t} := L_t / Z. $ Girsanov's theorem makes $ \tilde{B}_t := B_t - \int_0^t \sqrt{2} \nabla \log h^* ds $ as a $\mathbb{Q}$-Brownian motion, so under $\mathbb{Q}$
    \begin{equation}
        d \overleftarrow{X}_t = (b + \sqrt{2} \cdot \sqrt{2} \nabla \log h^*) dt + \sqrt{2} d \tilde{B}_t = (b + 2 \nabla \log h^*) dt + \sqrt{2} d \tilde{B}_t,
    \end{equation}
    which is \eqref{eq:rewersed_process}, and the $\mathbb{Q}$-marginals are $ \mathbb{E}_{\mathbb{P}} [ f (\overleftarrow{X}_t) L_t ] / Z = \int f h^* \rho_t / Z, $ that is $ \rho_t^*. $ Note that the tilt acts on the initial condition as well: the exactly guided process must be started from $ \rho_0^* = h^* (0, \cdot) p_T / Z $ rather than from $ p_T. $ Initialization error is not addressed here. 

    For \eqref{eq:upsilon_def}, $ h^* \leq \bar{B} $ by Assumption \ref{ass:bounded}, while Jensen applied to the convex map $ u \mapsto u^{-\mathrm{s}} $ gives $ \mathbb{E}_{p_0} [w^{-\mathrm{s}}] \geq ( \mathbb{E}_{p_0} [w] )^{-\mathrm{s}}, $ i.e. $ Z \geq \Xi_{\mathrm{s}}^{-1 / \mathrm{s}}. $ At $ \mathrm{s} = \infty $ one has $ Z \geq B $ directly.
\end{proof}

\begin{corollary}(Sampling guarantee in the controlled metric) \label{cor:sampling_guarantee}
    Let $ J := [t_0, \bar{t}] \subseteq I^\circ, $ and let $ \mathbb{Q}^* $ and $ \hat{\mathbb{Q}} $ be the laws on $ C(J; \mathbb{R}^d) $ of 
    \begin{equation}
        d \overleftarrow{Z}_t = (b + 2 g^*) (t, \overleftarrow{Z}_t)dt + \sqrt{2} d B_t, \quad d \overleftarrow{Y}_t = (b + 2 \hat{g})(t, \overleftarrow{Y}_t) dt + \sqrt{2} d B_t,
    \end{equation}
    both started at time $ t_0 $ from $ \rho^*_{t_0}. $ Then $ \hat{\mathbb{Q}} $ and $ \mathbb{Q}^* $ are equivalent,
    \begin{equation} \label{eq:KL_equiv}
        \mathrm{KL} (\mathbb{Q}^* || \hat{\mathbb{Q}}) = \int_J \| \hat{g} - g^* \|^2_{\rho^*_t} dt \leq \ell \, \Upsilon \, \| \hat{g} - g^* \|^2_{L^2 (\nu \otimes \rho; J)},
    \end{equation}
    and, writing $ \hat{\rho}_{\bar{t}} := \mathrm{Law}( \overleftarrow{Y}_{\bar{t}} ) $ for the time-$\bar{t}$ marginal of the estimated sampler,
    \begin{equation} \label{eq:tv_bound}
        \mathrm{TV} (\rho^*_{\bar{t}}, \hat{\rho}_{\bar{t}})^2 \leq \frac{1}{2} \ell \, \Upsilon \, \| \hat{g} - g^* \|^2_{L^2 (\nu \otimes \rho; J)}.
    \end{equation}
    In particular, under the hypotheses of Theorem \ref{thm:guidance_rate}, for $ n \geq n_0 $ and on the event of Theorem \ref{thm:dev_bound},
    \begin{equation} \label{eq:particular_tv_bound}
        \mathrm{TV} (\rho^*_{\bar{t}}, \hat{\rho}_{\bar{t}})^2 \leq \sigma^{-c} \log^8 (e n / \zeta) \, \ell \bar{B} \, \Xi_{\mathrm{s}}^{\frac{1}{\mathrm{s}} + \frac{4}{\mathrm{s} + 4}} \varepsilon_n^{\frac{\mathrm{s}}{\mathrm{s} + 4}},
    \end{equation}
    which at $ \mathrm{s} = \infty $ reads $ \sigma^{-c} \log^8 (e n / \zeta) \, \ell \bar{B} B^{-5} \varepsilon_n, $ of order $ n^{-2/(d+2)} $ up to logarithmic factors at $ \beta = 2. $
\end{corollary}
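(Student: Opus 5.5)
The plan is to compare the two path laws by Girsanov, pass from the path-space KL to the time-$\bar t$ marginals by data processing, and finish with Pinsker.

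\emph{Path-space KL.} Both SDEs share the diffusion $\sqrt{2}\,I_d$ and the initial law $\rho^*_{t_0}$; their drifts differ by $2(\hat g - g^*)$. By Lemma \ref{lem:bounded_guidance}, $|g^*|\le G^*$ on $(0,t_1]$, and $|\hat g|\le G^*$ because of the projection $\Pi_{G^*}$. Hence the drift difference is bounded by $4G^*$, deterministically in the sample, and Novikov's condition holds on the compact interval $J$.

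This requires well-posedness of both SDEs. For the reference law I will rely on Lemma \ref{lem:marginals_bounded}, which realizes $\mathbb{Q}^*$ as the $h$-transform of the reverse process. The law $\hat{\mathbb{Q}}$ is then obtained from $\mathbb{Q}^*$ by a bounded Girsanov change, so it exists as a weak solution, which suffices for the statement.

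Under $\mathbb{Q}^*$ the canonical process satisfies $dZ=(b+2g^*)dt+\sqrt2\,dW$. The density $d\hat{\mathbb{Q}}/d\mathbb{Q}^*$ is the stochastic exponential of $\int \tfrac{2}{\sqrt2}(\hat g-g^*)\cdot dW$, and this martingale is strictly positive, so the two laws are equivalent. Taking the expectation of $\log(d\mathbb{Q}^*/d\hat{\mathbb{Q}})$ under $\mathbb{Q}^*$ kills the martingale part, since it has a bounded integrand. What remains is
\[
\tfrac12\,\mathbb{E}_{\mathbb{Q}^*}\int_J |\sqrt2(\hat g-g^*)|^2\,dt=\int_J \|\hat g-g^*\|^2_{\rho^*_t}\,dt ,
\]
using that the $\mathbb{Q}^*$-marginals are $\rho^*_t$ by Lemma \ref{lem:marginals_bounded}. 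This is the stated identity.

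\emph{Change of measure to $\rho_t$.} Since $d\rho^*_t/d\rho_t\le\Upsilon$ by \eqref{eq:upsilon_def}, and $dt=\ell\,\nu(dt)$ with $J\subseteq I$, we get
\[
\int_J\|\hat g-g^*\|^2_{\rho^*_t}\,dt\le \ell\,\Upsilon\,\|\hat g-g^*\|^2_{L^2(\nu\otimes\rho;J)} .
\]

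\emph{TV bound.} The map sending a path to its value at $\bar t$ is measurable, so by the data-processing inequality $\mathrm{KL}(\rho^*_{\bar t}\|\hat\rho_{\bar t})\le \mathrm{KL}(\mathbb{Q}^*\|\hat{\mathbb{Q}})$. Pinsker's inequality $\mathrm{TV}^2\le \tfrac12\mathrm{KL}$ then gives \eqref{eq:tv_bound}.

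\emph{Rate.} For \eqref{eq:particular_tv_bound}, since $J\subseteq I^\circ$ the $L^2$ norm over $J$ is at most the one over $I^\circ$, which Theorem \ref{thm:guidance_rate} bounds on the event of Theorem \ref{thm:dev_bound}. Combining this with $\Upsilon\le\bar B\,\Xi_{\mathrm s}^{1/\mathrm s}$ multiplies $\Xi_{\mathrm s}^{4/(\mathrm s+4)}$ by $\Xi_{\mathrm s}^{1/\mathrm s}$. In the case $\mathrm s=\infty$ the same argument applies with $\Upsilon\le \bar B/B$ and the bound $B^{-4}\varepsilon_n$, which yields $\bar B B^{-5}\varepsilon_n$.

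\emph{Main obstacle.} The delicate point is not the KL computation but justifying it rigorously. One must check that the drift $b+2g^*$ (locally Lipschitz with linear growth on $J$), together with the merely measurable $\hat g$, still yields a weak solution with the stated law, and that $\mathbb{Q}^*$ is conditional on the sample only through $\hat g$. I would handle this by constructing $\hat{\mathbb{Q}}$ directly as the Girsanov transform of $\mathbb{Q}^*$. With that construction neither strong existence nor uniqueness for the estimated SDE is needed, beyond uniqueness in law, which holds for bounded perturbations of a well-posed drift.
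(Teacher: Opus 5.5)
Your proposal is correct and follows the paper's proof essentially step by step. You use the bounded drift difference $2(\hat g - g^*)$ to get Novikov and the Girsanov KL identity, take the $\mathbb{Q}^*$-marginals $\rho^*_t$ from Lemma \ref{lem:marginals_bounded}, reweight by $\Upsilon$, apply data processing plus Pinsker, and invoke Theorem \ref{thm:guidance_rate} on $I^\circ \supseteq J$. Your extra care about well-posedness (building $\hat{\mathbb{Q}}$ as a Girsanov transform of $\mathbb{Q}^*$) goes beyond what the paper writes, but note that it is $\hat{\mathbb{Q}}$, not $\mathbb{Q}^*$, that depends on the sample through $\hat g$.
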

\begin{proof}
    The drift difference is $ 2 (\hat{g} - g^*), $ of modulus at most $ 4 G^* $ since $ | \hat{g} | \leq G^* $ by construction \eqref{eq:proj_guidance} and $ | g^* | \leq G^* $ by Lemma \ref{lem:bounded_guidance}. With $ \theta := \sqrt{2} (\hat{g} - g^*), $ Novikov's condition holds trivially
    \begin{equation}
        \mathbb{E}_{\mathbb{Q}^*} \exp \left( \frac{1}{2} \int_J | \theta |^2 dt \right) \leq \exp (4 (G^*)^2 \ell) < \infty,
    \end{equation}
    so the two path laws are equivalent and 
    \begin{equation}
        \mathrm{KL}(\mathbb{Q}^* || \hat{\mathbb{Q}}) = \mathbb{E}_{\mathbb{Q}^*} \left[ \frac{1}{2} \int_J | \theta |^2 dt \right] = \int_J \mathbb{E}_{\rho_t^*} | \hat{g} - g^* |^2 dt,
    \end{equation}
    which is the identity in \eqref{eq:KL_equiv}. This is where the projection $ \Pi_{G^*} $ earns its keep beyond Corollary \ref{cor:guidance_control}. The unprojected ratio obeys only $ | \nabla \hat{h} / (\hat{h} \lor \mathrm{b}) | \leq G / \mathrm{b}, $ and $ \mathrm{b} \asymp (\varepsilon_n / \Xi_{\mathrm{s}})^{1 / (\mathrm{s} + 4)} \downarrow 0 $ by \eqref{eq:N_mu_alpha_b_choice}, so that bound degrades with $ n $ and the Novikov constant with it, whereas $ G^* = 2 \sqrt{d} \sigma_*^{-2} $ is independent of $ n. $

    The inequality in  \eqref{eq:KL_equiv} is Lemma \ref{lem:marginals_bounded}: $ \int_J \| \cdot \|^2_{\rho_t^*} dt \leq \Upsilon \int_J \| \cdot \|^2_{\rho_t} dt = \ell \, \Upsilon \int_J \| \cdot \|^2_{\rho_t}  \nu(dt), $ using $ dt = \ell \, \nu(dt). $ Then \eqref{eq:tv_bound} follows from Pinsker's inequality and the data-processing inequality for the coordinate map $ \pi_{\bar{t}}: C(J; \mathbb{R}^d) \rightarrow \mathbb{R}^d, $ and \eqref{eq:particular_tv_bound} from Theorem \ref{thm:guidance_rate} together with $ \| \cdot \|_{L^2 (\nu \otimes \rho; J)} \leq \| \cdot \|_{L^2 (\nu \otimes \rho; I^\circ)} $ and \eqref{eq:upsilon_def}.
\end{proof}

\begin{remark}(The window $ I^\circ $) \label{rem:sampling_window}
    Corollary \ref{cor:coercivity} and Theorem \ref{thm:coercivity} deliver the gradient bound on the half-window $ I^\circ $ and not on $ I. $ The restriction is intrinsic to the endpoint-averaging device and not an artifact of taking $ \nu $ uniform: for any probability measure $ \nu $ on $ I $ without an atom at $ t_1 $ the weight $ \omega (t) = \nu ([t, t_1]) $ is a survival function vanishing at $ t_1. $ The argument leaves $ \| \varphi(t_1, \cdot) \|^2_{\rho_{t_1}} $ uncontrolled near the right endpoint, i.e. a value error at a single instant, since the statistical analysis controls only the $\nu$-average $ \mathcal{D}(h), $ and no reweighting of the spatial marginal manufactures an instantaneous quantity out of a time-average.

    At the same time, given a target sampling window $ J = [t_0, \bar{t}] $ and any $ \delta > 0, $ we can run the whole construction on the enlarged estimation window $ \tilde{I} := [t_0, \bar{t} + \delta], $ for which $ J = \tilde{I}_{\delta / \tilde{\ell}} $ in the notation of \eqref{eq:delta_coercivity}, with $ \tilde{\ell} := \bar{t} + \delta - t_0, $ provided $ \tilde{\ell} \leq 1 $ and $ \bar{t} + \delta < T. $ Every statement from Section \ref{sec:coercivity} onwards holds verbatim with $ \ell $ replaced by $ \tilde{\ell}, $ with the constant 2 of \eqref{eq:coercivity_on_half} replaced by $ \tilde{\ell} / \delta, $ and with $ \sigma_* $ replaced by $ \sigma_{T - \bar{t} - \delta}. $ Taking $ \delta = \bar{t} - t_0 $ doubles the window and keeps the constant at 2. So, the estimator must be run closer to the singular end than the sampler is, and the $ \sigma^{-c} $ prefactors of Theorem \ref{thm:guidance_rate} and Corollary \ref{cor:sampling_guarantee} are then evaluated at the deeper cutoff. Nothing else in the argument changes, and in particular Corollary \ref{cor:sampling_guarantee} applies with $ J = \tilde{I}_{\delta / \tilde{\ell}} $ in place of $ I^\circ. $
\end{remark}

\begin{remark}[From the half-window to $ I_\delta $] \label{rem:delta_window}
    Several statements above are proved on $ I^\circ = [t_0, (t_0 + t_1)/2], $ where $ \omega \geq 1/2. $ Fix $ \delta \in (0,1) $ and set $ I_\delta := [t_0, t_1 - \delta \ell], $ so that $ \omega (t) = (t_1 - t)/\ell \geq \delta $ on $ I_\delta $ and $ I_{1/2} = I^\circ. $ The weight enters only when $ E_\omega $ is converted into an unweighted integral, and there $ \omega \geq \delta $ gives
    \begin{equation} \label{eq:delta_window_coercivity}
        \int_{I_\delta} \| \nabla \varphi \|^2_{\rho_t} \nu (dt) \leq \frac{1}{\delta} E_\omega,
    \end{equation}
    so the left-hand side is at most $ \delta^{-1} $ times the right-hand side of \eqref{eq:coercivity} at $ \theta = \ell, $ equivalently $ (2 \delta)^{-1} $ times the right-hand side of \eqref{eq:coercivity_on_half}, while Theorem \ref{thm:coercivity} is already stated on $ I_\delta. $ Corollary \ref{cor:guidance_control} holds verbatim with $ I^\circ $ replaced by $ I_\delta, $ its proof being unchanged. Hence Theorem \ref{thm:guidance_rate} and Corollary \ref{cor:sampling_guarantee} hold on $ I_\delta, $ resp.\ for $ J \subseteq I_\delta, $ with an extra factor $ \delta^{-1}: $ apply Theorem \ref{thm:coercivity} with this $ \delta $ in Step 3 of the proof of Theorem \ref{thm:guidance_rate}, and use $ | \hat{g} - g^* | \leq 2 G^* $ on $ I_\delta \subseteq I $ in the capped case of Step 4. What cannot be reached is $ t_1 $ itself, where $ \omega(t_1) = 0. $
\end{remark}

\section{Further details of the numerical experiments} \label{app:experiments}

This section records how the experiments of Section \ref{sec:experiments} are implemented, and gives the
full sweeps behind the numbers quoted there.

\subsection{General implementation details} \label{app:real_setup}

\textbf{Base models.} Both are public unconditional DDPM checkpoints with $\epsilon$-prediction and
$1000$ training steps, frozen throughout and sampled in $100$ DDIM steps at $\eta = 0$, that is
deterministically (Table \ref{tab:real_models})
\citep{ddpmcifar10ckpt,godoy2023}. Throughout this section $\eta$ is the DDIM noise level, unrelated to
the width of the analytic strip carrying the same name in the approximation lemmas. The target is the superclass of the six animal classes on
CIFAR-10 \citep{krizhevsky2009} and of the odd digits on MNIST \citep{lecun1998}. MNIST model was fine-tuned
with random horizontal flips and draws some digits mirrored, so the MNIST classifier is applied to an
image and to its mirror and keeps the more confident prediction.

\begin{table}[ht]
\centering
\caption{Base models. ``Share'' is the fraction of base samples the held-out head assigns to the target
superclass, ``SIR-32'' the same after $32$-fold importance resampling with weights $w$, and FID is to the
model's own reference sample (Appendix \ref{app:metrics}).}
\label{tab:real_models}
\small
\begin{tabular}{llcccc}
\hline
 & checkpoint & parameters & share & SIR-32 & FID \\
\hline
CIFAR-10 & \texttt{google/ddpm-cifar10-32} & $35.7$M & $0.653$ & $0.854$ & $24.5$ \\
MNIST & \texttt{dvgodoy/ddpm-cifar10-32-mnist} & $35.7$M & $0.523$ & $0.630$ & $7.4$ \\
\hline
\end{tabular}
\end{table}

\textbf{Weight and heads.} $r(x_0) = \log \sum_{k \in C} p_\varphi(k \mid x_0)$ and $w = \epsilon +
(1-\epsilon)\,\mathrm{sigmoid}((r - r_{\mathrm{ref}})/\alpha)$, with $r_{\mathrm{ref}}$ the median of $r$
on the reference sample and $\alpha$ half its interquartile range, so $\epsilon \leq w \leq 1$ and
Assumption \ref{ass:bounded} holds with $\mathrm{s} = \infty$. Two heads of one architecture are trained
on disjoint halves of the labelled set: the tilt head defines $w$ and is used by every estimator,
the held-out head scores every reported number and is used by none.

\textbf{Reference sample.} The $n$ reference points are samples of the base model itself, so $\rho_t$ are
its own marginals and $s$ in \eqref{eq:weak_main} is its network, they are noised at $12$ levels in the
window $\sigma_{T-t} \in [0.35, 0.95]$. The guidance is zero above the window, and below it $\hat h$ is
evaluated at the window's lower edge.

\textbf{Fitted classes.} Those of Definition \ref{def:trial_test_classes}. The trial class is a
convolutional network of three blocks, width $32$ doubling with depth, with the rectified cubic
$\varrho(u) = (u \vee 0)^3$ under a smooth saturation at the activation budget $\mathcal{A} = 3$, the
retraction $\Theta$ on the output, the spatial clip $\chi_R$ on the input, and the weight budget
$\Lambda = 5$ imposed by projection after every step, a convolution is a sparse linear map, so such a
network is a member at these budgets in $d = 3072$, where a dense map is not. The test class is the cone
over differences of two trial networks, which by Lemma \ref{lem:networks_difference} is the network of
twice the width and one more layer with $\Theta$ as the activation of the appended layer, so
\ref{property:absorption} holds as an identity, the ratio measured before each fit is $1.000000$. Four
things depart from the letter of the definition: the sup-norm constraints are read on the batch rather
than over $I \times \mathbb{R}^d$ (measured, $\sup|\nabla h| = 0.24$ against $G = 1.8 \cdot 10^3$ and
$\sup|\partial_t h| = 2.9$ against $G_t = 8.2 \cdot 10^5$, so they never bind), the sparsity budget is
whatever the convolutions realise, a group normalisation and a smooth time embedding are kept for
trainability, both $C^\infty$ so the class stays $C^{1,2}$, neither of the form $\varrho(Ax+b),$ and the
penalty is a plug-in square of a batch mean, with a positive bias of order $1/\text{batch}$.

\textbf{Sampling and cost.} The guided sampler replaces $\hat\epsilon$ by $\hat\epsilon -
\sigma\,c\,\nabla \log \hat h$, capped at $10\%$ of $\|\hat\epsilon\|$ for every guided method. Each
configuration draws $3$ seeds of $2000$ samples with the initial noise of sample $i$ shared across
methods. The CIFAR-10 sweep took about three hours on one RTX 4090, the MNIST sweep about two. The code runs from a single entry point with one configuration file per base model, in PyTorch
on top of \texttt{diffusers}, with FID from \texttt{torchmetrics}.

\subsection{The empirical objective} \label{app:objective}

With $\omega(u) = (u - u_{\mathrm{lo}})/(u_{\mathrm{hi}} - u_{\mathrm{lo}})$, $(t_i, X_i)_{i \leq m}$ the
noised reference points, $w_i = w(X_0^i)$ and $s = -\hat\epsilon/\sigma$, the estimator minimises over
$\Hcal$
\[
\frac{1}{m}\sum_i \big( h(t_i, X_i) - w_i \big)^2 + \frac{\mu}{2} \sup_{0 \neq \psi \in \Psi}
\frac{\hat a(h, \psi)^2}{\|\psi\|^2_{\hat{\mathcal{V}}}} ,
\quad
\hat a(h, \psi) = \frac{1}{m}\sum_i \Big[ \big( \partial_t h + (x + s)\cdot\nabla h \big) \psi - \nabla
h \cdot \nabla \psi \Big](t_i, X_i) ,
\]
the second term being $\mathcal{N}_{\Psi}(h)^2/2$: $\Psi$ carries a free real scalar and the quotient is
invariant under it, so the radial part is solved in closed form and only the direction is optimised.
Reverse time runs backwards in $u$, so $\partial_t h = -\partial_u h$ in the coordinate the reference
points are indexed by.

Trial and test networks are trained at two time scales by Adam, two ascent steps per descent step, with a
warmup on the learning rate and on $\mu$ and a projection of every weight onto $\Lambda$ after each step,
the scale of the test network is held near one, which does not move the maximiser. Each batch draws eight
of the twelve levels. The first tenth of the steps runs at $\mu = 0$, after which $\mu$ is set so that the
penalty equals the value term at that point rather than tuned: on CIFAR-10 this gives $\mu = 0.0599$, and
the penalty then falls by more than three orders of magnitude.

The inner supremum over a network class is nonconvex, so the penalty the fit reports is a lower bound on
$\mathcal{N}_{\Psi}(\hat h)$. It is therefore also measured against cones the fit never saw, $\omega \cdot
\mathrm{span}\,\phi$ over $Q = 1024$ random cosine features of pooled pixels, where the supremum is a
closed-form quadratic and the plug-in bias is removed by cross-fitting over two halves of $24{,}576$
points: $\mathcal{N}^2 = 2.1 \cdot 10^{-3}$ at $8 \times 8$ pooling, $1.8 \cdot 10^{-3}$ against a second
cone with independent frequencies, $1.1 \cdot 10^{-3}$ at $16 \times 16$. A value-only least-squares fit
gives $5.7 \cdot 10^{-3}$ on the same cones and a linear instance of the estimator, whose inner supremum
is exact, gives $5.1 \cdot 10^{-4}$.

\subsection{Metrics used in the experiments} \label{app:metrics}
\begin{enumerate}[leftmargin=*]
\item \emph{Percentage of the exact tilt's shift}: $100$ times the paired gain in held-out reward over
the base model, divided by $\E_{q_0} r - \E_{p_0} r$ computed by self-normalized importance sampling on
the $6000$ base samples, values above $100\%$ are possible for $c > 1$. Standard errors are over the
$6000$ paired samples, the reported standard deviations over the three seeds, and are the larger of the two.
\item \emph{Share}: the fraction of samples the held-out head assigns to the target superclass.
\item \emph{FID} \citep{heusel2017}: Fr\'echet distance between $2048$-dimensional Inception
\citep{szegedy2016} features of the $2000$ samples of a seed and of $2000$ points of the model's own
reference sample, averaged over the $3$ seeds, so it measures departure from the base distribution.
\item \emph{Transfer ratio}: the paired gain on the tilt head divided by that on the held-out head, a
check that a reported gain is not specific to the head defining $w$.
\end{enumerate}

\subsection{The synthetic instance} \label{app:synthetic}

\textbf{The instance.} $p_0$ is atomic on $M = 512$ points drawn uniformly from the ball of radius $0.8$,
so Assumption \ref{ass:compact} holds verbatim, and the tilt obeys $0.5 \leq w \leq 3$, so Assumption
\ref{ass:bounded} holds with $\bar{B} = 3$ at the boundary case $\mathrm{s} = \infty$. On such a $p_0$,
$h^\ast$ and its derivatives, the score $s$ and $\nabla \cdot s$ are softmax averages over the atoms,
closed-form at any $d,$ the score is never learned. With $T = 1$ the window is $I = [0.40, 0.85]$ and
errors are read on $I_\delta$ at $\delta = 1/2$, relative to $\| g^\ast \|^2_{L^2 (\nu \otimes \rho;
I_\delta)}$, by Monte Carlo on an $8000$-point cloud. A replicate resamples atoms, masses, tilt, knots,
sample and evaluation cloud, bars are $\pm 1$ standard deviation over $8$ replicates.

\textbf{Trial class and sweeps.} Tensor B-splines times a Chebyshev basis in time, cubic because
\eqref{eq:wei_main} needs $C^{1,2}$, hence $\beta = 4,$ here \eqref{eq:h_empirical_main} is a single
linear solve, the inner supremum being a closed-form quadratic in the coefficients. Seven sample sizes
over three decades, resolution chosen by oracle on a grid of nine values and the penalty weight over
eleven decades, slopes are fitted on the seed mean and bootstrapped over seeds. A fitted exponent is the
rate only if the oracle's resolution stays interior to its grid, which happens at $d = 1$ but not at
$d = 2, 3$, where the oracle sits at or near the coarsest cell, the exponent is therefore read at $d = 1$. The comparison
with least squares does not depend on the exponent being readable, and is reported at all three
dimensions in Table \ref{tab:rate}: the weak residual is ahead in every one of the $20$ cells swept, by
a margin that grows with $d$.

\textbf{Penalty ablation.} Four estimators are fitted on one sample and read on one evaluation cloud, each
at its own oracle $(N, \mu)$ over the same grids: least squares on the values ($\mu = 0$), the Sobolev
penalty $\int_I \| \nabla h \|^2_{\rho_t} \nu (dt)$, the strong residual $\| \mathcal{R}[h]
\|^2_{L^2(\nu \otimes \rho)}$ and the weak residual of \eqref{eq:h_empirical_main}. The strong residual
carries the reverse drift, $\partial_t + (x + 2s) \cdot \nabla + \Delta$, the form with $(x+s)$ and
$-\Delta$ being its integration by parts against $\rho_t$ and valid only under the integral, the weak
penalty is a quadratic form of a sample mean, whose plug-in estimate carries a bias of order $P/n$, so it
is read across two independent halves, as the image fits do.

\begin{table}[ht]
\centering
\caption{Penalty ablation at $d = 1$ in the cubic class ($\beta = 4$), $8$ seeds, each penalty at its own oracle
$(N, \mu)$: relative guidance error at the smallest and the largest sample size, against the exponent the
theory predicts for it. A rate is predicted only for the weak residual and, from the classical
derivative-recovery bound, for least squares, the Sobolev penalty carries a regularization bias and the
strong residual an approximation floor (Table \ref{tab:penalties}), and neither is analysed here.
``Value LS'' is the $\mu = 0$ baseline of Figure \ref{fig:rate}.}
\label{tab:ablation}
\small
\begin{tabular}{lccc}
\hline
penalty & error at $n = 10^3$ & error at $n = 10^6$ & predicted exponent \\
\hline
value LS ($\mu = 0$) & $6.46 \times 10^{-2}$ & $9.71 \times 10^{-5}$ & $0.667$ \\
Sobolev & $4.41 \times 10^{-2}$ & $1.40 \times 10^{-4}$ & --- \\
strong residual & $4.46 \times 10^{-3}$ & $2.34 \times 10^{-5}$ & --- \\
weak residual (ours) & $6.51 \times 10^{-3}$ & $1.64 \times 10^{-5}$ & $\mathbf{0.857}$ \\
\hline
\end{tabular}
\end{table}

\begin{figure}[ht]
\centering
\includegraphics[width=0.55\textwidth]{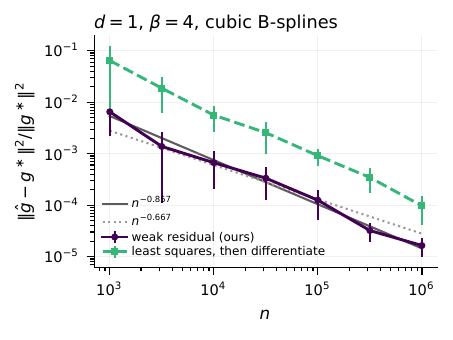}
\caption{Guidance error against $n$ at $d=1$ in the tensor cubic B-spline class ($\beta=4$), for the weak-residual estimator and for least squares on the values followed by differentiation, against the predicted slopes $0.857$ and $0.667$, intercepts fitted only. Bars are $\pm1$ standard deviation over $8$ seeds. The weak residual's fitted exponent, $0.842 \pm 0.026$, matches its prediction, least squares runs above its own, $0.909 \pm 0.038$ against $0.667$, so over this range it is measured far from its asymptote and its steeper slope is a climb out of a worse level rather than a faster rate.}
\label{fig:rate}
\end{figure}

\begin{table}[ht]
\centering
\caption{The two estimators at equal budget, $8$ seeds, each at its own oracle $(N, \mu)$ on the same
grids. ``Differentiate'' is least squares on the values, then differentiated. Entries are relative
squared guidance error, the largest $n$ is $10^6$, and $3.2 \cdot 10^5$ at $d = 3$.}
\label{tab:rate}
\small
\IfFileExists{plots/tab_rate.tex}{
\begin{tabular}{@{}l cc cc cc@{}}
\hline
 & \multicolumn{2}{c}{weak residual (ours)} & \multicolumn{2}{c}{differentiate} & \multicolumn{2}{c}{ratio} \\
\cline{2-3}\cline{4-5}\cline{6-7}
$d$ & $n = 10^3$ & largest $n$ & $n = 10^3$ & largest $n$ & smallest & largest \\
\hline
$1$ & $6.5 \times 10^{-3}$ & $1.6 \times 10^{-5}$ & $6.5 \times 10^{-2}$ & $9.7 \times 10^{-5}$ & $5.9\times$ & $13.5\times$ \\
$2$ & $2.3 \times 10^{-2}$ & $6.9 \times 10^{-5}$ & $3.5 \times 10^{-1}$ & $4.2 \times 10^{-4}$ & $5.4\times$ & $15.3\times$ \\
$3$ & $8.3 \times 10^{-2}$ & $3.7 \times 10^{-4}$ & $2.5$ & $3.6 \times 10^{-3}$ & $9.7\times$ & $30.2\times$ \\
\hline
\end{tabular}
}{\textsc{[run \texttt{figures.paperFigures}]}}
\end{table}

\subsection{Evaluation on CIFAR-10} \label{app:cifar}

\textbf{Estimators compared.} Both baselines are implemented against their authors' public code rather
than from the papers alone: DEFT from \texttt{github.com/alexdenker/DEFT} (commit \texttt{2495d46}),
DOIT from \texttt{github.com/liamyzq/Doob\_training\_free\_adaptation} (commit \texttt{9cbc541}), where
code and paper differ we follow the code. DEFT \citep{denker2024} trains a head for the $h$-transform on
the same reference sample and weight, in its authors' parametrisation, their equation (12): the head
reads the noisy state, the Tweedie estimate $\hat x_0$ and the gradient $\nabla_{\hat x_0} \log w(\hat
x_0)$, and returns a network output plus a learned time-dependent multiple of that gradient. It shares
the scale $c$ and the cap with every other method, and the deterministic sampler is its authors' own
setting rather than a liberty we take: their objective is denoising score matching against the forward
process, with no sampler entering it, and the configuration shipped with their code sets the DDIM $\eta$
to $0$.

DOIT \citep{zhu2026doit} is training-free, estimating $h$ and $\nabla h$ at each step by Monte Carlo over
$M$ one-step look-ahead samples. Two of its choices are not ours to make. Its terminal condition is the
exponential tilt $h(\cdot, 0) \propto \exp(r/\tau)$ rather than the bounded $w$ of Assumption
\ref{ass:bounded}, and has to be: the estimator resolves only the contrast between the $M$ weights, and
forcing the shared $w$ on it leaves them too close together to steer, the field then having cosine
$0.01$ with the reward gradient where the tilt gives $0.16$--$0.20$. And what it differentiates is the
sampler's own transition density, so it is defined only for a stochastic sampler, its reference
implementation drawing the look-ahead cloud from the kernel it samples with. Our headline protocol is
deterministic, where that density is a point mass, so DOIT is reported at $\eta = 0.4$ as well, the value
its authors use, with ours resampled there, the reported DEFT head was not, so that protocol carries ours
and DOIT only. Under $\eta = 0$ the look-ahead spread has to be supplied by hand, a departure from the
method which we report as such. We sweep DOIT over both its parameters, the temperature $\tau$ and the
strength $\gamma$, the latter playing the role of $c$ under the same cap, and below the shared sweep as
well, its authors' own range being $\gamma \leq 0.5$. That range is the only one in which the method is
competitive at all, and only on one of the two datasets: on MNIST $\gamma = 0.01$ buys $88.5\%$ of the
exact tilt's shift at FID $14.1$ against a base of $7.3$, its one point on a usable frontier, where we
reach $176\%$ at FID $13.8,$ on CIFAR-10 the same $\gamma$ moves the held-out reward the wrong way,
$-43.8\%$, and the best small-$\gamma$ point is $116\%$ at FID $222$.

\begin{table}[ht]
\centering
\caption{CIFAR-10, animals, along the guidance scale ($\gamma$ for DOIT): held-out reward shift as \% of the exact tilt's, and FID to the model's own reference sample. Base FID $24.5 \pm 0.2$, share $0.653$. Bold marks, at each scale, the largest shift and the smallest FID, a row holds $c$ fixed, which leaves the three methods at different FIDs, so the operative reading is the frontier of Figure \ref{fig:frontier_all} and not the row. DOIT is entered at the three $\gamma$ of the shared sweep, where its cap binds on every guided step so that the three agree to $0.2$ points, and at the small $\gamma$ its authors' own setting calls for. Daggered rows are that setting --- $\eta = 0.4$ and no cap, read against the base of the same sampler --- and are not comparable with the rest of the row they sit in, the wider grid over $\gamma$, $\tau$ and the sampler is in Appendix \ref{app:cap}. Here and in Table \ref{tab:mnist}, $\pm$ is one standard deviation over the three seeds, which dominates the standard error over the $6000$ paired samples (at most $2.3$ for the shift).}
\label{tab:cifar}
\small
\begin{tabular}{ccccccc}
\hline
 & \multicolumn{2}{c}{ours} & \multicolumn{2}{c}{DEFT} & \multicolumn{2}{c}{DOIT} \\
$c$ & \% & FID & \% & FID & \% & FID \\
\hline
$0.01^\dagger$ & --- & --- & --- & --- & $-43.8 \pm 5.1$ & $43.1 \pm 0.7$ \\
$0.05^\dagger$ & --- & --- & --- & --- & $38.7 \pm 2.0$ & $87.0 \pm 1.2$ \\
$0.1^\dagger$ & --- & --- & --- & --- & $116.0 \pm 2.5$ & $221.6 \pm 1.0$ \\
$0.25$ & $3.2 \pm 0.3$ & $24.5 \pm 0.2$ & --- & --- & --- & --- \\
$0.5$ & $6.1 \pm 0.4$ & $24.6 \pm 0.2$ & --- & --- & --- & --- \\
$1$ & $11.7 \pm 0.8$ & $24.7 \pm 0.2$ & $43.0 \pm 3.0$ & $25.8 \pm 0.4$ & $70.5 \pm 8.0$ & $116.7 \pm 0.9$ \\
$2$ & $21.4 \pm 1.6$ & $25.0 \pm 0.3$ & $78.2 \pm 5.2$ & $29.7 \pm 0.9$ & --- & --- \\
$3$ & $29.7 \pm 2.3$ & $25.3 \pm 0.3$ & $101.8 \pm 6.1$ & $36.2 \pm 1.0$ & --- & --- \\
$5$ & $44.3 \pm 3.3$ & $26.1 \pm 0.4$ & $122.6 \pm 7.7$ & $53.8 \pm 0.4$ & $70.7 \pm 7.8$ & $116.7 \pm 0.8$ \\
$10$ & $69.3 \pm 3.1$ & $28.7 \pm 0.5$ & $125.1 \pm 7.3$ & $83.4 \pm 0.5$ & --- & --- \\
$20$ & $94.0 \pm 5.1$ & $32.9 \pm 0.7$ & $121.8 \pm 7.4$ & $97.7 \pm 0.8$ & $70.6 \pm 7.8$ & $116.6 \pm 0.9$ \\
\hline
\end{tabular}
\end{table}

Transfer ratios fall towards one along the sweep, $1.44 \to 1.19$ for ours and $1.37 \to 1.15$ for DEFT.
The sweeps are not aligned, so we read them at a matched shift as the frontiers are read, there the two
are indistinguishable, $1.23$ against $1.26$ at $78\%$ of the exact tilt's shift and $1.19$ against
$1.20$ at $94\%$, and we claim no difference, having no standard error for the ratio. For DOIT the ratio
is $-0.46$ at every $\gamma$, and its sign is the reading that matters: the held-out head gains $0.63$
while the head defining $w$, from which DOIT's own terminal condition is built, loses $0.29$. At FID
$117$ the samples are far enough outside the model's distribution that the two classifiers disagree
about which way the reward moved, so the $70.6\%$ in the table is not an alignment gain. The pairs of Figure \ref{fig:main}c are the five largest rises in the held-out probability among
the $64$ samples of seed $0$ at $c = 20$, a rule rather than a hand-picked set, the same $64$ are shown
uncurated in Figure \ref{fig:cifar_samples}. Over all $6000$ pairs that probability rises from below $0.3$
to above $0.7$ in $13.4\%$ of them and falls the other way in $0.07\%$.

\begin{figure}[ht]
\centering
\includegraphics[width=\textwidth]{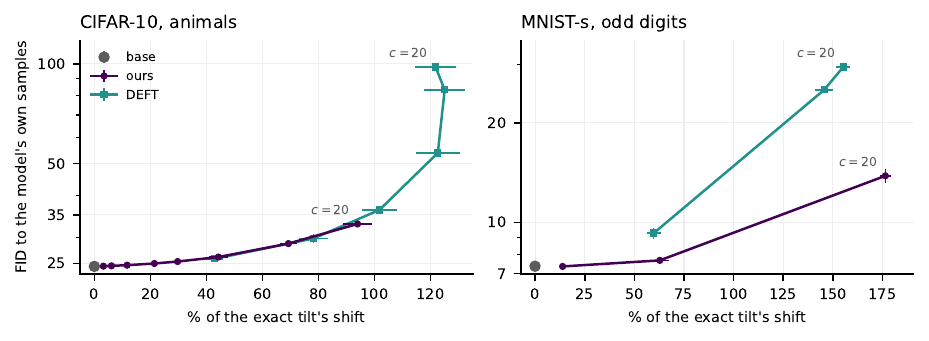}
\caption{What the alignment costs. Horizontal is the held-out reward shift as a percentage of the exact
tilt's, vertical the FID to the model's own reference sample, so the grey point is the base model and a
lower curve is cheaper, bars are $\pm1$ s.d.\ over three seeds on both axes. Left: CIFAR-10,
$c \in [0.25, 20]$. Right: MNIST, $c \in \{1, 5, 20\}$. DOIT is off both panels, at FID $117$ and
$153$ against bases of $24.5$ and $7.4,$ drawing it would compress the region the figure exists to
show, and its numbers are in Tables \ref{tab:cifar} and \ref{tab:mnist}.}
\label{fig:frontier_all}
\end{figure}

\begin{figure}[ht]
\centering
\includegraphics[width=\textwidth]{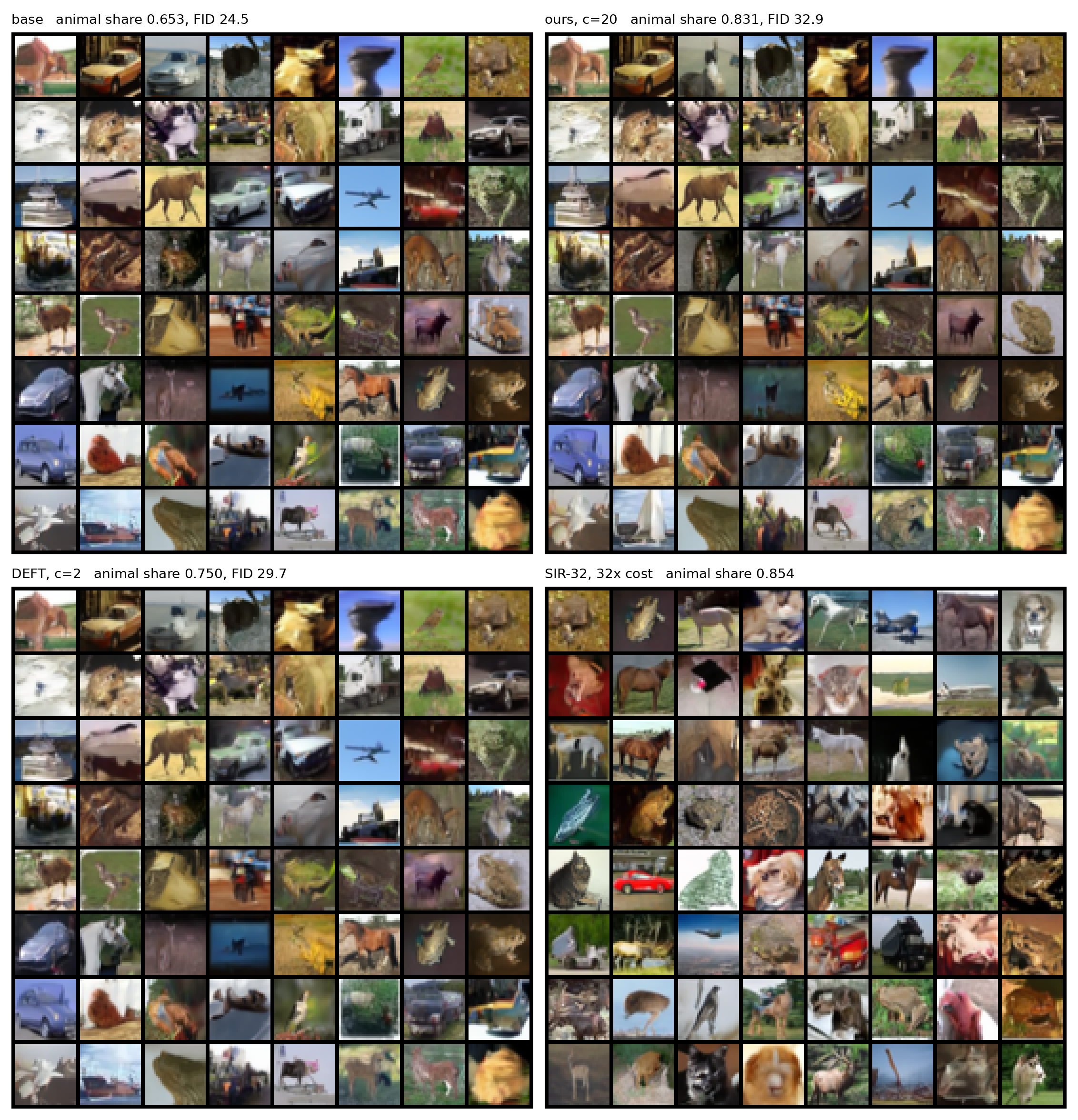}
\caption{CIFAR-10, animals, the first $64$ samples of seed $0$, uncurated. Base and ours at $c = 20$ above, DEFT and SIR-32 below, each panel labelled with the share of animals it carries and with its FID. DEFT is drawn at $c = 2$, the scale of its sweep whose FID is closest to the one our own panel is drawn at, because a matched departure from the base distribution is the reading under which the two frontiers are compared, at a matched $c$ it would instead sit at FID $98$. The guided panels share the initial noise with the base panel, so a sample can be compared with its counterpart in the same position, SIR-32 does not, by construction.}
\label{fig:cifar_samples}
\end{figure}

\subsection{Evaluation on MNIST} \label{app:mnist}

\begin{table}[ht]
\centering
\caption{MNIST, odd digits, tilted under the recipe of Appendix \ref{app:real_setup}: held-out reward
shift as \% of the exact tilt's and FID to the model's own reference sample. Base share $0.523$, FID
$7.4 \pm 0.1,$ SIR-32 share $0.630$. Bold marks, at each scale, the largest shift and the smallest FID,
and as in Table \ref{tab:cifar} the row, which holds $c$ fixed, is not the operative comparison. Daggered
rows are DOIT at the small $\gamma$ and in the setting its authors use, $\eta = 0.4$ and no cap, read
against the base of that sampler, $\gamma = 0.01$ is the one point at which DOIT reaches a usable
frontier at all. $\pm$ is one standard deviation over the three seeds.}
\label{tab:mnist}
\small
\begin{tabular}{ccccccc}
\hline
 & \multicolumn{2}{c}{ours} & \multicolumn{2}{c}{DEFT} & \multicolumn{2}{c}{DOIT} \\
$c$ & \% & FID & \% & FID & \% & FID \\
\hline
$0.01^\dagger$ & --- & --- & --- & --- & $88.5 \pm 3.7$ & $14.1 \pm 0.8$ \\
$0.05^\dagger$ & --- & --- & --- & --- & $280.0 \pm 7.9$ & $99.4 \pm 2.2$ \\
$0.1^\dagger$ & --- & --- & --- & --- & $283.4 \pm 7.9$ & $108.0 \pm 1.4$ \\
$1$ & $13.9 \pm 1.0$ & $7.4 \pm 0.1$ & $59.8 \pm 3.5$ & $9.3 \pm 0.3$ & $296.4 \pm 4.1$ & $153.3 \pm 3.6$ \\
$5$ & $62.8 \pm 4.7$ & $7.7 \pm 0.1$ & $145.6 \pm 4.6$ & $25.2 \pm 0.2$ & $296.4 \pm 4.0$ & $153.1 \pm 3.5$ \\
$20$ & $176.4 \pm 2.8$ & $13.8 \pm 0.7$ & $155.2 \pm 3.5$ & $29.5 \pm 0.7$ & $296.3 \pm 4.0$ & $152.7 \pm 3.5$ \\
\hline
\end{tabular}
\end{table}

The shift exceeds that of the exact tilt from $c = 5$ on, as a scale $c > 1$ allows. Read at a fixed
departure from the base distribution rather than at a fixed $c$, the ordering is the one of CIFAR-10: at
the FID DEFT reaches at $c = 1$ our shift is about $92\%$ against its $59.8\%$, and at our $c = 20$ it is
$176.4\%$ against about $84\%$ for DEFT interpolated to the same FID. Transfer ratios grow with the scale
here, $1.37 \to 1.54$ for ours and $1.56 \to 1.66$ for DEFT, so ours is the smaller of the two at every
scale. DOIT reports $296\%$ at FID $153$, twenty times the base, here, unlike on CIFAR-10, both heads
move the same way, ratio $1.50$, but they are reading a sample the cap could not hold together.

\begin{figure}[ht]
\centering
\includegraphics[width=\textwidth]{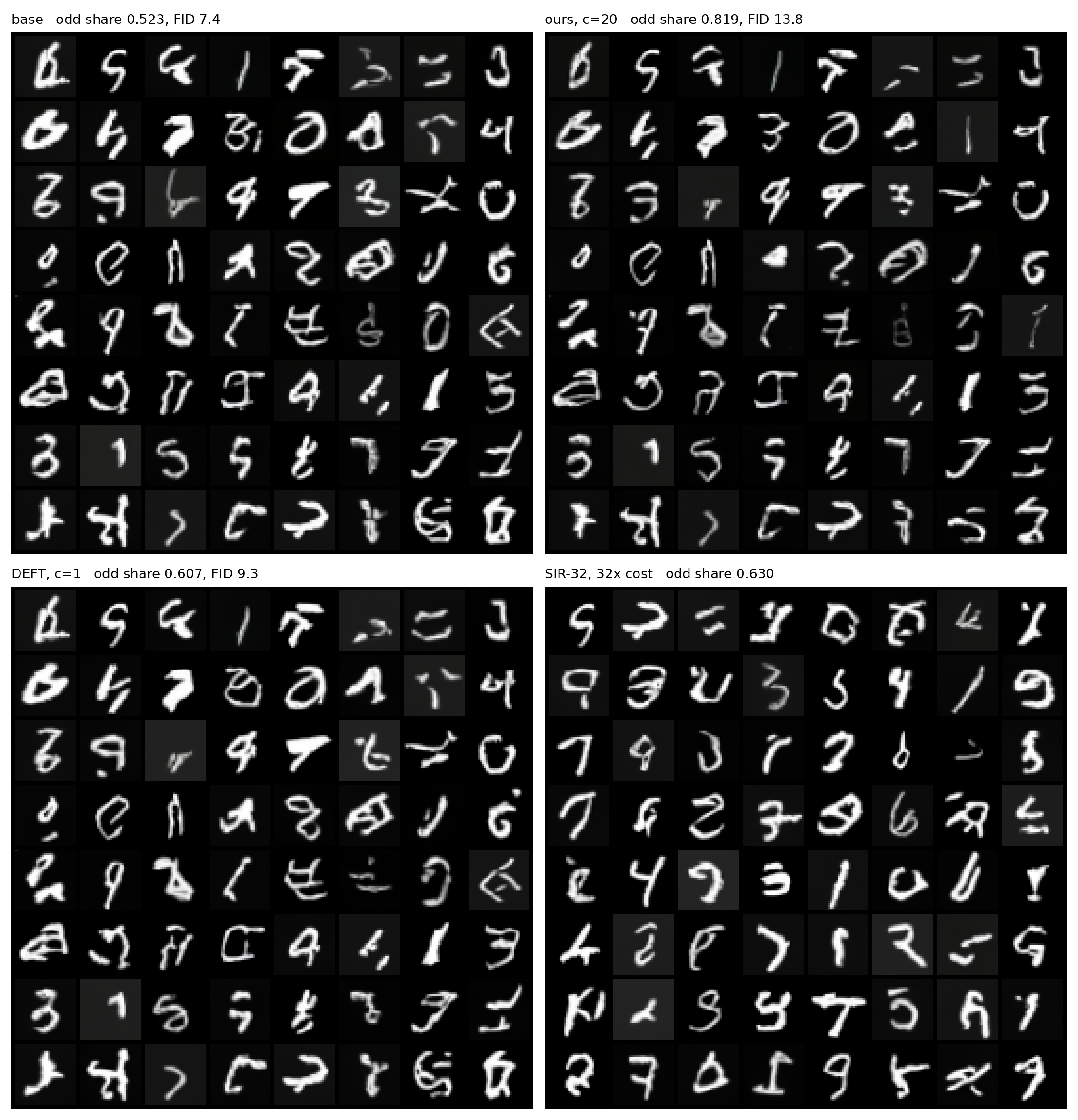}
\caption{MNIST, odd digits, the first $64$ samples of seed $0$, uncurated. Base and ours at $c = 20$
above, DEFT at $c = 1$ and SIR-32 below, each panel labelled with the share of odd digits and with its
FID, as in Figure \ref{fig:cifar_samples}, DEFT is drawn at the scale of its sweep whose FID is closest
to ours. The guided panels share the initial noise with the base panel. The base model draws some digits
mirrored.}
\label{fig:mnist_s_samples}
\end{figure}

\subsection{Final hyperparameter values} \label{app:hyperparams}
Shared by both base models unless stated otherwise.
\begin{itemize}[leftmargin=*]
\item \emph{Weight and reference sample}: $n = 20{,}000$ samples from the base model, $12$ noise levels per
sample in $\sigma_{T-t} \in [0.35, 0.95]$, $\epsilon = 0.05$ in $w$. Classifier heads: a five-layer
convolutional network, the MNIST heads work on a canonical $28\times28$ grey image, with label smoothing
$0.1$ and noise augmentation.
\item \emph{Our estimator}: trial network of three blocks, width $32$ doubling with depth, activation
budget $\mathcal{A} = 3$, weight budget $\Lambda = 5,$ test network the difference of two such networks.
Minimax: $6000$ descent steps at batch $128$, two ascent steps per descent step, Adam at $10^{-4}$ and
$4 \cdot 10^{-4}$ with $\beta = (0, 0.9)$, warmup over the first tenth and a cosine decay after it, an
exponential moving average of the trial weights at $0.999$, gradient norm clipped at $5$, $\mu$ calibrated
so that the penalty equals the value term at the end of the warmup.
\item \emph{DEFT}: the head of their equation (12) --- a U-Net trunk on the concatenation of the noisy
state, the Tweedie estimate and the log-weight gradient, $48$ base channels, $3$ downsampling stages on
CIFAR-10 and $2$ on MNIST, its last layer initialised at zero, plus a two-layer time embedding returning
the multiple of the gradient, initialised at one, Adam \citep{kingma2015}, learning rate
$2\cdot10^{-4}$, batch $128$, $8000$ steps, gradient norm clipped at $5$.
\item \emph{DOIT}: $M = 32$ antithetic look-ahead samples at $\eta = 0.4$, correction over steps
$1 \dots \lfloor 0.67 L \rfloor$ of the $L = 100$, $h(\cdot, 0) \propto \exp(r/\tau)$ with $\tau$ swept,
these are the reference implementation's values.
\item \emph{Sampling}: $100$ DDIM steps at $\eta = 0$, and at $\eta = 0.4$ where a second regime is
reported, correction capped at $10\%$ of $\|\hat\epsilon\|$ for every guided
method, $c \in \{0.25, 0.5, 1, 2, 3, 5, 10, 20\}$ on CIFAR-10 and $\{1, 5, 20\}$ on MNIST,
$3$ seeds of $2000$ samples.
\end{itemize}
\subsection{What the guidance cap does} \label{app:cap}

Every guided method runs under one shared constraint, a correction capped at $10\%$ of
$\|\hat\epsilon\|$ per sample and step. A constraint that binds unevenly would shape the comparison, so
each run records how hard it works.

\begin{table}[ht]
\centering
\caption{The cap at $c = 20$ on CIFAR-10 ($\gamma = 1$ for DOIT, and $\gamma = 0.1$, the small-$\gamma$
setting of its authors), where it binds at all: the mean
uncapped correction against the cap of $0.1$, the share of guided steps it binds on, and what removing
it does. At $c = 5$ and below it binds on at most $1\%$ of the steps of either trained method. The
uncapped point was not sampled for the reported DEFT head, nor was that head run under the stochastic
sampler, so those cells are empty.}
\label{tab:cap}
\small
\begin{tabular}{llccc}
\hline
sampler & method & $\overline{|\delta|}$ & binds & capped $\to$ uncapped \\
\hline
$\eta = 0$ & ours & $0.031$ & $5.3\%$ & $0.831 / 32.9 \to 0.739 / 43.4$ \\
 & DEFT & $0.158$ & $77.9\%$ & $0.941 / 97.7 \to$ --- \\
 & DOIT & $2.4$ & $100\%$ & $0.689 / 116.7 \to 0.995 / 463.2$ \\
$\eta = 0.4$ & ours & $0.031$ & $5.0\%$ & $0.854 / 34.6 \to 0.805 / 46.2$ \\
 & DOIT, $\gamma = 1$ & $2.3$ & $100\%$ & $0.464 / 70.2 \to 0.996 / 437.1$ \\
 & DOIT, $\gamma = 0.1$ & $0.23$ & $95.2\%$ & $0.462 / 70.0 \to 0.889 / 221.6$ \\
\hline
\end{tabular}
\end{table}

The cap does different work for each. Ours asks for less than a third of it on average and removing it
costs nine points of share and ten of FID, DEFT asks for more than the cap itself at $c = 20$ and is
clipped on four fifths of its steps, for DOIT the cap is the only thing between the run and divergence,
and at $\gamma = 20$ the uncapped run collapses to FID $719$ with no animals left. What matters for the
paper is that the comparison does not rest on it: the two frontiers meet between FID $28$ and $30$,
which is DEFT's $c \in \{1, 2\}$ and our $c \in \{10, 20\}$, and there the cap binds on none of DEFT's
steps and at most a twentieth of ours.

\subsection{Approximation floors at the operative smoothness} \label{app:floors}

Table \ref{tab:penalties} claims the strong residual carries a first-order approximation floor,
non-informative at $\beta = 2$. A floor belongs to the class, not the estimator, so it is measured
directly, at the member of the class approximating $h^\ast$ in value, gradient and time derivative at
once. Orders $3$, $2$, $1$ give $\beta = 4$, $3$, $2,$ order $1$, being only $C^0$, is not an admissible
trial class, but its floor is defined.

\begin{table}[ht]
\centering
\caption{Approximation floors against resolution, $d = 1$, $4$ replicates, slopes over
$N \in [2, 24]$, predictions $-2(\beta-1)$ and $-2(\beta-2)$.}
\label{tab:floors}
\small
\begin{tabular}{cccc}
\hline
$\beta$ (order) & weak floor & strong floor & strong\,/\,weak at $N = 24$ \\
\hline
$4$ ($3$) & $N^{-1.44}$ & $N^{-1.89}$ & $7.4$ \\
$3$ ($2$) & $N^{-1.81}$ & $N^{-1.00}$ & $311$ \\
$2$ ($1$) & $N^{-2.46}$ & $\mathbf{N^{+0.29}}$ & $8500$ \\
\hline
\end{tabular}
\end{table}

At $\beta = 2$ the strong floor stops decaying while the weak one keeps falling, the gap widening from
$7$ through $311$ to $8500$. The fitted slopes are not the predicted ones, the grid binding before the
smoothness does at $\beta = 4$ and $3$, and $h^\ast$ being smoother than the $W^{2,\infty}$ the
prediction assumes at $\beta = 2,$ the table supports the ordering and the flattening, not the exponents.

\subsection{The finite-\texorpdfstring{$\mathrm{s}$}{s} regime} \label{app:finiteS}

The rate experiment runs at the boundary case $\mathrm{s} = \infty$, where the clipping never binds. What
Assumption \ref{ass:bounded} buys beyond it is the factor $\Xi_{\mathrm{s}}^{4/(\mathrm{s}+4)}
\varepsilon_n^{\mathrm{s}/(\mathrm{s}+4)}$, so what is worth testing is whether the error degrades where
that envelope says it should. A fraction $0.8$ of the atoms of Appendix \ref{app:synthetic} is therefore
given a weight $w_{\mathrm{low}}$ descending the ladder $1, 0.3, 0.1, 0.03, 3 \cdot 10^{-3}$, the rest
keeping $3$, with class, sample sizes, oracle grids and four replicates unchanged. The window is not: on
$I = [0.40, 0.85]$ the noise is comparable to the radius of the data, the smoothing averages the
low-weight region away, and $h^\ast$ never falls below $\mathrm{b}$, however small $w_{\mathrm{low}}$ is.
The ladder is read on $I = [0.96, 0.995]$ instead, with $r = 2$ and $N \leq 32$ for the smaller $\sigma,$
$\mathrm{b}$ is the level the theorem prescribes at the minimising $\mathrm{s}$, exact here because $p_0$
is atomic.

\begin{table}[ht]
\centering
\caption{The finite-$\mathrm{s}$ ladder, $d = 1$, $4$ replicates, each cell at its own oracle
$(N, \mu),$ slopes over the seven sample sizes, ``clip'' the mass on which $h^\ast < \mathrm{b}$ at
$n = 10^6$.}
\label{tab:finites}
\small
\begin{tabular}{cccccc}
\hline
$\min w$ & $n = 10^3$ & $n = 10^6$ & measured & envelope & clip \\
\hline
$0.5$ (base) & $3.0 \times 10^{-3}$ & $1.0 \times 10^{-5}$ & $0.787$ & $0.779$ & $0.00$ \\
$1$ & $1.2 \times 10^{-2}$ & $2.2 \times 10^{-5}$ & $0.919$ & $0.779$ & $0.00$ \\
$0.3$ & $2.6 \times 10^{-2}$ & $2.0 \times 10^{-5}$ & $1.014$ & $0.779$ & $0.00$ \\
$0.1$ & $4.3 \times 10^{-2}$ & $2.2 \times 10^{-5}$ & $1.076$ & $0.337$ & $0.00$ \\
$0.03$ & $1.5 \times 10^{-1}$ & $2.9 \times 10^{-3}$ & $0.547$ & $0.004$ & $0.40$ \\
$3 \times 10^{-3}$ & $2.6 \times 10^{-1}$ & $\mathbf{2.1 \times 10^{-1}}$ & $\mathbf{0.028}$ & $0.002$ & $0.44$ \\
\hline
\end{tabular}
\end{table}

The clipping mass separates the two regimes. Down to $\min w = 0.1$ it does not bind at $n = 10^6$ and
the error there is the reference rung's $2 \times 10^{-5}$: a small $w$ on a set the smoothing averages
away costs a constant, not a rate. At $0.03$ it binds at every sample size, the guaranteed exponent
collapses and the error is two orders of magnitude worse, at $3 \cdot 10^{-3}$ there is no convergence at
all over three decades of $n$. The envelope being an upper bound, the measured slope need only be at
least as steep, which it is at every rung, what the table shows is that the bound goes flat exactly where
the estimator stops converging.

\end{document}